\theoremstyle{plain} 
\newtheorem{thm}{Theorem}[section]
\newtheorem{cor}[thm]{Corollary}
\newtheorem{lem}[thm]{Lemma}
\newtheorem{prop}[thm]{Proposition}
\newtheorem{defn}[thm]{Definition}
\theoremstyle{remark}
\newtheorem{rem}[thm]{Remark}
\def\Ld{\Lambda}
\numberwithin{equation}{section}
\def\bfi {\mathbf{i}}
\def\bfj{\mathbf{j}}
\def\f{\frac}
\def\vi{\varphi}
\def\({\left(}
\def \){ \right)}
  \def\sph{\mathbb{S}^{d-1}}
\def\Bl{\Bigl}
\def\Br{\Bigr}
 \def\ee{{\textnormal{e}}}
 \def\tr{{\triangle}}
 \def\Ga{\Gamma}
\def\ta{\theta}
\def\al{{\alpha}}
\def\da{{\delta}}
\def\sa{{\sigma}}
 \def\b{{\beta}}
  \def\ga{{\gamma}}
 \def\t{{\theta}}
 \def\va{\varepsilon}
 \def\ib{{\mathbf i}}
 \def\kb{{\mathbf k}}
 \def\NN{{\mathbb N}}
 \def\RR{{\mathbb R}}
 \def\SS{{\mathbb S}}
 \def\ZZ{{\mathbb Z}}
  \def\proj{\operatorname{proj}}
  \def\sph{\mathbb{S}^{d-1}}
\def\Og{\Omega}
\def\al{\alpha}
\newcommand{\wt}{\widetilde}
\newcommand{\wh}{\widehat}
\def\sub{\substack}
\def\p{\partial}
\def\ld{\lambda}
\def\bl{\bigl}
\def\br{\bigr}
\def\og{\omega}
\def\Ld{\Lambda}
 \def\CE{\mathcal{E}}
 \def\EEE{{\mathcal E}}
\newcommand{\ds}{\displaystyle}
\newcommand{\R}{{\mathbb{R}}}
\newcommand{\diam}{{\rm diam}}
\newcommand{\dist}{{\rm dist}}
\def\be{\begin{equation}}
\def\ee{\end{equation}}
\begin{document}

\title[]{Polynomial approximation on $C^2$-domains}
\author{Feng Dai}
\address{F.~Dai, Department of Mathematical and Statistical Sciences\\
	University of Alberta\\ Edmonton, Alberta T6G 2G1, Canada.}
\email{fdai@ualberta.ca}

\author{Andriy Prymak}
\address{Department of Mathematics, University of Manitoba, Winnipeg, MB, R3T2N2, Canada}

\email{prymak@gmail.com}

\thanks{	The first author was supported by  NSERC of Canada Discovery
	grant RGPIN-2020-03909, and the second author  was supported by NSERC of Canada Discovery grant RGPIN-2020-05357.
	}


\subjclass[2010]{Primary 41A10, 41A17, 41A27, 41A63;\\Secondary 41A55, 65D32}
\keywords{$C^2$-domains, polynomial approximation, modulus of smoothness, Jackson inequality, inverse theorem}

\begin{abstract}
	 We  introduce appropriate computable moduli of smoothness
to 
characterize
the rate of best  approximation  by multivariate
polynomials on a connected and  compact $C^2$-domain $\Omega\subset \mathbb{R}^d$. This  new modulus of smoothness is defined via finite differences along the directions of coordinate axes, and along a number of  tangential directions from the boundary.  With this modulus, we  prove  both the  direct Jackson inequality    and the corresponding  inverse for the best polynomial approximation in  $L_p(\Omega)$. The Jackson inequality is established for the full range of $0<p\leq \infty$,  while  its proof  relies on a recently established Whitney type  estimates with constants depending only on certain  parameters; and on a highly localized  polynomial partitions of  unity on a $C^2$-domain which is of independent interest. The inverse inequality is established for $1\leq p\leq \infty$, and its  proof relies on a recently proved Bernstein type inequality associated with the tangential derivatives on the boundary of $\Omega$.  Such an  inequality also allows us to establish the inverse theorem for Ivanov's  average moduli of smoothness on general compact $C^2$-domains.

\end{abstract}

\maketitle

\section{Introduction and Main Results}

\subsection{Historical remarks}

One of the primary questions of approximation theory is to characterize the
rate of approximation by a given system in terms of some  modulus
of  smoothness.   It is  well known (see, e.g.~\cite{De-Lo,Di-To,Ni}) that  the quality of approximation  by algebraic polynomials  increases towards the boundary of the underlying domain. As a result,  characterization of the class of functions with a prescribed rate of best  approximation by algebraic polynomials on a compact domain with nonempty boundary 
cannot be described by the ordinary moduli of smoothness. 
Several  successful  moduli of smoothness were introduced to solve  this problem in the setting of one variable. Among them the most established   ones are the  Ditzian-Totik moduli of smoothness  \cite{Di-To} and the average moduli of smoothness of K. Ivanov  \cite{Iv2}  (see the survey paper  \cite{Dit07} for details).  
The essential idea is that for the same approximation rate one may allow the function to be much less smooth closer to the endpoints of the interval. 
Successful  attempts were also   made to solve the problem in more variables,    the  most notable  being the work  of K. Ivanov   for polynomial approximation on piecewise $C^2$-domains in $\RR^2$  \cite{Iv}, and the recent works of Totik for polynomial approximation  on general  polytopes and algebraic domains  \cite{To14, To17}; we will describe~\cite{Iv} and~\cite{To17} in more details below. The following list is not meant to be exhaustive, but we would like to also mention several other related works: results for simple polytopes by Ditzian and Totik~\cite{Di-To}*{Chapter~12}, an announcement of a characterization of approximation classes by Netrusov~\cite{Ne}, possibly reduction to local approximation by Dubiner~\cite{Du}, results for simple polytopes for $p<1$ by Ditzian~\cite{Di96}, a new modulus of smoothness and characterization of approximation classes on the unit ball by the first author and Xu~\cite{DX}, a different alternative approach on the unit ball by Ditzian~\cite{Di14a,Di14b}, and a strengthening of the rate of polynomial approximation near conic boundary points of general convex domains by Yu.~Brudnyi~\cite{Br}.

The  main aim in this paper is to introduce a  computable  modulus of smoothness for functions on  $C^2$-domains,  for which both the direct Jackson inequality and  the corresponding  converse hold. As is well known,  the definition of  such a modulus must take into account  the boundary of the underlying domain. 

We start with some necessary notations. 
Let $L^p(\Og)$, $0<p<\infty$ denote the Lebesgue $L^p$-space defined with respect to the Lebesgue measure on a compact domain $\Og\subset \RR^d$. In the limit case we set $L^\infty(\Og)=C(\Og)$, the space of all continuous functions  on $\Og$ with the uniform norm $\|\cdot \|_\infty$.
Given  $\xi, \eta\in \RR^d$, and  $r\in\NN$,  we define 
$$ \tr_\xi^r f(\eta) :=\sum_{j=0}^r (-1)^{r+j} \binom{r} {j} f(\eta+j\xi ),$$
where we assume that $f$ is defined everywhere on the set  $\{\eta+j\xi:\  \   j=0,1,\dots, r\}$.
For a function $f: \Og\to\RR$, we also define  
\begin{equation}\label{finite-diff}
	\tr_\xi^r (f, \Og, \eta):=\begin{cases}
		\tr_\xi^r f(\eta),\   \ &\text{if  $[\eta, \eta+r\xi]\subset \Og$,}\\
		0, &\   \  \text{otherwise},
	\end{cases}
\end{equation}
where $[x,y]$ denotes the line segment connecting any two points $x,y\in\RR^d$. The symmetric versions of these finite differences are
\[
\wt \tr^r_\xi f(\eta):= \tr^r_\xi f\Bl(\eta-\f r2 \xi\Br)
\quad\text{and}\quad
\wt \tr^r_\xi (f,\Og,\eta):= \tr^r_\xi \Bl(f,\Og,\eta-\f r2 \xi\Br).
\]
The best approximation of  $f\in L^p(\Og)$ by means of algebraic polynomials of total degree at most $n$ is defined as
$$ E_n(f)_p=E_n(f)_{L^p(\Og)}:=\inf\Bl\{ \|f-Q\|_p:\   \  Q\in \Pi^d_n\Br\},$$
where $\Pi^d_n$ is the space of algebraic polynomials of total degree $\le n$ on $\RR^d$.
Given a set  $E\subset \RR^d$, we denote by $|E|$ its Lebesgue measure in $\RR^d$, and   define  $\dist(\xi, E):=\inf_{\eta\in E}\|\xi-\eta\|$ for $\xi\in \RR^d$, (if $E=\emptyset$, then define $\dist(\xi, E)=1$). Here and throughout the paper,   $\|\cdot\|$ denotes the Euclidean norm. 
Finally, let $\sph\subset \RR^d$ be the unit sphere of $\RR^d$, and let $e_1=(1,0,\dots, 0), \dots, e_d =(0, \dots, 0, 1)$ denote the standard canonical basis  in $\RR^d$.

Next, we describe   the work of K. Ivanov \cite{Iv}, where  a new modulus of smoothness was introduced  to study the best algebraic polynomial approximation
for functions of two variables on a bounded domain   with piecewise $C^2$ boundary.  To avoid technicalities,  we always   assume  that  $\Og\subset \RR^d$ is  the closure of an open,  bounded, and connected  domain in $\R^d$    with $C^2$ boundary $\Ga$ (see Definition~\ref{def:c2}). Consider the following metric on $\Og$:
\begin{equation}\label{metric}\rho_\Og (\xi,\eta):=\|\xi-\eta\|+ \Bl|\sqrt{\dist(\xi, \Gamma)} -\sqrt{\dist(\eta, \Gamma)}\Br|,\   \  \xi, \eta\in\Og. \end{equation}
For $\xi\in\Og$ and $t>0$, set 
$ U( \xi, t):= \{\eta\in\Og:\  \  \rho_\Og(\xi,\eta) \leq t\}$.
For $0< q\leq p\leq \infty$,  the average   $(p,q)$-modulus of order $r\in\NN$ of $f\in L^p(\Og)$  was defined in  \cite{Iv} by\footnote{Both the metric $\rho_\Og$ and  the average moduli of smoothness $\tau_r(f,t)_{p,q}$  were defined in \cite{Iv} for a more general domain $\Og\subset \RR^2$. } 
\begin{equation}\label{eqn:ivanov} \tau_r (f; \da)_{p,q} :=\Bl\| w_r (f, \cdot, \da)_q \Br\|_p,\end{equation}
where    
$$ w_r (f, \xi, \da)_q : =\begin{cases}
	\displaystyle \Bl( \f 1 {|U(\xi,\da)|} \int_{U(\xi,\da)} |\tr_{(\eta-\xi)/r} ^r (f,\Og,\xi)|^q \, d\eta\Br)^{\f1q},\  \  & \text{if $0<q <\infty$};\\
	\sup_{\eta\in U( \xi,\da)} |\tr_{(\eta-\xi)/r}^r (f,\Og,\xi)|,\   \ &\text {if $q=\infty$}.\end{cases}$$
Intuitively, the smoothness is measured through local subdomains $U(\xi,t)$. When $\xi\in\Gamma$, $U(\xi,n^{-1})$ has the (Euclidean) size roughly $n^{-1}$ in the directions parallel to $\Gamma$ at $\xi$ (tangential directions), while in the (orthogonal) direction of the inward normal the size will be roughly $n^{-2}$. Thus, for the same approximation rate, the function is allowed to be less smooth in the inward normal direction. This is natural to expect as we do not worry about the values of the approximating polynomial outside of the domain. On the other hand, one also needs to account for the varying (in arbitrary $C^2$ manner) throughout the domain tangential directions, which is one of the key difficulties.

With the modulus defined in~\eqref{eqn:ivanov},  the following result was  announced  without proof in  \cite{Iv} for a   bounded domain  in the plane   with piecewise $C^2$ boundary.
\begin{thm}\label{thm:ivanov}\cite{Iv} Let $\Og$ be the closure of a bounded open domain in the plane $\RR^2$ with piecewise $C^2$-boundary $\Ga$. 	
	If $f\in L^p(\Og)$, $1\leq q\leq p \leq \infty$ and $r\in\NN$, then
	\begin{equation}\label{1-4-0} E_n (f)_p \leq C_{r, \Og} \tau_r (f, n^{-1})_{p,q}.\end{equation}
	Conversely, if either $p=\infty$ or $\Og$ is a parallelogram or a disk and $1\leq p\leq \infty$, then
	\begin{equation}\label{1-5-0}\tau_r (f, n^{-1})_{p,q} \leq C_{r,\Og} n^{-r} \sum_{s=0}^n (s+1)^{r-1} E_s (f)_p.\end{equation}
\end{thm}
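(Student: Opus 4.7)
The plan is to follow the scheme of one-dimensional Ditzian--Totik and Ivanov results, adapted to the multidimensional $C^2$ setting; the proof decouples into direct (Jackson) and inverse (Bernstein) parts, linked only by the common modulus $\tau_r$. For the direct inequality \eqref{1-4-0}, my strategy is local polynomial approximation glued by a partition of unity. First I would establish a Whitney-type estimate: for each $\rho_\Og$-ball $U(\xi,\da)$ with $\da\sim 1/n$ there is a polynomial $Q_\xi\in \Pi^d_{r-1}$ with
\[
\Bl(\f 1{|U(\xi,\da)|}\int_{U(\xi,\da)} |f-Q_\xi|^q\,d\eta\Br)^{1/q} \le C_r\, w_r(f,\xi,c\da)_q
\]
for constants $c,C_r$ depending only on $r$ and $\Og$. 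This should follow from locally flattening $C^2$-pieces of $\Ga$ and invoking the standard planar Whitney bound on the flattened coordinates. Next, I would construct a \emph{polynomial} partition of unity $\{\phi_j\}\subset \Pi_n^d$ associated with a bounded-overlap cover $\{U_j:=U(\xi_j,c/n)\}_j$ such that $\sum_j\phi_j\equiv 1$ on $\Og$, $0\le \phi_j\le 1$, and $\phi_j(\eta)$ decays rapidly once $\rho_\Og(\eta,\xi_j)\gg 1/n$. Setting $P_n:=\sum_j \phi_j Q_j$, I would estimate $\|f-P_n\|_p$ by a weighted $\ell^p$-sum of $\|f-Q_j\|_{L^p(U_j)}$ (using $q\le p$ via H\"older together with bounded overlap) and convert that sum back into the integral defining $\tau_r(f,c/n)_{p,q}$.

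The principal technical obstacle for the direct part is constructing a polynomial partition of unity compatible with the anisotropic scale $\rho_\Og$ near a curved $C^2$-boundary. Away from $\Ga$ this is a routine smoothing of cube partitions, but near $\Ga$ the natural cells are boundary-strip rectangles of size roughly $n^{-1}\times n^{-2}$ (tangential $\times$ normal), and one needs polynomials of degree $\lesssim n$ that are effectively concentrated on such cells. I would handle this by localizing to a tubular neighborhood of $\Ga$, passing to graph coordinates that straighten the boundary, and then gluing via Chebyshev-type polynomial bumps adapted to the weight $\sqrt{\dist(\cdot,\Ga)}$.

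For the inverse inequality \eqref{1-5-0}, I would apply the standard telescoping argument. Let $P_m\in \Pi^d_m$ satisfy $\|f-P_m\|_p\le 2E_m(f)_p$ and write $f=P_1+\sum_{k\ge 0}(P_{2^{k+1}}-P_{2^k})$ (convergent in $L^p$ for $1\le p\le \infty$). Apply $\tau_r(\cdot,n^{-1})_{p,q}$, split at $2^k\sim n$, use $\tau_r(g,t)_{p,q}\le C\|g\|_p$ for the high-frequency tail, and on the low-frequency part invoke a Bernstein-type inequality
\[
\tau_r(Q,n^{-1})_{p,q}\le C\Bl(\f{m}{n}\Br)^{r}\|Q\|_p,\qquad Q\in \Pi^d_m,\ m\le n.
\]
This Bernstein bound for $\tau_r$ would in turn be proved by pointwise control of $\tr^r_\xi Q$ in terms of $\rho_\Og$-step sizes combined with multivariate Markov--Bernstein derivative estimates, including a tangential-derivative version at the boundary. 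Geometric summation followed by Abel rearrangement then produces the weighted tail $n^{-r}\sum_{s=0}^n (s+1)^{r-1} E_s(f)_p$. The main obstruction is precisely this Bernstein inequality for $\tau_r$ outside the $p=\infty$ or disk/parallelogram cases: the $L^q$-averaging inside $w_r$ combined with the square-root anisotropy of $\rho_\Og$ demands uniform control of Jacobians of the boundary parametrization that is not available on a general $C^2$-domain, which is the source of the restriction on $\Og$ in \eqref{1-5-0}.
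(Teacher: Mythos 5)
Theorem~\ref{thm:ivanov} is quoted from \cite{Iv} as a result announced there \emph{without proof}; the present paper does not contain a proof of this exact statement (note also the different domain classes: \cite{Iv} is for piecewise $C^2$ in $\RR^2$, the paper works with $C^2$-domains in $\RR^{d+1}$). The paper instead proves the closest analogues in Corollary~\ref{cor-4-4-0} and Theorem~\ref{thm-15-1-00}, and in doing so \emph{removes} the restriction to $p=\infty$ or parallelogram/disk in \eqref{1-5-0}. Your two-step architecture --- Whitney estimate plus polynomial partition of unity for the Jackson side, telescoping plus a Bernstein inequality for the inverse --- does match the scheme the paper uses, but the pieces carrying the weight are left at the level of wishes, and your explanation of the restriction is contradicted by the paper's results.

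On the inverse side, the Bernstein-type estimate $\tau_r(Q,n^{-1})_{p,q}\le C(m/n)^r\|Q\|_p$ for $Q\in\Pi_m^{d+1}$ is not a formal consequence of ``Markov--Bernstein plus a tangential-derivative version''; it is the entire nontrivial content of the inverse. The paper proves it (the key step is \eqref{15-2} inside Lemma~\ref{lem-15-1}) via the tangential Bernstein inequality of Theorem~\ref{cor-11-2}, imported from \cite{Da-Pr-Bernstein}, which controls $\|\vi_n^{i}\,\mathcal{D}_{\tan,u}^{\pmb{\al}}\p^{i+j}_{d+1}P\|_{L^p}$ on domains of special type; this is substantial machinery in its own right, and you invoke it without any plan for proving it. Moreover, your attribution of the restriction in \eqref{1-5-0} to ``Jacobian control not available on a general $C^2$-domain'' is simply wrong: Theorem~\ref{thm-15-1-00} establishes the inverse on every compact connected $C^2$-domain for the full range $1\le p\le\infty$, so Ivanov's restriction reflected the state of the toolbox in 1984, not an intrinsic obstruction. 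On the direct side, a Whitney estimate stated directly on a $\rho_\Og$-ball $U(\xi,\delta)$ is not routine, because near $\Gamma$ such a ball is an anisotropic, non-convex lens to which no off-the-shelf Whitney theorem applies; the paper instead proves a directional Whitney inequality on domains of special type (Lemma~\ref{cor-7-3}, from \cite{Da-Pr-Whitney}), constructs the polynomial partition of unity via the Dzjadyk--Konovalov machinery followed by a chain-of-balls gluing with fast-decreasing polynomials (all of Section~\ref{sec:partition of unity}), proves Jackson for its intrinsic modulus $\og^r_\Og$, and only afterwards compares $\og^r_\Og$ with $\tau_r$ via the combinatorial identity in Lemma~\ref{lem-9-1:Dec} (Section~\ref{ch:IvanovModuli}). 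Each of these is a section-length argument, not a remark.
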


It remained open in \cite{Iv} whether the inverse inequality~\eqref{1-5-0} holds for the full range of $1\leq p\leq \infty$  for more general  $C^2$-domains other than parallelograms and  disks.  The methods developed in this paper allow us to give a positive answer to this question.  In fact, we shall prove the Jackson inequality~\eqref{1-4-0} for $0<p\leq \infty$ and the inverse inequality~\eqref{1-5-0} for $1\leq p\leq \infty$ for all compact, connected $C^2$-domains $\Og\subset \RR^d$. Our results   apply to higher dimensional domains as well.

Finally, we describe the recent  work of Totik \cite{To17}, where a new modulus of smoothness using the univariate moduli of smoothness on circles and line segments was introduced to study polynomial approximation on  algebraic domains.
Let  $\Og\subset \RR^d$ be the closure of a bounded, finitely connected domain with $C^2$ boundary $\Ga$. Such a domain  is called an algebraic domain  if for   each connected  component $\Ga'$ of the boundary $\Ga$,   there  is  a  polynomial $\Phi(x_1,\dots, x_d)$ of $d$ variables  such that $\Ga'$  is one of the components of  the surface $\Phi(x_1,\dots, x_d)=0$ and $\nabla \Phi(\xi) \neq 0$ for each $\xi\in\Ga'$.   
The $r$-th order  modulus of smoothness of $f\in C(\Og)$ on a circle $\mathcal{C}\subset \Og$ is defined  as in the classical trigonometric approximation theory by 
\begin{align*}
	\wh{\og}_{\mathcal{C}} ^r (f,t) :
	&=\sup_{0\leq \ta \leq t} \sup_{0\leq \vi\leq 2\pi } 
	\Bl| \wt\tr^r_\theta f_{\mathcal{C}}(\vi) \Br|
\end{align*}
where  we identify the circle $\mathcal{C}$ with the interval $[0, 2\pi)$ and  $f_{\mathcal{C}}$ denotes   the restriction of $f$ on $\mathcal{C}$.
Similarly, if $I=[a,b]\subset \Og$ is a line segment and $e\in\SS^{d-1}$ is the direction of $I$, then with  $\wt{d}_I (e, z): =\sqrt{\|z-a\|\|z-b\|}$,  we may define 
the modulus of smoothness of $f\in C(\Og)$   on $I$  as  
\begin{align*}
	\wh{\og}_I^r (f,\da)
	&=\sup_{0\leq h\leq \da} \sup_{z\in I}
	\Bl| \wt\tr^r_{h \wt{d}_I (e, z)e}(f,\Omega,z) \Br|. 
\end{align*}
Now we  define the $r$-th order  the modulus of smoothness of $f\in C(\Og)$ on the domain $\Og$ as 
\begin{equation}\label{eqn:mod-totik}
	\wh{\og}^r(f,\da)_\Og=\max\Bl( \sup_{\mathcal{C}_\rho} \wh{\og}_{\mathcal{C}_\rho}^r (f,\da), \sup_I \wh{\og}_I^r (f,\da)\Br),
\end{equation}
where the suprema are taken for all circles $\mathcal{C}_\rho\subset \Og$ of some radius $\rho$  which are parallel with a coordinate plane,  and for all segments $I\subset \Og$ that are parallel with one of the coordinate axes. 
With this modulus of smoothness,  Totik  proved  

\begin{thm}\label{Totik:thm}\cite{To17}
	If $\Og\subset \RR^d$ is an algebraic domain and $f\in C(\Og)$, then 
	\begin{equation} \label{1-6-0}E_n (f)_{C(\Og)} \leq C \wh{\og}^r (f, n^{-1})_{\Og},\   \ n\ge rd,\end{equation}
	and 
	\begin{equation} \label{1-7-0} \wh{\og}^r(f, n^{-1})_{\Og}  \leq C n^{-r} \sum_{k=0}^n (k+1)^{r-1} E_k (f)_{C(\Og)} \end{equation}
	with a constant $C$ independent of $f$ and $n$.
\end{thm}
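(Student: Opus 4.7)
The plan is to establish \eqref{1-6-0} by combining univariate Jackson inequalities along the admissible curves (circles parallel to coordinate planes and segments parallel to coordinate axes) with a global polynomial approximation scheme that exploits the algebraic structure of $\Og$, and to prove \eqref{1-7-0} by a standard dyadic telescoping argument based on univariate Bernstein inequalities.

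For the direct inequality, I would first produce local Whitney-type polynomial approximants $Q_j$ of degree $O(n)$ on a cover $\{R_j\}$ of $\Og$ whose cells have diameters matching the natural Ditzian--Totik scale -- roughly $\sqrt{\dist(x,\Ga)}/n + 1/n^2$ near the boundary and $1/n$ in the interior. On such a cell a Whitney estimate controls $\|f-Q_j\|_{L^\infty(R_j)}$ by $r$-th order differences of $f$ taken along coordinate segments or short arcs of a coordinate-parallel circle contained in $R_j$; each such difference is in turn dominated by $\wh\og^r(f, n^{-1})_\Og$. The delicate step is then to glue the $Q_j$'s into a single algebraic polynomial of total degree $\le n$. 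For this one builds a polynomial partition of unity $\{\varphi_j\}$ on $\Og$: in the interior the bumps are standard de la Vall\'ee Poussin type on coordinate cubes, while near a boundary component $\{\Phi=0\}$ the localizing polynomials are multiplied by an appropriate power of the defining polynomial $\Phi$, which is what matches the square-root boundary weight $\sqrt{\dist(x,\Ga)}$ and provides the correct vanishing on $\Ga$. The assumption $n\ge rd$ guarantees enough degrees of freedom for this construction, and the global approximant is $\sum_j Q_j \varphi_j$.

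The converse \eqref{1-7-0} is much more mechanical. Let $P_k\in\Pi^d_k$ satisfy $\|f-P_k\|_\infty = E_k(f)_{C(\Og)}$, set $R_0 := P_1$ and $R_k := P_{2^k} - P_{2^{k-1}}$ for $k\ge 1$, so that $\|R_k\|_\infty \le 2E_{2^{k-1}}(f)_\infty$ and $R_k\in\Pi^d_{2^k}$. Fix $0<\da\le 1$ and choose $N$ with $2^{-N-1}<\da\le 2^{-N}$. For the tail $k>N$ use the crude bound $\wh\og^r(R_k,\da)_\Og \le 2^r \|R_k\|_\infty$. For the head $k\le N$ restrict $R_k$ to any circle $\mathcal{C}_\rho$ parallel to a coordinate plane (after parameterization, a trigonometric polynomial of degree $\le 2^k$) or to any segment $I$ parallel to a coordinate axis (an algebraic polynomial of degree $\le 2^k$), and apply, respectively, the classical trigonometric Bernstein inequality and the Ditzian--Totik Bernstein inequality with weight $\wt{d}_I(e,z)$. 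Both yield $\wh\og^r(R_k,\da)_\Og \le C(\da\cdot 2^k)^r \|R_k\|_\infty \le C'(\da\cdot 2^k)^r E_{2^{k-1}}(f)_\infty$. Adding the head and tail, then taking $\da = n^{-1}$ and reindexing the dyadic sum via $s\asymp 2^k$, yields \eqref{1-7-0}.

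The main obstacle is the direct inequality: Jackson estimates on individual curves do not combine into a global Jackson bound automatically, and one genuinely needs the algebraic structure of $\Og$ -- through the defining polynomial $\Phi$ -- to construct a polynomial partition of unity carrying the correct Ditzian--Totik boundary weight. The converse, by contrast, follows essentially from univariate Bernstein inequalities applied to restrictions, since these transfer without loss to the multivariate polynomial $R_k$.
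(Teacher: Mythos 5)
Note first that Theorem~\ref{Totik:thm} is cited from Totik's paper \cite{To17} and is not proved in the present paper; the surrounding discussion makes explicit that the inverse inequality \eqref{1-7-0} follows easily from classical one-variable inverse arguments, while the direct inequality \eqref{1-6-0} is genuinely hard even on algebraic domains and is the main achievement of \cite{To17}.

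Your proof of the converse \eqref{1-7-0} is the right idea but has a tail gap: for general $f\in C(\Og)$, $E_k(f)\to 0$ does not imply $\sum_k E_{2^{k-1}}(f)_\infty<\infty$, so neither the decomposition $f=\sum_k R_k$ nor the bound $\sum_{k>N}\wh\og^r(R_k,\da)_\Og\le 2^{r+1}\sum_{k>N}E_{2^{k-1}}(f)_\infty$ is justified, and in any case that tail is not controlled by the finite sum appearing on the right of \eqref{1-7-0}. The standard repair is to set $2^N\sim n$, write $f=P_{2^N}+(f-P_{2^N})$, bound $\wh\og^r(f-P_{2^N},n^{-1})_\Og\le 2^r E_{2^N}(f)_\infty\le Cn^{-r}\sum_{k=0}^n(k+1)^{r-1}E_k(f)_\infty$ using monotonicity of $E_k$, and telescope only the finite polynomial $P_{2^N}$. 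Your plan for the direct inequality \eqref{1-6-0}, by contrast, is not a proof: the pivotal assertion that a Whitney-type error on a boundary-adapted cell is dominated by $r$-th differences of $f$ along only coordinate-parallel segments and coordinate-parallel circular arcs is precisely the quasi-Whitney inequality that forms the difficult core of \cite{To17} (the footnote attached to this statement records that Totik communicated such an estimate to the authors privately), and you give no argument for it. The gluing step involving powers of the defining polynomial $\Phi$ is likewise only asserted, with no degree count or verification that it respects the Ditzian--Totik boundary scale, so the threshold $n\ge rd$ remains unexplained.
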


From the classical inverse inequalities in one variable, and 
the way the moduli of smoothness  $\wh{\og}^r(f,t)_\Og$ are defined,  one can easily show that the inverse inequality~\eqref {1-7-0} in fact  holds on more general $C^2$-domains $\Og$. On the other hand,  however,    it is much harder to show   the direct Jackson inequality~\eqref{1-6-0} even on algebraic domains (see \cite{To17}).\footnote{{In a private  communication, V. Totik kindly showed us that certain  quasi-Whitney inequality  can be established for the  moduli $\wh{\og}^r$ on cells of distance $≥ C/n^2$ from
		the boundary of $\Og$, which, combined with certain techniques from Section \ref{ch:direct} of the current paper, will yield  the Jackson inequality \eqref{1-6-0} for the  moduli $\wh{\og}^r$  on a general $C^2$-domain.  }}
Furthermore, it  is unclear   how to extend the results of Theorem~\ref{Totik:thm} to  $L^p$ spaces with $p<\infty$.

In this paper, we will introduce a new computable modulus of smoothness on a connected, compact $C^2$-domain $\Og\subset \RR^d$. Our new modulus of smoothness is defined via finite differences along the directions of coordinate axes, and along 
tangential directions on the boundary.  With this modulus, we shall prove  a  direct Jackson-type inequality   for the full range of $0<p\leq \infty$, and the corresponding  inverse for $1\leq p\leq \infty$.  The proof of the Jackson inequality relies on a Whitney type estimate on certain domains of special type which we recently established in~\cite{Da-Pr-Whitney}, and a polynomial partition of unity on $\Og$ which we construct motivated by the ideas of Dzjadyk and Konovalov~\cite{Dz-Ko}. On the other hand, the proof of the inverse inequality is more difficult. It 
relies on a new tangential Bernstein inequality on $C^2$-domains, which we recently established in~\cite{Da-Pr-Bernstein}. 

We  give some preliminary materials in the next subsection. 
After that, we define the new modulus of smoothness in Section~\ref{modulus:def}. The main results of this paper are summarized in  Section~\ref{summary}, where we also  describe briefly the organization of the rest of the paper.  

\subsection{Preliminaries }\label{preliminary}

We start with a brief description of some necessary notations. Often we will work with domains bounded by graphs of functions, so it will be more convenient  to  work on the $(d+1)$-dimensional Euclidean space $\RR^{d+1}$ rather than    the  $d$-dimensional  space $\RR^d$. We shall often    write a point in $\RR^{d+1}$ in the form $(x, y)$  with  $x=(x_1,\dots, x_d)\in\RR^d$ and $y=x_{d+1}~\in~\RR$.     
Let $B_r[\xi]$ (resp., $B_r(\xi)$ )  denote  the closed ball (resp., open ball)  in $\R^{d+1}$ centered at $\xi\in\RR^{d+1}$   having radius $r>0$.  
A   rectangular box in $\RR^{d+1}$ is a set that  takes the form $[a_1,b_1]\times \dots\times [a_{d+1}, b_{d+1}]$ with $-\infty<a_j<b_j<\infty$,  $j=1,\dots, d+1$.  
We  always assume that the sides of a  rectangular box   are parallel with  the coordinate axes.   If $R$ denotes either a parallelepiped or a ball in $\RR^{d+1}$, then  we  denote by $cR$ the dilation of  $R$ from its center by a factor $c>0$.
Given  $1\leq i\neq j\leq d+1$,  we call   the coordinate  plane spanned by the vectors $e_i$ and $e_j$  the $x_ix_j$-plane.  Finally, we use the notation  $A_1\sim A_2$ to mean that there exists a positive constant $c>0$ such that $c^{-1}A_1\leq A_2\leq c A_1$.

\subsection{Directional moduli of smoothness} 

The $r$-th order   directional modulus of smoothness  on a domain   $\Og\subset \RR^{d+1}$  along  a  set $\mathcal{E}\subset \SS^d$ of directions    is defined  by 
$$ \og^r (f,  t; \mathcal{E})_p:=\sup_{\xi\in \mathcal{E}} \sup_{0<u\leq t } \|\tr_{u\xi}^r (f, \Og, \cdot)\|_{L^p(\Og)}=
\sup_{\xi\in \mathcal{E}} \sup_{0<u\leq t } \|\tr_{u\xi}^r f\|_{L^p(\Og_{ru\xi})},$$
where $\tr_{u\xi}^r f=\tr_{u\xi}^r(f,\Og, \cdot)$ is  given in~\eqref{finite-diff}, and 
$\Og_{\eta}:=\{\xi\in \Og:\  \  [\xi, \xi+\eta]\subset \Og\}$ for $\eta\in\RR^{d+1}$. 
Let 
$$ \og^r(f, \Og; \mathcal{E})_p:=
\og^r (f,  \diam(\Og); \mathcal{E})_p,$$ where $\diam (\Og) :=\sup_{\xi, \eta \in \Og} \|\xi-\eta\|$. 
If $\mathcal{E}=\SS^d$, then we write $\og^r (f,  t)_p=\og^r (f,  t; \SS^d)_p$ and  $\og^r(f, \Og)_p= \og^r(f, \Og; \SS^d)_p$, whereas if $\mathcal{E}=\{e\}$ contains only one direction $e\in\SS^d$,  we write  $\og^r (f,  t; e)_p=\og^r (f,  t; \mathcal{E})_p$ and  $\og^r(f, \Og; e)_p= \og^r(f, \Og; \mathcal{E})_p$.
We shall frequently use the following two properties of these directional moduli of smoothness, which can be easily verified from the definition: \begin{enumerate}[\rm (a)] 
	\item For each $\mathcal{E}\subset \SS^d$,   \begin{equation*}\label{2-1-18}\og^r (f,  \Og; \mathcal{E})_p=\og^r (f,  \Og; \mathcal{E}\cup(-\mathcal{E}))_p.\end{equation*}
	\item   If $T$ is an affine mapping given by  $T\eta =\eta_0 +T_0\eta$ for all $\eta\in\R^{d+1}$ with   $\eta_0\in\R^{d+1}$ and   $T_0$ being  a nonsingular linear mapping on $\R^{d+1}$,   then 
	\begin{equation*}\label{5-1-eq}
		\og^r (f,   \Og; \mathcal{E})_p =\Bl|\text{det} \ (T_0)\Br|^{-\f1p}  \og^r( f\circ T^{-1},  T(\Og); \mathcal{E}_{T})_p,
	\end{equation*}
	where  
	$\mathcal{E}_{T}=\bl\{\f{ T_0 x}{\|T_0 x\|}:\  \ x\in \mathcal{E}\br\}$.   Moreover, if   $\xi, e\in\SS^d$ is such that $e=T_0(\xi)$, then for any $h>0$,
	\begin{equation}\label{2-3-18}
		|\text{det} \  (T_0)|^{\f1p} 	\|\tr_{he} ^r ( f, \Og)\|_{L^p(\Og)}^p=\|\tr_{h\xi}^r (f\circ T^{-1}, T(\Og))\|_{L^p(T(\Og))}.
	\end{equation}	 
	
\end{enumerate} 
Next, we  recall that  the  analogue of the  Ditzian-Totik modulus on   $\Og\subset \RR^{d+1}$  along a direction $e\in\SS^d$    is defined as  (see \cite{To14, To17}):
\begin{equation}\label{2-3-DT}\og_{\Og, \vi}^r(f,t; e)_{p}:=
	\sup_{|h|\leq \min\{t,1\}} \Bl\|\wt\tr_{h\vi_\Og (e, \cdot) e}^r (f, \Og, \cdot)\Br\|_{L^p(\Og)},\      \   t>0,\end{equation}
where 
\begin{equation}\label{funct-vi} \vi_\Og (e, \xi):=\max\Bl\{ \sqrt{l_1l_2}:\  \  l_1, l_2\ge 0,\  \ [\xi-l_1 e, \xi+l_2 e]\subset \Og\Br\},\  \ \xi\in\Og.\end{equation}
For simplicity, we also define  $\vi_\Og (\da e, \xi)=\vi_\Og (e, \xi)$ for $e\in\SS^d$,  $\da>0$ and $\xi\in\Og$.

\subsection{Domains of special type}




A set  $G\subset \RR^{d+1}$ is called  an {\sl upward}  $x_{d+1}$- domain with base size $b>0$ and parameter $L\ge 1$  if 
it can be written in the form 
\begin{equation} \label{2-7-special}G=\xi+\Bl\{( x,  y):\  \  x\in (-b,b)^d,\   \  g(x)- L b< y \leq g(x)\Br\} \end{equation}
with   $\xi\in\RR^{d+1}$ and   $g\in C^2(\RR^d)$. 
For such a domain $G$,   and a parameter  $\ld\in (0, 2]$,  we define 
\begin{align*}
	G(\ld):&=\xi +\Bl\{ (x,   y):\  \  x\in (-\ld b, \ld b)^d,\   \   g(x)-\ld L b < y \leq g(x)\Br\},\\
	\p'G(\ld)&:=\xi +\Bl\{ (x,  g(x)):\  \  x\in (-\ld b, \ld b)^d\Br\}.
\end{align*} 
Associated with the set $G$ in~\eqref{2-7-special}, we also define 
\begin{align*}
	G^\ast:&=\xi+\Bl\{( x,   y):\  \  x\in (-2b,2b)^d,\   \ \min_{u\in [-2b, 2b]^d} g(u)-4Lb <  y \leq g(x)\Br\}.\label{G}
\end{align*}


For later applications, we give the following remark on the above definition.
\begin{rem}\label{rem-2-1-0}
	In the above definition,
	we may choose the base size  $b$    as small as we wish, and we   may also  assume the parameter $L$ in~\eqref{2-7-special} satisfies 
	\begin{equation}\label{parameter-2-9}
		L\ge L_G:=4\sqrt{d} \max_{x\in [-2b, 2b]^d} \|\nabla g(x)\| +1,
	\end{equation}
	since otherwise we may consider a   subset of the form 
	\begin{equation*} G_0=\xi+\Bl\{( x,  y):\  \  x\in (-b_0,b_0)^d,\   \  g(x)- L_0 b_0< y \leq g(x)\Br\}\end{equation*}
	with  $L_0=Lb/b_0$ and $b_0\in (0, b)$ being  a sufficiently small constant. 
	Unless otherwise stated, we will always assume that  the condition~\eqref{parameter-2-9} is satisfied for each upward $x_{d+1}$-domain.

	%
	%
	%
\end{rem}

We  may  define 
an upward $x_j$-domain $G\subset \RR^{d+1}$    and the associated   sets $G(\ld)$,   $\p' G(\ld)$, $G^\ast$  for  $1\leq j\leq d$  in a similar manner, using  the reflection  
$$\sa_j (x) =(x_1,\dots, x_{j-1}, x_{d+1}, x_{j+1},\dots, x_d, x_j),\  \  x\in\RR^{d+1}.$$
Indeed,  $G\subset \RR^{d+1}$
is  an  {\it upward} $x_j$-domain with base size $b>0$  and parameter  $L\ge 1$ if  $E:=\sa_j (G)$ is an  upward $x_{d+1}$-domain with base size $b$ and parameter $L$, in which case we define   
$$G (\ld) = \sa_j \bl( E (\ld)\br),\    \ 
\p' G(\ld)= \sa_j\bl(\p' E (\ld)\br),\   \   \   G^\ast =\sa_j (E^\ast).$$

We can also  define a  {\it downward} $x_j$-domain   and the associated   sets $G(\ld)$,    $\p' G(\ld)$, using  the reflection with respect to the coordinate plane $x_j=0$:  
$$\tau_j (x) :=(x_1, \dots, x_{j-1}, -x_j, x_{j+1}, \dots, x_{d+1}),\   \ x\in\RR^{d+1}.$$
Indeed,   $G\subset \RR^{d+1}$
is  an  {\it downward} $x_j$-domain with base size $b>0$ and parameter $L\ge 1$  if  $H:=\tau_j (G)$ is an  upward $x_{j}$-domain with base size $b$ and parameter $L\ge 1$, in which case we define 
$$G (\ld) = \tau_j \bl( H (\ld)\br),\   \
\p' G(\ld)= \tau_j\bl(\p' H (\ld)\br),\   \    G^\ast =\tau_j(H^\ast).$$

%
%
%

We  say     $G\subset  \RR^{d+1}$  is  a domain  of special type   if it  is an upward or downward $x_j$-domain for some $1\leq j\leq d+1$, in which case   we call $\p' G(\ld)$ the essential boundary of $G(\ld)$, and write      $\p' G =\p' G(1)$ and $\p' G^\ast =\p' G (2)$.

\begin{defn}\label{Def-2-1}   Let $\Og\subset \RR^{d+1}$ be a bounded domain with boundary $\Ga=\p \Og$, and let $G\subset \Og$ be a domain of special type. 
	We say $G$ is attached to $\Ga$ if $\overline{G}^\ast\cap \Ga =\overline{\p' G^\ast}$ 
	and there exists an open rectangular box $Q$ in $\RR^{d+1}$  such that $ G^\ast =Q\cap \Og$.
\end{defn}


\subsection*{$C^2$-domains}

In this paper, we shall mainly  work on   $C^2$-domains, defined as follows: 

\begin{defn}\label{def:c2}
	A bounded   domain  $\Og\subset \RR^{d+1}$ is called  $C^2$ if there exist numbers $\da>0$, $M>0$ and a finite cover of the boundary $\Ga:= \p \Og$  by connected open sets $\{ U_j\}_{j=1}^J$ such that: {\rm (i)}  for every  $x\in\Og$ with   $\dist(x, \Ga)<\da$,  there exists an index $j$ such that $x\in U_j$,  and $\dist(x, \p U_j)>\da$;  {\rm (ii)} for each $j$ there exists a  Cartesian coordinate system $(\xi_{j,1},\dots, \xi_{j,d+1})$ in $U_j$ such that the set $\Og\cap U_j$ can be represented by the inequality $\xi_{j,d+1}\leq f_j(\xi_{j,1}, \ldots, \xi_{j, d})$, where $f_j:\RR^{d}\to \RR$ is a $C^2$-function satisfying
	$\max_{1\leq i, k\leq d}\|\p_i\p_k f_j\|_\infty\leq M.$
	
\end{defn}

\subsection{New moduli of smoothness on $C^2$ domains}\label{modulus:def}

Let $\Og\subset \RR^{d+1}$ be  the closure of an open,   connected, bounded  $C^2$-domain  in $\RR^{d+1}$ with boundary $\Ga=\p \Og$. In this section, we shall give the definition of  our    new moduli  of smoothness on  the   domain $\Og$.

The definition requires  a tangential  modulus of smoothness $\wt{\og}^r_G (f, t)_p$ on a domain $G\subset \Og$ of special type, which is described  below.  We start with an upward  $x_{d+1}$-domain $G$   given in~\eqref{2-7-special} with  $\xi=0$. 
Let 
$$\xi_j(x):=  {e_j + \p_j g(x) e_{d+1}}\in\RR^{d+1},\  \ j=1,\dots, d,\   \  x\in (-2b, 2b)^d.$$
Clearly, $\xi_j(x)$ is the tangent vector to the essential boundary $\p' G^\ast$ of $G^\ast$  at the point $(x, g(x))$ that is parallel to the $x_jx_{d+1}$-coordinate plane.  Given  a parameter    $A_0>1$,  we  set 
\begin{equation}\label{eqn:a0}
	G^t:= \bl\{\xi\in G:\  \ \dist(\xi, \p' G) \ge A_0 t^2\br\},\   \ 0\leq t\leq 1.
\end{equation} 
We then   define 
the    $r$-th order tangential   modulus of smoothness  $\wt{\og}^r_G (f, t)_p$, ($0<t\leq 1$)  of $f\in L^p(\Og)$   by 
\begin{align}\label{modulus-special}
	\wt{\og}^r_G (f, t)_p&:= \sup_{\sub{0<s\leq t\\
			1\leq j\leq d}} \Bl(\int_{G^t}
	\Bl[\f 1{(tb)^d}  \int_{I_x(tb)} |\tr_{s \xi_j(u)}^r (f, \Og,(x,y))|^p  \, du\Br] dxdy\Br)^{\f1p},
\end{align}
where $I_x(tb):=\{ u\in (-b, b)^d:\   \  \|u-x\|\leq tb\}$, and we use $L^\infty$-norm to replace  the $L^p$-norm  when $p=\infty$.
For $t>1$, we define   $ \wt{\og}^r_G (f, t)_p=\wt{\og}^r_G (f, 1)_p$.
Next, if  $G\subset \Og$ is a general domain of special type,  then we  define the tangential  moduli $\wt{\og}^r_G (f, t)_p$ 
through the identity,
$$ \wt{\og}^r_G (f, t)_p=\wt{\og}^r_{T(G)} (f\circ T^{-1}, t)_p,$$
where $T$ is   a  composition of a  translation and the reflections $\sa_j, \tau_j$ for some $1\leq j\leq d+1$ which takes $G$ to an upward $x_{d+1}$-domain  of  the form~\eqref{2-7-special} with  $\xi=0$.

To define the  new moduli of smoothness on $\Og$, we also  need  the following covering lemma, which was proved in~\cite{Da-Pr-Bernstein}*{Section~2}.


\begin{lem}[\cite{Da-Pr-Bernstein}*{Proposition~2.7}]\label{lem-2-1-18}
	There exists a finite cover of the boundary $\Gamma=\p \Og$ by domains of special type  $G_1, \dots, G_{m_0}\subset \Og$ that are attached to $\Ga$. In addition,  we may select the domains $G_j$   in such a way that    the  size of each  $G_j$  is as small as we wish, and  the  parameter  of  each  $G_j$  satisfies  the condition~\eqref{parameter-2-9}.
\end{lem}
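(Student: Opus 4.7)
The plan is to construct, at each point $\xi_0\in\Ga$, a domain of special type attached to $\Ga$ that contains $\xi_0$ on its essential boundary, and then extract a finite subfamily by compactness of $\Ga$. Fix $\xi_0\in\Ga$ and let $\n(\xi_0)\in\SS^d$ denote the outward unit normal at $\xi_0$, which is well defined because $\Og$ is $C^2$. There exists an index $k=k(\xi_0)\in\{1,\dots,d+1\}$ with $|\la \n(\xi_0),e_k\ra|\ge (d+1)^{-1/2}$; by a coordinate permutation and, if needed, the passage from an upward to a downward $x_k$-domain, I may reduce to the case $k=d+1$ and $\la \n(\xi_0),e_{d+1}\ra\ge (d+1)^{-1/2}$, aiming to produce an upward $x_{d+1}$-domain near $\xi_0$.

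The graph representation furnished by Definition~\ref{def:c2} is given in a coordinate frame that may be rotated relative to the standard one, so to obtain an axis-aligned graph I apply the implicit function theorem to the defining equation $\xi_{j,d+1}=f_j(\xi_{j,1},\dots,\xi_{j,d})$ at $\xi_0$. The $x_{d+1}$-partial derivative of the resulting defining function equals, up to sign, $\la \n(\xi_0),e_{d+1}\ra$, which is bounded away from $0$ by the choice of $k$. Hence on some neighborhood $V$ of $\xi_0$ in $\RR^{d+1}$, the boundary piece $\Ga\cap V$ is the graph $y=g(x)$ of a $C^2$-function $g$ on an open set $V_0\subset\RR^d$, and $\Og\cap V=\{(x,y)\in V:y\le g(x)\}$. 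The $C^2$-norm of $g$ admits a bound uniform in $\xi_0$, obtained from the uniform bound on the second-order derivatives of $f_j$ in Definition~\ref{def:c2} together with the lower bound on $|\la \n(\xi_0),e_{d+1}\ra|$ enforced by the choice of $k$.

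Now choose a base size $b=b(\xi_0)>0$ small enough that the candidate $G^\ast$ lies inside $V$ and no component of $\Ga$ outside $V$ meets $G^\ast$; the latter uses that $\dist(\xi_0,\Ga\setminus V)>0$. Position $\xi$ in~\eqref{2-7-special} so that $(0,g(0))=\xi_0$. Then every point of $\Ga\cap Q$ lies on the graph of $g$, which yields both $Q\cap\Og=G^\ast$ and $\overline{G^\ast}\cap\Ga=\overline{\p' G^\ast}$; thus $G=G(\xi_0)$ is attached to $\Ga$. The parameter $L$ can be forced to satisfy~\eqref{parameter-2-9} by the device in Remark~\ref{rem-2-1-0} (shrinking $b$ further if necessary). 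Finally, the family $\{\p' G(\xi_0):\xi_0\in\Ga\}$ consists of relatively open subsets of $\Ga$ covering $\Ga$, so compactness produces a finite subcover, and the corresponding $G_1,\dots,G_{m_0}$ are the desired domains.

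The main obstacle is the coupling between the rotated graph furnished by Definition~\ref{def:c2} and the required axis-aligned one, together with the uniform-in-$\xi_0$ verification that no other component of $\Ga$ leaks into the box $Q$. This is handled by a quantitative implicit function theorem with uniform bounds and a compactness argument on $\Ga\setminus V$; the compatible rescaling of Remark~\ref{rem-2-1-0} then lets one enforce~\eqref{parameter-2-9} while keeping the base size arbitrarily small.
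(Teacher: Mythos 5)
Your proposal is correct and follows the approach one expects: at each $\xi_0\in\Ga$ you pick a coordinate axis with $|\la\n(\xi_0),e_k\ra|\ge(d+1)^{-1/2}$, use the implicit function theorem to convert the rotated $C^2$-chart from Definition~\ref{def:c2} into an axis-aligned local graph $y=g(x)$, shrink the base $b$ until the associated $G^\ast$ sits inside the chart neighborhood so that $\Ga\cap Q$ is exactly a graph patch and hence $Q\cap\Og=G^\ast$ and $\overline{G^\ast}\cap\Ga=\overline{\p'G^\ast}$, enforce~\eqref{parameter-2-9} via Remark~\ref{rem-2-1-0}, and finish by compactness of $\Ga$ using that the $\p'G(\xi_0)$ are relatively open in $\Ga$. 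Note that the paper does not reprove this lemma; it cites \cite{Da-Pr-Bernstein}*{Proposition~2.7}, so there is no in-text proof to compare against, but your argument is the natural one and I would expect it to match. Two minor points of hygiene: (i) to ensure $\Ga\cap Q$ lies on the graph you need the closure $\overline{G^\ast}$ (not just $G^\ast$) to be contained in the chart neighborhood $V$, so phrase the smallness of $b$ accordingly; and (ii) the function $g$ in~\eqref{2-7-special} is required to lie in $C^2(\RR^d)$, whereas the implicit function theorem only produces it on a bounded patch, so one should tacitly invoke a $C^2$-extension of $g$ to all of $\RR^d$ (or restrict the definition to $[-2b,2b]^d$ as the paper effectively does). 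Neither affects the substance of the argument.
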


Now we are in a position to define the    new moduli of smoothness    on $\Og$. 
\begin{defn}\label{def:modulus}
	Given  $0<p\leq \infty$,  the $r$-th order  modulus of smoothness  of $f\in L^p(\Og)$ is defined by   
	\begin{equation}\label{eqn:defmodulus}
		\og_\Og^r(f, t)_p:=\og^r_{\Og, \vi} (f, t)_p +\og^r_{\Og, \tan} (f, t)_p,  
	\end{equation}
	where 
	$$
	\og_{\Og, \vi}^r (f,t)_p:=\max_{1\leq j\leq d+1} \og^r_{\Og, \vi} (f, t; e_j)_p\  \ \text{and}\  \ 
	\og_{\Og, \tan}^r (f, t)_p :=\sum_{j=1}^{m_0} \wt{\og}_{G_j}^r (f,t)_p.
	$$
	Here $G_1,\dots, G_{m_0}\subset 	\Og$ are  the domains of special type  from  Lemma~\ref{lem-2-1-18}.
\end{defn}

Note that  the second term on the right hand side of~\eqref{eqn:defmodulus} 
is defined via  finite differences   along certain  tangential directions of  the boundary $\Ga=\p \Og$. As a result, 
we call $\og^r_{\Og, \tan} (f, t)_p$  the tangential part of the $r$-th order modulus of smoothness on $\Og$. More specifically, in~\eqref{modulus-special}, which is the main component of the tangential modulus, for each point $(x,y)\in G_t$ where a finite difference is computed, we find the ``closest'' (measuring only along $(d+1)$-st coordinate) boundary point of the domain $(x,g(x))$ and take the direction of the tangent vector $\xi_j(x)$ from the $x_jx_{d+1}$-coordinate plane. Such directions ``follow'' (are ``parallel'' to) the boundary and allow to capture the required smoothness information from the function in the tangential directions as the $C^2$-smoothness of the boundary ensures that $(x,y)+rt\xi_j(x)\in\Omega$ when necessary. (Observe that we start with any point $(x,y)\in G_t$, so such a point is at least $A_0t^2$ away from the boundary allowing for sufficient space inside the domain for the other points of the finite difference.)  Towards the interior of the domain the job is done by $\og_{\Og, \vi}^r (f,t)_p$ which only uses the coordinate directions (there is no need to be perfectly orthogonal to the boundary) and is a rather straightforward generalization of the one-dimensional modulus for the segment. 

Comparing the above with the moduli in~\eqref{eqn:ivanov}, one can see that the point sets where the finite differences are computed in~\eqref{eqn:defmodulus} are from the local subdomains resembling $U(\xi,t)$ (see the discussion after~\eqref{eqn:ivanov} for the boundary case). However, only more specific directions of the finite differences are needed in~\eqref{eqn:defmodulus} and those directions are easily expressed through the decomposition into the domains of special type (they are $\xi_j(x)$ as well as the coordinate directions). 

The modulus from~\eqref{eqn:mod-totik} is similar in the interior (non-tangential) directions also computing the finite differences along segments. However, for the tangential directions, $\wh{\og}^r(f,\da)_\Og$ uses finite differences along arcs of circles which are inside the domain and parallel to one of the coordinate axes. It is not hard to observe that the ``size'' of such circular finite differences matches that for the linear finite differences for the other two moduli: roughly $t\approx \sin t$ in the tangential and $t^2\approx 1-\cos t$ in the interior directions near the boundary.

We conclude this subsection with the following remark.

\begin{rem}\label{rem-3-2} The moduli of smoothness defined in Definition~\ref{def:modulus} rely on the parameter $A_0$ in~\eqref{eqn:a0}.  To emphasize  the dependence on this parameter,   we  often    write 
	$$\wt{\og}^r_G (f, t; A_0)_p:=\wt{\og}^r_G (f, t)_p,\   \    \  {\og}^r_{\Og} (f, t; A_0)_p:={\og}^r_{\Og} (f, t)_p.$$	
	By the Jackson theorem (Theorem~\ref{Jackson-thm}) and the univariate Remez inequality (see \cite{MT2}), it can be easily shown that given any two parameters $A_1, A_2\ge 1$,
	$$ {\og}^r_{\Og} (f, t; A_1)_p\sim {\og}^r_{\Og} (f, t; A_2)_p,\   \  t>0,\  \  0<p\leq \infty.$$	
\end{rem}

%

\subsection{Summary of main results}\label{summary}

In this subsection, we shall summarize the main results of this paper. As always, we assume that $\Og$ is the closure of  an open,  connected and  bounded $C^2$-domain in $\RR^{d+1}$. 
For simplicity, we identify with $L^\infty(\Og)$ the space $C(\Og)$ of continuous functions on $\Omega$.

The  main aim of  this paper is to prove the   Jackson type inequality and  the corresponding  inverse inequality for  the modulus of smoothness $\og_\Og^r(f,t)_p$  defined in~\eqref{eqn:defmodulus}, as  stated   in the following two theorems.

\begin{thm}\label{Jackson-thm} If   $r,n\in\NN$, $0<p\leq \infty$ and $f\in L^p(\Og)$,   then  
	$$ E_n (f)_{L^p(\Og)} \leq C \og_\Og^r(f, n^{-1})_p,$$
	where the constant $C$ is independent of $f$ and $n$.  
	
\end{thm}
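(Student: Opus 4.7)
The plan is to prove the Jackson inequality by a localization argument, as signalled in the introduction: decompose $\Omega$ into small pieces, approximate $f$ on each piece by a low-degree polynomial using a Whitney-type estimate, and paste these local approximants together using a polynomial partition of unity of degree $O(n)$. The decomposition uses two scales: away from $\Gamma$, isotropic cubes of side $\sim 1/n$; near $\Gamma$, anisotropic ``bricks'' inside the special-type domains $G_j$ of Lemma~\ref{lem-2-1-18}, whose extent is $\sim 1/n$ in each tangential coordinate $x_1,\dots,x_d$ of the local parametrization $(x, g_j(x))$ but only $\sim 1/n^2$ in the normal/graph direction $x_{d+1}$. This anisotropy is precisely matched to the two ingredients of $\omega_\Omega^r$: the Ditzian--Totik part $\omega_{\Omega,\varphi}^r$ controls differences of step $\sim 1/n^2$ in the coordinate directions near the boundary (since $\varphi_\Omega(e_{d+1},\xi)\sim \sqrt{\dist(\xi,\Gamma)}$ there), while the tangential part $\omega_{\Omega,\tan}^r$ controls differences of step $\sim 1/n$ along the tangent vectors $\xi_j(x)=e_j+\partial_j g(x)e_{d+1}$ appearing inside $\wt\omega^r_{G_j}$.

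First I would construct the polynomial partition of unity. Following Dzjadyk--Konovalov, I would build, for each cell $Q$ in the decomposition, a polynomial $\psi_Q$ on $\mathbb{R}^{d+1}$ of degree $O(n)$ such that $\psi_Q$ is essentially concentrated on $Q$ with rapid algebraic decay away from $Q$ measured in the metric $\rho_\Omega$ of~\eqref{metric}, and $\sum_Q \psi_Q \equiv 1$ on $\Omega$. Locally the $\psi_Q$ are built as tensor products of univariate Jackson-type kernels in the graph coordinates of each chart $G_j$, then blended across charts by a standard polynomial identity-of-unity modification. The quantitative output I need is the quasi-norm bound
\begin{equation*}
\Bigl\| \sum_Q a_Q \psi_Q \Bigr\|_{L^p(\Omega)}^p \lesssim \sum_Q |a_Q|^p |Q|, \qquad 0 < p \leq \infty,
\end{equation*}
valid even in the quasi-Banach range $p<1$, which follows from the decay of $\psi_Q$ and a discrete Hardy-type estimate on the cell graph.

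Second, on each cell $Q$ I would invoke the Whitney-type estimate from~\cite{Da-Pr-Whitney} to produce a polynomial $P_Q \in \Pi^{d+1}_{r-1}$ with
\begin{equation*}
\|f - P_Q\|_{L^p(Q^*)} \lesssim \omega^r(f, Q^*; \mathcal{E}_Q)_p,
\end{equation*}
where $Q^*$ is a fixed modest dilation of $Q$ and $\mathcal{E}_Q$ is the set of coordinate directions for interior $Q$, enlarged by the tangent vectors $\{\xi_j(x)\}$ when $Q$ is a boundary brick inside some $G_j$. Using the definitions of $\omega_{\Omega,\varphi}^r$ (for the coordinate-direction step $\sim 1/n^2$ near the boundary and $\sim 1/n$ in the interior) and of $\wt\omega^r_{G_j}$ (for the tangential step $\sim 1/n$), each local directional modulus at the relevant scale is dominated pointwise by a piece of $\omega_\Omega^r(f,1/n)_p$ supported on $Q^*$. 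Setting $P := \sum_Q P_Q \psi_Q \in \Pi^{d+1}_{Cn}$, the partition-of-unity bound gives
\begin{equation*}
\|f - P\|_{L^p(\Omega)}^p = \Bigl\| \sum_Q (f - P_Q)\psi_Q \Bigr\|_{L^p(\Omega)}^p \lesssim \sum_Q \|f - P_Q\|_{L^p(Q^*)}^p,
\end{equation*}
and summation over the finitely overlapping $Q^*$ reproduces $\omega_\Omega^r(f,1/n)_p^p$.

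The main obstacle, in my view, is the construction and sharp analysis of the polynomial partition of unity $\{\psi_Q\}$: one needs $\deg \psi_Q \lesssim n$, a genuine identity $\sum_Q \psi_Q \equiv 1$ on $\Omega$, and algebraic-decay estimates in $\rho_\Omega$ of order high enough to absorb the number of cells in the quasi-Banach regime $p<1$. This is the substantial multivariate refinement of the Dzjadyk--Konovalov one-variable construction and must be carried out uniformly in the curved geometry of $\Gamma$ across overlapping charts. A secondary technical point is ensuring that the Whitney estimate on each anisotropic boundary brick is stated along exactly the directions $\xi_j(x)$ appearing in $\wt\omega^r_{G_j}$ (rather than merely the coordinate directions), which I would accomplish by applying~\cite{Da-Pr-Whitney} after an affine change of variables adapted to the brick and then using the invariance properties~\eqref{5-1-eq}--\eqref{2-3-18} to transfer the result back to $\Omega$.
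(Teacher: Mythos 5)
Your high-level plan --- a $\rho_\Omega$-localized polynomial partition of unity combined with a local Whitney estimate on each anisotropic cell --- is indeed the paper's plan, and you correctly identify the matching of cell scales to the two halves of $\omega^r_\Omega$ and the need to invoke the Whitney result along the tangent directions $\xi_j(x)$ via an affine change of variables. However, the step
\begin{equation*}
\Bigl\| \sum_Q (f-P_Q)\psi_Q \Bigr\|_{L^p(\Omega)}^p \lesssim \sum_Q \|f-P_Q\|_{L^p(Q^*)}^p
\end{equation*}
is a genuine gap, and it is where the main technical work of the paper lives. The quasi-norm bound you state is for constant coefficient sequences $(a_Q)$; it does not localize $(f-P_Q)\psi_Q$ to $Q^*$. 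Since $\psi_Q$ has only algebraic decay in $\rho_\Omega$, a point $\xi$ lying in a cell far from $Q$ still sees $(f(\xi)-P_Q(\xi))\psi_Q(\xi)$, and the Whitney bound says nothing about $f-P_Q$ away from $Q^*$, so this quantity can be arbitrarily large. In the paper's Theorem~\ref{THM-WT-OMEGA} the error on the cell $I_\beta$ is instead written as $(f-s_\beta)+\sum_\gamma(s_\beta-s_\gamma)q_\gamma$, converting the far-field contributions into differences of \emph{polynomials}; one then needs (i) a doubling-type comparison (Lemma~\ref{lem-5-2}) showing $\|R\|_{L^p(I_{\bfi,j})}\lesssim(1+\max\{\|\bfi-\kb\|_\infty,|j-l|\})^{\ell}\|R\|_{L^p(I_{\kb,l})}$ for low-degree polynomials, and (ii) a chain argument (the claim~\eqref{claim-8-23}) passing from $\gamma$ to $\beta$ through $O(\|\gamma-\beta\|^2)$ intermediate cells so that $\|s_\beta-s_\gamma\|_{L^p(I_\beta)}$ is controlled by local Whitney quantities up to a polynomial factor in $\|\gamma-\beta\|$, which is then absorbed by the decay of $q_\gamma$. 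None of this appears in your sketch, and without it the sum over distant cells does not close.

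A secondary, more organizational difference: the paper does not use a single global partition of unity on $\Omega$ to prove Jackson. It first performs a coarse gluing across charts by the reduction Lemma~\ref{REDUCTION} (fast-decreasing polynomials as in Lemma~\ref{lem-4-3}), which reduces to the Jackson estimate on a single special-type domain $G$; only inside a fixed $G$, where the anisotropic cell geometry is uniform, does it use the POU of Theorem~\ref{strips-0}. Your single global POU would also force you to compare local approximants from overlapping, differently oriented charts, which the two-stage structure avoids. (The global POU Theorem~\ref{polyPartition00} is proved in the paper, but as a separate tool, and its construction itself relies on the same reduction-style gluing.)
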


\begin{thm}\label{inverse-thm} If   $r, n\in\NN$,  $1\leq p\leq \infty$ and $f\in L^p(\Og)$,  then 
	$$\og_{\Og}^r (f, n^{-1})_{p} \leq\f{ C}{n^r}   \sum_{j=0}^n (j+1)^{r-1} E_j (f)_{L^p(\Og)},$$
	where the constant $C$ is independent of $f$ and $n$. 
\end{thm}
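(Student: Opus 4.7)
The plan is to follow the classical inverse-theorem template: combine a dyadic decomposition of best approximations with Bernstein-type modulus inequalities for polynomials. Let $Q_k\in\Pi^d_{2^k}$ denote best approximations to $f$ in $L^p(\Og)$, so that $\|Q_{k+1}-Q_k\|_p\le 2E_{2^k}(f)_p$. Choose $M\in\NN$ with $2^M\leq n<2^{M+1}$. Because $p\ge 1$, every finite-difference operator is bounded on $L^p$, so each of $\og^r_{\Og,\vi}$ and $\wt\og^r_{G_i}$ is subadditive; consequently
$$\og_\Og^r(f,1/n)_p \le \og_\Og^r(f-Q_{M},1/n)_p + \og_\Og^r(Q_0,1/n)_p + \sum_{k=0}^{M-1}\og_\Og^r(Q_{k+1}-Q_k,1/n)_p.$$
For the first term I use the elementary bound $\og_\Og^r(g,t)_p \le C\|g\|_p$ (immediate from Definition~\ref{def:modulus}, since each $\wt\tr^r$ and $\tr^r$ operator in the definition is a bounded linear combination of translations, and the $u$-average in the tangential part preserves this bound). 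It remains to prove a Bernstein-type inequality for the full modulus on polynomials: for every $P\in\Pi^d_m$ and $0<t\le 1$,
$$\og_\Og^r(P,t)_p \leq C(tm)^r\|P\|_p.$$

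For the coordinate-axis part $\og^r_{\Og,\vi}(P,t;e_j)_p$, I use the representation
$$\wt\tr^r_{h\vi_\Og(e_j,\xi)e_j}P(\xi) = \bl(h\vi_\Og(e_j,\xi)\br)^r\int_{[0,1]^r}\partial^r_{e_j}P\Bl(\xi+(u_1+\dots+u_r-\tf r2)h\vi_\Og(e_j,\xi)e_j\Br)\,du_1\cdots du_r,$$
followed by Jensen's inequality (possible because $p\ge 1$) and the standard change of variable exploiting the $C^2$-equivalence $\vi_\Og(e_j,\xi)\sim\vi_\Og(e_j,\xi+v\vi_\Og(e_j,\xi)e_j)$ for $|v|\le r/2$. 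This reduces the bound to the Ditzian--Totik-type Bernstein inequality $\|\vi_\Og(e_j,\cdot)^r\partial^r_{e_j}P\|_p \le Cm^r\|P\|_p$, which is classical on a $C^2$-domain.

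For the tangential part $\wt\og^r_{G_i}(P,t)_p$, I use the analogous representation
$$\tr^r_{s\xi_j(u)}P((x,y)) = s^r\int_{[0,1]^r}\partial^r_{\xi_j(u)}P\Bl((x,y)+s\tau\xi_j(u)\Br)\,du_1\cdots du_r,\qquad \tau:=u_1+\dots+u_r,$$
apply Jensen, and substitute into~\eqref{modulus-special}. The key step is then to swap the order of integration between $(x,y)\in G^t$ and $u\in I_x(tb)$, and to change the spatial variable to $(x',y')=(x,y)+s\tau\xi_j(u)$. For each fixed $(x',y')$ the set of contributing $u$ is contained in a ball of radius $tb$, whose measure cancels the normalization $(bt)^{-d}$. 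What is left is bounded by $\|\partial^r_{\xi_j(u)}P\|_{L^p(\Og)}^p$, to which I apply the tangential Bernstein inequality of~\cite{Da-Pr-Bernstein}, namely $\|\partial^r_{\xi_j(u)}P\|_{L^p(\Og)}\le Cm^r\|P\|_p$, uniformly in $u$ and $j$. I expect the main obstacle to be exactly this step: the direction $\xi_j(u)$ varies with $u$, so the reorganization of the $u$-average must invoke the $C^1$-smoothness of $\partial_j g$ (and, in the generic case when $\partial_j g$ is non-constant, a Vandermonde-type linear algebra argument over finitely many distinct slopes) to pass from the fixed-direction Bernstein estimate to a uniform control of the averaged modulus.

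Combining these two Bernstein bounds with $m=2^{k+1}$ and $\|Q_{k+1}-Q_k\|_p\le 2E_{2^k}(f)_p$ yields $\og_\Og^r(Q_{k+1}-Q_k,1/n)_p \le C(2^k/n)^r E_{2^k}(f)_p$. Summing the dyadic series and invoking the standard equivalence
$$\sum_{k=0}^{M}2^{kr}E_{2^k}(f)_p \le C\sum_{j=0}^n (j+1)^{r-1}E_j(f)_p$$
yields the asserted inequality.
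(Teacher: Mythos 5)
Your proposal takes a genuinely different (and more direct) route than the paper. The paper does not prove a Bernstein-type bound for the new modulus $\og_\Og^r$ directly; it first establishes the comparison $\og_\Og^r(f,t)_p\le C\,\tau_r(f,Ct)_{p,q}$ with Ivanov's average modulus (Theorem~\ref{thm-9-1-00}), and then proves the inverse theorem for $\tau_r$ (Theorem~\ref{thm-15-1}) via the same dyadic decomposition you use, but with the Bernstein estimate applied to $w_r(P,\cdot,n^{-1})_p$: an arbitrary increment $\eta-\xi$ with $\rho_\Og(\xi,\eta)\le 1/n$ is split into a tangential component $z_\zeta(u)$ and a $\vi_n$-weighted $\partial_{d+1}$ component, and the mixed estimate~\eqref{15-7} (from Theorem~\ref{cor-11-2}) is invoked. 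Working with $\tau_r$ lets the paper handle the normal and tangential increments through a single position-dependent splitting, and delivers Theorem~\ref{thm-15-1} as a by-product. Your approach avoids $\tau_r$ entirely but must therefore supply Bernstein bounds separately for the $\vi_\Og$-modulus and for the tangential modulus.

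Two of your Bernstein claims are stated imprecisely, and one of your anticipated obstacles is not where you think it is. First, the inequality $\|\vi_\Og(e_j,\cdot)^r\partial^r_{e_j}P\|_p\le Cm^r\|P\|_p$ is not "classical" for a general compact $C^2$-domain, but it does hold: by Fubini it reduces to the one-dimensional Ditzian--Totik Bernstein inequality on each maximal segment of $\Og$ parallel to $e_j$, using that $\vi_\Og(e_j,\cdot)$ in~\eqref{funct-vi} restricts exactly to the 1D Chebyshev weight on each such segment and that the 1D constant is invariant under affine rescaling; you should say this. Second, the claim "$\|\partial^r_{\xi_j(u)}P\|_{L^p(\Og)}\le Cm^r\|P\|_p$ uniformly in $u$" is false as written: for a fixed $u$ the direction $\xi_j(u)$ is tangent to $\Ga$ only at $(u,g(u))$, and a fixed-direction bound with rate $m^r$ (instead of $m^{2r}$) cannot hold over the whole of $G$. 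What saves the argument after your change of variables is precisely that $u$ is simultaneously constrained to $\|u-\xi_x\|\le tb\le C\vi_n(\xi)$; the right tool is then exactly Theorem~\ref{cor-11-2}, which bounds $\|\max_{u\in\Xi_{n,\mu,\ld}(\xi)}|\partial^r_{\xi_j(u)}P(\xi)|\|_{L^p(G)}$ with the maximum over the localized $u$-set inside the norm. This built-in maximum already provides the uniformity you need, so the Vandermonde-type argument you foresee is unnecessary. With these two corrections (and noting $\og_\Og^r(Q_1,t)_p\le Ct^r E_0(f)_p$, which requires replacing $Q_1$ by $Q_1-c$), your dyadic argument does yield the theorem.
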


As an example of application of the above, we obtain the following relation between approximation and smoothness classes (for further details in the classical settings, see, for example~\cite{De-Lo}*{Sect.~2.10, 7.9, 8.7}).

\begin{cor}	
	Suppose $1\le p\le\infty$, $0<q\le\infty$ and $0<\alpha<r$. For $f\in L^p(\Omega)$, we have\\
	(i) $E_n(f)_{L^p(\Omega)}=O(n^{-\alpha})$, $n=1,2,\dots$, if and only if $\og_{\Og}^r (f, t)_{p}=O(t^\alpha)$, $t>0$.\\
	(ii) $\ds \sum_{n=1}^\infty [n^\alpha E_n(f)_{L^p(\Omega)}]^qn^{-1}<\infty$ if and only if $\ds\int_0^\infty\frac{[t^{-\alpha}\og_{\Og}^r (f, t)_{p}]^q}{t}\,dt<\infty$.
\end{cor}
An implication of this corollary is that the corresponding smoothness classes (for example, the class of functions $f\in L^p(\Omega)$ satisfying $\og_{\Og}^r (f, t)_{p}=O(t^\alpha)$, $t>0$) do not depend on the particular choices of parameters $A_0$, $L$, $b$ and the decomposition into the domains of special type. 

Note that  the Jackson inequality stated in  Theorem~\ref{Jackson-thm} holds  for the full range of $0<p\leq \infty$. 

Now let us describe  two main ingredients in the proof of the direct Jackson theorem:   multivariate    Whitney type  inequalities  on certain  domains (not necessarily convex);  and localized  polynomial partitions  of   unity on $C^2$-domains. 

The  Whitney type inequality  gives an upper estimate  for the error of local polynomial   approximation of a function via the behavior of its finite differences.
A   useful  multivariate Whitney type inequality was   established by Dekel and Leviatan  \cite{De-Le}  on a convex body (compact convex set with non-empty interior)  $G\subset \R^{d+1}$  asserting  that 
for any  $0<p\leq \infty$ ,   $r\in\NN$, and  $f\in L^p(G)$,
\begin{equation}\label{7-1-18-00}E_{r-1}  (f)_{L^p(G)} \leq C(p,d,r) \og^r (f, G)_{p}.\end{equation}
It is remarkable that the constant $C(p,d,r)$ here  depends only on the three parameters $p,d,r$, but  is  independent of  the particular shape of the convex body $G$.  However, the  Whitney inequality~\eqref{7-1-18-00}  is NOT enough for our purpose because  our domain $\Og$ is not necessarily  convex, and the definition of our local moduli of smoothness (Definition~\ref{def-8-1})  uses local  finite differences along a finite number of  directions only.
In~\cite{Da-Pr-Whitney} we developed a new method to study the following Whitney type inequality  for directional  moduli of smoothness on a more general domain $G\subset\RR^{d+1}$ (not necessarily convex):
\begin{equation*}\label{7-8-18-00}E_{r-1} (f)_{L^p(G)} \leq C  \og^r(f, G; \mathcal{E})_p.\end{equation*}
The key idea of~\cite{Da-Pr-Whitney} is to deduce   the   Whitney type inequality on a  more   complicated   domain   from  the Whitney inequality  on  cubes or some other simpler domains. We state the result from~\cite{Da-Pr-Whitney} which is sufficient for the purposes of this work in Section~\ref{sec:tools}.

 A polynomial partition of unity is a useful tool  to patch together local polynomial  approximation and can be of independent interest.  For simplicity,  we say 
a set  $\Ld$  in a metric space $(X,\rho)$   is   $\va$-separated  for some $\va>0$  if $\rho(\og, \og') \ge \va$ for any two distinct points $\og, \og'\in\Ld$,  and we call  an  $\va$-separated  subset $\Ld$  of $X$   maximal  if 
$\inf_{\og\in\Ld} \rho(x, \og)<\va$ for any $x\in X$. 
In Section~\ref{sec:partition of unity}, relying on ideas by Dzjadyk and Konovalov~\cite{Dz-Ko}, we shall prove the following  localized  polynomial partitions of  unity on $C^2$-domains:
\begin{thm}\label{polyPartition00}Given  any parameter  $\ell>1$ and  positive integer $n$,   there exist a $\f 1n$-separated subset $\{\xi_j\}_{j=1}^{m_n}$ of $\Og$  with respect to the metric $\rho_\Og$ defined in~\eqref{metric}  and  a sequence  of   polynomials  $\{P_j\}_{j=1}^{m_n}\subset \Pi_{c_0 n}^{d+1}$   such that 	$\sum_{j=1}^{m_n} P_j (\xi) =1$  and 
	$  |P_j(\xi)| \leq C_1 (1+n\rho_\Og(\xi,\xi_j))^{-\ell}$, $j=1,2,\dots, m_n$ 
	for every $\xi\in \Og$,
	where the constants $c_0$ and $C_1$ depend only on $d$ and $\ell$.  	
\end{thm}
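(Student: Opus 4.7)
The plan is to build the partition by combining Dzjadyk--Konovalov-type univariate polynomial approximations of step functions, with boundary-adapted (Chebyshev-type) kernels in the normal direction, arranged so that their tensor products telescope to an exact partition of unity with the required decay in $\rho_\Og$.

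First I would choose a maximal $1/n$-separated subset $\{\xi_j\}_{j=1}^{m_n}$ of $(\Og,\rho_\Og)$ by a greedy algorithm; the doubling property of $\rho_\Og$-balls then gives $m_n\lesssim n^{d+1}$. Using the covering from \lemref{lem-2-1-18}, I decompose $\Og$ into a fixed ``interior'' region (where $\dist(\cdot,\Ga)\ge\alpha$) and finitely many boundary patches $G_1,\dots,G_{m_0}$, each a domain of special type locally described by $y\le g(x)$ with $g\in C^2$. Within each region the nodes can be placed on a locally rectangular grid whose spacing is $\sim 1/n$ in the tangential directions $x_1,\dots,x_d$ and Chebyshev-like in the normal direction, so that consecutive grid points lie at $\rho_\Og$-distance $\sim 1/n$.

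For each axis of the local frame I build a univariate Dzjadyk--Konovalov kernel: a polynomial $\chi_n^a(t)$ of degree $O(n)$ approximating the Heaviside $\1_{[a,\infty)}$ with $|\chi_n^a(t)-\1_{[a,\infty)}(t)|\lesssim(1+n|t-a|)^{-L}$ in tangential directions, and its boundary-adapted analog giving decay $(1+n|\sqrt{\dist(\xi,\Ga)}-\sqrt{\dist(\xi_j,\Ga)}|)^{-L}$ in the normal direction. For a node $\xi_j$ at a grid intersection, set $P_j$ equal to the product over the $d+1$ coordinates of consecutive differences $\chi_n^{a_{k-1}}-\chi_n^{a_k}$. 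Then $\sum_j P_j$ factors as a product of $d+1$ univariate telescoping sums, each identically $1$, so $\sum_j P_j\equiv 1$. The decay $(1+n\rho_\Og(\xi,\xi_j))^{-\ell}$ follows by multiplying the univariate decays, choosing $L=\ell+O(1)$ to absorb the Jacobian distortions from the $C^2$-parametrizations; the total degree is $(d+1)\cdot O(n)$, so $c_0$ depends only on $d$ and $\ell$ as required.

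The main obstacle is twofold. First, the normal coordinate $\sqrt{g(x)-y}$ is not itself polynomial in $(x,y)$, so the Chebyshev kernel in this coordinate must be replaced by a genuine polynomial analog working directly in the original variables, in the spirit of the univariate Ditzian--Totik theory; constructing such an analog while preserving both the telescoping identity and the pointwise decay in $\rho_\Og$ is delicate. Second, globalizing the local tensor-product partitions across overlapping patches requires a master decomposition of $\Og$ into cells, each wholly contained in a single patch, together with a compatible arrangement of nodes so that the telescoping identities match across transition zones and the Jacobian estimates remain uniform. I expect most of the technical work to live in these two steps; once they are in place, the partition and decay properties follow immediately from the telescoping argument.
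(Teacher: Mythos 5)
Your outline matches the paper's architecture within a single boundary patch (tensor products of telescoping Dzjadyk--Konovalov kernels, uniform grid in the tangential directions, Chebyshev-type in the normal direction), but both ``hard steps'' you flag are left unresolved, and your proposed fix for the second one would not work. For the first step (making the normal-coordinate kernel a genuine polynomial), the paper's key device is not a Ditzian--Totik substitution but a \emph{quadratic Taylor over-approximant of $g$ anchored at a grid node}: one sets $f_{\bfi}(x) = g(x_{\bfi}) + \nabla g(x_{\bfi})\cdot(x-x_{\bfi}) + \tfrac{M}{2}\|x-x_{\bfi}\|^2$, which is a polynomial majorant of $g$ on all of $[-b,b]^d$, and then uses $q^\ast_{\bfi,j}(x,y) = u_j(f_{\bfi}(x)-y)\,v_{\bfi}(x)$. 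Because $f_{\bfi}(x)-y$ is a polynomial in $(x,y)$, this is a genuine polynomial; because $|f_{\bfi}(x)-g(x)|\le M\|x-x_{\bfi}\|^2$ and the tangential kernel $v_{\bfi}$ localizes near $x_{\bfi}$, one can transfer the decay from $|\sqrt{f_{\bfi}(x)-y}-\sqrt{\al_j}|$ to $|\sqrt{g(x)-y}-\sqrt{\al_j}|$ (the paper's estimate \eqref{claim-4-5}), and the telescoping $\sum_j u_j(f_{\bfi}(x)-y)=1$ is immediate. Without such a polynomial surrogate for $g(x)-y$, the telescoping sum you propose is simply not a polynomial.

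For the globalization step, your plan---``a master decomposition of $\Og$ into cells, each wholly contained in a single patch, together with a compatible arrangement of nodes so that the telescoping identities match across transition zones''---cannot succeed. Polynomials of degree $O(n)$ are not locally supported, so $P_j$'s cannot be confined to cells; worse, adjacent special-type patches use \emph{different local frames} (the ``normal'' axis may be $x_j$ in one patch and $x_k$ in another), so there is no single telescoping structure that could be made to agree across a transition zone. The paper takes an entirely different route: given partitions $\{u_i\}_{i\le n_0}$ on $H_j=\bigcup_{s\le j}\Og_s$ and $\{u_i\}_{i>n_0}$ on the next patch $G$, it forms $w_i=u_i(1-R_n)$ for $i\le n_0$ and $w_i=u_iR_n$ for $i>n_0$, where $R_n$ is a fast-decreasing polynomial (Lemma~\ref{lem-4-3}) that is $\approx 1$ on $\Og_{j+1}$ and $\approx 0$ off $G$. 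Then $\sum_i w_i=(1-R_n)+R_n=1$ holds identically with no boundary matching needed, and the localization of $w_i$ away from its native patch is recovered by the Remez-type bound $\|P\|_{L^\infty(\lambda B)}\le (5\lambda)^n\|P\|_{L^\infty(B)}$ (Lemma~\ref{lem-4-1}) applied on the fixed-size overlap ball guaranteed by Lemma~\ref{LEM-4-2-18-0}. These two ideas---the quadratic polynomial majorant and the multiplicative $R_n$-transition controlled by Remez---are the essential missing content in your proposal.
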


A crucial role in the proof of our inverse theorem  (i.e., Theorem~\ref{inverse-thm}) is played by a new Bernstein inequality associated with the tangential derivatives on the boundary $\Ga$, which we recently established in~\cite{Da-Pr-Bernstein}. The corresponding definitions and statements required in the context of the current work can be found in Section~\ref{sec:tools}. We only mention here that this new tangential Bernstein inequality was used in~\cite{Da-Pr-Bernstein} to establish Marcinkiewicz-Zygmund type inequalities and positive cubature formulas on $C^2$ domains.

We also compare the  moduli of smoothness $\og_\Og^r(f,t)_p$  with 
the average $(p,q)$-moduli of smoothness  $ \tau_r (f, t)_{p,q}$ introduced by Ivanov~\cite{Iv}.  
It turns out  that the  moduli  $\og_\Og^r(f,t)_p$ 
can be controlled above by the average moduli  $ \tau_r (f, t)_{p,q}$, as shown in the following theorem that will be proved in Section~\ref{ch:IvanovModuli}:

\begin{thm}\label{thm-9-1-00}
	For any $0<q\leq p\leq \infty$ and $f\in L^p(\Og)$, 
	$$\og_\Og^r(f, t; A_0)_p \leq C \tau_r (f, c_0 t)_{p,q},\   \   0<t\leq 1,$$
	where the constant $C$ is independent of $f$ and $t$. 
\end{thm}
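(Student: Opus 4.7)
The plan is to split $\og^r_\Og(f,t;A_0)_p$ into the Ditzian--Totik part $\og^r_{\Og,\vi}(f,t)_p$ and the tangential part $\og^r_{\Og,\tan}(f,t)_p=\sum_{j=1}^{m_0}\wt\og^r_{G_j}(f,t)_p$, and to bound each by $C\tau_r(f,c_0 t)_{p,q}$ with a constant depending on $A_0$, $r$, $q$, and $\Og$.

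The first ingredient will be a common geometric estimate: whenever $\z$ is a step of Ditzian--Totik type $\z=h\vi_\Og(e_j,\xi)e_j$ with $0<h\leq t$, or of tangential type $\z=s\xi_j(u)$ with $0<s\leq t$, $\xi=(x,y)\in G^t$, $u\in I_x(tb)$, the endpoint $\eta=\xi+r\z$ lies in $U(\xi,c_0 t)$ for a suitable $c_0=c_0(A_0,r)$. The Ditzian--Totik case is routine from the bound $\vi_\Og(e,\xi)\leq C\min\{1,\sqrt{\dist(\xi,\Ga)}\}$ and the elementary inequality $|\sqrt a-\sqrt b|\leq\sqrt{|a-b|}$. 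The tangential case rests on the crucial observation that $\xi_j(u)=e_j+\p_j g(u)e_{d+1}$ lies in the tangent plane to $\Ga$ at $(x,g(x))$ modulo an error $O(\|u-x\|)$, so along $\xi_j(u)$ the function $\dist(\cdot,\Ga)$ changes only to second order: $|\dist(\eta,\Ga)-\dist(\xi,\Ga)|\leq Crs(rs+\|u-x\|)\leq Crt^2$. Combined with $\dist(\xi,\Ga)\geq A_0 t^2$ this yields $|\sqrt{\dist(\eta,\Ga)}-\sqrt{\dist(\xi,\Ga)}|\leq Crt/\sqrt{A_0}\leq c_0 t$ provided $A_0$ is chosen sufficiently large, which is permitted by Remark~\ref{rem-3-2}.

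The second step is to rewrite each piece of $\og^r_\Og$ as an averaged quantity comparable to $\tau_r(f,c_0 t)_{p,p}$. For the tangential modulus the inner average over $u\in I_x(tb)$ is already present, but the map $u\mapsto\eta=\xi+rs\xi_j(u)$ is of low rank (its image is only two-dimensional), so I will enlarge the parametrisation by also smearing $\xi$ over a small transverse slice inside $\Og$ of appropriate width. Fubini and an explicit computation of the associated Jacobian, using the $C^2$-smoothness of $g$ and the uniform positivity of $\dist(\xi,\Ga)$ on $G^t$, then produce the bound $\wt\og^r_{G_j}(f,t)_p^p\leq C\tau_r(f,c_0 t)_{p,p}^p$. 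For the Ditzian--Totik part an analogous smearing of the starting point $\xi$ in an $e_j$-aligned box of size $\sim t\sqrt{\dist(\xi,\Ga)}$ along $e_j$ and $\sim\dist(\xi,\Ga)$ transversely (so the resulting box has volume comparable to $|U(\xi,c_0 t)|$) delivers the same estimate for $\og^r_{\Og,\vi}$.

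The main obstacle is the final step of passing from $\tau_r(f,c_0 t)_{p,p}$ down to $\tau_r(f,c_0 t)_{p,q}$ for arbitrary $0<q\leq p$: on a probability space the $L^q$-average decreases with $q$, and a pointwise lower bound of $w_r(f,\xi,c_0 t)_q$ by a single directional difference would lose a factor of the relative measure to the power $1/q$, which is unbounded as $q\to 0^+$. I expect this to be resolved by establishing an equivalence $\tau_r(f,\delta)_{p,q_1}\sim\tau_r(f,\delta')_{p,q_2}$ for any $0<q_1,q_2\leq p$ (modulo a harmless dilation $\delta\mapsto\delta'$). Such an equivalence is a standard feature of Ivanov's averaged moduli in one variable and should carry over here by combining the Jackson inequality of Theorem~\ref{Jackson-thm}, the doubling property of the balls $U(\xi,\delta)$ in the metric $\rho_\Og$ (which follows from the $C^2$-regularity of $\Ga$), and the Lebesgue differentiation theorem adapted to this metric. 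Modulo this equivalence, the $q=p$ bound established above upgrades to the desired bound for every $q\leq p$.
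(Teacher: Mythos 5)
The proposal has a genuine gap, and you have correctly located it yourself but not closed it. The entire difficulty of the theorem is the final step you label as ``the main obstacle'': passing from the $(p,p)$-average to the $(p,q)$-average for small $q$. You ``expect'' this to follow from an equivalence $\tau_r(f,\delta)_{p,q_1}\sim\tau_r(f,\delta')_{p,q_2}$, but none of the tools you list produces it. Jackson for $\og^r_\Og$ (Theorem~\ref{Jackson-thm}) only links $E_n(f)_p$ to the new modulus, not to $\tau_r$ at a given $q$; the Jackson inequality for $\tau_r(\cdot)_{p,q}$ is Corollary~\ref{cor-4-4-0}, which is itself a \emph{consequence} of the theorem you are proving, so an argument along these lines would be circular. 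Doubling and Lebesgue differentiation give no reverse-H\"older type control of the $L^p$-average of the difference kernel $\eta\mapsto\tr^r_{(\eta-\xi)/r}(f,\Og,\xi)$ by its $L^q$-average, because that kernel is not a polynomial or otherwise regular in $\eta$. The equivalence you want is in fact true, but its proof essentially \emph{is} the missing ingredient.

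The paper's proof never passes through $\tau_r(\cdot)_{p,p}$ at all. It relies on Lemma~\ref{lem-9-1:Dec}, whose engine is the combinatorial identity~\eqref{9-2-0} (from~\cite[Lemma~7.3]{Di-Pr08}): for a convex set $E^\xi\ni\xi,\xi+rh$ and each $\eta\in E^\xi$, the single difference $\tr^r_hf(\xi)$ equals a signed sum of $2r$ differences $\tr^rf[u,v]$ whose endpoints interpolate between $\xi$, $\xi+rh$ and $\eta$. Because this identity holds \emph{pointwise in} $\eta$, one takes an $L^q(E^\xi,\,d\eta/|E^\xi|)$-quasinorm of both sides — the left side being constant in $\eta$ — and this already produces the $q$-th power average, with a constant depending on $q$ only through the quasi-triangle inequality. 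No reverse H\"older, no reduction to $q=p$. Your geometric observations in step 1 (that the relevant endpoints fall in $U(\xi,c_0t)$, with second-order control of $\dist(\cdot,\Ga)$ along tangential steps, and the freedom to enlarge $A_0$ via Remark~\ref{rem-3-2}) are correct and reflect what the paper does via Proposition~\ref{metric-lem} and Remark~\ref{rem-6-2}. But step 2, as stated, is also problematic even for $q=p$: the map $(u,s)\mapsto\eta=\xi+rs\xi_j(u)$ has rank at most $2$ (its image lies in the affine plane $\xi+\operatorname{span}\{e_j,e_{d+1}\}$), so the ``Jacobian'' of the proposed change of variables degenerates; the smearing of the starting point that you gesture at cannot fix this pointwise, because a single finite difference is not controlled pointwise by an average of other differences. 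It is precisely identity~\eqref{9-2-0} that converts the single difference into an average of differences, and it is what you need in both step 2 and step 3.
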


As an immediate consequence of Theorem~\ref{thm-9-1-00} and Theorem~\ref{Jackson-thm}, we  obtain a  Jackson type  inequality for the average moduli of smoothness  for  any dimension   $d\ge 1$ and  the full range of $0<q\leq p\leq \infty$.
\begin{cor}  \label{cor-4-4-0}	
	If $f\in L^p(\Og)$, $0< q\leq p \leq \infty$ and $r\in\NN$, then
	$$ E_n (f)_p \leq C_{r, \Og} \tau_r \Bl(f, \f {c_0} n\Br)_{p,q}.$$
\end{cor}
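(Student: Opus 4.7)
The corollary is billed by the authors as an immediate consequence of Theorem~\ref{Jackson-thm} and Theorem~\ref{thm-9-1-00}, and indeed nothing more than a composition of those two inequalities is required. My plan is therefore to simply chain them together.

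First I would apply the Jackson-type inequality of Theorem~\ref{Jackson-thm} with the parameter $t=1/n$: for every $f\in L^p(\Og)$ and every positive integer $n$, this gives
\[
E_n(f)_{L^p(\Og)} \le C_1 \,\og_\Og^r\Bl(f, \tfrac{1}{n}; A_0\Br)_p,
\]
where $C_1$ depends only on $r$ and $\Og$ (the parameter $A_0$ being any fixed admissible constant from~\eqref{eqn:a0}). Next I would feed the resulting new modulus of smoothness into the comparison estimate of Theorem~\ref{thm-9-1-00}, which controls $\og_\Og^r(f,t;A_0)_p$ from above by the Ivanov average modulus $\tau_r(f, c_0 t)_{p,q}$ under the hypothesis $0<q\le p\le\infty$. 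Specializing to $t=1/n$ yields
\[
\og_\Og^r\Bl(f, \tfrac{1}{n}; A_0\Br)_p \le C_2 \,\tau_r\Bl(f, \tfrac{c_0}{n}\Br)_{p,q}.
\]

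Combining the two displayed inequalities produces the desired bound
\[
E_n(f)_{L^p(\Og)} \le C_1 C_2\, \tau_r\Bl(f, \tfrac{c_0}{n}\Br)_{p,q},
\]
with the constant $C_1 C_2$ depending only on $r$ and $\Og$ (through the finitely many domains of special type $G_1,\dots,G_{m_0}$ fixed once and for all in Definition~\ref{def:modulus}) and on the parameter $c_0$ inherited from Theorem~\ref{thm-9-1-00}.

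There is no genuine obstacle here: all of the real work has been packaged into the two cited theorems. The only point worth double-checking is that the range of parameters matches — Theorem~\ref{Jackson-thm} covers the full range $0<p\le\infty$, and Theorem~\ref{thm-9-1-00} is stated for $0<q\le p\le\infty$, so the combined statement is valid precisely for $0<q\le p\le\infty$, which is exactly the range claimed in the corollary. Accordingly, the proof is a one-line application of the two previously established results.
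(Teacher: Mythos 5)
Your proposal is correct and coincides exactly with the paper's own (implicit) proof: the authors explicitly introduce Corollary~\ref{cor-4-4-0} as ``an immediate consequence of Theorem~\ref{thm-9-1-00} and Theorem~\ref{Jackson-thm},'' and your chaining of the two inequalities with $t=1/n$ is precisely that deduction, including the correct check that the parameter ranges $0<p\le\infty$ and $0<q\le p\le\infty$ combine to $0<q\le p\le\infty$.
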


As mentioned  in the introduction,  Corollary~\eqref{cor-4-4-0} for $1\leq p\leq \infty$ and $d=1$  was  announced  in  \cite{Iv}  for   a piecewise $C^2$-domain $\Og\subset \RR^2$.  

We shall   prove the corresponding  inverse theorem for the average moduli of smoothness $\tau_r(f, t)_{p,q}$ as well:

\begin{thm} \label{thm-15-1-00}If $r\in\NN$, $1\leq q\leq  p\leq \infty$ and $f\in L^p(\Og)$, then 
	$$\tau_r (f, n^{-1})_{p,q} \leq C_{r} n^{-r} \sum_{s=0}^n (s+1)^{r-1} E_s (f)_p.$$
\end{thm}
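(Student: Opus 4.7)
The plan is to follow the classical two-step scheme for inverse theorems. First, establish a Bernstein-type inequality for $\tau_r$ applied to polynomials. Then apply it to a dyadic telescoping decomposition of $f$ into near-best polynomial approximants. The hypothesis $q\ge 1$ is used throughout to guarantee the subadditivity of $\tau_r(\cdot,\da)_{p,q}$ and to enable a maximal-inequality argument.

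\emph{Step 1: Bernstein-type inequality for $\tau_r$.} The central estimate to establish is
\begin{equation}\label{plan:bern}
\tau_r(P,1/n)_{p,q}\le C_r\Bl(\f{m}{n}\Br)^r\|P\|_{L^p(\Og)},\qquad P\in\Pi^{d+1}_m,\ \ n\ge m.
\end{equation}
For $\xi\in\Og$ and $\eta\in U(\xi,1/n)$ with $[\xi,\eta]\subset\Og$, set $v=(\eta-\xi)/\|\eta-\xi\|$ and $h=(\eta-\xi)/r$. Combining the integral representation
$$\tr^r_h P(\xi)=\int_{[0,1]^r}(h\cdot\nabla)^r P\bl(\xi+(t_1+\cdots+t_r)h\br)\,dt_1\cdots dt_r$$
with the direction-uniform $L^p$-Bernstein inequality on $\Og$,
$$\|\vi_\Og(v,\cdot)^r D_v^r P\|_{L^p(\Og)}\le C_r\,m^r\|P\|_{L^p(\Og)},$$
which can be derived from the tangential Bernstein inequality of \cite{Da-Pr-Bernstein} combined with a Markov-type bound on the normal component, one obtains \eqref{plan:bern} after integrating first in $\eta$ over $U(\xi,1/n)$ and then in $\xi$ over $\Og$. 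The essential geometric input is that for an admissible chord with $\rho_\Og(\xi,\eta)\le 1/n$, one has $\vi_\Og(v,\zeta)\gtrsim\|\eta-\xi\|$ on the interior of $[\xi,\eta]$, producing the scaling factor $\|h\|\cdot m/\vi_\Og(v,\zeta)\lesssim m/n$.

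\emph{Step 2: Telescoping.} For each $k\ge 0$ select $P_k\in\Pi^{d+1}_{2^k}$ with $\|f-P_k\|_p\le 2E_{2^k}(f)_p$ and set $K=\lfloor\log_2 n\rfloor$. Writing
$$f=(f-P_K)+P_0+\sum_{k=1}^K(P_k-P_{k-1})$$
and using the subadditivity of $\tau_r(\cdot,1/n)_{p,q}$ together with the trivial bound $\tau_r(g,\da)_{p,q}\le C\|g\|_p$ (which relies on a Hardy--Littlewood maximal inequality argument and thus needs $p\ge q\ge 1$), we obtain
$$\tau_r(f,1/n)_{p,q}\le C E_n(f)_p+\sum_{k=1}^K\tau_r(P_k-P_{k-1},1/n)_{p,q},$$
since $\tau_r(P_0,1/n)_{p,q}=0$ as $P_0$ is a constant. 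Applying \eqref{plan:bern} with $m=2^k\le n$ and using $\|P_k-P_{k-1}\|_p\le 4E_{2^{k-1}}(f)_p$ gives
$$\tau_r(P_k-P_{k-1},1/n)_{p,q}\le C\bl(2^k/n\br)^r E_{2^{k-1}}(f)_p.$$
A standard Abel-type rearrangement on non-increasing sequences converts $\sum_{k=1}^K 2^{kr}E_{2^{k-1}}(f)_p$ into $C\sum_{s=0}^n(s+1)^{r-1}E_s(f)_p$, and the leftover $E_n(f)_p$ is absorbed into the same sum by monotonicity of $\{E_s(f)_p\}$, completing the proof.

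\emph{Main obstacle.} The heart of the argument is Step 1: a direction-uniform $L^p$-Bernstein inequality on $C^2$-domains, valid as $v$ ranges from purely tangential to purely normal with respect to $\Gamma$. The tangential Bernstein inequality of \cite{Da-Pr-Bernstein} controls the tangential component, whereas normal components are handled by the classical Markov inequality; the delicate point is weaving these into a single direction-uniform estimate that matches the non-Euclidean geometry of the averaging set $U(\xi,1/n)$ induced by $\rho_\Og$.
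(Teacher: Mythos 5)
Your high-level scheme (prove a Bernstein-type estimate for $\tau_r$ applied to polynomials, then telescope over near-best dyadic approximants and rearrange) is exactly the scheme the paper uses, and your Step 2 matches the paper's telescoping verbatim. However, the entire weight of the argument is carried by Step 1, and that step is not actually established in your proposal; moreover, the route you sketch for it contains concrete inaccuracies.

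First, the ``direction-uniform $L^p$-Bernstein inequality'' $\|\vi_\Og(v,\cdot)^r D_v^r P\|_{L^p(\Og)}\le C m^r\|P\|_{L^p(\Og)}$ with $v$ ranging over all of $\SS^d$ is not something the paper (or the cited Bernstein paper) supplies, and it is not a small modification of the tangential Bernstein result. The tangential inequality of \cite{Da-Pr-Bernstein} (Theorem~\ref{cor-11-2}) is formulated in a boundary-adapted coordinate frame: it controls $\mathcal{D}_{\tan,u}^{\pmb\alpha}\partial_{d+1}^{i+j}f$ with a weight $\vi_n(\xi)^i=\bl(\sqrt{\da(\xi)}+1/n\br)^i$ attached only to the normal derivatives, and the tangential directions are the specific vectors $\xi_j(u)$. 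Converting this into a single clean estimate that is uniform over $v\in\SS^d$, weighted by the unregularized $\vi_\Og(v,\cdot)$, is a genuinely different statement; the paper never proves such an inequality and does not need to. Instead, in the proof of Lemma~\ref{lem-15-1} it decomposes $\Og$ into boundary-attached special domains (Lemma~\ref{lem-2-1-18}) plus an interior piece, partitions each special domain into cells $I_{\bfi,j}$, and for $\xi\in I_{\bfi,j}$ and $\eta\in I_{\bfi,j}^\ast$ writes $\eta-\xi=\tf1n\bl(\zeta,\,\p_\zeta g(u)+s\vi_n(\xi)\br)$, i.e.\ decomposes $\p_{\eta-\xi}$ explicitly into $\tf1n\bl(\p_{z_\zeta(u)}+s\vi_n(\xi)\p_{d+1}\br)$, so that the $r$-th directional derivative lands exactly on the mixed tangential/normal derivatives the Bernstein theorem controls. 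Your proposal recognizes that ``weaving these into a single direction-uniform estimate'' is the delicate point, but it does not carry it out, and as stated the target inequality you are aiming for is stronger than what is needed or available.

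Second, the claimed geometric input ``$\vi_\Og(v,\zeta)\gtrsim\|\eta-\xi\|$ on the interior of $[\xi,\eta]$'' is false near the boundary. If $\xi\in\Ga$ and $v$ is close to the inward normal, then $\vi_\Og(v,\zeta)\to 0$ as $\zeta\to\xi$, so no such lower bound holds uniformly on the segment; more generally, whenever $\dist(\xi,\Ga)\lesssim n^{-2}$ the quantity $\vi_\Og(v,\zeta)$ can be far smaller than $\|\eta-\xi\|$ near the boundary endpoint. This is precisely why the paper works throughout with the regularized $\vi_n(\xi)=\sqrt{\da(\xi)}+1/n$, which is bounded below by $1/n$, rather than with $\vi_\Og$. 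The same regularization also explains where the factor $(k/n)^r$ (with $k=\deg P$) comes from, via the display $\|\vi_n^{l_2}M_{\mu,n}^{l_1,l_2}P\|_{L^p(G^\ast)}\le Ck^{l_1+l_2}\|P\|_{L^p(\Og)}$, rather than from a pointwise lower bound on $\vi_\Og$ along the chord.

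In short: the skeleton of your argument is correct and coincides with the paper's, but Step 1 --- the one that actually requires new ideas --- is only gestured at, and the two specific claims you lean on (a direction-uniform $\vi_\Og$-weighted Bernstein inequality, and the lower bound $\vi_\Og(v,\zeta)\gtrsim\|\eta-\xi\|$ along the chord) are respectively unavailable in that form and false near $\Ga$. The paper replaces both by working locally in cells $I_{\bfi,j}$ of a boundary-attached special domain, with an explicit tangential/normal splitting of $\eta-\xi$ and the regularized $\vi_n$.
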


In the case when $\Og\subset \RR^2$ (i.e., $d=1$),  Theorem~~\ref{thm-15-1-00}  was announced   without detailed proofs  in  \cite{Iv}  for  the case   $p=\infty$  and  the case when  $1\leq p\leq \infty$ and   $\Og$ is a parallelogram or a disk.

The rest of the paper is organized as follows. Section~\ref{sec:tools} is devoted to the statements of the required Bernstein and Whitney-type inequalities obtained in~\cite{Da-Pr-Bernstein} and~\cite{Da-Pr-Whitney}.  
Sections~\ref{sec:partition of unity}--\ref{ch:direct} contain the proof of the Jackson theorem (Theorem~\ref{Jackson-thm}). In Section~\ref{ch:IvanovModuli}, we compare our moduli of smoothness $\og_\Og^r(f,t)_p$ with the average moduli of smoothness $\tau_r(f,t)_{p,q}$. The main result of Section~\ref{ch:IvanovModuli} is stated in   Theorem~\ref{thm-9-1-00}. Finally, in Section~\ref{sec:15}, we prove the inverse theorems as stated in Theorem~\ref{inverse-thm} and  Theorem~\ref{thm-15-1-00}. 

\section{Tools}\label{sec:tools}

In this section we collect several necessary ingredients which we established recently in~\cite{Da-Pr-Bernstein} and~\cite{Da-Pr-Whitney}. A useful domain covering result Lemma~\ref{lem-2-1-18} has already been stated.

\subsection{Equivalence of different metrics}

Let $\rho_\Og:\Og\times \Og\to [0,\infty)$  be the  metric on $\Og$ given in \eqref{metric}. As in~\cite{Da-Pr-Bernstein}, we introduce another metric $\wh \rho_G$ on a domain $G$  of special type, which is equivalent to  the restriction of  $\rho_{\Og}$ on $G$ if $G\subset \Og$ is attached to  $\Ga:=\p\Og$. Let $G\subset \R^{d+1}$ be an $x_d$-upward domain with base size $b\in (0,1)$ and parameter $L>0$:  
\begin{align*}
	G:=\varsigma+\{ (x, y):\  \  x\in (-b,b)^{d},\   \  g(x)-Lb<  y\leq  g(x)\},\   \ \varsigma\in\RR^{d+1},
\end{align*}
where    $g$ is a $C^2$-function on $\RR^{d}$. Then 	$$G^\ast=\varsigma+\Bl\{ (x, y):\  \  x\in (-2b,2b)^{d},\   \  \min_{u\in [-2b, 2b]^{d}} g(u)-4Lb < y\leq  g(x)\Br\}$$   
and we define a metric $\wh\rho_G: \overline{G^\ast}\times \overline{G^\ast} \to (0,\infty)$ by 
\begin{equation}\label{rhog}
	\wh{\rho}_G(\varsigma+\xi, \varsigma+\eta):=\max\Bl\{\|\xi_x-\eta_x\|,
	\Bl|\sqrt{g(\xi_x)-\xi_y}-\sqrt{g(\eta_x)-\eta_y}\Br|\Br\}
\end{equation}
for all $ \xi=(\xi_x, \xi_y),\eta=(\eta_x,\eta_y)\in \overline{G^\ast}-\varsigma$.  
We can define the metric $\wh{\rho}_G$   on a more general  $x_j$-domain  $G\subset \RR^{d+1}$ (upward or downward) in a similar way.

We will use the following equivalence of the metric $\wh\rho_G$ and the restriction of $\rho_\Og$ on $G$ when $G\subset\Og$ is attached to $\Ga=\p \Og$. 

\begin{prop}[\cite{Da-Pr-Bernstein}*{Proposition~3.1}]\label{metric-lem} If   $G \subset \Og$  is   a domain of special type attached to $\Ga$, 
	then 
	\begin{equation*}\label{6-1-metric}\wh{\rho}_G(\xi,\eta)\sim \rho_{\Og} (\xi,\eta),\    \    \  \xi, \eta\in G\end{equation*}
	with the constants of equivalence depending only on $G$ and $\Og$.
\end{prop}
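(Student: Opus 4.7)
The proof rests on a key geometric estimate combined with elementary manipulations of square roots.

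\emph{Step 1 (Key lemma).} The central fact I would establish is that, for every $\xi \in G$,
\[
\dist(\xi,\Ga) \sim h(\xi) := g(\xi_x) - \xi_y,
\]
with constants depending only on $G$ and $\Og$. The upper bound is trivial since $(\xi_x, g(\xi_x)) \in \overline{\p' G^*} \subset \Ga$. For the lower bound I would exploit the attached property $\overline{G}^*\cap\Ga=\overline{\p' G^*}$ and $G^*=Q\cap\Og$: it forces $\dist(\xi, \Ga\setminus \overline{\p' G^*}) \gtrsim b$ for $\xi \in G$, because $G$ sits strictly inside $G^*$ via the $b$-vs-$2b$ scaling of Definition~\ref{Def-2-1}. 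When the nearest point on $\Ga$ to $\xi$ lies on $\overline{\p' G^*}$, that point has the form $(x_*, g(x_*))$ and the Lipschitz estimate $|g(\xi_x) - g(x_*)| \leq M \|\xi_x - x_*\|$ yields $h(\xi)\leq (M+1)\dist(\xi,\Ga)$. In the remaining case $h(\xi)\lesssim b\lesssim \dist(\xi,\Ga)$ absorbs the bound into the constant.

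\emph{Step 2 (Lipschitz control of the ratio).} Step~1 gives that $\theta(\xi) := h(\xi)/\dist(\xi,\Ga)$ is bounded on $G$. I would further argue that $\theta$ is Lipschitz on $G$, using the tubular-neighborhood parametrization of the $C^2$-graph $\overline{\p' G^*}$: near the graph $\theta(\xi) \to \sqrt{1 + \|\nabla g(\pi_x(\xi))\|^2}$, which is $C^1$ in $\xi$ because $g$ is $C^2$; far from the graph both $h(\xi)$ and $\dist(\xi,\Ga)$ are bounded below by a positive constant (possible by choosing $b$ sufficiently small as in Remark~\ref{rem-2-1-0}), so the ratio is smooth there as well.

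\emph{Step 3 (Assembly).} To obtain $\wh\rho_G(\xi,\eta) \leq C \rho_\Og(\xi,\eta)$, the bound $\|\xi_x - \eta_x\| \leq \|\xi - \eta\| \leq \rho_\Og$ handles one term in the max. For the other, I would use the identity
\[
\sqrt{h(\xi)} - \sqrt{h(\eta)} = \sqrt{\theta(\xi)} \bigl(\sqrt{\dist(\xi,\Ga)} - \sqrt{\dist(\eta,\Ga)}\bigr) + \bigl(\sqrt{\theta(\xi)} - \sqrt{\theta(\eta)}\bigr) \sqrt{\dist(\eta,\Ga)}
\]
and estimate the first piece by boundedness of $\theta$ and the second by the Lipschitz bound on $\sqrt{\theta}$ from Step~2 together with $\sqrt{\dist(\eta,\Ga)}\lesssim 1$ on $G$; both contributions are $\lesssim \rho_\Og$. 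The reverse inequality $\rho_\Og \leq C \wh\rho_G$ is symmetric: writing $\xi_y = g(\xi_x) - h(\xi)$ one gets $\|\xi-\eta\|\lesssim \wh\rho_G$ by $|g(\xi_x)-g(\eta_x)|\leq M\|\xi_x-\eta_x\|$ and $|h(\xi)-h(\eta)|=(\sqrt{h(\xi)}+\sqrt{h(\eta)})|\sqrt{h(\xi)}-\sqrt{h(\eta)}|\lesssim \wh\rho_G$, and the square-root-distance term is handled by the mirror identity with $\dist(\xi,\Ga) = h(\xi)/\theta(\xi)$.

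\emph{Main obstacle.} The principal technical point is that the multiplicative equivalence $\dist(\xi,\Ga) \sim h(\xi)$ from Step~1 does \emph{not} by itself yield equivalence of the $|\sqrt{\cdot}|$-difference terms in the two metrics. The remedy is Step~2: the Lipschitz bound on $\theta$, which genuinely requires the $C^2$-regularity of the boundary, absorbs the discrepancy into an additive error of order $\|\xi - \eta\| \leq \rho_\Og(\xi,\eta)$ through the displayed identity above.
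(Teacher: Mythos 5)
Your approach is sound and you correctly diagnose the central difficulty: the two-sided estimate $\dist(\xi,\Gamma)\sim h(\xi)$ alone does not force the square-root-difference terms of the two metrics to be comparable if the ratio $\theta=h/\dist$ oscillates, and the decomposition in Step~3 --- which absorbs the contribution from the oscillation of $\theta$ into the $\|\xi-\eta\|$ part of $\rho_\Og$ --- is the right fix. (This paper only cites \cite{Da-Pr-Bernstein} for the proposition, so there is no in-paper proof for me to compare against.) Step~1 and Step~3 are correct as written.

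Step~2 is the weakest link and conflates two different statements. Knowing that $\theta$ has the boundary limit $\sqrt{1+\|\nabla g\|^2}$ on $\p' G$ and that this limit is $C^1$ in the base point does not by itself give a Lipschitz bound for $\theta$ on a neighbourhood of $\p' G$ in $G$; what is needed is that $\theta$ \emph{extends} as a Lipschitz (in fact $C^1$) function across $\p' G$. The clean route is a Hadamard-type factorization: on the inner side of a tubular neighbourhood of the $C^2$ hypersurface $\p' G^*$, the function $\dist(\cdot,\Gamma)$ is itself $C^2$ up to and including $\p' G^*$ (signed distance to a $C^2$ boundary is $C^2$ on a tubular neighbourhood), vanishes there, and has nonvanishing gradient; the same holds for $h$. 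Passing to normal coordinates $z$ with $\{z=0\}=\p' G^*$, both functions factor as $z\tilde h$ and $z\tilde d$ with $\tilde h,\tilde d$ of class $C^1$ and $\tilde d$ bounded away from zero, whence $\theta=\tilde h/\tilde d$ is $C^1$ up to the boundary. Separately, in the complementary region where $h$ is bounded below, the word ``smooth'' in your sketch should read ``Lipschitz'': $\dist(\cdot,\Gamma)$ is only $1$-Lipschitz in general, which suffices since the denominator is bounded below there. With these two clarifications the argument is complete, and it is precisely here that the $C^2$ hypothesis on the boundary does its work.
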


\subsection{Whitney type inequality}

\begin{defn}\label{def-3-4} Given $\xi\in\sph$, we say $G \subset \RR^{d+1}$  is  a regular   $\xi$-directional domain with parameter $L\ge 1$      if   there exists a  rotation $\pmb{\rho}\in SO(d+1)$ such that\begin{enumerate}[\rm (i)]
		\item
		$\pmb{\rho}(0,\dots, 0,1)=\xi$, and  $ G$ takes the form
		\begin{equation*}\label{3-6}
			G:=\pmb{\rho}\Bl(\{(x, y):  \  x\in D,\   g_1(x)\leq y\leq g_2(x)\}\Br),
		\end{equation*}
		where  	$D\subset \RR^{d}$ is compact    and  $g_i: D\to\RR$ are measurable;
		\item
		there exist an affine function (element of $\Pi_1^{d+1}$) $H:\RR^{d}\to\RR$ and a constant $\da>0$  such that   $S\subset G\subset S_L$, where
		\begin{align*}
			\pmb{\rho}^{-1} (S):&=\{(x,y):\  \  x\in D,\  \  H(x)-\da\leq y\leq H(x)+\da\},\\
			\pmb{\rho}^{-1}  (S_L):&= \{(x,y):\  \  x\in D,\  \  H(x)-L\da\leq y\leq H(x)+L\da\}.
		\end{align*}  	
	\end{enumerate}	
	In this case, we say $S$ is the base of $G$.
\end{defn}

For $r\in\NN$,  $0<p\leq \infty$ and a nonempty set  $\CE\subset \SS^d$, we define the directional  Whitney constant by
\begin{equation*}
	w_r(\Og;\CE)_p:= \sup\Bl \{ E_{(d+1)(r-1)}(f)_{L^p(\Og)}:\  \ f\in L^p(\Og),\   \ \og^r(f, \Og;\CE)_p\leq 1\Br\}.
\end{equation*}

We remark that the above definition differs from the corresponding definition in~\cite{Da-Pr-Whitney} by using approximation from the wider space $\Pi_{(d+1)(r-1)}^{d+1}$ instead of certain ``directional'' polynomial space $\Pi_{r-1}^{d+1}(\CE)$, see~\cite{Da-Pr-Whitney}*{Prop.~1.1(ii)}. This results in smaller Whitney constants which are subject to the same upper bound as in the next lemma which is sufficient for our purposes here. 

\begin{lem}[\cite{Da-Pr-Whitney}*{Lemma~2.5}]\label{cor-7-3}
	Let $G\subset \RR^{d+1}$ be a regular  $\xi$-directional  domain with parameter $L\ge 1$ and base $S$   as given in Definition~\ref{def-3-4} for some $\xi\in\SS^d$.  Let  $\CE\subset\SS^d$ be a set of directions containing $\xi$.  	Assume that   $K$  is  a measurable subset of $\RR^{d+1}$ such that $S\subset  K\cap G$   and $w_r(K; \CE)_p<\infty$ for some $r\in\NN$,  $0<p\leq \infty$.
	Then
	\begin{equation*}\label{3-9-a}
		w_r(G\cup K; \EEE)_p\leq C_{p,r} L^{r-1+2/p}(1+ w_r(K; \CE)_p),
	\end{equation*}
	where the constant $C_{p,r}$ depends only on $p$ and $r$.
	
\end{lem}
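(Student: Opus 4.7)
The strategy is to combine the hypothesis $w_r(K;\CE)_p<\infty$ with a fiber-by-fiber one-dimensional extension argument in the direction $\xi$, exploiting the fact that every fiber of $G$ parallel to $\xi$ passes through the base $S\subset K\cap G$.

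First, invoking the affine invariance of both sides of \eqref{3-9-a} via \eqref{5-1-eq}--\eqref{2-3-18} applied to the rotation $\pmb{\rho}$ from Definition~\ref{def-3-4}, I would reduce to the case $\pmb{\rho}=\mathrm{id}$, so that $\xi=e_{d+1}$ and $G=\{(x,y):x\in D,\ g_1(x)\le y\le g_2(x)\}$ is sandwiched between the horizontal slabs $S=\{(x,y):x\in D,\ |y-H(x)|\le \da\}$ and $S_L=\{(x,y):x\in D,\ |y-H(x)|\le L\da\}$, with $S\subset K\cap G$ and $G\subset S_L$. In particular, every vertical fiber $F_x=\{y:g_1(x)\le y\le g_2(x)\}$ of $G$ contains the base fiber $S_x=[H(x)-\da,H(x)+\da]$, and $|F_x|/|S_x|\le L$.

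Given $f\in L^p(G\cup K)$ with $\og^r(f,G\cup K;\CE)_p\le 1$, I would use the hypothesis to pick a near-best polynomial $Q\in\Pi^{d+1}_{(d+1)(r-1)}$ on $K$, so that $\|f-Q\|_{L^p(K)}\le 2\,w_r(K;\CE)_p$. In particular $\|f-Q\|_{L^p(S)}$ is controlled, and the task reduces to bounding $\|f-Q\|_{L^p(G)}$, which I would handle slice by slice in the direction $e_{d+1}$. On each vertical fiber $F_x$, the classical univariate Whitney inequality applied to $y\mapsto f(x,y)$ supplies a polynomial $p_x$ of degree $r-1$ in $y$ with $\|f(x,\cdot)-p_x\|_{L^p(F_x)}\le C_r\,\og^r(f,F_x;e_{d+1})_p$. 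The triangle inequality reduces the estimate to controlling $\|p_x-Q(x,\cdot)\|_{L^p(F_x)}$, which is to be handled by a one-dimensional Nikolskii/Remez-type polynomial inequality of the form $\|q\|_{L^p(F_x)}\le C L^{m+1/p}\|q\|_{L^p(S_x)}$ for a univariate polynomial $q$ of degree $m$. Integrating the $p$th power over $x\in D$ via Fubini then transfers the fiber estimates to a global bound on $\|f-Q\|_{L^p(G)}$, and combining with the bound on $K$ yields the target estimate for $w_r(G\cup K;\CE)_p$.

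The main obstacle is the Nikolskii step. Since $Q(x,\cdot)$ has formal degree $(d+1)(r-1)$ in $y$, a naive application would introduce a factor $L^{(d+1)(r-1)+1/p}$, much worse than the target $L^{r-1+2/p}$. Achieving the correct exponent requires exploiting that, although formally of higher degree, $Q(x,\cdot)$ is close on $S_x$ to the univariate polynomial $p_x$ of degree only $r-1$. Accordingly, I would decompose $Q(x,\cdot)=T_x+R_x$ into a degree-$(r-1)$ piece $T_x$ in $y$ and a higher-order remainder $R_x$; the remainder is controlled by the directional modulus $\og^r(f,F_x;e_{d+1})_p$ through the identity $\Delta^r_{he_{d+1}}(Q-p_x)=\Delta^r_{he_{d+1}}Q$ together with a Markov/Bernstein-type bound on $\partial_y^r Q(x,\cdot)$ and the a priori bound on $\|f-Q\|_{L^p(S_x)}$. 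Applying Nikolskii only to the low-degree piece $T_x$ produces the tame factor $L^{r-1+1/p}$, and tracking the additional $L$-dependence through the base-fiber interactions and the $L^p$-integration in $x\in D$ accounts for the extra $L^{1/p}$, yielding the stated exponent $L^{r-1+2/p}$ in the final bound.
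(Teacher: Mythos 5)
The lemma you are proving is quoted from \cite{Da-Pr-Whitney}*{Lemma~2.5}, so the present paper does not contain its proof; I can only evaluate your proposal on its own terms. Your overall outline --- affine reduction to $\pmb{\rho}=\mathrm{id}$, choosing a near-best $Q\in\Pi^{d+1}_{(d+1)(r-1)}$ on $K$, comparing $f$ to $Q$ fiber-by-fiber on the direction-$\xi$ lines through the base $S$, and using a one-dimensional Whitney estimate plus a Remez/Nikolskii transfer from $S_x$ to $F_x$ --- is a reasonable starting point, and you correctly flag the degree mismatch as the decisive difficulty.

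The problem is that your proposed resolution of that difficulty does not, as stated, actually work, and this is the heart of the lemma. Splitting $Q(x,\cdot)=T_x+R_x$ with $T_x$ of degree $<r$ in $y$, the remainder $R_x$ still has $y$-degree as large as $(d+1)(r-1)$, and so does $\partial_y^r Q(x,\cdot)$. Any Markov/Remez estimate applied to $R_x$ or to $\partial_y^r Q$ in order to pass from $S_x$ to $F_x\subset S_{L,x}$ produces a factor whose exponent grows with the degree, hence with $d$. But the lemma asserts the exponent $L^{r-1+2/p}$ and a constant $C_{p,r}$ that are both independent of $d$. Your sketch does not explain how the $d$-dependence cancels; indeed it cannot cancel along the route you describe, because a term of degree exactly $(d+1)(r-1)$ with a coefficient of order one on $S_x$ genuinely blows up like $L^{(d+1)(r-1)}$ on $F_x$ --- so the high-degree part of $Q$ must be controlled by an argument that does not pass through a Remez inequality at full degree, and you do not supply one. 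Two secondary issues: (a) ``integrating the $p$th power over $x\in D$ via Fubini'' cannot be applied directly to $\og^r(f,F_x;e_{d+1})_p$ since the $\sup$ over the step $h$ sits outside the integral --- one needs the equivalence with an averaged modulus (as in Lemma~\ref{lem-8-1} of this paper) before Fubini; and (b) the final sentence, attributing the extra $L^{1/p}$ to ``base-fiber interactions and the $L^p$-integration in $x$,'' is an assertion with no supporting computation. In short, the plan is sensible up to the crux, but the crux --- how to beat the naive degree-$(d+1)(r-1)$ Remez exponent down to $r-1$ while keeping all constants $d$-free --- is left as a gap, and the specific device you propose for closing it appears incompatible with the $d$-independence of the bound.
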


\subsection{Bernstein inequality} 

	If $P$ is an algebraic  polynomial of one variable of degree $\le n$, then by the   univariate Bernstein inequality  (\cite[p. 265]{De-Lo}),  we have that for any $b>0$ and $\al>1$, 
\begin{equation}\label{markov-bern}
	\Bl	\|(\sqrt{b^{-1}t} +n^{-1})^iP^{(i+j)}(t)\Br\|_{L^p([0,b], dt)} \le  C_\al  n^{i+2j}  b^{-(i+j)}
	\|P\|_{L^p([0,\al b])}.
\end{equation}

Let   $G\subset \RR^{d+1}$  be  an $x_{d+1}$-upward domain with base size $b>0$ and parameter $L\ge 1$  given by 
\begin{equation*}\label{11-2-2} G:=\Bl\{ (x, y)\in\RR^{d+1}:\  \ x\in (-b,b)^{d},\   \  g(x)-Lb< y\leq g(x)\Br\},\end{equation*}
where $g:\RR^{d}\to \RR$ is a $C^2$-function  satisfying  that $\min_{x\in [-2b, 2b]^{d}}g(x)= 4Lb$.
Denote
 for each  $\mu\in(0,2]$
\begin{align*}
	G(\mu):&=\{ (x, y):\  \ x\in (-\mu b,\mu b)^{d},\   \  g(x)-\mu L b<y\leq g(x)\}.
\end{align*}  
For $(x,y)\in G(2)$,  we define  
\begin{equation*}
	\da(x,y):=g(x)-y\   \   \text{and}\  \   \
	\vi_n(x,y) :=\sqrt{\da(x,y)} +\f 1n,\   \  n=1,2,\dots.
\end{equation*}


The  Bernstein type inequality on the domain  $G$ is formulated in terms of  certain  tangential derivatives along the essential boundary $\p' G$ of $G$, whose definition is given as follows.
For  $x_0\in [-2a, 2a]^{d}$,  let \begin{equation*}\label{11-4-0}
	\xi_j (x_0) :=e_j + \p_j g(x_0) e_{d+1},\   \ j=1,\dots, d
\end{equation*}
be the tangent vector to $\p G$ at the point  $(x_0, g(x_0))$ that is parallel to   the $x_jx_{d+1}$-coordinate plane. We  denote by 	$\p_{\xi_j(x_0)}^\ell$ the $\ell$-th order  directional derivative along  the   direction  of $\xi_j(x_0)$: 
\begin{equation*}\label{11-3}
	\p_{\xi_j(x_0)}^\ell:=(\xi_j(x_0)\cdot\nabla )^\ell=\sum_{i=0}^\ell \binom{\ell}i (\p_jg(x_0))^i \p_j^{\ell-i} \p_{d+1}^i,\   \ 
\end{equation*}
where $j=1,2,\dots, d$ and $ x_0\in [-2b,2b]^{d}.$ Thus, 
for   $(x,y)\in G$ and   $  f\in C^1(G)$,
$$  \p_{\xi_j(x)}^\ell f(x,y)=\sum_{i=0}^\ell \binom{\ell}i (\p_jg(x))^i( \p_j^{\ell-i} \p_{d+1}^i f)(x,y),\  \ 1\leq j\le d. $$

We also need to deal with  certain mixed directional derivatives.   Let $\NN_0$ denote the set of all nonnegative integers.  For $\pmb{\al}=(\al_1,\dots, \al_{d})\in \NN_0^{d}$, we set $|\pmb\al| =\al_1+\al_2+\dots+\al_{d}$, and define 
\begin{align*}
	{\mathcal{D}}_{\tan, x_0}^{\pmb{\al}}  =\p_{\xi_1(x_0)}^{\al_1} \p_{\xi_2( x_0)}^{\al_2} \dots \p_{\xi_{d}( x_0)}^{\al_{d}},\  \    \    \  x_0\in [-2b, 2b]^{d}.\label{11-4}
\end{align*}
%
%
%
%
%
%
%
%
%

Finally, we are ready to state the required result.

\begin{thm}\label{cor-11-2} \cite{Da-Pr-Bernstein}*{Corollary~5.2} Let $\ld \in (1,2]$ and $\mu>1$ be two given parameters.  If  $0<p\leq \infty$  and       $f\in\Pi_n^{d+1}$,  then for any $\pmb{\al}\in\NN_0^{d}$, and  $ i,j=0,1,\dots$,
	\begin{align*}
		\Bl\|\vi_n(\xi)^{i} &\max_{ u\in \Xi_{n,\mu,\ld}(\xi)}\Bigl|  {\mathcal{D}}_{\tan, u}^{\pmb{\al}}\partial_{d+1}^{i+j}f(\xi)\Br|\Br\|_{L^p(G; d\xi)} \le  c_\mu n^{|\pmb{\al}|+2j+i}\|f\|_{L^p(G(\ld))},	\end{align*}
	where 
	$$\Xi_{n, \mu,\ld} (\xi):= \Bl\{ u\in [-\ld b, \ld b]^{d}:\  \  \|u-\xi_x\|\leq \mu \vi_n(\xi)\Br\}, \quad \xi=(\xi_x,\xi_y).$$
\end{thm}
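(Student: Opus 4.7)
The plan is to reduce the $(d{+}1)$-dimensional weighted Bernstein inequality on the special-type domain $G$ to iterated one-variable estimates, exploiting two structural features: $G$ is the region below the graph of a $C^2$ function $g$ over the cube $(-b,b)^d$, and the weight $\vi_n(\xi)=\sqrt{\delta(\xi)}+1/n$ depends only on the normal coordinate via $\delta(\xi)=g(\xi_x)-\xi_y$. Together these let me treat the $L^p(G)$-norm as an iterated integral and push the univariate Bernstein--Markov inequality \eqref{markov-bern} through one coordinate at a time.

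First I would dispose of the normal piece $\vi_n^i\partial_{d+1}^{i+j}$. For each fixed $\xi_x\in(-b,b)^d$, the map $y\mapsto f(\xi_x,y)$ is a univariate polynomial of degree $\le n$ on $[g(\xi_x)-Lb,g(\xi_x)]$, so substituting $t=g(\xi_x)-y$ and applying \eqref{markov-bern} with base size $Lb$ yields
\[
\int_{g(\xi_x)-Lb}^{g(\xi_x)} \vi_n(\xi)^{ip}|\partial_{d+1}^{i+j}f(\xi_x,y)|^{p}\,dy
\;\le\; C\,n^{p(i+2j)}\int_{g(\xi_x)-\lambda Lb}^{g(\xi_x)} |f(\xi_x,y)|^{p}\,dy.
\]
Integrating in $\xi_x\in(-b,b)^d$ converts this slicewise estimate into a global inequality on $G$ with the weight $\vi_n^i$ fully absorbed and the factor $n^{i+2j}$ in place.

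Second I would establish a tangential $L^p$-Bernstein estimate: for any polynomial $h$ of degree $\le n$, any $k\in\{1,\dots,d\}$, and any $u\in[-\lambda b,\lambda b]^d$,
\[
\|\partial_{\xi_k(u)}h\|_{L^p(G)}\le C_\lambda\,n\,\|h\|_{L^p(G(\lambda'))}
\]
for some $\lambda'\in(1,\lambda)$. This follows by restricting $h$ to line segments in the direction $\xi_k(u)$ through each point of $G$ (which, since $|\partial_k g|$ is bounded by $L_G$, extend into $G(\lambda')$ with length at least a constant multiple of $(\lambda'-1)b$) and applying the interior univariate Bernstein inequality on each such segment; Fubini upgrades the resulting slicewise bound to $L^p$. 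Iterating $|\pmb{\al}|$ times with $\lambda'\downarrow 1$ produces the factor $n^{|\pmb{\al}|}$ from the tangential operator.

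The main obstacle is the pointwise maximum over $u\in\Xi_{n,\mu,\lambda}(\xi)$, which cannot be handled by simply taking $\sup_u$ of the slicewise bounds. I would overcome it using the near-tangency observation that for $u\in\Xi_{n,\mu,\lambda}(\xi)$ the $C^2$-regularity of $g$ gives $|\partial_kg(u)-\partial_kg(\xi_x)|\le M\mu\,\vi_n(\xi)$, so $\xi_k(u)$ differs from the exact tangent vector $\xi_k(\xi_x)$ only by a factor of order $\vi_n(\xi)$ in the $e_{d+1}$-direction. Expanding
\[
\mathcal{D}_{\tan,u}^{\pmb{\al}}=\prod_{k=1}^{d}\bigl(\partial_{\xi_k(\xi_x)}+(\partial_kg(u)-\partial_kg(\xi_x))\partial_{d+1}\bigr)^{\al_k}
\]
produces, term by term, at most $|\pmb{\al}|-K$ true tangential derivatives, $K+i+j$ copies of $\partial_{d+1}$, and a coefficient bounded by $(M\mu\,\vi_n(\xi))^K$ for each $K\in\{0,\dots,|\pmb{\al}|\}$. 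The $\vi_n^K$ combines with $\vi_n^i$ to give a $\vi_n^{i+K}$ weight that lets the first step absorb $i+K$ weighted normal derivatives (contributing $n^{i+K+2j}$), while the remaining $|\pmb{\al}|-K$ tangential derivatives contribute $n^{|\pmb{\al}|-K}$ via the second step. The total $n^{(i+K+2j)+(|\pmb{\al}|-K)}=n^{|\pmb{\al}|+i+2j}$ is independent of $K$, so summing over $K$ and applying Fubini on $L^p(G)$ yields the stated inequality.
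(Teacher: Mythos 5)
The paper does not prove this result; it cites \cite{Da-Pr-Bernstein}*{Corollary~5.2}, so the ``paper's proof'' is external. Evaluating your argument on its own merits: Step~1 (slicewise normal estimate via the weighted univariate Bernstein--Markov inequality \eqref{markov-bern}) and Step~3 (the expansion of $\mathcal{D}_{\tan,u}^{\pmb\alpha}$ around $u=\xi_x$ exploiting $|\partial_kg(u)-\partial_kg(\xi_x)|\lesssim\mu\vi_n(\xi)$, with the near-tangency deviations converted into $\vi_n$-weighted normal derivatives) are both sound and in fact reflect the right strategy. The gap is in Step~2.

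The claimed tangential Bernstein estimate $\|\partial_{\xi_k(u)}h\|_{L^p(G)}\le C n\|h\|_{L^p(G(\lambda'))}$ does \emph{not} follow by restricting to line segments and applying univariate Bernstein. The geometric premise of your Step~2 --- that the segments ``extend into $G(\lambda')$ with length at least a constant multiple of $(\lambda'-1)b$'' --- is false near the essential boundary. Since $G(\lambda')$ has exactly the same top boundary $y=g(x)$ as $G$, enlarging the domain does not lengthen a segment that terminates on the graph. For $\xi$ at depth $\delta=g(\xi_x)-\xi_y$ and $u$ near $\xi_x$, Taylor's theorem gives that a line through $\xi$ in the direction $\xi_k(u)=e_k+\partial_kg(u)e_{d+1}$ exits $\{y\le g(x)\}$ within distance comparable to $\sqrt{\delta}+\vi_n(\xi)\sim\vi_n(\xi)$ (this is exact for a parabola and is the worst case for convex boundaries, where both half-segments are this short). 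On a segment of length $\sim\vi_n(\xi)$, the interior univariate Bernstein inequality yields the Markov rate $n/\vi_n(\xi)$, which is $\sim n^2$ at depth $\delta\lesssim n^{-2}$, not $n$. Nor does the $L^p$-integration salvage this: modeling the pointwise bound $n/\vi_n$ and integrating over depth $\delta$ gives $n^p\int_0^1(\sqrt\delta+1/n)^{-p}\,d\delta\sim n^{2p-2}$ for $p>2$, i.e., an effective rate $n^{2-2/p}$, strictly worse than $n$ whenever $p>2$. The tangential $L^p$-Bernstein on a $C^2$-domain is a genuinely multivariate phenomenon and is precisely the main theorem of \cite{Da-Pr-Bernstein}; there it is obtained not by 1D slicing but by a local comparison to a model parabolic domain (where a polynomial change of variables flattens the boundary) at the scale $\vi_n(\xi)$, together with the structure of the discretized domain decomposition. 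Your outline treats the hard ingredient as elementary, so the proof as written does not close.
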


\section{Polynomial partitions of the unity}\label{sec:partition of unity}

\subsection{Polynomial partitions of the unity on domains of special type}
\label{sec:5}
The main purpose in this section is to construct a localized  polynomial partition of the  unity on a domain   $G\subset \RR^{d+1}$  of special type.  Without loss of generality, we may assume that $G$ is an upward $x_{d+1}$-domain given in~\eqref{2-7-special} with $\xi=0$, small base size $b>0$ and parameter $L=b^{-1}$. Namely,  	
\begin{align*}
	G:=\{ (x, y):\  \  x\in [-b,b]^{d},\   \  g(x)-1\leq  y\leq  g(x)\},
\end{align*}
where $b\in (0,(2\sqrt{d})^{-1})$ is a sufficiently small constant and  $g$ is a $C^2$-function on $\RR^d$ satisfying that $\min_{x\in [-b, b]^d} g(x)\ge 4$.

Our construction of localized polynomial partition of the  unity relies on  a partition of the domain $G$, which we now  describe.  
Given a positive integer $n$, let   $\Ld^d_n:=\{ 0, 1,\dots, n-1\}^d\subset \ZZ^d$ be an index set. 
We shall use boldface letters $\mathbf{i}, \mathbf{j},\dots$ to denote indices in the set $\Ld_n^d$.  For each $\mathbf{i}=(i_1,\dots, i_{d})\in \Ld_n^d$, define 
\begin{equation*}\label{partition-b}\Delta_{\bfi}:=[t_{i_1}, t_{i_1+1}]\times \dots \times [t_{i_{d}}, t_{i_{d}+1}]  \   \  \ \text{with}\  \    t_{i}=-b+\f {2i}n b.
\end{equation*}
Then  $\{\Delta_{\bfi}\}_{\bfi\in\Ld_n^d}$ forms a   partition of the cube $[-b,b]^d$.  
Next, let    $N:=N_n:=\ell_1 n$
and  $\al:= 1/(2\sin^2\f \pi{2\ell_1})$,  where  $\ell_1$ is  a sufficiently  large positive integer such that $\alpha$ satisfies \begin{equation}\label{5-2-18}
	\al\ge 5d\max_{x\in [-4b, 4b]^d} (|g(x)|+\max_{1\leq i, j\leq d} |\p_i\p_j g(x)|). 
\end{equation}
Let  $\{\al_j:=2\al \sin^2 (\f {j\pi}{2N})\}_{j=0}^N$ denote   the Chebyshev partition of the interval $[0, 2\al]$ of order $N$ such that  $\al_n=1$. Then  $\{\al_j\}_{j=0}^n$ forms a partition of the interval $[0,1]$.  
Finally, we  define a  partition of the domain $G$ as follows:  
\begin{align*}
	G&=\Bl\{(x,y):\  \  x\in [-b,b]^d,\   \   g(x)-y\in [0,1]\Br\} =\bigcup_{\bfi\in\Ld_n^d} \bigcup_{j=0}^{n-1} I_{\mathbf{i},j},
\end{align*}
where 
$$I_{\mathbf{i},j}:=\Bl\{ (x, y):\  \  x\in \Delta_{\bfi},\  \   g(x)-y\in [\al_{j}, \al_{j+1}]\Br\}.$$
Note that $\Ld_n^d \times \{0,\dots, n-1\}=\Ld_n^{d+1}$.

With the above notation, we have

\begin{thm}\label{strips-0}
	For any   $m\ge2$, there exists a sequence of   polynomials $\bl\{q_{\bfi, j}:\  \   (\bfi, j) \in\Ld_n^{d+1}\br\}$ of degree at most  $ C(m, d) n$ on $\RR^{d+1}$ such that $$\sum_{(\bfi, j)\in\Ld_n^{d+1}} q_{\bfi,j}(x,y)=1\   \   \  \text{for all $(x,y)\in G$},$$
	and  for each $(x,y)\in I_{\mathbf{k}, l} $ with $(\mathbf{k},l)\in\Ld_n^{d+1}$, 
	\begin{equation*}\label{strips-ineq}
	| q_{\bfi,j}(x,y)|\le \frac{C_{m,d}}{\Bl(1+\max\{ \|\bfi-\mathbf{k}\|, |j-l|\}\Br)^m}.
	\end{equation*}
\end{thm}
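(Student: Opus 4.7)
The plan is a tensor-product construction: build univariate polynomial partitions of unity in the $d$ equispaced directions $x_1,\dots,x_d$ and in the ``Chebyshev'' direction corresponding to $g(x)-y$, then combine them multiplicatively. To keep the total degree linear in $n$, $g$ will be replaced by a \emph{local} polynomial approximation of bounded degree rather than by a global polynomial approximation of degree $\sim n$.

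For each $s=1,\dots,d$, I first construct univariate polynomials $\phi_0,\dots,\phi_{n-1}$ of degree $O(Mn)$ on $\RR$ with $\sum_{i=0}^{n-1}\phi_i\equiv1$ on $[-b,b]$ and $|\phi_i(t)|\le C_M(1+|i-k|)^{-M}$ for $t\in[t_k,t_{k+1}]$, where $M$ is a large integer to be chosen depending on $m$. These arise as translates of an appropriately scaled algebraic de la Vall\'ee Poussin kernel (equivalently, an equispaced trigonometric partition of unity of the stated decay converted to algebraic form via a linear change of variable). For the Chebyshev direction, the substitution $t=2\alpha\sin^2(\theta/2)$ maps the nodes $\{\alpha_j\}$ to the equispaced grid in $\theta\in[0,\pi]$; a trigonometric partition of unity with decay of order $m$ translates back to algebraic polynomials $\psi_0,\dots,\psi_{n-1}$ of degree $O(mn)$ with $\sum_{j=0}^{n-1}\psi_j\equiv1$ on a slight enlargement of $[0,1]$ and $|\psi_j(t)|\le C_m(1+|j-l|)^{-m}$ for $t\in[\alpha_l,\alpha_{l+1}]$.

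The obstacle in passing to a polynomial partition of unity on $G$ is that $g(x)-y$ is not polynomial in $(x,y)$; a global polynomial replacement $P$ of $g$ of degree $O(n)$ would be necessary to obtain $\|P-g\|_\infty=O(n^{-2})$, but then $\psi_j(P(x)-y)$ would have degree $O(mn^2)$, far too large. I instead use, for each $\bfi\in\Ld_n^d$, the second-order Taylor polynomial of $g$ at the center $c_\bfi$ of $\Delta_\bfi$,
\[
T_\bfi(x):=g(c_\bfi)+\nabla g(c_\bfi)\cdot(x-c_\bfi)+\tfrac12(x-c_\bfi)^{\mathrm T}H_g(c_\bfi)(x-c_\bfi),
\]
and set
\[
q_{\bfi,j}(x,y):=\phi_{i_1}(x_1)\cdots\phi_{i_d}(x_d)\,\psi_j\bigl(T_\bfi(x)-y\bigr).
\]
Since $\deg T_\bfi\le 2$, the total degree is at most $C(m,d)n$. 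Because $T_\bfi(x)-y$ ranges in a fixed bounded interval over $(x,y)\in G$ and $\bfi\in\Ld_n^d$, arranging $\sum_j\psi_j\equiv 1$ on this enlarged interval yields $\sum_{(\bfi,j)}q_{\bfi,j}=\sum_\bfi\prod_s\phi_{i_s}(x_s)=1$ on $G$.

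The main technical step is the decay bound \eqref{strips-ineq}. For $(x,y)\in I_{\mathbf k,l}$, the tensor product of the $\phi_{i_s}$ gives $\prod_s|\phi_{i_s}(x_s)|\le C_M(1+\|\bfi-\mathbf k\|_\infty)^{-M}$, while the Taylor error satisfies $|T_\bfi(x)-g(x)|\le\tfrac12\omega_{D^2g}(|x-c_\bfi|)\cdot|x-c_\bfi|^2$, where $\omega_{D^2g}$ is the modulus of continuity of the Hessian of $g$. When $\|\bfi-\mathbf k\|_\infty$ is bounded, this error is $o(n^{-2})$, below the minimal Chebyshev spacing $\alpha_{l+1}-\alpha_l\gtrsim n^{-2}$, so $\psi_j$ retains its localization and $|\psi_j(T_\bfi(x)-y)|\le C_m(1+|j-l|)^{-m}$. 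When $\|\bfi-\mathbf k\|_\infty$ is large, the $\phi$-decay dominates and choosing $M$ large enough (depending on $m$) produces the required $(1+\|\bfi-\mathbf k\|)^{-m}$ bound. The delicate intermediate regime $1\ll\|\bfi-\mathbf k\|\lesssim|j-l|$ — where the quadratic growth of the Taylor error in $\|\bfi-\mathbf k\|$ may shift the effective Chebyshev index of $T_\bfi(x)-y$ away from $l$ — is the crux of the argument, and is handled by taking $M$ sufficiently large that the polynomial decay of $\prod_s\phi_{i_s}$ absorbs the consequent loss in the $\psi_j$-localization.
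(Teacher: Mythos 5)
Your construction is the same tensor-product scheme as the paper's (a $d$-variate equispaced polynomial partition of unity in $x$ times a univariate Chebyshev-node partition in the variable $g(x)-y$, with $g$ replaced locally by a quadratic to keep the degree linear in $n$). The decisive difference is the choice of local quadratic. The paper does \emph{not} use the Taylor polynomial of $g$ at $x_\bfi$; it uses
\[
f_\bfi(x)=g(x_\bfi)+\nabla g(x_\bfi)\cdot(x-x_\bfi)+\tfrac{M}{2}\|x-x_\bfi\|^2,
\]
with $M$ chosen so large that $f_\bfi(x)\ge g(x)$ for all $x$, hence $f_\bfi(x)-y\ge g(x)-y\ge 0$ on $G$, while still $0\le f_\bfi(x)-g(x)\le M\|x-x_\bfi\|^2$. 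This one-sided bound is essential: it guarantees the argument fed to the Chebyshev partition $u_j$ stays in $[0,2\alpha]$, where the square-root localization $|u_j(t)|\lesssim(1+n|\sqrt t-\sqrt{\alpha_j}|)^{-4m}$ actually holds. Your $T_\bfi(x)-y$ can be negative near $y=g(x)$ (the Taylor remainder is signed), and for negative arguments the Chebyshev polynomials leave the range on which the trigonometric localization of Lemma~\ref{chebyshev} applies; they can grow rapidly outside $[0,2\alpha]$, and the expression $|\sqrt{T_\bfi(x)-y}-\sqrt{\alpha_j}|$ does not even make sense. The inequality $|\sqrt a-\sqrt b|\le\sqrt{|a-b|}$ that you implicitly rely on for the Chebyshev index shift also requires both $a,b\ge0$. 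Your proposal never addresses this sign problem, and it is not a routine patch; it is precisely what the nonstandard definition of $f_\bfi$ is designed to preempt.

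A secondary gap is in the quantitative analysis of the ``intermediate regime.'' You assert that choosing the exponent $M$ large makes the $\phi$-decay absorb the index shift, but you do not give the actual case split. The paper's version is cleaner and explicit: if $6M\|x-x_\bfi\|>|\sqrt{g(x)-y}-\sqrt{\alpha_j}|$, one sacrifices half of the $(1+n\|x-x_\bfi\|)^{-4m}$ factor; otherwise one proves the two-sided equivalence $|\sqrt{f_\bfi(x)-y}-\sqrt{\alpha_j}|\sim|\sqrt{g(x)-y}-\sqrt{\alpha_j}|$ directly, using $0\le f_\bfi-g\le M\|x-x_\bfi\|^2$ and the hypothesis of the case, and then the $u_j$-decay carries through unchanged. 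This argument again uses the nonnegativity of $f_\bfi(x)-y$, so it cannot be replicated verbatim with your $T_\bfi$. To repair your proof you would either need to replace $T_\bfi$ by an upper-bound quadratic (which is exactly the paper's $f_\bfi$), or extend the Chebyshev partition and its localization estimate to a slightly enlarged interval to the left of $0$ and control the implied loss --- neither of which your sketch contains.
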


Theorem~\ref{strips-0}  is motivated by~\cite[Lemma~2.4]{Dz-Ko}, but  some important details of the proof were omitted there. In this section, we shall give a complete and simpler proof of the theorem. 


Recall that we write $\xi\in\RR^{d+1}$ in the form $\xi=(\xi_x, \xi_y)$ with $\xi_x\in\RR^d$ and $\xi_y\in\RR$.

\begin{rem}\label{rem-5-2}
	Recall that in~\eqref{rhog} we introduced  the following metric on the domain   $G$:  for  $\xi=(\xi_x, \xi_y)$ and  $\eta=(\eta_x, \eta_y)\in G$, 
	\begin{equation*}
		\wh{\rho}_G(\xi, \eta)=\max\Bl\{\|\xi_x-\eta_x\|,
		\Bl|\sqrt{g(\xi_x)-\xi_y}-\sqrt{g(\eta_x)-\eta_y}\Br|\Br\}.
	\end{equation*}
	It can be easily seen that    if $ \xi\in I_{\bfi, j}$ and $ \eta\in I_{\mathbf {k}, \ell}$, then 
	\begin{equation*}\label{Chapter-5-1}
		1+n\wh{\rho}_G(\xi,\eta) \sim 1+\max\{ \|\bfi-\mathbf{k}\|, |j-\ell|\}.
	\end{equation*}
	This implies that  
	\begin{equation*}\label{6-6-18}
		| q_{\bfi,j}(\xi)|\le \frac{C_{m,d}}{(1+n\wh{\rho}_G(\xi,\og_{\bfi,j}))^m},\   \   \ \forall  \xi\in G,\   \ \forall \og_{\ib, j} \in I_{\ib, j}.
	\end{equation*}
	
\end{rem}
\begin{rem}\label{rem-6-3}
	If  $r\in\NN$ and $n\ge 10 r$, then the polynomials $q_{\ib,j}$ in Theorem~\ref{strips-0}  can be chosen to be of total degree $\le n/r$. Indeed, this can be obtained by invoking   Theorem~\ref{strips-0} with $c(m,d)n/r$ in place of $n$, relabeling the indices, and setting some of the polynomials to be zero. 
\end{rem}
For the proof of Theorem~\ref{strips-0}, we need two additional   lemmas, the first of which is well known.

\begin{lem}\label{chebyshev}\cite[Theorem~1.1]{Dz-Ko}
	Given any parameter $\ell>1$, there exists a sequence of     polynomials $\{u_j\}_{j=1}^n$ of degree at most  $ 2n$ on $\RR$  such that $\sum_{j=0}^{n-1}u_j(x)=1$ for all $x\in [-1,1]$ and
	$$| u_j(\cos\t)|\leq \frac{C_{\ell}}
	{( 1+n|\t -\f {j\pi}{n}|)^{\ell}},\   \  \t\in [0,\pi],\  \ j=0,\dots,n-1.$$
\end{lem}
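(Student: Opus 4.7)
My plan is to construct the $u_j$ by shifting a Jackson-type trigonometric kernel to the nodes $\theta_j := j\pi/n$ (paired with their reflections across $0$) and then enforcing exact summation to $1$ by a careful renormalization.

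First I would introduce the Jackson-type kernel
\[
K_n(\theta) := A_{n,m}\Bl(\tf{\sin(n\theta/2)}{\sin(\theta/2)}\Br)^{2m},
\]
with exponent $m$ chosen so that $2m \ge \ell + 2$ and constant $A_{n,m}$ normalizing $\int_{-\pi}^{\pi} K_n = 2\pi$. Then $K_n$ is a nonnegative, even, $2\pi$-periodic trigonometric polynomial of degree $\le m(n-1)$ obeying the classical localization bound
\[
K_n(\theta) \le \f{C_m}{n(1+n|\theta|)^{2m}},\qquad \theta\in[-\pi,\pi].
\]
Next, for each $j=0,1,\dots,n-1$ I would form the symmetrized pre-kernel $v_j(\theta) := K_n(\theta-\theta_j)+K_n(\theta+\theta_j)$, halving this at the endpoints $\theta_j \in\{0,\pi\}$ to avoid double-counting. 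Each $v_j$ is even and $2\pi$-periodic, hence equals $\wt u_j(\cos\theta)$ for an algebraic polynomial $\wt u_j$ of the same degree, and directly inherits the decay $|\wt u_j(\cos\theta)|\le C_m(1+n|\theta-\theta_j|)^{-2m}$ on $[0,\pi]$. A careful comparison of the total sum $V(\theta):=\sum_j v_j(\theta)$ with the Riemann quadrature on $\{k\pi/n\}_{k=0}^{2n-1}$, together with the exact quadrature rule for trigonometric polynomials of degree $<2n$ and the rapid decay of $K_n$, shows that $V = c_* + R$ with $c_*>0$ a fixed constant and $\|R\|_\infty$ controllably small.

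The final step is the renormalization. Replacing the naive rational choice $u_j := v_j/V$, I would expand the inverse as a Neumann-type series $V^{-1} = c_*^{-1}\sum_{k\ge 0}(-R/c_*)^k$, truncate after finitely many (i.e., $\ell$-dependent) terms, and multiply the resulting polynomial through $v_j$. Since $R$ can be made as small as we wish by enlarging $m$, the discarded tail is absorbed into the target bound $C_\ell(1+n|\theta-\theta_j|)^{-\ell}$. Exact summation $\sum u_j\equiv 1$ is then enforced by an additional polynomial correction, distributing any small residual $1-\sum u_j$ across all nodes according to a smooth bump partition of unity that does not spoil the decay profile.

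The main obstacle is precisely this renormalization: one needs $\sum u_j\equiv 1$ exactly, while simultaneously preserving both the polynomial-degree ceiling and the $(1+n|\theta-\theta_j|)^{-\ell}$ decay. Both Jackson oversmoothing (enlarging $m$) and geometric-series inversion (more truncation terms) inflate the trigonometric degree, so $m$ must be chosen as a controlled function of $\ell$ and the truncation kept tight enough that the aggregate degree meets the required ceiling; the endpoint indices $j\in\{0,n-1\}$ also require separate bookkeeping, since for them the symmetrization does not double the mass in the same way as at interior nodes.
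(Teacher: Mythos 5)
The paper does not prove Lemma~\ref{chebyshev}: it cites it verbatim from Dzjadyk--Konovalov~\cite[Theorem~1.1]{Dz-Ko}, so there is no in-paper argument for me to compare against. Judged on its own, your proposal has two genuine gaps, and the first one is fatal.

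The first gap is the degree bound. The lemma asks for algebraic polynomials of degree at most $2n$ with decay exponent~$\ell$, and these two parameters are independent: the degree cap is $2n$ for every $\ell$, with only the constant $C_\ell$ allowed to blow up. Your starting point, the generalized Jackson kernel $K_n(\theta)=A_{n,m}\bigl(\sin(n\theta/2)/\sin(\theta/2)\bigr)^{2m}$, has trigonometric degree $m(n-1)$, and you need $2m\ge\ell+2$, so the degree is of order $\ell n/2$. After symmetrizing and passing to $\cos\theta$, each $\widetilde u_j$ has algebraic degree on the order of $\ell n/2$, which already violates $2n$ once $\ell\ge 6$ or so, and the subsequent Neumann-series multiplication only makes it worse. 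You flag this as ``the main obstacle,'' but there is no choice of $m$ that is simultaneously large enough for the decay and small enough for the degree cap, because the Jackson kernel hard-couples the two. The standard way out is a Paley--Wiener construction: take $\widehat\eta\in C^\infty_c([-1/2,1/2])$ with $\widehat\eta(0)=1$, put $\tau(\theta)=\tfrac1N\sum_{|k|<N/2}\widehat\eta(k/N)e^{ik\theta}$ with $N=2n$, and use Poisson summation to get $|\tau(\theta)|\le C_\ell N^{-1}(1+N|\theta|)^{-\ell}$ on $[-\pi,\pi]$ for \emph{every} $\ell$ while the degree stays strictly below $N$. This decouples the decay rate from the degree, at the sole cost of the constant $C_\ell$.

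The second gap is the normalization. Your plan replaces $1/V$ by a truncated Neumann series and then ``distributes the residual $1-\sum u_j$ across all nodes according to a smooth bump partition of unity.'' The residual is an algebraic polynomial, and any pieces you add to the $u_j$ must themselves be polynomials of degree $\le 2n$ that respect the localization $(1+n|\theta-\theta_j|)^{-\ell}$; smooth bumps are not polynomials, and splitting the residual evenly gives pieces of size $\|1-\sum u_j\|_\infty/n$, which far exceeds the required $(1+n\pi)^{-\ell}\sim n^{-\ell}$ bound at the far node unless you push the Neumann truncation out to $\sim\ell$ terms, which again breaks the degree cap. The construction you want makes the summation exact \emph{for free}, with no renormalization at all: if $\tau$ is a trigonometric polynomial of degree $<N$ with $\widehat\tau(0)=1/N$, then $\sum_{j=0}^{N-1}\tau(\theta-2\pi j/N)\equiv 1$ identically, because $\sum_{j=0}^{N-1}e^{-2\pi i jk/N}$ vanishes for $0<|k|<N$. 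Taking $N=2n$, pairing the node $2\pi j/N=j\pi/n$ with its reflection $-j\pi/n$ to form the even function $\tau(\theta-j\pi/n)+\tau(\theta+j\pi/n)$ (folding the endpoints $0$ and $\pi$ appropriately), and rewriting each even trigonometric polynomial as an algebraic polynomial in $\cos\theta$ of degree $<N=2n$ yields exactly the $\{u_j\}$ of the lemma with the claimed degree and decay, and with $\sum u_j\equiv 1$ by construction.
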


The second lemma gives a    polynomial partition of the  unity   associated with the partition $\{\Delta_{\bfj}:\  \ \bfj\in\Ld_n^d\}$ of  the cube $[-b,b]^d$. 

\begin{lem}\label{uniform} Given any  parameter $\ell>1$,  there exists a sequence of    polynomials $\{v_{\mathbf{j}}^d\}_{\mathbf{j}\in\Ld_n^d} $ of total  degree $\le  2dn$ on $\RR^d$ such that for all $x\in [-b, b]^{d}$,  $\sum_{\mathbf{j}\in \Ld_n^d}v_{\mathbf{j}}^d(x)= 1$   and
	$$|v_{\mathbf{j}}^d(x)|\le \f{C_{\ell,d}} { (1+n\|x-x_{\mathbf{j}}\|)^\ell},\    \  \mathbf{j}\in\Ld_n^d,
	$$
	where $x_{\bf j}$ is an arbitrary point in $\Delta_{\bf j}$.
\end{lem}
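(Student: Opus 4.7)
The plan is to reduce to the one-dimensional case via tensor products, and then to construct the univariate partition of unity by grouping together the Chebyshev-localized polynomials provided by Lemma~\ref{chebyshev}. Granted univariate polynomials $\{v_j\}_{j=0}^{n-1}$ of degree $\le 2n$ on $\R$ with $\sum_{j=0}^{n-1}v_j\equiv 1$ on $[-b,b]$ and
\[
|v_j(t)|\le \frac{C_\ell}{(1+n|t-t_j|)^\ell},\quad t\in[-b,b],
\]
I set
\[
v_\bfj^d(x_1,\dots,x_d):=\prod_{i=1}^d v_{j_i}(x_i),\qquad \bfj=(j_1,\dots,j_d)\in\Ld_n^d.
\]
Then $v_\bfj^d$ has total degree at most $2dn$ and $\sum_{\bfj\in\Ld_n^d} v_\bfj^d\equiv 1$ on $[-b,b]^d$. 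Since $\max_i|x_i-t_{j_i}|\ge \|x-x_\bfj\|/\sqrt d$, one of the $d$ factors supplies the full decay in $\|x-x_\bfj\|$ while the remaining ones are trivially bounded by $1$, which yields $|v_\bfj^d(x)|\le C_{\ell,d}/(1+n\|x-x_\bfj\|)^\ell$.

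For the univariate construction I apply Lemma~\ref{chebyshev} with the enlarged parameter $\ell':=\ell+1$ to obtain polynomials $\{u_k\}_{k=0}^{n-1}$ of degree $\le 2n$ with $\sum_k u_k\equiv 1$ on $[-1,1]$ and $|u_k(\cos\theta)|\le C/(1+n|\theta-k\pi/n|)^{\ell'}$. After the affine rescaling $U_k(t):=u_k(t/b)$, the Chebyshev nodes are $c_k:=b\cos(k\pi/n)\in[-b,b]$ and $\sum_k U_k\equiv 1$ on $[-b,b]$. For each uniform box $\Delta_j$ I set
\[
S_j:=\{\,k\in\{0,\ldots,n-1\}:c_k\in\Delta_j\,\},\qquad v_j(t):=\sum_{k\in S_j}U_k(t),
\]
with $v_j\equiv 0$ when $S_j=\emptyset$. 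The partition-of-unity property $\sum_j v_j\equiv 1$ on $[-b,b]$ is inherited from $\sum_k U_k\equiv 1$ by a relabeling, and each $v_j$ has degree $\le 2n$.

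It remains to verify the univariate decay estimate. With $\alpha(t):=\arccos(t/b)$ the angular bound reads $|U_k(t)|\le C/(1+n|\alpha(t)-k\pi/n|)^{\ell'}$. The elementary inequality $|\alpha(t)-\alpha(c_k)|\ge |t-c_k|/b$, together with the fact that $|S_j|\le C$ for boxes $\Delta_j$ bounded away from $\pm b$, handles the interior case at once. The delicate case is the boundary boxes near $\pm b$, where the Chebyshev nodes cluster and $|S_j|$ can reach $O(\sqrt n)$ in the extreme boxes $\Delta_0,\Delta_{n-1}$. There I use the boundary asymptotic $\alpha(-b+s)\approx\pi-\sqrt{2s/b}$: the reparametrization $w_k:=\sqrt{b+c_k}$ places the points $\{c_k\in\Delta_0\}$ at $\sim \sqrt n$ quasi-uniformly spaced values with step $\sim \sqrt b/n$, and a telescoping argument produces
\[
\sum_{k\in S_0}\frac{C}{\bigl(1+n\sqrt{2/b}\,|\sqrt{b+t}-w_k|\bigr)^{\ell'}}\le \frac{C_{\ell,b}}{(1+n|t-t_0|)^\ell},\qquad t\in[-b,b],
\]
in which the extra power $\ell'-\ell=1$ precisely absorbs the $\sqrt n$-sized cluster; the endpoint $+b$ is symmetric. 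Carrying through this boundary estimate carefully is the main obstacle of the proof; once it is in place, the tensor product of the first paragraph finishes the lemma.
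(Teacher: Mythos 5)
Your tensor-product reduction to $d=1$ and your idea of grouping the Chebyshev-localized polynomials $u_k$ from Lemma~\ref{chebyshev} into the uniform boxes is exactly the paper's strategy, but you have set the problem up in a way that creates a genuine (and avoidable) difficulty. You rescale the Chebyshev construction so that the nodes $c_k=b\cos(k\pi/n)$ live on the \emph{same} interval $[-b,b]$ as the uniform partition $\{\Delta_j\}$. Near the endpoints $\pm b$ the Chebyshev nodes cluster, the extreme boxes $\Delta_0,\Delta_{n-1}$ contain $O(\sqrt n)$ nodes, and you are forced into the boundary estimate with the extra power $\ell'=\ell+1$. You correctly identify this as ``the main obstacle of the proof'' and then only sketch the needed telescoping bound — so as written the proof has a real gap precisely at its hardest step.

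The paper's proof sidesteps the boundary clustering entirely by first normalizing $b=\tfrac12$ (the constant in the lemma is permitted to depend on $b$, so this is harmless). The uniform grid then lives on $[-\tfrac12,\tfrac12]$ \emph{strictly inside} the Chebyshev interval $[-1,1]$, and the elementary inequality $|u-v|\le|\arccos u-\arccos v|\le\pi|u-v|$ (valid because $v\in[-\tfrac12,\tfrac12]$ is bounded away from $\pm1$) gives a clean two-sided equivalence between the Euclidean and angular metrics on the region of interest. With this, the Chebyshev nodes that land in $[-\tfrac12,\tfrac12]$ are quasi-uniformly spaced, the grouping map $s_j<\cos(i\pi/n)\le s_{j+1}$ puts at most $O(1)$ nodes in each interior box, and the nodes clustering near $\pm1$ are far from $[-\tfrac12,\tfrac12]$ so they trivially contribute the right decay. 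The whole boundary case then reduces to the summability $\sum_i(1+n|x-\cos(i\pi/n)|)^{-\ell}\le C$, using the same elementary inequality — two lines instead of a telescoping argument. I recommend you adopt the normalization $b=\tfrac12$; otherwise you must actually carry out and justify the boundary estimate you have only sketched, and verify in particular that the extra single power $\ell'-\ell=1$ suffices uniformly in the position of $t$ relative to $\Delta_0$ (it does, but this needs a careful case analysis in the variable $u=n\beta$).
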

This lemma is probably well known, but for completeness, we present a proof below.
\begin{proof}
	Without loss of generality, we may assume that  $d=1$ and $b=\f12$. 
	The  general case can be deduced easily  using tensor products of polynomials in one variable.
	Let $\{u_j\}_{j=0}^{ n-1}$ be a sequence of
	polynomials of degree at most $2n$ as given in  Lemma~\ref{chebyshev} with $2\ell$ in place of $\ell$. Noticing that    for  $u\in [-1,1]$ and $v\in [-\f 12, \f12]$, 
	\begin{equation}\label{4-1}
		|u-v|\leq |\arccos u-\arccos v |\leq \pi |u-v|,
	\end{equation}
	we obtain  
	\begin{equation}\label{4-2}
		| u_j(x)|\leq\f{ C_\ell}{ (1+n|x-\cos \f {j\pi}n|)^{2\ell}},\  \ x\in \Bigl[-\f12,\f12\Bigr].
	\end{equation}
	Next, we define a sequence of polynomials $\{v_j\}_{j=0}^{n-1}$ of degree at most $2n$ on $[-\f12, \f12]$ as follows: 
	$$ v_j(x)=\sum_{i:\  \   s_{j}<\cos \f {i\pi}n\leq s_{j+1}} u_i(x),$$
	where $0\leq i\leq n-1$,  $s_0=-2$, $s_n =2$,  $s_j=t_j =-\f12 +\f {j}n$ for $1\leq j\leq n-1$, and  we define  $v_j(x)=0$ if the sum is taken over the  empty set.  Clearly,   $\sum_{j=0}^{n-1} v_j(x)=\sum_{i=0}^{n-1} u_i(x)=1$ for all $x\in [-\f12,\f12]$. 
	Furthermore, using~\eqref{4-2}, we have 
	\begin{align*}
		| v_j(x) |\leq \f {C_\ell}{ (1+n|x-s_j|)^{\ell}} \sum_{i=0}^{n-1} \f{ 1}{ (1+n|x-\cos \f {i\pi}n|)^{2\ell}}\leq  \f {C_\ell}{ (1+n|x-s_j|)^{\ell}}, 
	\end{align*}
	where the last step uses~\eqref{4-1}.
	This completes the proof.
\end{proof}

We are now in a position to prove Theorem~\ref{strips-0}.\\

\begin{proof}[Proof of Theorem~\ref{strips-0}]
	Set \[M:=d\max_{1\leq i,j\leq d}\max_{x\in[-b,b]^d} |\p_i\p_jg(x)|+1.\]
	For each  $\bfi\in\Ld_n^d$,  let  $x_{\bfi}\in \Delta_{\bfi}$ be   an arbitrarily fixed point in the cube $\Delta_{\bfi}$, and  define 
	$$f_{\bfi}( x):=g( x_{\bfi})+\nabla g( x_{\bfi}) \cdot ( x- x_{\bfi})+\f M2 \| x- x_{\bfi}\|^2.$$
	By   Taylor's theorem,  it is easily seen  that for each $x\in [-b,b]^d$,
	\begin{align}\label{5-8-aug}
		f_{\bfi}(x) -M\|x-x_{\bfi}\|^2 \leq g(x) \leq f_{\bfi}(x).
	\end{align}
	Since $0<b<(2\sqrt{d})^{-1}$, this implies that  for each $\bfi\in\Ld_n^d$,
	$$ G \subset\Bl \{ ( x,y):\  \ x\in [-b,b]^{d},\   \   0\leq f_{\bfi}( x)-y\leq M+1\Br\}.$$
	Recall that 
	$\{\al_j\}_{j=0}^N$
	is a Chebyshev partition of $[\al_0, \al_N]=[0, 2\al]$ of degree $ N=2\ell_1n$,  $\al_n=1$   and according to~\eqref{5-2-18}, $\al\ge 4M+1$. Thus, 
	\begin{align*}
		G\subset \bigcup_{\bfi\in\Ld_n^d} \bigcup_{j=0}^{N-1} \Bl \{ ( x,y):\  \ x\in \Delta_{\bfi},\   \   \al_{j}\leq f_{\bfi}( x)-y\leq \al_{j+1}\Br\}. 
	\end{align*}
	
	Next, using   Lemma~\ref{chebyshev}, we obtain   a sequence of  polynomials $\{u_j\}_{j=0}^{N-1}$of degree at most $4\ell_1 n$ on $[0, 2\al]$  such that   $\sum_{j=0}^{N-1} u_j(t) =1$ for all $t\in [0, 2\al]$,   and
	\begin{equation}\label{key-5-6}
		| u_j (t)| \leq \f {C_m}{ (1+n |\sqrt{t}-\sqrt{\al_j}|)^{4m}},\   \  t\in [0, 2M]\subset [0, \al].
	\end{equation}
	Similarly, using  Lemma~\ref{uniform},  we may  obtain  a sequence of   polynomials $\{v_{\mathbf{j}}\}_{\mathbf{i}\in\Ld_n^d} $ of total  degree $\le n$ on the cube $[-b, b]^d$  such that $\sum_{\mathbf{j}\in \Ld_n^d}v_{\mathbf{j}}( x)= 1$ for all $ x\in [-b, b]^{d}$,   and
	\begin{equation}\label{key-5-7}
		| v_{\mathbf{j}}( x)|\le \f{C_m} { (1+n\| x- x_{\mathbf{j}}\|)^{4m}},\   \ x\in [-b, b]^d.
	\end{equation}
	Define  a sequence $\{q^\ast_{\bfi,j}: \  \  \bfi\in\Ld_n^d,\   \  0\leq j\leq N-1\}$ of   auxiliary polynomials  as follows: 
	\begin{equation}\label{key-5-8-0}
		q^\ast_{\bfi,j}( x,y):=u_j(f_{\bfi}( x)-y)v_{\bfi}( x).
	\end{equation}
	It is easily seen from~\eqref{key-5-6} and~\eqref{key-5-7} that
	for each $( x, y)\in G$, 
	\begin{align}\label{5-7-chapter}
		|  q^\ast_{\bfi,j}(x, y)|\leq \f {C_m}{(1+n\|x- x_{\bfi}\|)^{4m} (1+n|\sqrt{f_{\bfi}({x})-y}-\sqrt{\al_j}|)^{4m}}.
	\end{align}
	
	We   claim that for each  $({x},y)\in G$,
	\begin{align}
		|  q^\ast_{\bfi,j}(x, y)|&\leq
		\f {C_m}{(1+n\|{x}- x_{\bfi}\|)^{2m} (1+n|\sqrt{g({x})-y}-\sqrt{\al_j}|)^{2m}}.\label{claim-4-5}
	\end{align}
	Note that~\eqref{claim-4-5}  follows directly from~\eqref{5-7-chapter} if $ 6M\| x- x_{\bfi}\|>  |\sqrt{g({x})-y}-\sqrt{\al_j}|$.
	Thus, for the proof of~\eqref{claim-4-5}, 
	it suffices to prove that  the equivalence
	\begin{equation}\label{key-5-11}
		|\sqrt{f_{\bfi}({x})-y}-\sqrt{\al_j}|\sim |\sqrt{g({x})-y}-\sqrt{\al_j}|,
	\end{equation}
	holds 
	under the assumption  
	\begin{equation}\label{key-5-13-18} 6M\| x- x_{\bfi}\|\leq  |\sqrt{g({x})-y}-\sqrt{\al_j}|. \end{equation}
	Indeed, if  $\sqrt{f_{\bfi}({x})-y}+ \sqrt{g({x})-y}\leq 2M\|{x}-{x}_{\bfi}\|$, then~\eqref{key-5-13-18} implies 
	$$\sqrt{\al_j}\ge 4M \|{x}-{x}_{\bfi}\|\ge 2 \max\{\sqrt{f_{\bfi}({x})-y},  \sqrt{g({x})-y}\},$$ 
	and hence 
	$$|\sqrt{f_{\bfi}({x})-y}-\sqrt{\al_j}|\sim \sqrt{\al_j}\sim |\sqrt{g({x})-y}-\sqrt{\al_j}|.$$
	On the other hand, if  $\sqrt{f_{\bfi}({x})-y}+ \sqrt{g({x})-y}> 2M\|{x}-{x}_{\bfi}\|$, then  by~\eqref{key-5-13-18} and~\eqref{5-8-aug}, we have 
	\begin{align*}
		&\Bl|\sqrt{f_{\bfi}({x})-y}- \sqrt{g({x})-y}\Br|=\f{|f_{\bfi}({x})-g({x})|}{\sqrt{f_{\bfi}({x})-y}+ \sqrt{g({x})-y}}\\
		&\leq \f {M\|{x}-{x}_{\bfi}\|^2}{2M\|{x}-{x}_{\bfi}\|}= \f12 \|{x}-{x}_{\bfi}\|\leq \f 1{12M}|\sqrt{g({x})-y}-\sqrt{\al_j}|,
	\end{align*}
	which in turn  implies~\eqref{key-5-11}. This completes the proof of~\eqref{claim-4-5}.

	Finally,   we define  for  $\bfi\in\Ld_n^d$,
	$$q_{\bfi,j}(x, y)=\begin{cases}
		q^\ast_{\bfi,j}(x,y),\  \  \text{ if $0\leq j\leq n-2$},\\
		\sum_{k=n-1}^{N-1} q^\ast_{\bfi,k}(x,y),\   \  \text{if $j=n-1$.}
	\end{cases}$$
	Clearly, each $q_{\bfi, j}$ is a  polynomial of degree at most $Cn$. 
	Since  for any $( x, y)\in G$ the polynomial $u_j$ in the definition~\eqref{key-5-8-0} is evaluated at the  point $f_{\bfi}( x)-y$, which lies in the interval $[0, M+1]\subset [\al_0, \al_N]$, it follows  that  for any $( x, y)\in{G}$,
	$$\sum_{\bfi\in\Ld_n^d} \sum_{j=0}^{n-1} q_{\bfi,j}(x,y)=\sum_{\bfi\in\Ld_n^d} \sum_{j=0}^{N-1} q^\ast_{\bfi,j}(x)=\sum_{\bfi\in\Ld_n^d} v_{\bfi}^d ( x) \sum_{j=0}^{n-1} u_{j} (f_{\bfi}( x) -y)=1.$$
	To complete the proof, 
	by~\eqref{claim-4-5}, it remains to estimate $q_{\bfi, j}$ for $j=n-1$.   
	Note  that for $j\ge n$,
	$$\sqrt{\al_j}-\sqrt{g({x})-y}\ge \sqrt{\al_n}-\sqrt{g({x})-y}\ge 0.$$
	Thus,  using~\eqref{claim-4-5}, and recalling that $m\ge 2$,  we obtain  that  
	\begin{align*}
		|q_{\bfi,n-1}(x)|&\leq  \f {C_m}{(1+n\|{x}- x_{\bfi}\|)^{2m} (1+n|\sqrt{g({x})-y}-\sqrt{\al_n}|)^{m}} \\
		&\qquad\qquad\qquad \cdot
		\sum_{j=n}^{N}\f 1{(1+n|\sqrt{g({x})-y}-
			\sqrt{\al_j}|)^{m}}\\
		&\leq \f {C_m}{(1+n\|{x}- x_{\bfi}\|)^{2m} (1+n|\sqrt{g({x})-y}-\sqrt{\al_n}|)^{m}}.
	\end{align*}
	This completes the proof.
\end{proof}

\subsection{Polynomial partitions of the unity on general $C^2$-domains}
\label{unity:sec}
In this section, we shall extend Theorem~\ref{strips-0} to the $C^2$-domain $\Og$. We will use the metric $\rho_\Og$ defined by~\eqref{metric}. 
Our  goal is to show the following theorem: 
\begin{thm}\label{polyPartition}Given  any $m>1$ and any positive integer $n$,   there exist a finite subset $\Ld$ of $\Og$ and  a sequence $\{\vi_\og\}_{\og\in\Ld}$ of   polynomials of degree at most $C( m) n$ on the domain $\Og$  satisfying 
	\begin{enumerate}[\rm (i)]
		\item $ \rho_{\Og} (\og,\og') \ge \f 1n$ 	for any two distinct points $\og,\og'\in\Ld$;  
		\item for every $\xi\in \Og$,	$\sum_{\og \in\Ld}  \vi_\og (\xi)=1$  and 
		\item for any  $\xi\in \Og$ and $\og\in\Ld$, 
		$$  |\vi_\og(\xi)| \leq C_m (1+n\rho_\Og(\xi,\og))^{-m}.$$  
	\end{enumerate} 	
\end{thm}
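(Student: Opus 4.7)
The plan is to assemble the global partition on $\Og$ from the local partitions produced by Theorem~\ref{strips-0} on each piece of a finite cover of $\Og$, glued together by a coarse polynomial partition of unity whose pieces overlap on the unit scale.

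First, I would cover $\Og$ by the attached domains of special type $G_1,\dots,G_{m_0}$ from Lemma~\ref{lem-2-1-18} (with base sizes chosen small enough that $\bigcup_s G_s$ contains a collar $\{\dist(\cdot,\Ga)<2\da_0\}$ of $\Ga$) together with finitely many closed axis-parallel cubes $Q_1,\dots,Q_{m_1}\subset\{\xi\in\Og:\dist(\xi,\Ga)\ge \da_0\}$ that cover the rest of $\Og$. On each $G_s$ I apply Theorem~\ref{strips-0} with $m$ replaced by $m+A$ (for $A$ large enough to absorb losses at the gluing) to obtain polynomials $q^{(s)}_{\bfi,k}$ of total degree $\le c_1n$ with $\sum_{(\bfi,k)\in\Ld_n^{d+1}}q^{(s)}_{\bfi,k}\equiv 1$ as a polynomial identity on $\RR^{d+1}$. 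By Remark~\ref{rem-5-2} and Proposition~\ref{metric-lem}, one can fix centers $\og^{(s)}_{\bfi,k}$ (one in each cell of the partition of $G_s$) that are pairwise $\rho_\Og$-separated by $\gtrsim 1/n$ and that satisfy
\[
|q^{(s)}_{\bfi,k}(\xi)|\le C\bl(1+n\rho_\Og(\xi,\og^{(s)}_{\bfi,k})\br)^{-m-A},\quad \xi\in G_s.
\]
The tensor-product construction underlying Lemma~\ref{uniform} delivers analogous polynomials $p^{(k)}_{\bfi}$ on each cube $Q_k$; here $\rho_\Og$ is equivalent to the Euclidean distance because $Q_k$ is at positive distance from $\Ga$.

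Second, I would construct a coarse polynomial partition of unity $\chi_1,\dots,\chi_K$ with $K=m_0+m_1$, each $\chi_s$ of degree $\le c_2n$, satisfying $\sum_s\chi_s\equiv 1$ identically on $\RR^{d+1}$, $|\chi_s|\le C$ on $\Og$, and $|\chi_s(\xi)|\le Ce^{-cn}$ whenever $\xi\in\Og$ lies at Euclidean distance $\ge \da_0/4$ from the corresponding piece. These cutoffs are built from finitely many univariate Chebyshev-type cutoffs attached to hyperplanes separating the pieces; the unit-scale separation is what lets a polynomial of degree $c_2n$ decay like $e^{-cn}$ away from its designed support. I then set $\vi_{\og^{(s)}_{\bfi,k}}(\xi):=\chi_s(\xi)\,q^{(s)}_{\bfi,k}(\xi)$ (and analogously for the cube centers) and let $\Ld$ collect all these centers. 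Because each inner sum equals $1$ as a polynomial identity,
\[
\sum_{\og\in\Ld}\vi_\og(\xi)=\sum_{s=1}^K\chi_s(\xi)\sum_{(\bfi,k)}q^{(s)}_{\bfi,k}(\xi)=\sum_{s=1}^K\chi_s(\xi)=1,\quad \xi\in\Og.
\]

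For the pointwise bound I would split into two cases. If $\og=\og^{(s)}_{\bfi,k}$ and $\xi$ lies in the same piece $G_s$ (respectively $Q_k$), then $|\chi_s(\xi)|\le C$ while the decay of $q^{(s)}_{\bfi,k}(\xi)$ already yields a bound stronger than required. If $\xi$ lies outside the piece containing $\og$, then $\rho_\Og(\xi,\og)\ge c_0>0$, so the target bound reduces to $O(n^{-m})$; here $|q^{(s)}_{\bfi,k}(\xi)|$, being the value of a polynomial of degree $\le c_1n$ on the bounded set $\Og$, can grow at most like $e^{c_3n}$ (by a Bernstein--Walsh estimate, using $\|q^{(s)}_{\bfi,k}\|_{L^\infty(G_s)}\le C$), while $|\chi_s(\xi)|\le Ce^{-cn}$ with $c>c_3$, so the product is $\le Ce^{-c'n}\le Cn^{-m}$ for $n$ large. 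A final thinning of $\Ld$ to remove pairs of centers within $\rho_\Og$-distance $<1/n$ (the removed polynomials being absorbed into their nearest surviving neighbor) preserves both the partition property and the pointwise decay, yielding the required $1/n$-separation.

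The main obstacle will be the second step, namely the construction of $\chi_s$ with $\sum_s\chi_s\equiv 1$ \emph{exactly} while each $\chi_s$ has pointwise exponential decay outside the corresponding piece. Exactness precludes naive normalization (which would leave the polynomial class), so one builds the $\chi_s$ directly as finite products and differences of univariate Chebyshev cutoffs on strips that separate the pieces, exploiting the fact that these separations are of unit scale. Once this coarse partition is in hand, the remainder of the argument is bookkeeping driven by Theorem~\ref{strips-0} and the metric equivalence from Proposition~\ref{metric-lem}.
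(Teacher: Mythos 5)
Your high-level plan (glue the local partitions from Theorem~\ref{strips-0} with exponentially-decaying polynomial cutoffs whose sum is exactly~$1$) is the right idea, and it is in the same spirit as the paper's proof. But the case analysis for the pointwise bound has a genuine gap, and the paper's inductive organization is designed precisely to sidestep it.

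The false step is the claim ``if $\xi$ lies outside the piece containing $\og$, then $\rho_\Og(\xi,\og)\ge c_0>0$.'' The centers $\og^{(s)}_{\bfi,k}$ produced by Theorem~\ref{strips-0} cover all of $G_s$, including cells abutting the portion of $\p G_s$ that is interior to $\Og$. A point $\xi\in\Og\setminus G_s$ can therefore be within $\rho_\Og$-distance $O(1/n)$ of such a center. In this regime the target bound $C_m(1+n\rho_\Og(\xi,\og))^{-m}$ is of order $1$, so you need $|\chi_s(\xi)\,q^{(s)}_{\bfi,k}(\xi)|\le C$. But the decay estimate for $q^{(s)}_{\bfi,k}$ from Theorem~\ref{strips-0} holds only on $G_s$, and just outside $G_s$ these degree-$c_1 n$ polynomials can already be large (their $L^\infty(G_s)$-norm is $O(1)$, but Bernstein--Walsh growth at Euclidean distance $\delta$ from $G_s$ is of size $e^{cn\sqrt\delta}$). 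Meanwhile your cutoff $\chi_s$ is only claimed to be $O(e^{-cn})$ at distance $\ge\da_0/4$, leaving an annulus $0<\dist(\xi,G_s)<\da_0/4$ where neither factor is controlled. To close this you would need $\chi_s$ to be exponentially small on \emph{all} of $\Og\setminus G_s$ (so that the crude global growth bound beats it), which is a stronger requirement than the one you impose; and to organize the telescoping so the sum is exactly $1$, you would have to manage the simultaneous transitions carefully.

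This is exactly what the paper's proof arranges. Instead of $K$ simultaneous cutoffs, it uses the chain decomposition of Lemma~\ref{LEM-4-2-18-0} — a sequence $\Og_1,\dots,\Og_J$ of admissible pieces (each either a strip $S_{G_i,\ld_0}$ or a cube $Q$ with $4Q\subset\Og$) with \emph{consecutive} pieces overlapping in a ball of fixed radius $\ga_0$ — and inducts on $j$, gluing $H_j$ to the new piece $\Og_{j+1}$ with a \emph{single} Chebyshev-type cutoff $R_n$ from Lemma~\ref{lem-4-3}. The point is that $R_n$ is chosen so that $1-R_n\le\theta^n$ on the entire new piece $K=S_{G,\ld_0}$ and $R_n\le\theta^n$ on $\Og\setminus S_{G,1}$, with $\theta=\ga_0/(20L)$ small enough that $\theta^n$ kills the worst-case growth factor $(10L/\ga_0)^n$ from Lemma~\ref{lem-4-1} applied on the overlap ball $B$. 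This gives the bound $|w_j(\xi)|\le 2^{-n}\le C_m(1+n\,\diam(\Og))^{-m}$ without ever needing $\rho_\Og(\xi,\og)\ge c_0$. The trivial identity $R_n+(1-R_n)=1$ replaces your telescoping. Lemma~\ref{LEM-4-2-18-0} (and the fact that $S_{G,\mu}=Q_\mu\cap\Og$ is a box intersected with $\Og$, so Lemma~\ref{lem-4-2} applies cleanly) is an essential ingredient you do not use; without it, or without making your $\chi_s$ exponentially small on all of $\Og\setminus G_s$, the argument as written does not close.

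Two smaller points: (a) you never state why a $1/n$-separated family of representative centers $\og^{(s)}_{\bfi,k}$ exists with the needed decay after thinning; the paper handles this by grouping nearby centers and using the volume bound (Remark~\ref{rem-6-2}, or \cite{Da-Pr-Bernstein}*{Corollary~3.3(iii)}) to show each group has bounded cardinality, so the grouped polynomials retain the decay. (b) A coarse partition $\{\chi_s\}$ with $\sum_s\chi_s\equiv1$ and each $\chi_s\in[0,1]$ on $\Og$ is indeed constructible by the telescoping $\chi_s=\phi_s\prod_{t<s}(1-\phi_t)$ from univariate cutoffs $\phi_t$, but notice that this \emph{is} the sequential/inductive construction in disguise, so there is no real shortcut to be had by building all $\chi_s$ ``at once.''
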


\begin{rem}\label{rem-6-2}
	Recall that for $\xi\in\Og$ and $\da>0$, we defined $ U(\xi,\da)=\{\eta\in\Og:\  \  \rho_{\Og}(\xi,\eta)\leq \da\}$.
	By \cite{Da-Pr-Bernstein}*{Corollary~3.3(i)}, we have
	$$\Bl|U\Bl(\xi, \f 1n\Br)\Br|\sim \f 1{n^{d+1}} \Bl( \f 1n +\sqrt{\dist(\xi, \Ga)}\Br),\   \   \   \xi\in\Og.$$
\end{rem}

%

\begin{proof}[Proof of Theorem~\ref{polyPartition}]
	
	For convenience, we say  a subset  $K\subset \Og$ admits a polynomial partition of the unity of degree $Cn$ with parameter $m>1$   if there exist a finite subset $\Ld \subset \Og$ and  a sequence $\{\vi_\og\}_{\og\in\Ld}$ of   polynomials of degree at most $C n$  such that $\rho_\Og(\og,\og') \ge \f 1n$ for any two distinct points $\og,\og'\in\Ld$,  $\sum_{\og \in\Ld} \vi_\og (x) =1$ for every $x\in K$ and 
	$ |\vi_\og (x)| \leq C (1+n\rho_{\Og} (x,\og))^{-m}$ for every $x\in K$ and $\og\in\Ld$, in which case    $\{ \vi_\og\}_{\og\in\Ld}$ is called  a polynomial partition of the unity of degree $Cn$ on   the set $K$.
	According to Theorem~\ref{strips-0}, Remark~~\ref{rem-5-2}, and Proposition~\ref{metric-lem},  if   $G\subset \Og$ is a domain  of special type attached to $\Ga$ or if $G=Q$ is a cube such that $4Q\subset \Og$, then for any $m>1$,  $G$ 
	admits a polynomial partition of the unity of degree $Cn$ with  parameter $m$.

	Our  proof relies on the decomposition in Lemma~\ref{LEM-4-2-18-0}. 
	Let $\{\Og_s\}_{s=1}^J$ be the sequence of subsets of $\Og$ given in Lemma~\ref{LEM-4-2-18-0}.  For $1\leq j\leq J$, let $H_j =\bigcup_{s=1}^j \Og_s$.  Assume that for some $1\leq j\leq J-1$, $H_j$ admits a polynomial partition $\{u_{\og_i}\}_{i=1}^{n_0}$   of the unity  of degree $Cn$ with parameter $m>1$. By induction and  Lemma~\ref{LEM-4-2-18-0}, it  will suffice to show that $H_{j+1}$ also  admits a polynomial partition of the unity of degree $Cn$ with   parameter $m>1$.
	For simplicity, we write $H=H_j$ and $K=\Og_{j+1}$. 
	Without loss of generality, we may assume that $K=S_{G,\ld_0}$ with $\ld_0\in (\f12, 1)$ and  $G\subset \Og$ a domain of special type attached to $\Ga$.  The case when $K=Q$ is a cube such that $4Q\subset \Og$ can be treated similarly, and in fact, is simpler.

	By Theorem~\ref{strips-0}, $G$ admits a   polynomial partition $\{u_{\og_j}\}_{j=n_0+1}^{n_0+n_1}$ of the  unity of degree $Cn$ with parameter $m>1$.
	Recall $H\cap G$ contains an open ball of radius $\ga_0\in (0,1)$.  Let  $L>1$ be  such that $\Og\subset B_L[0]$, and let 
	$\t:=\f {\ga_0}{20L} \in (0,1)$. 
	According to Lemma~\ref{lem-4-3},  there exists a polynomial $R_n$ of degree at most $Cn$ such that $0\leq R_n(\xi)\leq 1$ for $\xi\in B_L[0]$,  $1-R_n(\xi)\leq \t^n$ for $\xi \in K$ and $R_n(\xi)\leq \t^n$ for $x\in \Og\setminus  G$.
	We now define 
	$$ w_j(\xi) =\begin{cases} u_{\og_j}(\xi) (1-R_n(\xi)), &\   \  \text{if $1\leq j\leq n_0$},\\
		u_{\og_j}(\xi)R_n(\xi), &\   \  \text{if $n_0+1\leq j\leq n_0+n_1$}.\end{cases}$$
	Clearly, each $w_j$ is a  polynomial of degree at most $Cn$ on $\RR^{d+1}$. 
	Since  polynomials are analytic functions  and $H\cap G$ contains an open ball of radius $\ga_0$, it follows that  
	$$\sum_{j=1}^{n_0+n_1}
	w_j(\xi) =R_n(\xi)+1-R_n(\xi)=1,\   \  \forall \xi\in \RR^{d+1}.$$
	
	Next, we prove  that for each $1\leq j\leq n_0+n_1$, 
	\begin{equation}\label{key-6-2-1}
		| w_j(\xi)|\leq C (1+n\rho_{\Og}(\xi, \og_j))^{-m},\   \    \forall \xi\in H\cup K.
	\end{equation}
	Indeed,  if $1\leq j\leq n_0$, then for $\xi\in H$,
	$$|w_j(\xi)|\leq |u_{\og_j}(\xi)|\leq C (1+n\rho_{\Og}(\xi, \og_j))^{-m},
	$$
	whereas   for $\xi\in K\subset G$,
	\begin{align*}
		| w_j(\xi)| &\leq \t^n \|u_{\og_j}\|_{L^\infty (B_L[0])}\leq C\t ^n \Bl( \f {10L}{\ga_0}\Br)^n\|u_{\og_j}\|_{L^\infty(H\cap G)}\\
		\leq & C 2^{-n}\leq  C_m (1+n\rho_{\Og}(\xi, \og_j))^{-m} ,
	\end{align*} 
	where the second step uses Lemma~\ref{lem-4-1}.
	Similarly,  if $n_0<j\leq n_0+n_1$, then for $\xi\in G$,
	$$| w_j(\xi)|\leq |u_{\og_j}(\xi)|\leq C_m (1+n\rho_{\Og}(\xi, \og_j))^{-m},
	$$
	whereas  for $\xi\in H\setminus G$, 
	$$ | w_j(\xi)| \leq \t^n \|u_{\og_j}\|_{L^\infty (B_L[0])}\leq C\t ^n \Bl( \f {10L}{\ga_0}\Br)^n\leq C 2^{-n}\leq  C_m (1+n\rho_{\Og}(\xi, \og_j))^{-m}.$$ 
	Thus, in either case, we prove the estimate~\eqref{key-6-2-1}.
	
	Finally, 
	we write the set    $A:=\{\og_1,\dots, \og_{n_0+n_1}\}$ as  a disjoint union 
	$A=\bigcup_{\og\in\Ld} I_{\og}$, where  $\Ld$ is  a subset of $A$ satisfying that  
	$\min_{\og\neq  \og'\in\Ld} \rho_{\Og}(\og,\og')\ge \f 1n$, and
	$ I_\og\subset \{\og'\in A:\  \  \rho_{\Og} (\og, \og') \leq \f1n\}$  for each $\og\in\Ld$.  We then define
	$$\vi_\og(\xi): =\sum_{j:\  \    \og_j\in I_\og} w_j(\xi),\   \ \xi \in H\cup G,\  \  \og\in\Ld,$$
	where $1\leq j\leq n_0+n_1$.
	Clearly,  each $\vi_\og$ is a polynomial of degree at most $C n$ and 
	$$\sum_{\og\in\Ld} \vi_{\og} (\xi) =\sum_{j=1}^{n_0+n_1} w_j(\xi)=1,\   \   \  \forall \xi\in H\cup G.$$
	On the other hand, we recall that  
	$\rho_{\Og}(\og_i, \og_j) \ge \f 1n$ if $1\leq i\neq j\leq n_0$ or $n_0+1\leq i\neq j\leq n_0+n_1$.
	Thus,  by the standard volume estimates and Remark~\ref{rem-6-2} (or directly by~\cite{Da-Pr-Bernstein}*{Corollary~3.3(iii)}) 
	we have that  $\# I_\og \leq C(\Og, m)$ for each $\og \in \Ld$, where $\# I$ denotes the cardinality of a set $I$. It then follows from~\eqref{key-6-2-1}   that 
	$$|\vi_\og (\xi) |\leq C (1+n\rho_\Og (\xi,\og))^{-m},\   \  \xi\in H\cup G,\   \  \og \in\Ld.$$
	Thus, we have shown that  the set $H\cup K$ admits a polynomial partition of the unity of degree $cn$ with parameter $m$, completing     the induction. 
\end{proof}

\begin{rem}
	The above proof implies $\#\Lambda=O(n^{d+1})$; recall that $\Omega\subset\R^{d+1}$.
\end{rem}

%
	%
	%
%

\section{Geometric reduction near the boundary}\label{sec:reduction}

Our main goal in this section is to   show that    the Jackson inequality in   Theorem~\ref{Jackson-thm} can be deduced from  the following     Jackson-type estimates on domains  of special type.

\begin{thm}\label{THM-4-1-18} 	
	If $0<p\leq \infty$, $c>0$ is arbitrary fixed, and   $G\subset \Og$ is   an upward or downward $x_j$-domain  attached to $\Ga$ for some $1\leq j\leq d+1$,   then 
	\begin{equation*}\label{Jackson:special}
		E_n (f)_{L^p (G)}\leq C \Bl[\wt{\og}_{G}^r \Bl(f, \f c n\Br)_p+ \og_{\Og,\vi}^r \Br(f, \f cn;  e_j\Br)_{p}\Br],\   
	\end{equation*}
	where the constant $C$ is independent of $f$ and $n$.  
\end{thm}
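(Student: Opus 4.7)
The plan is to reduce to the model case and then patch local polynomial approximations using the partition of unity from Section~\ref{sec:partition of unity}. By the affine/reflection invariance built into the definition of $\wt{\og}_G^r$ (and the analogous covariance of $\og_{\Og,\vi}^r$ under reflections), we may assume WLOG that $G$ is an upward $x_{d+1}$-domain of the form~\eqref{2-7-special} with $\xi=0$, small base $b$, and that~\eqref{parameter-2-9} holds; then $j=d+1$ in~\eqref{Jackson:special}. After an additional harmless rescaling we may further normalize $L=b^{-1}$ so that $G$ matches the setup of Section~\ref{sec:5}.

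Next, I would apply Theorem~\ref{strips-0} (together with Remark~\ref{rem-6-3}, so that degrees stay $\leq n/r$) to obtain polynomials $\{q_{\bfi,j}\}_{(\bfi,j)\in\Ld_n^{d+1}}$ of total degree $\leq n/r$ such that $\sum q_{\bfi,j}\equiv 1$ on $G$ and~\eqref{strips-ineq} holds with any preassigned decay exponent $m$. For each cell $I_{\bfi,j}$, the Chebyshev spacing gives tangential diameter $\sim b/n$ and normal thickness $\sim \sqrt{\al_j}/n \sim \vi_\Og(e_{d+1},\xi)/n$ for $\xi\in I_{\bfi,j}$; this is exactly the scale matched by both $\wt{\og}_G^r(\cdot,1/n)_p$ and $\og_{\Og,\vi}^r(\cdot,1/n;e_{d+1})_p$. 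On each cell I would choose a polynomial $P_{\bfi,j}\in \Pi_{r-1}^{d+1}$ of near best approximation to $f$ on an enlargement $\wt I_{\bfi,j}\supset I_{\bfi,j}$ big enough to accommodate $r$-th order finite differences in the tangential directions $\xi_k(x)=e_k+\p_k g(x)e_{d+1}$, $k=1,\dots,d$, and in the coordinate direction $e_{d+1}$. The set $\wt I_{\bfi,j}$ will be a regular $e_{d+1}$-directional domain (in the sense of Definition~\ref{def-3-4}) with bounded parameter, obtained by lifting the tangential cube to a slab of normal thickness $\sim\sqrt{\al_j}/n$. Lemma~\ref{cor-7-3} then yields
\[
\|f-P_{\bfi,j}\|_{L^p(\wt I_{\bfi,j})}\leq C\Bl(\og^r(f,\wt I_{\bfi,j};e_{d+1})_p+\sum_{k=1}^d\og^r(f,\wt I_{\bfi,j};\xi_k(\cdot))_p\Br),
\]
where each term is controlled by local contributions to either $\og_{\Og,\vi}^r(f,1/n;e_{d+1})_p$ (the $e_{d+1}$-term, because the step-size $\sim\sqrt{\al_j}/n$ matches $\vi_\Og/n$) or to $\wt{\og}_G^r(f,\tau/n)_p$ (the tangential terms, since the allowed tangential step is $\sim b/n$).

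I would then set $P:=\sum_{\bfi,j} P_{\bfi,j}q_{\bfi,j}$, a polynomial of degree $\leq C_0 n$ (degree of $q_{\bfi,j}$ plus $r-1$), and write $f-P=\sum_{\bfi,j}(f-P_{\bfi,j})q_{\bfi,j}$ on $G$. Raising to the $p$-th power on a cell $I_{\mathbf k,\ell}$ (for $p<1$ one uses the $p$-subadditivity $|a+b|^p\leq|a|^p+|b|^p$; for $p\geq 1$ a Hölder/duality argument with the decay~\eqref{strips-ineq} produces the same type of estimate), integrating over $G$, and summing over cells $I_{\mathbf k,\ell}$, one obtains a weighted sum of local errors
\[
\|f-P\|_{L^p(G)}^p \leq C\sum_{\bfi,j}\|f-P_{\bfi,j}\|_{L^p(\wt I_{\bfi,j})}^p
\]
after absorbing the polynomial decay in $q_{\bfi,j}$ (here one chooses $m$ large enough depending on $d,p,r$). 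Combining with the cell-wise Whitney estimates and recognizing that each local modulus contribution is counted only a bounded number of times gives the desired bound.

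The main obstacle is the last summation step: bundling the local directional moduli on the slab-shaped enlargements $\wt I_{\bfi,j}$ into the two global quantities on the right of~\eqref{Jackson:special}. The tangential contribution is delicate because the tangent directions $\xi_k(x)$ vary with $x$, so I would freeze $\xi_k(u)$ at a representative $u\in I_{\bfi,j}$ and use the $C^2$ regularity of $g$ plus the small base $b$ to control the resulting commutator; this is precisely what the supremum over $u\in I_x(tb)$ in the definition~\eqref{modulus-special} of $\wt{\og}_G^r$ is designed to absorb. The $p<1$ case and the correct choice of the parameter $\tau$ (needed so that the tangential step-sizes and the enlargement factor in $\wt I_{\bfi,j}$ stay within $[1,2]$ times $1/n$) are the remaining technicalities.
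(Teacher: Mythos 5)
Your overall framework is the same as the paper's: reduce to a model upward $x_{d+1}$-domain, take the polynomial partition of unity from Theorem~\ref{strips-0}/Remark~\ref{rem-6-3}, build local Whitney-type approximants via Lemma~\ref{cor-7-3} on slab-shaped enlargements whose directions match the tangential set $\{\xi_k(x)\}$ and $e_{d+1}$, patch with $P=\sum P_{\bfi,j}q_{\bfi,j}$, and then translate the local errors into the two global moduli. The paper additionally introduces an intermediate local modulus $\og^r_{\text{loc}}$ (Definition~\ref{def-8-1}, Theorem~\ref{THM-WT-OMEGA}) to organize the two stages, but that is an organizational choice, not a different route.

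There is, however, a genuine gap in the patching step. You write $f-P=\sum_{\gamma}(f-P_{\gamma})q_{\gamma}$ and claim that ``after absorbing the polynomial decay'' one gets $\|f-P\|_{L^p(G)}^p\lesssim\sum_{\gamma}\|f-P_{\gamma}\|_{L^p(\wt I_{\gamma})}^p$. That inequality does not follow. On a cell $I_{\mathbf k,\ell}$ far from $\gamma$, the factor $q_{\gamma}$ is small but the factor $f-P_{\gamma}$ is completely uncontrolled there: the Whitney bound only controls $\|f-P_\gamma\|_{L^p}$ on the local enlargement $\wt I_\gamma$, not globally, and $f-P_\gamma$ can grow polynomially away from $\wt I_\gamma$ (indeed $P_\gamma$ is an arbitrary low-degree polynomial). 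The decay of $q_\gamma$ does not by itself dominate that growth in a way that leaves a summable bound.

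The paper's proof circumvents this by fixing the target cell $\beta$ first and writing
\begin{align*}
f-P_n=(f-s_\beta)+\sum_{\gamma}(s_\beta-s_\gamma)q_\gamma \quad\text{on } I_\beta,
\end{align*}
so that the first term is the good local Whitney error on $I_\beta$, and the second term involves the \emph{difference of two low-degree polynomials} evaluated on $I_\beta$. Controlling $\|s_\beta-s_\gamma\|_{L^p(I_\beta)}$ then requires an additional, non-obvious ingredient: the doubling-type comparison between polynomial norms on different cells of the curved Chebyshev partition (Lemmas~\ref{lem-5-1},~\ref{lem-8-4},~\ref{lem-5-2}) together with a telescoping chain of intermediate cells $\gamma=\gamma_1,\dots,\gamma_{N_0}=\beta$ satisfying $I_{\gamma_j}\subset I_{\gamma_{j+1}}^\ast$ and $N_0\lesssim(1+\|\gamma-\beta\|)^2$ (estimate~\eqref{claim-8-23}). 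Only after this chain argument does one get a polynomially-growing bound in $\|\gamma-\beta\|$ that the decay exponent $m$ in $q_\gamma$ can absorb. This chain-plus-doubling mechanism is the core technical idea of the proof and is absent from your proposal; without it the summation step does not close.
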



The proof of Theorem~\ref{THM-4-1-18} will be given in Section~\ref{Sec:8}. In this section, we  will  show how Theorem~\ref{Jackson-thm} can be deduced from Theorem~\ref{THM-4-1-18}. The idea of our proof is close to that in   \cite[Chapter 7]{To17}.  

\subsection{Lemmas and geometric reduction}

We need  a series of  lemmas, the first of which gives a well known Jackson type estimate (see~\cite[Theorem~1.1]{Di96}) on a rectangular box (recall that we always assume that the sides of such boxes are parallel to the coordinate axes).

\begin{lem}\label{lem-6-1-0}
	Let $B$ be a compact  rectangular box  in $\RR^{d+1}$.  Assume that  $f\in L^p(B)$ if  $0<p< \infty$ and $f\in C(B)$ if $p=\infty$.
	Then for $0<p\leq \infty$, 
	$$\inf_{P\in \Pi_n^{d+1}} \|f-P\|_{L^p(B)} \leq C \max_{1\leq j\leq d+1}\og_{B,\vi}^r \Bl(f, \f 1n, e_j\Br)_{p},$$
	where $C$ is independent of $f$.
\end{lem}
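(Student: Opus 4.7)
The plan is to reduce to the unit cube by affine invariance and then build a near-best polynomial approximant by tensorising the univariate Ditzian--Totik Jackson operator along the $d+1$ coordinate axes, combining the pieces via a telescoping identity. The linear map $T:[-1,1]^{d+1}\to B$ aligning the sides of $B$ with the coordinate axes preserves $\Pi_n^{d+1}$, dilates each coordinate independently, and (since $\vi_B(e_j,\cdot)$ scales linearly in the $j$-th side length) makes $\og^r_{B,\vi}(f,1/n;e_j)_p$ comparable, up to a constant independent of $f$ and $B$, to the corresponding modulus of $f\circ T$ on the cube, via~\eqref{5-1-eq}--\eqref{2-3-18}. Thus it suffices to treat $B=[-1,1]^{d+1}$, in which case $\vi_B(e_j,\xi)=\sqrt{1-\xi_j^2}$ depends only on $\xi_j$.

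For each $j\in\{1,\dots,d+1\}$ I would fix a univariate Ditzian--Totik Jackson kernel $K_n(s,t)$ on $[-1,1]^2$ of the type constructed in~\cite{Di96}, which is uniformly bounded in every $L^p$-quasinorm for $0<p\le\infty$ and which yields the optimal one-dimensional bound $\|(I-J_n)g\|_{L^p([-1,1])}\le C\,\og^r_\vi(g,1/n)_p$. Set
\[
J_n^{(j)} f(\xi):=\int_{-1}^{1} K_n(\xi_j,t)\,f(\xi_1,\dots,\xi_{j-1},t,\xi_{j+1},\dots,\xi_{d+1})\,dt,
\]
a polynomial of degree $\le n$ in $\xi_j$. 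Applying the one-dimensional bound in the $j$-th variable and integrating over the transverse fibres by Fubini yields
\[
\|(I-J_n^{(j)})f\|_{L^p(B)}\le C\,\og^r_{B,\vi}(f,1/n;e_j)_p, \qquad j=1,\dots,d+1;
\]
note that on the cube $\vi_B(e_j,\xi)$ depends only on $\xi_j$, so the multivariate symmetric $\vi$-difference in direction $e_j$ appearing in~\eqref{2-3-DT} factors neatly along fibres and no boundary correction is needed.

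Finally, form $T_n:=J_n^{(1)}\circ\cdots\circ J_n^{(d+1)}$, reindexing with $\lfloor n/(d+1)\rfloor$ in place of $n$ to keep the total degree $\le n$. Writing $T^{(0)}:=I$ and $T^{(j)}:=J_n^{(1)}\cdots J_n^{(j)}$, the telescoping identity
\[
f-T_n f=\sum_{j=1}^{d+1} T^{(j-1)}\bigl(I-J_n^{(j)}\bigr)f,
\]
together with $\min(1,p)$-subadditivity of $\|\cdot\|_p^{\min(1,p)}$ and the uniform $L^p$-boundedness of each $J_n^{(j)}$ on the cube, reduces the estimate to the slicewise bounds above. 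I expect the main obstacle to be the quasi-Banach range $0<p<1$: the usual convolutional Jackson kernels are bounded in $L^p$ only for $p\ge 1$, so one must rely on Ditzian's specific construction in~\cite{Di96} which simultaneously achieves near-best approximation and uniform $L^p$-quasinorm boundedness in this endpoint range; everything else is routine Fubini and telescoping.
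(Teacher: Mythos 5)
Your affine reduction to the cube and the telescoping / tensorization strategy are correct and constitute the standard Ditzian--Totik argument for $1\le p\le\infty$. But the range $0<p<1$ --- which is precisely the point of Ditzian's paper \cite{Di96} and is included in the statement --- cannot be handled this way, and the obstacle you acknowledge at the end is not one that a cleverer kernel can remove.

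For $0<p<1$ the topological dual $(L^p)^*$ is trivial, so every bounded linear map from $L^p$ into a finite-dimensional space such as $\Pi^1_n$ is the zero map. Consequently there is no integral operator $J^{(j)}_n$ that is simultaneously $L^p$-bounded uniformly in $n$ and achieves a Jackson bound for $p<1$; the construction you hope exists in \cite{Di96} cannot be a linear kernel operator, and the uniform boundedness of $T^{(j-1)}$ that your telescope requires fails. What is actually done for $p<1$ is intrinsically nonlinear: one builds the global polynomial by gluing local near-best (piecewise) polynomial approximants --- controlled by a Whitney estimate as in \eqref{7-1-18-00} or Lemma \ref{cor-7-3} --- via a rapidly decaying polynomial partition of unity of the kind constructed in Theorem \ref{strips-0}, plus a combinatorial argument bounding the overlap of the local pieces. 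This is exactly the mechanism the present paper deploys to prove Theorem \ref{THM-WT-OMEGA}, and a cube version of it is what Ditzian's Lemma~2.1 rests on. Your proposal is therefore a valid proof only for $p\ge 1$; for $0<p<1$ it must be replaced by a Whitney-plus-partition-of-unity argument.
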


Our second lemma is  a simple observation on domains of special type. 
Recall that  unless otherwise stated we  always assume that the parameter $L$
of a domain  of special type   satisfies the condition~\eqref{parameter-2-9}. 

\begin{lem}\label{lem-6-2:Dec} Let $G\subset \Og$ be an (upward or download) $x_j$- domain  of special type attached to $\Ga$ for some $1\leq j\leq d+1$. Then for  each parameter $\mu\in (\f12, 1]$, there exists an open rectangular box $Q_\mu$ in $\RR^{d+1}$ such that 
	\begin{equation}\label{eqn:decomp} \p' G(\mu) \subset S_{ G,\mu}:=Q_\mu\cap \Og \subset G(\mu)\   \ \text{and}  \  \  \overline{Q_\mu}\subset Q_1\   \  \text{provided $\mu<1$}.   \  \end{equation}
\end{lem}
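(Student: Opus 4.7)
The plan is to work in local coordinates and exhibit $Q_\mu$ explicitly as a product of intervals. By composing the reflections $\sigma_j,\tau_j$ (which preserve both the class of rectangular boxes and the class of domains of special type) I may assume without loss of generality that $G$ is an upward $x_{d+1}$-domain with base point $0$, given by~\eqref{2-7-special}. Because $G$ is attached to $\Ga$, there is an open rectangular box $Q$ with $G^\ast=Q\cap\Og$; comparison with~\eqref{G} forces $Q$ to have the form $(-2b,2b)^d\times(A,B)$ for some $A<\min_{[-2b,2b]^d}g-4Lb$ and $B>\max_{[-2b,2b]^d}g$.

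For $\mu\in(1/2,1]$ I would define
\[
Q_\mu:=(-\mu b,\mu b)^d\times(c_\mu,d_\mu),
\]
with $c_\mu,d_\mu$ chosen so that
\[
\max_{[-\mu b,\mu b]^d}g-\mu Lb\le c_\mu<\min_{[-\mu b,\mu b]^d}g,\qquad \max_{[-\mu b,\mu b]^d}g<d_\mu<B,
\]
and, whenever $\mu<1$, subject to the additional requirements $c_\mu>c_1$ and $d_\mu<d_1$ (the $\mu=1$ choice being fixed first). The main obstacle is to confirm that the first displayed interval for $c_\mu$ is non-empty. This is precisely where the standing assumption~\eqref{parameter-2-9} on $L$ is used: by the mean value theorem,
\[
\max_{[-\mu b,\mu b]^d}g-\min_{[-\mu b,\mu b]^d}g\le 2\mu b\sqrt{d}\,\max_{[-2b,2b]^d}\|\nabla g\|<\mu Lb,
\]
so the interval is non-empty. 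Since $\min g$ over $[-\mu b,\mu b]^d$ dominates $\min g$ over $[-b,b]^d$, and the latter already exceeds $c_1$, one may also arrange $c_\mu>c_1$ for $\mu<1$; the analogous $d_\mu<d_1$ is trivial.

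The inclusions are then short to verify. For $\p'G(\mu)\subset Q_\mu\cap\Og$, a point $(x,g(x))$ with $x\in(-\mu b,\mu b)^d$ lies on $\Ga\subset\Og$ and belongs to $Q_\mu$ by the choice of $c_\mu,d_\mu$. For $Q_\mu\cap\Og\subset G(\mu)$, one first notes $Q_\mu\subset Q$ (both $c_\mu>A$ and $d_\mu<B$ are automatic), so any $(x,y)\in Q_\mu\cap\Og$ lies in $G^\ast$; its defining inequality forces $y\le g(x)$, and combined with $y>c_\mu\ge g(x)-\mu Lb$ this gives $(x,y)\in G(\mu)$. For $\mu<1$ the inclusion $\overline{Q_\mu}\subset Q_1$ is then immediate from $\mu b<b$ together with $c_\mu>c_1$ and $d_\mu<d_1$.
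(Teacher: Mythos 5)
Your proof is correct and follows essentially the same route as the paper: reduce to an upward $x_{d+1}$-domain with $\xi=0$, use the attachment condition to obtain a box $Q=(-2b,2b)^d\times(a_1,a_2)$ with $G^\ast=Q\cap\Og$, define $Q_\mu$ as $(-\mu b,\mu b)^d$ times a suitably chosen interval, invoke the gradient bound from~\eqref{parameter-2-9} to see the interval is nonempty, and then check the inclusions. The only (cosmetic) difference is that you bound the oscillation of $g$ over $[-\mu b,\mu b]^d$ whereas the paper bounds it over $[-b,b]^d$ and then uses $\mu>1/2$; both yield the required nonemptiness.
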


\begin{proof}
	Without loss of generality, we may assume that $G$ is given in~\eqref{2-7-special} with $\xi=0$.  Let
	$g_{\max}:=\max_{x\in [-b, b]^d} g(x)$ and $g_{\min} :=\min_{x\in [-b, b]^d} g(x)$.
	Using~\eqref{parameter-2-9}, we have 
	$$g_{\max}-g_{\min} \leq 2 \sqrt{d} b \max_{x\in [-b, b]^d}\|\nabla g(x)\|\leq \f 12 Lb.$$
	Thus, given each parameter $\mu\in (\f 12, 1]$, we may find a constant $a_{1,\mu}$ such that
	$$ g_{\max} -\mu L b < a_{1,\mu} <g_{\min}.$$
	We may choose the constant $a_{1,\mu}$ in such a way that $a_{1,1}<a_{1,\mu}$ if $\mu<1$. 
	On the other hand, since $G$ is attached to $\Ga$, we may find an open rectangular box $Q$ of the form $(-2b, 2b)^d\times (a_1, a_2)$ such that $G^\ast =Q\cap \Og$, where $a_1, a_2$ are two constants and    $a_2>g_{\max}$. 
	Let  $a_{2, 1} =a_2$ and let  $a_{2,\mu}$ be a constant so  that $g_{\max} < a_{2,\mu} <a_2$ for $\mu\in (\f12, 1)$.   Now  setting 
	\begin{equation*}\label{6-3-Dec}
		Q_\mu :=(-\mu b, \mu b)^d\times (a_{1,\mu}, a_{2,\mu})\   \   \ \text{and}\   \ S_{G,\mu}: = Q_\mu \cap \Og,
	\end{equation*}
	we obtain~\eqref{eqn:decomp}.
\end{proof}

\begin{rem}
	Note that~\eqref{eqn:decomp} implies that $\proj_j (Q_\mu) = \proj_j (G(\mu))$ for $\mu\in (\f 12, 1]$, where $\proj_j$ denotes the orthogonal projection onto the coordinate plane $x_j=0$.
\end{rem}

Now let $G_1,\dots, G_{m_0}\subset \Og$ be the domains of special type   in Lemma~\ref{lem-2-1-18}. 
Note that  for  every  domain $G$ of special type, its essential boundary  can be expressed as 
$\p ' G=\bigcup_{n=1}^\infty \p' G (1-n^{-1})$.  
Since $\Ga$ is compact and each $\p' G_j$ is open relative to the topology of $\Ga$, there exists $\ld_0\in (\f 12, 1)$ such that $\Ga =\bigcup_{j=1}^{m_0} \p' G_j (\ld_0)$. 
For convenience, we call $S\subset \Og$ an admissible subset of $\Og$ if either $S=S_{G_j, \ld_0}$ for some $1\leq j\leq m_0$ or $S$ is an open cube in $\RR^{d+1}$ such that $4S\subset \Og$. 

Our third lemma  gives a useful decomposition of the domain $\Og$.

\begin{lem}\label{LEM-4-2-18-0}  There exists  a sequence $\{\Og_s\}_{s=1}^J$ of admissible subsets of $\Og$ such that   $\Og=\bigcup_{j=1}^J \Og_j,$ and 
	$\Og_s \cap \Og_{s+1}$ contains an open ball of radius $\ga_0>0$ in $\RR^{d+1}$ 	for  each  $s=1,\dots, J-1$, where 
	the  parameters $J$ and $\ga_0$ depend only on the domain $\Og$. 	
\end{lem}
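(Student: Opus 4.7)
My plan is to build a finite admissible cover $\mathcal{A}=\{A_1,\dots,A_N\}$ of $\Og$, attach to $\mathcal{A}$ a graph whose edges record non-empty intersections, show that this graph is connected because $\Og$ is, and then take a walk through the graph visiting every vertex as the required sequence $\Og_1,\dots,\Og_J$ (with repetitions allowed).

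\textbf{Constructing the cover.} Since $\Ga$ is compact and $\Ga=\bigcup_{j=1}^{m_0}\p' G_j(\ld_0)\subset\bigcup_{j=1}^{m_0}S_{G_j,\ld_0}$ with each $S_{G_j,\ld_0}=Q_{j,\ld_0}\cap\Og$ relatively open in $\Og$ (Lemma~\ref{lem-6-2:Dec}), there exists $\eta_0>0$ such that $\{\xi\in\Og:\dist(\xi,\Ga)<\eta_0\}\subset\bigcup_j S_{G_j,\ld_0}$. The compact set $\Og\setminus\{\xi\in\Og:\dist(\xi,\Ga)<\eta_0/2\}$ lies in $\inte\Og$ at distance $\ge\eta_0/2$ from $\Ga$, and can be covered by finitely many open cubes $Q_1,\dots,Q_M$ from a fixed grid of side length $s\le\eta_0/100$, each satisfying $4Q_i\subset\Og$. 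Set $\mathcal{A}:=\{S_{G_j,\ld_0}\}_{j=1}^{m_0}\cup\{Q_i\}_{i=1}^M$, a finite admissible cover of $\Og$ by relatively open sets.

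\textbf{The overlap graph.} Join $A,A'\in\mathcal{A}$ by an edge whenever $A\cap A'\ne\emptyset$. A short case check shows that every such non-empty intersection has non-empty $\RR^{d+1}$-interior: two overlapping grid cubes share a sub-box, hence a ball; an interior cube $Q_i$ and a boundary piece $S_{G_j,\ld_0}$ meet in $Q_{j,\ld_0}\cap Q_i$ (since $Q_i\subset\inte\Og$), an intersection of two open boxes; and two boundary pieces $S_{G_j,\ld_0},S_{G_k,\ld_0}$ that meet at some $\xi$ share the set $Q_{j,\ld_0}\cap Q_{k,\ld_0}\cap\Og$, whose $\RR^{d+1}$-open superset $Q_{j,\ld_0}\cap Q_{k,\ld_0}$ surrounds $\xi$ and therefore hits $\inte\Og$ in a non-empty open set. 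By finiteness of $\mathcal{A}$, the infimum $\ga_0>0$ over all non-empty intersections of the radii of balls fitting inside them is strictly positive. The graph is connected: otherwise $\mathcal{A}=\mathcal{C}\sqcup\mathcal{C}'$ with no edges between, and then $V:=\bigcup_{A\in\mathcal{C}}A$ and $W:=\bigcup_{A\in\mathcal{C}'}A$ partition $\Og$ into two non-empty, disjoint, relatively open subsets (disjoint because any common point would yield an edge), contradicting connectedness of $\Og$.

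\textbf{The chain and the main obstacle.} In a finite connected graph one easily produces, via a depth-first traversal of a spanning tree, a walk $A_{i_1},A_{i_2},\dots,A_{i_J}$ visiting every vertex with consecutive vertices adjacent; setting $\Og_s:=A_{i_s}$ gives $\bigcup_s\Og_s=\bigcup_iA_i=\Og$, and each $\Og_s\cap\Og_{s+1}$ contains an open ball of radius $\ga_0$. The main obstacle of the proof is the verification in the previous step that two touching members of $\mathcal{A}$ in fact share a full $\RR^{d+1}$-ball rather than merely a piece of $\Ga$; this is handled by the open-box structure $S_{G_j,\ld_0}=Q_{j,\ld_0}\cap\Og$ from Lemma~\ref{lem-6-2:Dec} (every point of $S_{G_j,\ld_0}\cap\Ga$ sits inside the $\RR^{d+1}$-open set $Q_{j,\ld_0}$) together with the $\eta_0/2$ buffer that keeps the interior cubes away from $\Ga$.
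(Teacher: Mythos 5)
Your proof is correct and follows essentially the same approach as the paper's: you build the same admissible cover (the relatively open pieces $S_{G_j,\ld_0}$ near $\Ga$ plus small interior cubes away from $\Ga$), you verify that any non-empty pairwise intersection contains an $\RR^{d+1}$-ball by exploiting the open-box description $S_{G_j,\ld_0}=Q_{j,\ld_0}\cap\Og$ and the fact that $\Og=\overline{\inte\Og}$, you deduce connectedness of the overlap graph from connectedness of $\Og$, and you extract the chain from a walk on the graph. The paper's proof of connectedness is phrased via a boundary point of $H=\bigcup_{E\in\mathcal{B}}E$, whereas yours invokes the open-partition characterization; and the paper writes the walk explicitly via $[E_1:E_2]\cup(E_2:E_1)\cup\cdots$ while you invoke a depth-first traversal — these are cosmetic variants of the same argument.
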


To state the fourth  lemma,  let  $\{\Og_s\}_{s=1}^{J}$  be the sequence of sets     in Lemma~~\ref{LEM-4-2-18-0}, and let   $H_m:=\bigcup_{j=1}^m \Og_j$ for $m=1,\dots, J$.
For $1\leq j\leq J$, define $\wh{\Og}_{j}=G_i$ if $\Og_j=S_{G_i, \ld_0}$ for some $1\leq i\leq m_0$;  and $\wh{\Og}_j =2Q$ if $\Og_j$ is an open  cube $Q$ such that $4 Q\subset \Og$.

\begin{lem}\label{REDUCTION}   
	If $0<p\leq \infty$ and $1\leq j<J$, then there exist constants $c_0,C>1$ depending only on $p$ and $\Og$  such that     
	$$ E_{c_0n} (f)_{L^p(H_{j+1})} \leq C \max \Bl\{ E_n (f)_{L^p(\wh{\Og}_{j+1})}, \   E_n (f)_{L^p (H_j)}\Br\}. $$
\end{lem}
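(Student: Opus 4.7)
The plan is to construct a polynomial approximant $Q \in \Pi_{c_0 n}^{d+1}$ on $H_{j+1}$ by gluing together best $L^p$-approximations on $H_j$ and on $\wh\Og_{j+1}$ via a polynomial cutoff, reusing the machinery already deployed in the proof of Theorem~\ref{polyPartition}. Let $P_1, P_2 \in \Pi_n^{d+1}$ be best $L^p$-approximations of $f$ on $H_j$ and on $\wh\Og_{j+1}$ respectively, so that $\|f-P_1\|_{L^p(H_j)} = E_n(f)_{L^p(H_j)}$ and $\|f-P_2\|_{L^p(\wh\Og_{j+1})} = E_n(f)_{L^p(\wh\Og_{j+1})}$. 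Pick $L>0$ with $\Og \subset B_L[0]$ and a constant $\tau \in (0,1)$ to be chosen small (of order $\ga_0/L$). Invoking Lemma~\ref{lem-4-3} exactly as in the proof of Theorem~\ref{polyPartition}, with $K=\Og_{j+1}$ and $G=\wh\Og_{j+1}$, produces a polynomial $R_n$ of degree at most $cn$ satisfying $0 \le R_n \le 1$ on $B_L[0]$, $1-R_n(\xi) \le \tau^n$ for $\xi \in \Og_{j+1}$, and $R_n(\xi) \le \tau^n$ for $\xi \in \Og \setminus \wh\Og_{j+1}$. I then set $Q:=(1-R_n)P_1 + R_n P_2 \in \Pi_{c_0 n}^{d+1}$ with $c_0:=c+1$ and claim $\|f-Q\|_{L^p(H_{j+1})}$ already achieves the desired bound.

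To verify this, I use the identity $f-Q = (1-R_n)(f-P_1) + R_n(f-P_2)$ together with the partition $H_{j+1} = A_1 \cup A_2 \cup A_3$, where $A_1 := H_j \cap \wh\Og_{j+1}$, $A_2 := H_j \setminus \wh\Og_{j+1}$, and $A_3 := \Og_{j+1} \setminus H_j$. On $A_1$ both $R_n$ and $1-R_n$ are bounded by $1$, giving a contribution of order $E_n(f)_{L^p(H_j)}+E_n(f)_{L^p(\wh\Og_{j+1})}$. On $A_2$, $R_n \le \tau^n$, giving $E_n(f)_{L^p(H_j)} + \tau^n \|f-P_2\|_{L^p(A_2)}$. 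On $A_3$, $1-R_n \le \tau^n$, giving $\tau^n \|f-P_1\|_{L^p(A_3)} + E_n(f)_{L^p(\wh\Og_{j+1})}$. Combining via the triangle inequality (for $p \ge 1$) or $p$-subadditivity (for $p < 1$) reduces matters to controlling the two tail quantities $\tau^n \|f-P_1\|_{L^p(A_3)}$ and $\tau^n \|f-P_2\|_{L^p(A_2)}$.

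The main obstacle is that neither $P_i$ is known to be near-best outside its home domain. The key trick is to substitute $f-P_1 = (f-P_2) + (P_2-P_1)$ on $A_3$, and symmetrically $f-P_2 = (f-P_1) + (P_1-P_2)$ on $A_2$; this reduces everything to estimating $\|P_1-P_2\|_{L^p(\Og)}$. By Lemma~\ref{LEM-4-2-18-0}, the set $H_j \cap \Og_{j+1}$ contains an open ball $B$ of radius $\ga_0$; Nikolskii's inequality followed by the standard Bernstein--Markov polynomial growth estimate (the same reasoning used in the proof of Theorem~\ref{polyPartition} to pass from $\|u_{\og_j}\|_{L^\infty(H\cap G)}$ to $\|u_{\og_j}\|_{L^\infty(B_L[0])}$) gives
\[
\|P_1-P_2\|_{L^p(\Og)} \le C\,n^{\gamma}(L/\ga_0)^n\,\|P_1-P_2\|_{L^p(B)},
\]
with $\gamma = \gamma(d,p)$. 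Since $B \subset H_j \cap \wh\Og_{j+1}$, the right-hand factor is bounded by
\[
\|P_1-P_2\|_{L^p(B)} \le C\bigl(\|f-P_1\|_{L^p(H_j)} + \|f-P_2\|_{L^p(\wh\Og_{j+1})}\bigr) = C\bigl(E_n(f)_{L^p(H_j)} + E_n(f)_{L^p(\wh\Og_{j+1})}\bigr).
\]
Choosing $\tau$ small enough that $\tau^n n^\gamma (L/\ga_0)^n \le 1$ for all $n \ge 1$ (mirroring the choice $\tau = \ga_0/(20L)$ in the proof of Theorem~\ref{polyPartition}), the tail quantities are absorbed into $C \max\{E_n(f)_{L^p(H_j)}, E_n(f)_{L^p(\wh\Og_{j+1})}\}$, and the definition $E_{c_0 n}(f)_{L^p(H_{j+1})} \le \|f-Q\|_{L^p(H_{j+1})}$ yields the desired conclusion.
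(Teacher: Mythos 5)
Your proof is correct and follows essentially the same strategy as the paper's: glue the two best approximants with a fast-transitioning polynomial cutoff $R_n$ from Lemma~\ref{lem-4-3}, split $H_{j+1}$ into three pieces according to where $R_n$ or $1-R_n$ is exponentially small, absorb the cross terms into $\|P_1-P_2\|_{L^p(\Og)}$, and control that quantity via the ball $B\subset H_j\cap\Og_{j+1}$ together with Lemma~\ref{lem-4-1}. The only differences from the paper are cosmetic: you take $P_2$ to be the best approximant on all of $\wh\Og_{j+1}=G$ rather than on the intermediate set $S_{G,1}$, and you split $H_{j+1}$ as $A_1\cup A_2\cup A_3$ rather than as $(H\cap S_{G,1})\cup(H\setminus S_{G,1})\cup S_{G,\ld_0}$; both variants are valid because $S_{G,1}\subset G$ and the cutoff $R_n$ transitions inside $\wh\Og_{j+1}\setminus\Og_{j+1}$ in either reading. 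Two minor points worth making explicit for completeness: (i) Lemma~\ref{lem-4-3} only handles the case $\Og_{j+1}=S_{G,\ld_0}$; when $\Og_{j+1}$ is a cube $Q$ with $4Q\subset\Og$ (so $\wh\Og_{j+1}=2Q$) one must instead invoke Lemma~\ref{lem-4-2}, as the paper notes; (ii) the bound $\|P_1-P_2\|_{L^p(\Og)}\le C\,n^{\gamma}(L/\ga_0)^n\|P_1-P_2\|_{L^p(B)}$ is better written directly via Lemma~\ref{lem-4-1}, which already packages the Nikolskii and growth factors into a single $(5\ld)^{n+(d+1)/p}$-type constant.
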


We also need a technical inequality which directly follows from the definition~\eqref{2-3-DT} and from the growth properties of the one-dimensional Ditzian-Totik modulus~\cite{Di-To}*{(4.1.3), p.~38} and~\cite{DiHI}*{(5.7)}. For any fixed $c>0$ 
\begin{equation}\label{eqn:DT-mod-growth}
	\omega^r_{\Omega,\varphi}(f,t)_p\le C \omega^r_{\Omega,\varphi}(f,ct)_p, \quad t>0,
\end{equation}
where $C$ is independent of $f$ and $t$.

Now we take Theorem~\ref{THM-4-1-18}, Lemma~\ref{LEM-4-2-18-0} and Lemma~\ref{REDUCTION}  for granted and proceed with the proof of  Theorem~\ref{Jackson-thm}.

\begin{proof}[Proof of Theorem~\ref{Jackson-thm}]
	Applying    Lemma~\ref{REDUCTION} $J-1$ times and recalling $H_J =\Og$, we obtain 
	\begin{equation}\label{6-3-0}E_{c_1n} (f)_{L^p(\Og)} \leq  C \max_{1\leq j\leq J} E_n (f)_{L^p (\wh{\Og}_j)},\end{equation}
	where $C, c_1>1$ depend only on  $p$ and $\Og$. 
	If ${\Og}_j =S_{G_i,\ld_0}$ for some $1\leq i\leq m_0$, then $\wh{\Og}_j =G_i$, and by  Theorem~\ref{THM-4-1-18}, 
	$$E_n (f)_{L^p(\wh{\Og}_j)} \leq \max_{1\leq i\leq m_0} E_n (f)_{L^p(G_i)}\leq C \og^r_\Og \left(f, \frac{c}{c_1n}\right)_p.$$
	If  $\Og_j=Q$ is a cube such that $4Q\subset \Og$, then $\wh{\Og}_j =2Q$ and by Lemma~\ref{lem-6-1-0},
	$$E_n (f)_{L^p(2Q)}\leq C \max_{1\leq j\leq d+1}\og^r_{2Q,\vi} (f, n^{-1}; e_j)_p\leq C \og_{\Og,\vi}^r (f, n^{-1})_p,$$
	where the last step uses the fact that for any   $S\subset \Og$,  
	\begin{equation*}\label{4-3-0-18}
		\max_{1\leq j\leq d+1}\og^r_{S,\vi} (f, t;  e_j)_{p}\leq C  \og_{\Og, \vi} ^r (f, t)_p.
	\end{equation*}
	By~\eqref{eqn:DT-mod-growth}, $\og_{\Og,\vi}^r (f, n^{-1})_p\le C\og_{\Og,\vi}^r (f, c/(c_1n))_p$, thus, in either case, we have 
	$$ E_n(f)_{L^p(\wh{\Og}_j)}\leq C \og_{\Og}^r \left(f, \frac{c}{c_1n}\right)_p.$$
	Theorem~\ref{Jackson-thm} then  follows  from the estimate~\eqref{6-3-0}.
\end{proof}

\begin{rem}
	\label{rem:constant-in-jackson}
	It is clear from the proof that a slightly stronger version of Theorem~\ref{Jackson-thm} is true. Namely, for arbitrary $c>0$, under the same hypotheses we obtain $E_n (f)_{L^p(\Og)} \leq C \og_\Og^r(f, c/n)_p$, where $C$ depends only on $\Omega$, $r$, $p$ and $c$. While it would be desirable to simply use the growth condition of the type~\eqref{eqn:DT-mod-growth} directly for our modulus $\og_\Og^r(f, t)_p$, it appears that establishing an analog of~\eqref{eqn:DT-mod-growth} for the tangential component of~$\og_\Og^r(f, t)_p$ is not immediate. We hope to obtain this in a future work.
\end{rem}

To  complete the reduction argument in this section, it remains  to prove  Lemma~\ref{LEM-4-2-18-0}  and    Lemma~\ref{REDUCTION}.

\subsection{Proof of Lemma~\ref{LEM-4-2-18-0}}
The proof of  Lemma~\ref{LEM-4-2-18-0}   is inspired by~\cite[p.~17]{To14} but written  in somewhat different language.
Let $S_j =S_{G_j, \ld_0}$ for $1\leq j\leq m_0$.  Note that  $S_j$   is an  open neighborhood of $\p' G_j(\ld)$  relative to the topology of $\Og$.
Since  $\p' G_j(\ld_0) \subset S_j\subset \Og$, and  $S_j$   is   open relative to the topology of $\Og$ for each $1\leq j\leq m_0$ ,  there exists $\va>0$ 
such that  $$ \Ga_\va:=\{ \xi\in\Og:\  \ \dist (\xi,\Ga)<16\sqrt{d+1}\va\}\subset \bigcup_{j=1}^{m_0} S_j.$$
Let us cover the remaining set $\Og\setminus \Ga_{\va}$ by finitely many  open cubes $Q_j$,  $j=m_0~+~1,\dots,  M_0$ of side length $\va$ such  that $4 Q_j\subset \Og$ for each $j$.
Thus, setting $E_j=S_j$ for $1\leq j\leq m_0$, and $E_j=Q_j$ for $m_0<j\leq M_0$, 
we have
$ \Og=\bigcup_{j=1}^{M_0} E_j.$ The required sets $\Omega_s$, $s=1,\dots,J$, will be selected from the family of the sets $\{E_j\}_{j=1}^{M_0}$, with possibly choosing the same set multiple times, so that each intersection $\Omega_s\cap\Omega_{s+1}$, $s=1,\dots,J-1$, contains a non-empty open ball.

First, note that if $E_j\cap E_{j'}\neq \emptyset$ for some  $1\leq j,  j'\leq M_0$, then $E_j\cap E_{j'}$ must contain a nonempty open ball in $\RR^{d+1}$.  Indeed, 
since  $E_j\cap E_{j'}$ is open relative to the topology of $\Og$, there exists an open set $V$ in $\RR^{d+1}$ such that $V\cap \Og=E_j\cap E_{j'}\neq \emptyset$. 
Since $\Og$ is the closure of an open set in $\RR^{d+1}$,  the set $V\cap \Og$ must contain   an interior point of $\Og$.

Next, we set  $\mathcal{A}=\{E_1,\dots, E_{M_0}\}$.  We say   two sets $A, B$ from the collection $\mathcal{A}$ are connected with each other  if there exists  a sequence of distinct sets $A_1, \dots, A_n$ from the collection $\mathcal{A}$  such that $A_1=A$, $A_n=B$ and $A_i\cap A_{i+1}\neq \emptyset$ for $i=1,\dots, n-1$, in which case we  write $[A:B]=\bigcup_{j=1}^n A_j$ and $(A: B)=\bigcup_{j=2}^{n-1} A_j$.
We  claim   that every set in the collection $\mathcal{A}$ is connected with the set $E_1$.  Once this claim is proved, then  Lemma~\ref{LEM-4-2-18-0} will follow since 
\begin{align*}\label{connected graph}
	\Og=\bigcup_{j=1}^{M_0} E_j = [E_1: E_2] \cup (E_2: E_1)\cup[E_1: E_3]\cup (E_3:E_1)\cup   \dots \cup [E_1: E_{M_0}]. 
\end{align*}

To show the claim, 
let  $\mathcal{B}$ denote  the collection of all    sets $E_j$ from the collection $\mathcal{A}$   that are connected with $E_1$. 
Assume that $\mathcal{A}\neq \mathcal{B}$. We obtain a contradiction as follows.  Let  $H:=\bigcup_{E\in\mathcal{B}} E$.  Then a set $E$ from the collection $\mathcal{A}$ is connected with $E_1$ (i.e., $E\in\mathcal{B}$)  if and only if $E\cap H\neq \emptyset$.
Since $\mathcal{A}\neq \mathcal{B}$,   there exists $E\in\mathcal{A}$ such that $E\cap H=\emptyset$, which
in particular, implies  that  $H$ is a proper subset of $\Og=\bigcup_{A\in \mathcal{A}} A$.
Since $\Og$ is a connected subset of $\RR^{d+1}$, $H$ must have  nonempty boundary  relative to the topology of $\Og$. Let $x_0$ be a boundary point of $H$ relative to the topology of $\Og$. Since $H$ is open relative to  $\Og$,  $x_0\in \Og\setminus H=\bigcup_{A\in\mathcal{A}} A \setminus H$.  Let $A_0\in\mathcal{A}$ be   such that    $x_0\in A_0$. Then   $A_0$ is an open neighborhood of  $x_0$ relative to the topology of $\Og$, and hence   $A_0\cap H\neq \emptyset$, which  in turn implies   $A_0\in \mathcal{B}$ and $A_0\subset H$. But this is impossible as $x_0\notin H$.

\subsection{ Proof of  Lemma~\ref{REDUCTION}}

We now  turn to the proof of  Lemma~\ref{REDUCTION}.  The proof relies on three additional  lemmas. 
The first one  is similar to~\cite[Lemma~14.3]{To14}, however, we could not follow the conclusion of its proof in~\cite{To14}, where some averaging argument appears to be missing. Our proof below uses a multivariate Nikol'skii inequality which simplifies the transition to the multivariate case.
\begin{lem}\label{lem-4-1}
	If  $B$  is  a  ball in $\RR^{d+1}$ and $\ld>1$, then  for each $P\in\Pi_n^{d+1}$ and $0<q\leq \infty$,
	\begin{equation}\label{eq1}
		\|P\|_{L^q (\ld B)} \leq C_{d,q}(5 \ld)^{n+\f {d+1}q} \|P\|_{L^q (B)}.
	\end{equation}
\end{lem}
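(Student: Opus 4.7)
My plan is to reduce the statement to the case when $B$ is the closed unit ball centered at the origin (the estimate is invariant under translations and affine dilations of $B$), and then combine a one-variable Chebyshev growth estimate along rays with a multivariate Nikol'skii inequality on $B$.

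For the $L^\infty$ case, I would fix $y\in\lambda B$ and consider the univariate polynomial $Q(t):=P(t y/\|y\|)$ of degree at most $n$. Since $|Q(t)|\le\|P\|_{L^\infty(B)}$ on $[-1,1]$, the classical Chebyshev extremal inequality gives
\[
|P(y)|=|Q(\|y\|)|\le T_n(\|y\|)\,\|P\|_{L^\infty(B)}\le (2\lambda)^n\|P\|_{L^\infty(B)},
\]
using $T_n(\lambda)\le(2\lambda)^n$ for $\lambda\ge 1$ (which follows from $T_n(\lambda)=\tfrac12\bigl((\lambda+\sqrt{\lambda^2-1})^n+(\lambda-\sqrt{\lambda^2-1})^n\bigr)$ and $\lambda+\sqrt{\lambda^2-1}\le 2\lambda$). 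Taking the supremum over $y\in\lambda B$ settles the case $q=\infty$, since $(2\lambda)^n\le(5\lambda)^n$ and the exponent $(d+1)/q$ in \eqref{eq1} vanishes.

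For $0<q<\infty$, I would chain three estimates. First, the trivial $L^\infty\to L^q$ bound together with the $L^\infty$ case above gives
\[
\|P\|_{L^q(\lambda B)}\le|\lambda B|^{1/q}\|P\|_{L^\infty(\lambda B)}\le\lambda^{(d+1)/q}|B|^{1/q}(2\lambda)^n\|P\|_{L^\infty(B)}.
\]
To return from $\|P\|_{L^\infty(B)}$ to $\|P\|_{L^q(B)}$, I invoke the multivariate Nikol'skii inequality on the unit ball: there exists $\beta=\beta(d,q)>0$ such that
\[
\|P\|_{L^\infty(B)}\le C(d,q)\,n^{\beta}\,|B|^{-1/q}\|P\|_{L^q(B)}\qquad\text{for every }P\in\Pi_n^{d+1}.
\]
This standard inequality can be proved by integrating the univariate Nikol'skii estimate in polar coordinates or via the reproducing kernel of $(\Pi_n^{d+1},L^2(B))$. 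Substituting yields
\[
\|P\|_{L^q(\lambda B)}\le C(d,q)\,\lambda^{(d+1)/q}(2\lambda)^n n^{\beta}\|P\|_{L^q(B)}.
\]

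The final step is to absorb the polynomial factor $n^{\beta}$ into the exponential. Since $\sup_{n\ge 0}n^{\beta}(2/5)^n<\infty$, we have $n^{\beta}\le C_{\beta}(5/2)^n$, hence $(2\lambda)^n n^{\beta}\le C_{\beta}(5\lambda)^n$. Combined with $\lambda^{(d+1)/q}\le(5\lambda)^{(d+1)/q}$, this gives the desired inequality
\[
\|P\|_{L^q(\lambda B)}\le C_{d,q}(5\lambda)^{n+(d+1)/q}\|P\|_{L^q(B)}.
\]
The only delicate point is the multivariate Nikol'skii inequality with explicit polynomial dependence on $n$, but the precise value of $\beta$ plays no role in the final bound: any polynomial-in-$n$ factor is absorbed when enlarging the base of the exponential from $2\lambda$ to $5\lambda$.
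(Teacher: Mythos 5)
Your proof is correct and takes essentially the same route as the paper: reduce to the unit ball, obtain the $q=\infty$ growth bound by a Chebyshev-type estimate along rays (the paper cites Totik's Lemma~4.2 with constant $(4\lambda)^n$, while you rederive the slightly sharper $(2\lambda)^n$ directly from the Chebyshev extremal inequality), and then for $q<\infty$ chain H\"older's inequality, the $L^\infty$ case, and Nikol'skii's inequality on the unit ball, finally absorbing the polynomial-in-$n$ factor by widening the base of the exponential to $5\lambda$.
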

\begin{proof} By dilation and translation, we may assume that $B=B_1[0]$.~\eqref{eq1} with the   explicit constant  $(4\ld)^n$   was proved  in~\cite[Lemma~4.2]{To14} for $q=\infty$. 
	For $q<\infty$, we have 	
	\begin{align*}
		\|P\|_{L^q (\ld B)} &\leq C_d \ld^{\f{d+1}q} \|P\|_{L^\infty (\ld B)}\leq C_d \ld^{\f {d+1}q} (4\ld)^n \|P\|_{L^\infty (B)}\\
		&\leq C_{d,q} \ld^{\f {d+1}q} (4\ld)^n n^{\f {d+1}q} \|P\|_{L^q (B)}
		\leq C_{d,q}  (5\ld)^{n+\f {d+1}q} \|P\|_{L^q (B)},
	\end{align*}
	where we used H\"older's inequality in the first step,~\eqref{eq1}  for the already proven case $q=\infty$ in the second step, and Nikol'skii's  inequality for algebraic polynomials on the unit ball (see~\cite{Da06} or \cite[Section~7]{Di-Pr16}) in the third step. 
\end{proof}

The second lemma  is probably well known. It can be proved in the same way as in~\cite[Lemma~4.3]{To14}.
\begin{lem}\label{lem-4-2} Let $I$ be a parallelepiped in $\RR^d$.  Then given parameters  $R>1$ and $\theta,\mu\in (0,1)$, there exists a polynomial  $P_n$ of  degree at most $ C(\theta, \mu, R, d) n$  such that  $ 0\leq P_n(\xi)\leq 1$ for  $ \xi\in B_R[0]$,  
	$ 1-P_n(\xi) \leq \theta^n$ for  $\xi\in \mu I$, and 
	$P_n(\xi)\leq \theta^n$ for $\xi\in  B_R[0]\setminus I$,
	where  $\mu I$ denotes the dilation of $I$  from its center by a factor $\mu$.
\end{lem}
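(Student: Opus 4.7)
The plan is to reduce the multi-dimensional assertion first to the case of a cube via an affine change of variables, then to a one-dimensional bump problem by tensorization, and finally to produce the one-dimensional bump from Chebyshev polynomials, essentially following~\cite[Lemma~4.3]{To14}.

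First, since a parallelepiped $I\subset\RR^d$ is the image of $[-1,1]^d$ under a nonsingular affine map $T:\RR^d\to\RR^d$, pulling back via $T$ reduces everything to the case $I=[-1,1]^d$. The preimage $T^{-1}(B_R[0])$ is a bounded ellipsoid contained in some ball $B_{R'}[0]$ with $R'=R'(R,I,d)$, and polynomial degrees are preserved under $T$; the center of the rescaling is the same in both pictures.

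Next, I would construct $P_n$ as a tensor product $P_n(x_1,\dots,x_d)=\prod_{j=1}^d p_n(x_j)$, where $p_n$ is a one-variable polynomial of degree $\leq C_1(\vartheta,R')n$ satisfying $0\le p_n\le 1$ on $[-R',R']$, $1-p_n\le\vartheta^n$ on $[-\mu,\mu]$, and $p_n\le\vartheta^n$ on $[-R',R']\setminus[-1,1]$. Since $B_{R'}[0]\subset[-R',R']^d$, the bound $0\le P_n\le 1$ on $B_{R'}[0]$ is immediate. On $[-\mu,\mu]^d=\mu I$ one has $P_n\ge(1-\vartheta^n)^d\ge 1-d\vartheta^n$, and on $B_{R'}[0]\setminus[-1,1]^d$ at least one factor is $\le\vartheta^n$ while the others are bounded by $1$, so $P_n\le\vartheta^n$. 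Choosing $\vartheta<\theta$ small enough to absorb the factor $d$ into the exponent (at the cost of enlarging the constant in the degree) yields all three bounds with $\theta^n$.

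The one-dimensional bump $p_n$ is the classical ingredient. Fixing an auxiliary $\rho\in(\mu,1)$, let $T_m$ denote the $m$-th Chebyshev polynomial, which satisfies $|T_m|\le 1$ on $[-1,1]$ and $|T_m(u)|\ge \tfrac{1}{2}(u+\sqrt{u^2-1})^m$ for $u\ge 1$. Composing $T_m$ with the quadratic $\phi(t):=(2t^2-\mu^2-\rho^2)/(\rho^2-\mu^2)$, which sends $[-\mu,\mu]$ into $[-1,0]$, the endpoints $\pm\rho$ to $1$, and $\{|t|\ge 1\}$ to values uniformly exceeding a constant $c(\mu,\rho)>1$, produces polynomials whose appropriate normalizations decay geometrically on both $[-\mu,\mu]$ and $[-R',R']\setminus[-1,1]$. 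One then combines such a factor, giving decay outside $[-1,1]$, with an analogous Chebyshev factor providing closeness to $1$ on $[-\mu,\mu]$, and raises the combination to a large power to enforce the unit bound on $[-R',R']$ while preserving the geometric rates.

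The main obstacle is arranging all three properties simultaneously: the uniform bound $0\le P_n\le 1$ on the possibly much larger ball $B_R[0]$ prevents the polynomial from overshooting, while the two geometric-decay conditions near $\mu I$ and outside $I$ pull in opposite directions. The standard way around this is precisely the two-stage Chebyshev construction outlined above, where the auxiliary dilation $\rho\in(\mu,1)$ provides the room to separate the ``interior'' and ``exterior'' bumps; the remainder of the argument is bookkeeping on exponents and the rescaling from $B_R[0]$ to $B_{R'}[0]$.
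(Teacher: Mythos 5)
Your high-level plan --- affine reduction to the cube $[-1,1]^d$, tensor product of one-variable bumps, and a Chebyshev-based one-dimensional construction --- is exactly the route the paper intends; the paper gives no proof and simply says the lemma ``can be proved in the same way as in \cite{To14}*{Lemma~4.3}'', and the tensorization step you outline is correct (including the bookkeeping that shrinks $\vartheta$ to absorb the factor $d$). One remark: your $R'$ necessarily depends on $I$ through the affine map $T$, so the degree bound you actually obtain is $C(\theta,\mu,R,I,d)\,n$. This $I$-dependence is unavoidable (a thin or small $I$ forces a sharp transition that Markov's inequality rules out for bounded degree), so the stated constant $C(\theta,\mu,R,d)$ in the lemma should really be read as depending on $I$ as well; this is harmless in the paper's applications because $I$ is a fixed box determined by $\Omega$.

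The genuine gap is in the one-dimensional construction. First, a concrete error: your quadratic $\phi(t)=(2t^2-\mu^2-\rho^2)/(\rho^2-\mu^2)$ does \emph{not} send $[-\mu,\mu]$ into $[-1,0]$. One computes $\phi(\pm\mu)=-1$ and $\phi(0)=-(\mu^2+\rho^2)/(\rho^2-\mu^2)<-1$, so $\phi([-\mu,\mu])\subset(-\infty,-1]$; consequently the normalized ratio $T_m(\phi(t))^2/T_m(\phi(0))^2$ \emph{decays} away from $t=0$ inside $[-\mu,\mu]$ (since $|T_m|$ is increasing in $|u|$ on $|u|\ge 1$ and $|\phi(t)|<|\phi(0)|$ for $0<|t|\le\mu$), which is the opposite of what you need there. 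Second, and more seriously, the closing step --- ``raises the combination to a large power to enforce the unit bound $\ldots$ while preserving the geometric rates'' --- does not work as stated: if $0\le q\le 1$, then $q^N$ decays wherever $q<1$, in particular on all of $[-\mu,\mu]$ except at a single point, so it cannot stay $\ge 1-\theta^n$ there; and $1-(1-q)^N$ tends to $1$ also on $[-R',R']\setminus[-1,1]$, where you need decay. The step that is actually needed (and is implicit in \cite{To14}) is a two-stage \emph{composition}, not a power: first build a \emph{fixed}-degree polynomial $q$ with $0\le q\le 1$ on $[-R',R']$, $q\ge 3/4$ on $[-\mu,\mu]$, and $q\le 1/4$ on $[-R',R']\setminus[-1,1]$ (a bounded degree $m_0(\mu,R')$ suffices, e.g.\ by Weierstrass approximation and truncation, or a fixed Chebyshev bump); then compose with a degree-$N$ ``step'' polynomial $B_N\colon[0,1]\to[0,1]$ satisfying $B_N(u)\le\theta^N$ for $u\le 1/4$ and $B_N(u)\ge 1-\theta^N$ for $u\ge 3/4$ (for instance a Bernstein tail sum $\sum_{k>N/2}\binom{N}{k}u^k(1-u)^{N-k}$, with the exponential rates coming from Chernoff bounds). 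Setting $p_n:=B_N\circ q$ with $N\sim n$ gives degree $\le m_0 N$ and all three required properties simultaneously.
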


As a consequence of Lemma~\ref{lem-4-2}, we have

\begin{lem}\label{lem-4-3}    Let   $G\subset \Og$  be   a domain of special type attached to $\Ga$, and $S_{G, \mu}: =\Og \cap Q_\mu$ be  as defined in Lemma~\ref{lem-6-2:Dec} with  $\mu\in (\f 12, 1]$. Let $R\ge 1$ be such that $Q_1 \cup \Og \subset B_R[0]$. 
	Then given   $\ld\in (\f 12, 1)$ and $\t \in (0,1)$, there exists a polynomial $P_n$ of degree at most $C(d,  \t,  R,  G, \ld)
	n$ with the properties that  $0\leq P_n(\xi)\leq 1$ for $\xi\in B_R[0]$,  $1-P_n(\xi)\leq \t^n$ for $\xi \in S_{G,\ld}$ and $P_n(\xi)\leq \t^n$ for $\xi\in \Og\setminus  S_{G,1}$.	\end{lem}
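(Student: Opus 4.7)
The plan is to reduce the statement to Lemma~\ref{lem-4-2} applied to the rectangular box $I=Q_1$ provided by Lemma~\ref{lem-6-2:Dec}. The key observation is that the sets $S_{G,\lambda}$ and $\Omega\setminus S_{G,1}$ are cut out of $\Omega$ by $Q_\lambda$ and $Q_1$, respectively, so the problem is really about separating $\overline{Q_\lambda}$ from $B_R[0]\setminus Q_1$ by an algebraic polynomial.

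First, I would invoke Lemma~\ref{lem-6-2:Dec} to obtain the open rectangular boxes $Q_\lambda, Q_1$ in $\RR^{d+1}$ with $S_{G,\lambda}=Q_\lambda\cap\Og$, $S_{G,1}=Q_1\cap\Og$, and the crucial inclusion $\overline{Q_\lambda}\subset Q_1$ (valid because $\lambda<1$). Since $\overline{Q_\lambda}$ is a compact subset of the open rectangular box $Q_1$, one can choose a dilation factor $\mu_0=\mu_0(G,\lambda)\in(0,1)$ (sufficiently close to $1$) such that
\[
\overline{Q_\lambda}\subset \mu_0 Q_1,
\]
where $\mu_0 Q_1$ denotes the dilation of $Q_1$ from its center by the factor $\mu_0$. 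This is a purely geometric step and the constant $\mu_0$ depends only on $G$ and $\lambda$.

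Next, I would apply Lemma~\ref{lem-4-2} to the parallelepiped $I:=Q_1\subset\RR^{d+1}$ with the parameters $R$ (as given), $\theta$ (as given), and $\mu:=\mu_0$. This yields a polynomial $P_n$ of degree at most $C(d,\theta,R,G,\lambda)\,n$ satisfying $0\le P_n\le 1$ on $B_R[0]$, $1-P_n\le\theta^n$ on $\mu_0 Q_1$, and $P_n\le\theta^n$ on $B_R[0]\setminus Q_1$.

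Finally, I would verify the three conclusions. The first is immediate. For the second, observe that $S_{G,\lambda}=Q_\lambda\cap\Og\subset\overline{Q_\lambda}\subset\mu_0 Q_1$, so $1-P_n(\xi)\le\theta^n$ on $S_{G,\lambda}$. For the third, $\Og\setminus S_{G,1}=\Og\setminus(Q_1\cap\Og)=\Og\setminus Q_1\subset B_R[0]\setminus Q_1$, so $P_n(\xi)\le\theta^n$ on $\Og\setminus S_{G,1}$. There is no real obstacle here: the content of the proof is entirely in Lemma~\ref{lem-6-2:Dec} (to realize the sets $S_{G,\mu}$ as intersections with nested boxes) and in Lemma~\ref{lem-4-2} (the standard separating-polynomial construction on boxes); the present lemma is essentially a repackaging.
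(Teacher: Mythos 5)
Your proposal is correct and takes essentially the same approach as the paper: use $\overline{Q_\lambda}\subset Q_1$ from Lemma~\ref{lem-6-2:Dec}, apply Lemma~\ref{lem-4-2} with $I=Q_1$, and observe $\Og\setminus S_{G,1}=\Og\setminus Q_1\subset B_R[0]\setminus Q_1$. The only difference is that you make explicit the compactness step of choosing $\mu_0<1$ with $\overline{Q_\lambda}\subset\mu_0 Q_1$, which the paper leaves implicit.
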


\begin{proof}
	Since $\ld<1$ and  $Q_\ld$ is an open  rectangular box such that  $\overline{Q_\ld}\subset Q_1$, it follows by Lemma~\ref{lem-4-2} that  there exists a  polynomial $P_n$  of degree at most $Cn$ such that 
	$0\leq P_n(\xi)\leq 1$ for all $\xi\in B_{R} [0]$,  $1-P_n(\xi) \leq \ta^n$ for all $\xi\in Q_\ld$ and $P_n(\xi) \leq \ta^n$ for all $\xi\in B_R[0]\setminus Q_1$.
	To complete the proof, we just need to observe that 
	$$ \Og \setminus S_{G,1} =\Omega \setminus (Q_1 \cap \Og)=\Omega\setminus Q_1\subset B_R[0]\setminus Q_1.$$
\end{proof}

We are now in a position to prove Lemma~\ref{REDUCTION}.

\begin{proof}[Proof of Lemma~\ref{REDUCTION}]

	The proof is essentially a repetition of that of~\cite[Lemma~4.1]{To14} or~\cite[Lemma~3.3]{To17} for our situation.   Let $R>1$ be such that $\Og\subset B_R[0]$, and set  $\theta:=\min\{\frac{\ga_0}{5R}, \f12\}$.   Write $H=H_j$ and  $S=\Og_{j+1}$.
	Without loss of generality, we may assume that $S=S_{G,\ld_0}$  for some  domain $G$ of special type attached to $\Ga$. (The  case  when  $S$ is a cube $Q$ such that $4Q\subset \Og$ can be proved similarly using Lemma~\ref{lem-4-2} instead of Lemma~\ref{lem-4-3}).  Then $S_{G,\ld_0}\cap H$ contains a ball $B$  of radius $\ga_0$. 	 
	By  Lemma~\ref{lem-4-3}, there exists  a polynomial $R_n$ of degree $\leq C(d, R, G)n$ such that
	$0\leq R_n(x)\leq 1$ for all $x\in B_{R}[0]$, $R_n(x)\leq \theta^{-n}$ for $x\in \Og\setminus  S_{G,1}$ and $1-R_n(x)\leq \theta^{-n}$ for $x\in  S_{G,\ld_0}$.
	Let 
	$P_1, P_2\in\Pi_n^{d+1}$ be such that 
	$$ E_n(f)_{L^p(S_{G,1})} =\|f-P_1\|_{L^p(S_{G,1})}\   \  \text{and}\  \   \   E_n(f)_{L^p(H)} =\|f-P_2\|_{L^p(H)}.$$
	Define  
	$$ P(x):=R_n(x) P_1(x) +(1-R_n(x)) P_2(x)\in \Pi_{cn}^{d+1}.$$
	Then  
	\begin{align*}
		E_{cn} (f)_{L^p(H_{j+1})}&\leq 
		\|f-P\|_{L^p(H\cup S_{G,\ld_0})}\\
		& \leq  \|f-P\|_{L^p(H\cap S_{G,1})}+\|f-P\|_{L^p(H\setminus S_{G,1})}+\|f-P\|_{L^p(S_{G,\ld_0})}.
	\end{align*}  
	
	First, we can estimate the term $\|f-P\|_{L^p(H\cap S_{G,1})}$ as follows: 
	\begin{align*}
		\|f-P\|_{L^p(H\cap S_{G,1})}&=\|R_n (f-P_1) +(1-R_n) (f-P_2)\|_{L^p(H\cap  S_{G,1})}\\
		&\leq C_p \max\Bl\{\|f-P_1\|_{L^p(S_{G,1})}, \  \|f-P_2\|_{L^p(H)}\Br\}\\
		&\leq C_p \max \Bl\{ E_n (f)_{L^p(S_{G,1})}, E_n(f)_{L^p(H)} \Br\}.
	\end{align*}
	
	Second, we show  
	\begin{align}\label{6-7-00}
		\|f-P\|_{L^p(H\setminus S_{G,1})}\leq  & C_{p,\ga_0,R} \max \Bl\{ E_n (f)_{L^p(S_{G,1})}, E_n(f)_{L^p(H)} \Br\}.
	\end{align}
	Indeed,   we have 
	\begin{align}
		\|f-P\|_{L^p(H\setminus S_{G,1})}&=\|( f-P_2) + R_n (P_2-P_1)\|_{L^p(H\setminus S_{G,1})}\notag\\
		&\leq C_p  E_n(f)_{L^p(H)}+ C_p \theta^{n} \|P_1-P_2\|_{L^p(\Og)}.\label{second}\end{align}
	However, by  Lemma~\ref{lem-4-1}, 
	\begin{align}
		\|P_1-P_2\|_{L^p (\Og)}& \leq \|P_1-P_2\|_{L^p (B_{R}[0])}\leq C\Bl( \f {5 R}{\ga_0}\Br)^{n+\f {d+1}p} \|P_1-P_2\|_{L^p(B)}\notag\\
		&\leq C\Bl( \f {5 R}{\ga_0}\Br)^{n+\f {d+1}p} \|P_1-P_2\|_{L^p(H\cap S_{G,1})}\notag\\
		&\leq C(R, d, \ga_0,p) \theta^{-n} \max\Bl\{E_n(f)_{L^p(S_{G,1})}, \  E_n(f)_{L^p(H)}\Br\}.\label{3-2-eq}
	\end{align}
	Thus, combining~\eqref{second} with~\eqref{3-2-eq}, we obtain~\eqref{6-7-00}.
	
	Finally, we estimate the term $\|f-P\|_{L^p(S_{G,\ld_0})}$  as follows: 
	\begin{align*}
		\|f-P\|_{L^p(S_{G,\ld_0})} & =\|f-P_1 +(1-R_n) (P_1-P_2)\|_{L^p(S_{G,\ld_0})} \\
		&\leq C_p \|f-P_1\|_{L^p(S_{G,1})}  + C_p \theta^{n} \|P_1-P_2\|_{L^p (\Og)}\\
		&\leq C_{p,\ga_0,R}\max\Bl\{E_n(f)_{L^p(S_{G,1})}, \  E_n(f)_{L^p(H)}\Br\},
	\end{align*}
	where the last step uses~\eqref{3-2-eq}.
	
	Now putting the above estimates together, and noticing $S_{G,1} \subset G=\wh{\Og}_{j+1}$, we  complete the proof of  Lemma~\ref{REDUCTION}.
\end{proof}

\section{The  direct Jackson theorem}\label{ch:direct}

\subsection{Jackson inequality on domains of special type}\label{Sec:8}
We will first prove  the Jackson inequality,  Theorem~\ref{THM-4-1-18}, on  a domain $G$ of special type that is attached to $\Ga=\p \Og$. Without loss of generality,
we may assume that
\begin{align}\label{standard}
	G:=\{ (x, y):\  \  x\in (-b,b)^{d},\   \  g(x)-1\le  y\leq  g(x)\},
\end{align}
where $b\in (0,(2\sqrt{d})^{-1})$ is the base size of $G$,  and  $g$ is a $C^2$-function on $\RR^d$ satisfying that $\min_{x\in [-4b, 4b]^d} g(x)\ge4$. We may choose the base size $b$ to be sufficiently small so that 
\begin{equation}\label{8-1-18}
	\max_{x\in [-4b, 4b]^d}  \|\nabla g(x)\|\leq \f 1{200db}
	\quad\text{and}\quad  \|\nabla^2 g\|_{L^\infty ([-b, b]^d)}\le \frac1{1600 b^2}.
\end{equation}

We first   recall some  notations from Section~\ref{sec:5} and Section~\ref{modulus:def}.
Given  $n\in\NN$,  the    partition $\{\Delta_{\bfi}\}_{\bfi\in\Ld_n^d}$ of the cube $[-b,b]^d$ is  defined  by 
\begin{equation*}\label{partition-a}\Delta_{\bfi}:=[t_{i_1}, t_{i_1+1}]\times \dots \times [t_{i_{d}}, t_{i_{d}+1}]  \   \  \ \text{with}\  \    t_{i}=\Bl(-1+\f {2i}n\Br)b,
\end{equation*}
where 
$\Ld_n^d:=\{ 0, 1,\dots, n-1\}^d\subset \ZZ^d$ is the  index set.  For simplicity, we also set  $t_i=-b$ for  $i<0$, and $t_i =b$ for $i>n$, and therefore, $\Delta_{\ib}$ is defined for all $\ib\in\ZZ^d$.
Next,  the sequence,
\begin{equation}\label{8-3-0-18}
	\al_j:=2\al \sin^2 \Bl(\f {j\pi}{2N}\Br),\   \  j=0,1,\dots, N:=2\ell_1 n, 
\end{equation}
forms a Chebyshev partition of the interval $[0, 2\al]$, where 
$\al:= 1/(2\sin^2\f \pi{2\ell_1})$,  and  $\ell_1$ is   a fixed  large  positive integer  for which~\eqref{5-2-18} is satisfied. 
Note that $\al_n=1$, and 
\begin{align}\label{8-4-18-0}
	\f {4j\al} {N^2} \leq \al_j-\al_{j-1} \leq \f { \pi^2 j\al} {N^2},\   \  j=1,\dots, N.
\end{align}
Finally, a  partition of the domain $G$ is  defined  as   
\begin{align*}
	G&=\Bl\{(x,y):\  \  x\in [-b,b]^d,\   \   g(x)-y\in [0,1]\Br\} =\bigcup_{(\bfi,j)\in\Ld_n^{d+1}} I_{\mathbf{i},j},
\end{align*}
where 
$$I_{\mathbf{i},j}:=\Bl\{ (x, y):\  \  x\in \Delta_{\bfi},\  \   g(x)-y\in [\al_{j}, \al_{j+1}]\Br\}.$$

Next,  we introduce a few  new notations for this section.  Without loss of generality, we assume that $n\ge 50$. By~\eqref{8-1-18}, we can select $10\leq m_0, m_1\leq n/5$ to be two fixed large integer parameters  satisfying 
\begin{equation}\label{8-5-18}
	m_1\ge \f {32\ell_1^2 m_0^2 b^2}{\al} \|\nabla^2 g\|_{L^\infty ([-b, b]^d)}.
\end{equation}
We define,  for $\bfi\in \Ld_n^d$,  
$$\Delta_{\bfi}^\ast =[t_{i_1-m_0}, t_{i_1+m_0}]\times [t_{i_2-m_0}, t_{i_2+m_0}]\times \dots\times [t_{i_{d}-m_0}, t_{i_{d}+m_0}],$$
and for $(\ib, j) \in\Ld_n^{d+1}$, 
$$I_{\bfi,j}^\ast:=\Bl\{ (x, y):\  \ x\in \Delta_{\bfi}^\ast,\   \  \al^\ast_{j-m_1}\leq g(x)-y\leq \al^\ast_{j+m_1}\Br\},$$
where
$\al_j^\ast =\al_j$ if $0\leq j\leq n$,  $\al_j^\ast =0$ if $j<0$ and $\al_j^\ast =1$ if $j>n$.  
Let $x_{\bfi}^\ast$ be   
an arbitrarily given  point    in the set $ \Delta_{\bfi}^\ast$.   Denote by   $\zeta_{k}(x_{\bfi}^\ast)$  the unit tangent vector to the boundary $\Ga$ at the point 
$({x}^\ast_{\bfi}, g({x}^\ast_{\bfi}) )$  that is  parallel to the   $x_kx_{d+1}$-plane and satisfies $\zeta_{k} (x_{\bfi}^\ast)\cdot e_k>0$ for $k=1,\dots, d$; that is,  
$\zeta_{ k} (x_{\bfi}^\ast):= \f { e_k + \p_k g( x_{\bfi}^\ast) e_{d+1} }{\sqrt{1+|\p_k g( x_{\bfi}^\ast)|^2}}.$
Set
$$\mathcal{E}(x_{\bfi}^\ast):=\{\zeta_{1} (x_{\bfi}^\ast), \dots, \zeta_{d} (x_{\bfi}^\ast)\},\   \  \ib\in\Ld_n^d.$$
By Taylor's theorem, we have 
\begin{equation*}\label{8-9-18}
	\Bl|g(x) -H_{\bfi}(x)  \Br| \leq M_0 n^{-2},\   \  \forall x\in \Delta_{\bfi}^\ast,
\end{equation*}
where     $$H_{\bfi} (x):=g(x_{\bfi}^\ast)+ \nabla g(x_{\bfi}^\ast)\cdot (x-x_{\bfi}^\ast),\   \  x\in\RR^d,$$
and 
$M_0:=8 m_0^2 b^2 \|\nabla^2 g\|_{L^\infty( [-b,b]^d)}+C_d A_0.$
Here we recall that $A_0$ is the parameter in~\eqref{eqn:a0}. 
Thus, setting 
\begin{align}\label{8-7-1-18}
	S_{\bfi,j}:=\Bl\{ (x,y):  x\in\Delta_{\bfi}^\ast,    H_{\bfi} (x) -\al^\ast_{j+m_1} +\f {M_0} {n^2}\leq y\leq H_{\bfi}(x) -\al^\ast_{j-m_1} -\f {M_0} {n^2}\Br\}
\end{align}
and 
\begin{equation}\label{8-8-1}
	S_{\bfi, j}^\ast:=\Bl\{ (x,y):\  \  x\in\Delta_{\bfi}^\ast,\   \  H_{\bfi} (x) -\al^\ast_{j+m_1} -\f {M_0} {n^2}\leq y\leq H_{\bfi}(x) -\al^\ast_{j-m_1} +\f {M_0} {n^2}\Br\},
\end{equation}
we have  
\begin{equation}\label{8-7-0}
	S_{\bfi,j} \subset I_{\bfi,j}^\ast \subset S_{\bfi,j}^\ast,\   \  (\bfi, j)\in\Ld_n^{d+1}.
\end{equation}
On the other hand, it is easily seen from~\eqref{8-3-0-18},~\eqref{8-5-18} and~\eqref{8-4-18-0} that $S_{\bfi,j}\neq \emptyset$ and 
\begin{equation*} \al^\ast_{j+m_1} -\al^\ast_{j-m_1} -\f {2M_0}{n^2} \sim \f {j+M_0}{n^2}.\end{equation*}
Thus,  $S_{\bfi, j}$ and $S_{\bfi,j}^\ast$ are two nonempty  compact parallelepipeds with the same set $\mathcal{E}(x^\ast_{\bfi})\cup \{e_{d+1}\}$ of edge directions 
and  comparable side lengths.

With the above notations, we introduce the following local modulus of smoothness on $G$:

\begin{defn}\label{def-8-1} For $0<p\leq \infty$, define the local modulus of smoothness of  order $r$  of $f\in L^p( G)$ by  
	$$ \og_{\text{loc}}^r (f, n^{-1})_{L^p( G)}:=
	\Bl[\sum_{(\bfi,j)\in\Ld_n^{d+1}} \Bl(\og^r (f, I_{\bfi,j}^\ast; e_{d+1})_p  ^p+\og^r (f, S_{\bfi,j}; \mathcal{E}(x^\ast_{\bfi}))_p  ^p\Br) \Br]^{1/p},$$
	with the usual change of the   $\ell^p$-norm over the set $(\bfi, j)\in\Ld_n^{d+1}$  for  $p=\infty$.  
\end{defn}

In this section, we shall prove  the following Jackson type estimate for the above  local modulus of smoothness, from which  Theorem~\ref{THM-4-1-18} will follow.

\begin{thm}\label{THM-WT-OMEGA} For $0<p\leq \infty$, and $f\in L^p(G)$,
	\[
	E_{n} (f)_{L^p(G)} \leq C  \omega_{\text{loc}}^r(f,  n^{-1})_{L^p( G)},
	\]
	where the constant  $C$  is independent of $f$ and $n$. 
\end{thm}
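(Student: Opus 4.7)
The plan is to build a global polynomial approximant by patching together local polynomial pieces through the polynomial partition of unity supplied by Theorem~\ref{strips-0}. Each local piece will come from a directional Whitney inequality on the parallelepipeds $S_{\bfi,j}$, and the final $L^p$-bound will follow from the rapid decay~\eqref{strips-ineq} of the partition of unity combined with a discrete Hardy-type summation.

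\textbf{Step 1 (local approximation).} For each $(\bfi,j)\in\Lambda_n^{d+1}$, the set $S_{\bfi,j}$ is a parallelepiped whose $d+1$ edges are parallel to $e_{d+1}$ and to the tangent vectors $\zeta_k(x_{\bfi}^\ast)\in\mathcal{E}(x_{\bfi}^\ast)$, $k=1,\dots,d$. An affine change of variables sending these edges to the coordinate axes transforms $S_{\bfi,j}$ into a rectangular box with uniformly bounded Whitney constant and sends the associated directional differences into coordinate differences; a classical Whitney inequality on the box (or equivalently an iteration of Lemma~\ref{cor-7-3}) then produces a polynomial $P_{\bfi,j}\in\Pi_{(d+1)(r-1)}^{d+1}$ with
\[
\|f-P_{\bfi,j}\|_{L^p(I_{\bfi,j}^\ast)}\le C\bigl[\omega^r(f,S_{\bfi,j};\mathcal{E}(x_{\bfi}^\ast))_p+\omega^r(f,I_{\bfi,j}^\ast;e_{d+1})_p\bigr].
\]
The comparability of $S_{\bfi,j}$, $I_{\bfi,j}^\ast$ and $S_{\bfi,j}^\ast$ (ensured by~\eqref{8-5-18}--\eqref{8-1-18}, which relate $m_0,m_1$ to $M_0$) allows the right-hand side to be written with $\omega^r$ on the smaller sets appearing in Definition~\ref{def-8-1}.

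\textbf{Step 2 (patching and cell-wise estimate).} By Theorem~\ref{strips-0} together with Remark~\ref{rem-6-3}, pick a polynomial partition of unity $\{q_{\bfi,j}\}_{(\bfi,j)\in\Lambda_n^{d+1}}$ with $\deg q_{\bfi,j}\le n-(d+1)(r-1)$ and decay parameter $m$ chosen so large that $m\min(p,1)>d+1$; then $P:=\sum q_{\bfi,j}P_{\bfi,j}\in\Pi_n^{d+1}$. Since $\sum q_{\bfi,j}\equiv 1$ on $G$, $f-P=\sum q_{\bfi,j}(f-P_{\bfi,j})$. Fixing a cell $I_{\mathbf{k},l}$ and $\xi\in I_{\mathbf{k},l}$, write $f-P_{\bfi,j}=(f-P_{\mathbf{k},l})+(P_{\mathbf{k},l}-P_{\bfi,j})$ and use $\sum_{(\bfi,j)}|q_{\bfi,j}(\xi)|\le C$ (which follows from~\eqref{strips-ineq} once $m>d+1$) to obtain
\[
|f(\xi)-P(\xi)|\le C|f(\xi)-P_{\mathbf{k},l}(\xi)|+\sum_{(\bfi,j)}|q_{\bfi,j}(\xi)|\,|P_{\mathbf{k},l}(\xi)-P_{\bfi,j}(\xi)|.
\]
The $L^p(I_{\mathbf{k},l})$-norm of the first term is controlled directly by applying Step~1 at $(\mathbf{k},l)$.

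\textbf{Step 3 (the cross term --- the main obstacle).} The principal difficulty is estimating the cross sum: for large $D_{\bfi,j}:=\max\{\|\bfi-\mathbf{k}\|,|j-l|\}$ the polynomial $P_{\bfi,j}$ is only a priori controlled on $S_{\bfi,j}$, which is far from $\xi\in I_{\mathbf{k},l}$, so Step~1 does not immediately bound $|P_{\mathbf{k},l}(\xi)-P_{\bfi,j}(\xi)|$. I plan to treat nearby and far indices separately. For $D_{\bfi,j}\le C_0$ the parallelepipeds $S_{\bfi,j}^\ast$ and $S_{\mathbf{k},l}^\ast$ overlap in a region of volume comparable to each cell; since $P_{\mathbf{k},l}-P_{\bfi,j}$ has the fixed degree $(d+1)(r-1)$, Markov's inequality in the form of Lemma~\ref{lem-4-1} transfers an $L^p$-bound on the overlap (where $|P_{\mathbf{k},l}-P_{\bfi,j}|\le|f-P_{\mathbf{k},l}|+|f-P_{\bfi,j}|$ and both are bounded by Step~1) to a pointwise estimate at $\xi$. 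For $D_{\bfi,j}>C_0$, I would telescope through $O(D_{\bfi,j})$ adjacent cells connecting $(\mathbf{k},l)$ to $(\bfi,j)$, yielding a bound on $|P_{\mathbf{k},l}(\xi)-P_{\bfi,j}(\xi)|$ that is polynomial in $D_{\bfi,j}$ times a sum of neighboring Step~1 quantities; this polynomial growth is absorbed by the factor $(1+D_{\bfi,j})^{-m}$ from $q_{\bfi,j}$ once $m$ is taken large enough. Raising the pointwise inequalities to the power $p':=\min(p,1)$, integrating on $I_{\mathbf{k},l}$, summing in $(\mathbf{k},l)$ and exchanging the order of the double sum using $\sum_{(\mathbf{k},l)}(1+D_{\bfi,j})^{-mp'}\le C$ (valid once $mp'>d+1$), one arrives at $\|f-P\|_{L^p(G)}\le C\omega_{\text{loc}}^r(f,n^{-1})_{L^p(G)}$, which completes the proof since $P\in\Pi_n^{d+1}$.
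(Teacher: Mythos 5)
Your plan follows the paper's proof closely: construct local polynomial approximants on each $I_{\bfi,j}^\ast$ via Lemma~\ref{cor-7-3} (this is the paper's Lemma~\ref{thm-2-1}), glue them with the partition of unity of Theorem~\ref{strips-0} and Remark~\ref{rem-6-3}, and control the cross terms by telescoping through a chain of adjacent cells whose polynomial growth in $D_{\bfi,j}$ is killed by choosing the decay parameter $m$ large — the paper's Lemmas~\ref{lem-5-1}--\ref{lem-5-2} supply exactly the cell-to-cell polynomial comparison your Step~3 requires. One small correction to Step~1: a ``classical Whitney inequality on the box $S_{\bfi,j}$'' and ``an iteration of Lemma~\ref{cor-7-3}'' are not equivalent — the box Whitney estimate only controls the error on $S_{\bfi,j}$, and Lemma~\ref{cor-7-3} (with $K=S_{\bfi,j}$ and $G=I_{\bfi,j}^\ast$ viewed as a regular $e_{d+1}$-directional domain based in $S_{\bfi,j}$) is genuinely needed to extend to the curved slab $I_{\bfi,j}^\ast$ and to place the $e_{d+1}$-modulus on $I_{\bfi,j}^\ast$; the ``comparability'' manipulation you invoke to shrink the modulus domains afterward is not a valid step on its own, so you should simply cite Lemma~\ref{cor-7-3} outright as the paper does.
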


\begin{rem}\label{rem:loc mod pnt choice}
	Note that $\omega_{\text{loc}}^r(f,  n^{-1})_{L^p( G)}$ depends on the choice of $x_{\bfi}^\ast$, which is an arbitrary point in $\Delta_{\bfi}^\ast$. It follows from the proof that the constant $C$ in Theorem~\ref{THM-WT-OMEGA} is independent of the selection of the points $x_{\bfi}^\ast\in \Delta_{\bfi}^\ast$.
\end{rem}

We divide the rest of this section  into two parts. In the first part,  we shall assume Theorem~\ref{THM-WT-OMEGA}, and show how it implies Theorem~\ref{THM-4-1-18}, while the second part is devoted to the proof of Theorem~\ref{THM-WT-OMEGA}.  
\subsection{Proof of Theorem~\ref{THM-4-1-18}}\label{subsection-8:1}
The aim  is to show that Theorem~\ref{THM-4-1-18} can be deduced from  Theorem~~\ref{THM-WT-OMEGA}. 
Recall that  for each $\bfi\in\Ld_n^d$,  $\mathcal{E}(x^\ast_{\bfi})$ is the set of  unit  tangent vectors to $\p' G^\ast$ at the point $(x_{\bfi}^\ast, g(x_{\bfi}^\ast))$, where $x_{\bfi}^\ast\in \Delta_{\bfi}^\ast$.   Thus, by Definition~\ref{def-8-1},   Theorem~\ref{THM-WT-OMEGA}, and Remark~\ref{rem:loc mod pnt choice}, to show  Theorem~\ref{THM-4-1-18}, 
it suffices to prove  that for any fixed $c>0$
\begin{equation}\label{8-3-18}
	\Sigma_1:=\sum_{(\bfi,j)\in\Ld_n^{d+1}}  \og^r(f, I_{\bfi, j}^\ast; e_{d+1})^p_p\leq   C\og_{\Og,\vi}^r \Bl(f, \f cn; e_{d+1}\Br)^p_{p},\  
\end{equation}
and for $k=1,\dots, d$,
\begin{equation}\label{8-4-18}
	\Sigma_2 (k):=n^d \sum_{(\bfi, j)\in\Ld_n^{d+1}}  \int_{\Delta_{\bfi}^\ast} \og^r(f, S_{\bfi,j}; \zeta_k(x_{\bfi}^\ast))^p_p dx_{\bfi}^\ast\leq C   \wt{\og}_{G}^r \Bl(f, \f c n\Br)^p_p 
\end{equation}
with the usual change of the $\ell^p$ norm in the case of $p=\infty$.

To prove the estimates~\eqref{8-3-18} and~\eqref{8-4-18}, we need to use  the   average modulus of smoothness  of order $r$ on a compact interval $I=[a_I, b_I]\subset \RR$ defined as   
$$w_r(f, t; I)_p  :=\Bl(\f1t \int_{t/4r}^t\Bl( \int_{I_{rh}} |\tr_h^r f(x)|^p  dx\Br)\, dh  \Br)^{1/p},\     \  0<p\leq \infty,$$
with the usual change when $p=\infty$.
The average modulus $w_r(f, t; I)_p$ turns out to be equivalent to the regular modulus $\og^r(f,t)_p:=\sup_{0<h\leq t} \|\tr_h^r f\|_{L^p(I_{rh})}$,  as is well known.  

\begin{lem}\cite[p.~373, p.~185]{De-Lo} \label{lem-8-1} For $f\in L^p(I)$ and $0<p\leq \infty$,
	\begin{equation}\label{key-equiv-mod-0}
		C_1 w_r(f, t; I)_p \leq \og^r (f,t)_p\leq C_2 w_r(f,t; I)_p,\   \  0<t\leq |I|,
	\end{equation}
	where the constants $C_1, C_2>0$ depend only on $p$ and $r$.
\end{lem}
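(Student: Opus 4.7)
The plan is to establish the two inequalities separately, with the lower bound being a direct consequence of the definitions and the upper bound requiring an averaging identity combined with a Marchaud-type iteration on the order $r$.

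For the lower bound $C_1 w_r(f,t;I)_p \leq \omega^r(f,t)_p$, I would simply observe that for every $h\in[t/(4r),t]$ the inner integral satisfies
$$\int_{I-rh} |\tr_h^r f(x)|^p\,dx = \|\tr_h^r f\|_{L^p(I-rh)}^p \leq \omega^r(f,t)_p^p,$$
since $h\le t$ puts $h$ in the range over which the supremum in $\omega^r$ is taken. Substituting this into the defining integral for $w_r^p$ and carrying out the average $\frac1t\int_{t/(4r)}^t\,dh$ yields the inequality with a constant depending only on $p$ and $r$; the case $p=\infty$ is analogous (sup in place of average).

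For the upper bound $\omega^r(f,t)_p \leq C_2 w_r(f,t;I)_p$, I would fix $h\in(0,t]$ and exploit the translation-operator identity $T_{h+u}-I = T_h(T_u-I)+(T_h-I)=T_h\tr_u+\tr_h$. Since all translations commute, the binomial expansion gives
$$\tr_{h+u}^r f(x) = \sum_{k=0}^r \binom{r}{k} \tr_u^k \tr_h^{r-k} f(x+kh).$$
Isolating the $k=0$ summand and averaging in $u$ over an interval whose image under $u\mapsto h+u$ lies in $[t/(4r),t]$, the averaged main term $\tr_{h+u}^r f$ is controlled (via Fubini, and the $p$-triangle inequality when $p<1$) by a multiple of $w_r(f,t;I)_p$. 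The remaining summands are mixed finite differences of total order $r$ and step size $\le t$, which can be absorbed into a fixed fraction of $\omega^r(f,t)_p$ by a Marchaud-type induction on $r$, producing a bootstrap of the form $\omega^r(f,t)_p \le C\,w_r(f,t;I)_p + \theta\,\omega^r(f,t)_p$ with $\theta<1$.

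The main obstacle is the iterative absorption of the remainder: one must verify that, after averaging in $u$, the contribution $\sum_{k=1}^r\binom{r}{k}\tr_u^k\tr_h^{r-k}f(\cdot+kh)$ is bounded by a constant strictly less than $1$ times $\omega^r(f,t)_p$. This is achieved by rewriting each mixed difference $\tr_u^k\tr_h^{r-k}$ as a linear combination of pure order-$r$ differences with steps comparable to $\max(u,h)\le t$, while carefully tracking the domains so that the translated arguments stay inside $I$; the hypothesis $t\le|I|$ ensures $I-rh'$ is nonempty and of controlled size for all the step sizes $h'$ encountered, which lets the recursive estimate close.
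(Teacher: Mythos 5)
The paper does not prove this lemma; it is cited to DeVore--Lorentz \cite{De-Lo} (pp.~373 and 185) with no proof, so there is no ``paper's own proof'' to compare against. Your argument must therefore be judged on its own merits.

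Your lower bound is correct and essentially trivial: since $|[t/(4r),t]|<t$, one may take $C_1=1$.

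The upper bound, however, contains a genuine gap at the absorption step. From the identity
$\triangle_h^r f(x) = \triangle_{h+u}^r f(x) - \sum_{k=1}^r \binom{r}{k}\triangle_u^k\triangle_h^{r-k}f(x+kh)$
you propose, after averaging in $u$, to bound the remainder by $\theta\,\omega^r(f,t)_p$ with $\theta<1$ and close a bootstrap. But each mixed difference $\triangle_u^k\triangle_h^{r-k}f$ is a difference of \emph{total order} $r$ with steps $\le t$, and the best one can say in general is $\|\triangle_u^k\triangle_h^{r-k}f\|_p\le C_r\,\omega^r(f,t)_p$ with $C_r\ge 1$; rewriting a mixed $r$-th difference as a combination of pure $r$-th differences cannot push the constant below $1$. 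Summing against the binomial coefficients $\binom{r}{k}$, $k=1,\dots,r$, already produces $\theta\ge 2^r-1>1$, so the inequality $\omega^r(f,t)_p\le C\,w_r(f,t;I)_p+\theta\,\omega^r(f,t)_p$ gives no information. Moreover, the averaging in $u$ cannot be expected to make the remainder small: testing on oscillatory $f$ shows that when $\|\triangle_h^r f\|_p$ is small the main term and the remainder are individually large and cancel, so bounding them separately, as you do, loses exactly what is needed. The ``Marchaud-type induction on $r$'' is also dubious: Marchaud controls lower-order moduli by higher-order ones, while your remainders are already of order $r$.

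The standard route (and the one in \cite{De-Lo}) avoids any bootstrap by introducing a Steklov-type mean $g=f_{r,s}$ built from $r$-fold averages of $\triangle_v^r f$ over steps $v$ comparable to $t$. One shows $\|f-g\|_{L^p}\le C\,w_r(f,t;I)_p$ (because $f-g$ is itself an average of $\triangle_v^r f$ with $v$ in the sampled range), $t^r\|g^{(r)}\|_{L^p}\le C\,w_r(f,t;I)_p$ (because differentiating the average produces divided differences of $f$), and finally $\omega^r(f,t)_p\le 2^r\|f-g\|_p+t^r\|g^{(r)}\|_p$, which does not require any self-improving inequality. If you want a self-contained proof, this is the decomposition you need; the direct translation-identity approach does not close.
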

A consequence of this equivalence and the growth properties of the usual one-dimensional modulus of smoothness (see, e.g. \cite{De-Lo}*{(7.7) and (7.8) on p.~45, (5.8) on p.~370}) is that for $f\in L^p(I)$, $0<p\le\infty$, and any fixed $c\in(0,1)$
\begin{equation}
	\label{eqn:growth-average}
	w_r(f,t;I)_p\le Cw_r(f,ct;I)_p, \quad 0<t\le |I|,
\end{equation}
where $C$ is independent of $f$ and $t$.

For simplicity, we will assume $p<\infty$. The proof below with slight modifications works equally well for the case $p=\infty$. 

We start with the proof of~\eqref{8-3-18}. 
Using~\eqref{8-4-18-0} and~\eqref{key-equiv-mod-0}, we have    
\begin{align*}\og^r(f, I_{\bfi,j}^\ast;  e_{d+1})_p^p &=\sup_{0<h<\f {c_1(j+1)}{n^2}} \int_{\Delta_{\bfi}^\ast}\Bl[ \int_{g(x)-\al_{j+m_1}}^{g(x)-\al_{j-m_1}} |\tr_{h e_{d+1}}^r (f, I_{\bfi, j}^\ast, (x,y))|^p dy\Br] \, dx \\
	&\sim \f {n^2} {j+1}\int_{\f {c_1(j+1)}{4rn^2}}^{\f {c_1(j+1)}{n^2}} \int_{I_{\bfi,j}^\ast } |\tr_{he_{d+1}}^r (f, I_{\bfi,j}^\ast, \xi)|^p d\xi dh.
\end{align*}
By~\eqref{funct-vi}, we note  that for $\xi=(x,y)\in I_{\bfi, j}^\ast-\f {c_1(j+1)}{4n^2} e_{d+1}$, 
\begin{align*} \vi_\Og (e_{d+1}, \xi)&\sim \sqrt{g(x)-y}\sim \f {j+1}n,\   \  0\leq j\leq n.\end{align*}
Thus, 
performing the  change of variable 
$h=s\vi_\Og(e_{d+1}, \xi)$ for each  fixed $\xi\in I_{\bfi,j}^\ast-\f {c_1(j+1)}{4n^2} e_{d+1}$,  we obtain 
\begin{align*}
	& \og^r(f, I_{\bfi,j}^\ast;  e_{d+1})_p^p 
	\leq C   n \int_{I_{\bfi,j}^\ast}\Bl[ \int_{0}^{\f {c_1}n} |\tr_{s\vi_\Og (e_{d+1}, \xi)e_{d+1}}^r(f, I_{\bfi, j}^\ast, \xi)|^p \, ds\Br] d\xi.
\end{align*}
It then follows that 
\begin{align*}
	\Sigma_1
	&\leq Cn\sum_{j=0}^{n-1} \sum_{\bfi\in\Ld_n^d}  \int_0^{\f {c_1}n}\Bl[ \int_{I_{\bfi, j}^\ast} |\tr_{s\vi_\Og (e_{d+1}, \xi)e_{d+1}}^r(f, \Og,x)|^p d\xi\Br] ds\\
	&\leq C n\int_0^{\f {c_1}n} \int_{\Og }|\tr_{u\vi_\Og (e_{d+1}, \xi)e_{d+1}}^r (f,\Og,\xi)|^p\, d\xi ds\leq C \og^r_{\Og,\vi}(f, cn^{-1}; e_{d+1})_p^p,
\end{align*}
where the last step uses~\eqref{eqn:DT-mod-growth}. This proves the estimate~\eqref{8-3-18}.

The estimate~\eqref{8-4-18} can be proved in a similar way.  Indeed, by~\eqref{2-3-18}, \eqref{key-equiv-mod-0} and~\eqref{eqn:growth-average},   it is easily seen  that 
\begin{align*}
	\og^r(f, S_{\bfi, j}; \zeta_k(x_{\bfi}^\ast))_p^p\sim n \int_0^{\f cn} \|\tr_{h \zeta_k(x_{\bfi}^\ast)}^r (f, S_{\bfi, j}) \|_{L^p(S_{\bfi,j})}^p\, dh.
\end{align*}
It follows that 
\begin{align*}
	\Sigma_2(k) &\leq C n^{d+1} \int_0^{\f cn} \Bl[ \sum_{(\bfi, j)\in\Ld_n^{d+1}}  \int_{\Delta_{\bfi}^\ast} 
	\|\tr_{h \zeta_k(x_{\bfi}^\ast)}^r (f, S_{\bfi, j}) \|_{L^p(S_{\bfi,j})}^p\, dx_{\bfi}^\ast\Br]\, dh\\
	& \leq C n^{d}\sup_{0<h\leq \f cn}   \sum_{(\bfi, j)\in\Ld_n^{d+1}}   
	\int_{S_{\bfi, j}} \int_{\|u-\xi_x\|\leq \f c n} |\tr_{h \zeta_k(u)}^r (f, S_{\bfi, j},\xi)|^p\, du\, d\xi\\
	& \leq C  n^{d}\sup_{0<h\leq \f cn}   
	\int_{G^n} \int_{\|u-\xi_x\|\leq \f c n} |\tr_{h \zeta_k(u)}^r (f, G,\xi)|^p\, du\, d\xi\leq C \wt{\og}_G^r(f, \f cn)_p^p,
\end{align*}
where $G^n:=\{\xi\in G:\  \  \dist(\xi, \p' G) \ge \f {A_0}{n^2}\}$. 
This proves~\eqref{8-4-18}.

\subsection{Proof of Theorem~\ref{THM-WT-OMEGA} }\label{subsection-8:2}
The proof relies on several lemmas.

\begin{lem}\label{thm-2-1} Let  $(\bfi,j)\in \Ld_n^{d+1}$. Then for $0<p\leq \infty$, $r\in\NN$ and any $x^\ast_\bfi\in\Delta_{\bfi}^\ast$,  
	$$E_{(d+1)(r-1)}(f)_{L^p(I_{\bfi,j}^\ast)}\leq C(p, r, d, G) \Bl[\og^r (f, I_{\bfi,j}^\ast; e_{d+1})_p+\og^r (f, S_{\bfi,j}; \mathcal{E}(x^\ast_\bfi))_p\Br].$$
\end{lem}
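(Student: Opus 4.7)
The plan is to reduce via an affine shear to a normalized geometric picture, apply a classical multivariate Whitney inequality on the resulting interior rectangular box, and then extend the estimate to the full bent strip by a slice-wise one-dimensional Whitney--Remez argument, with the sup-vs-average moduli equivalence (Lemma~\ref{lem-8-1}) as the crucial bridge.

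First I apply the affine shear $T:(x,y)\mapsto(x,y-H_\bfi(x))$, which has Jacobian~$1$, fixes $e_{d+1}$, and sends each $\zeta_k(x_\bfi^\ast)$ to a nonzero scalar multiple of $e_k$ for $k=1,\dots,d$. Under $T$ the parallelepipeds $S_{\bfi,j}$ and $S_{\bfi,j}^\ast$ become rectangular boxes $\wt S\subset \wt S^\ast$ with edges parallel to the coordinate axes, while $I_{\bfi,j}^\ast$ becomes a slightly bent strip
\[
\wt I=\{(x,z):x\in\Delta_\bfi^\ast,\,\beta(x)-\al^\ast_{j+m_1}\le z\le \beta(x)-\al^\ast_{j-m_1}\}, \quad \beta(x):=g(x)-H_\bfi(x),
\]
with $\wt S\subset \wt I\subset \wt S^\ast$. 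By~\eqref{8-9-18} the bending amplitude satisfies $|\beta(x)|\le M_0/n^2$, which thanks to~\eqref{8-5-18} is dominated by the $e_{d+1}$-extent $\sim m_1/n^2$ of $\wt S$; consequently the ratio $|S_x|/|I_x|$ of the vertical sections of $\wt S$ and $\wt I$ at any $x\in\Delta_\bfi^\ast$ is bounded below. Via~\eqref{2-3-18} the lemma reduces to the analogous statement for $F:=f\circ T^{-1}$, $\wt I$, $\wt S$, and $\{e_1,\dots,e_d\}$ in place of $f$, $I_{\bfi,j}^\ast$, $S_{\bfi,j}$, and $\mathcal{E}(x_\bfi^\ast)$, respectively.

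The classical multivariate Whitney inequality on the rectangular box $\wt S$ (obtained by iterating the univariate inequality in each coordinate direction) produces $P\in\Pi^{d+1}_{(d+1)(r-1)}$ with $\|F-P\|_{L^p(\wt S)}\le C\sum_{k=1}^{d+1}\og^r(F,\wt S;e_k)_p$, where the first $d$ summands pull back under $T$ to $\og^r(f,S_{\bfi,j};\zeta_k(x_\bfi^\ast))_p$, and the $k=d+1$ summand is bounded by $\og^r(f,I_{\bfi,j}^\ast;e_{d+1})_p$ since $\wt S\subset \wt I$. To upgrade the estimate from $\wt S$ to $\wt I$, I proceed slice-wise: for each $x\in\Delta_\bfi^\ast$ the univariate Whitney inequality decomposes $F(x,\cdot)=R_1(x,\cdot)+R_2(x,\cdot)$ with $R_1(x,\cdot)\in\Pi^1_{r-1}$ and $\|R_2(x,\cdot)\|_{L^p(I_x)}\le C\og^r(F(x,\cdot),|I_x|)_{L^p(I_x)}$, while the univariate Remez inequality applied to the degree-$\le(d+1)(r-1)$ polynomial $R_1(x,\cdot)-P(x,\cdot)$ on $I_x\supset S_x$ (using $|S_x|/|I_x|$ bounded below) gives
\[
\|F(x,\cdot)-P(x,\cdot)\|_{L^p(I_x)}\le C\bl[\|F(x,\cdot)-P(x,\cdot)\|_{L^p(S_x)}+\og^r(F(x,\cdot),|I_x|)_{L^p(I_x)}\br].
\]
Taking $p$-th powers and integrating over $x\in\Delta_\bfi^\ast$ yields $\|F-P\|_{L^p(\wt I)}^p\le C\|F-P\|_{L^p(\wt S)}^p+C\int_{\Delta_\bfi^\ast}\og^r(F(x,\cdot),|I_x|)_{L^p(I_x)}^p\,dx$.

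The main obstacle is controlling this last integral by $\og^r(f,I_{\bfi,j}^\ast;e_{d+1})_p^p$: the per-slice supremum over step size does not a priori commute with the integral over $x$. The decisive device is Lemma~\ref{lem-8-1}, which lets me replace each slice modulus by the equivalent average modulus; the defining integral over step sizes in the average modulus commutes with the $x$-integral by Fubini, and since $|I_x|$ is independent of $x$ (equal to $\al^\ast_{j+m_1}-\al^\ast_{j-m_1}$) this produces $\int_{\Delta_\bfi^\ast}\og^r(F(x,\cdot),|I_x|)_{L^p(I_x)}^p\,dx\le C\og^r(F,\wt I;e_{d+1})_p^p=C\og^r(f,I_{\bfi,j}^\ast;e_{d+1})_p^p$. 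Combining this with the Whitney estimate on $\wt S$ and the modulus correspondence under $T$ from Step~1 completes the proof, with only the standard quasi-triangle adjustments needed for $0<p<1$.
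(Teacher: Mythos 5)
Your proposal is correct and establishes the lemma, including the stronger form with the tangential moduli measured on the smaller set $S_{\bfi,j}$ rather than on $I_{\bfi,j}^\ast$, but it does so by a route quite different from the paper's. The paper's proof is a one-liner: it invokes the sandwiching $S_{\bfi,j}\subset I_{\bfi,j}^\ast\subset S_{\bfi,j}^\ast$ from~\eqref{8-7-0} together with the directional Whitney machinery of Lemma~\ref{cor-7-3} (imported from the companion paper~\cite{Da-Pr-Whitney}), which bootstraps a uniform Whitney constant for the parallelepiped $S_{\bfi,j}$ to one for the union $I_{\bfi,j}^\ast$. Your approach instead flattens the geometry by a Jacobian-$1$ shear, applies the classical Whitney inequality on the resulting rectangular box $\wt S$ in the coordinate directions, and then upgrades from $\wt S$ to the bent strip $\wt I$ via a slice-wise one-dimensional Whitney--Remez argument combined with the average-modulus equivalence (Lemma~\ref{lem-8-1}) and Fubini to control the accumulated slice moduli. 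This is more self-contained (it avoids the Whitney-constant bootstrapping from~\cite{Da-Pr-Whitney} and uses only the univariate Whitney and Remez inequalities plus Lemma~\ref{lem-8-1}), but it is longer and relies on the convenient structural features that the shear flattens both $S_{\bfi,j}$ and $S_{\bfi,j}^\ast$ into boxes and that the section lengths $|I_x|$ are constant in $x$, which is what makes the Fubini step on the averaged modulus clean. One small informality worth flagging: the ``iterated univariate Whitney on a rectangular box'' is standard but requires a bit of care (one cannot simply take slice-wise best approximations in each coordinate, since those need not depend polynomially on the other variables); a citation or a brief argument via linear, quasi-optimal univariate projectors would close that gap. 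Also verify explicitly that the constant in the univariate Remez inequality applied to the degree-$(d+1)(r-1)$ polynomial $R_1(x,\cdot)-P(x,\cdot)$ is uniform, which follows because $|S_x|/|I_x|$ is bounded below uniformly in $n,\bfi,j$ by~\eqref{8-5-18} and~\eqref{8-4-18-0}.
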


\begin{proof}Lemma~\ref{thm-2-1} follows directly from~\eqref{8-7-0} and Lemma~\ref{cor-7-3}.
\end{proof}

\begin{lem}\label{lem-5-1} Given   $0<p\leq \infty$ and  $r\in\NN$, there  exist positive  constants $C=C(p, r)$ and $s_1=s_1(p,r)$ depending only on $p$ and $r$  such that  for any  integers $0\leq k, j\leq N/2$  and any $P\in\Pi_r^1$,
	\begin{equation*}\label{5-2a}
		\|P\|_{L^p[\al_{j}, \al_{j+1}]}\leq C(p,r) (1+|j-k|)^{s_1}\|P\|_{L^p[\al_{k},\al_{k+1}]}.
	\end{equation*}
\end{lem}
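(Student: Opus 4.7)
The plan is to combine the Chebyshev extremal property with a Nikol'skii-type reverse H\"older inequality for algebraic polynomials of one variable.

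First, I would establish basic length estimates for the Chebyshev partition. Using the identity $\sin^2 b - \sin^2 a = \sin(b+a)\sin(b-a)$ together with $\sin t \sim t$ on $[0,\pi/2]$, one verifies that for $0 \le j, k \le N/2$,
\[
\al_{j+1} - \al_j \sim \al(j+1)/N^2, \quad \al_{\max(j,k)+1} - \al_{\min(j,k)} \sim \al(j+k+1)(|j-k|+1)/N^2.
\]
This is the only place where the restriction $j,k \le N/2$ is used, ensuring that the relevant sines stay comparable to their arguments.

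Next, set $I := [\al_k, \al_{k+1}]$, $J := [\al_j, \al_{j+1}]$ and $I^* := [\al_{\min(j,k)}, \al_{\max(j,k)+1}]$, so that $I, J \subset I^*$. The classical Chebyshev extremal property, applied after affinely mapping $I$ onto $[-1,1]$, gives for every $P \in \Pi_r^1$
\[
\|P\|_{L^\infty(I^*)} \le T_r\Bl(\tfrac{2|I^*|}{|I|}-1\Br)\|P\|_{L^\infty(I)} \le C(r)\Bl(\tfrac{|I^*|}{|I|}\Br)^r \|P\|_{L^\infty(I)}.
\]
The Nikol'skii (reverse H\"older) inequality for polynomials of degree $\le r$, which follows from the equivalence of norms on $\Pi_r^1$ by affine scaling, gives
\[
\|P\|_{L^\infty(I)} \le C(r,p)|I|^{-1/p}\|P\|_{L^p(I)}.
\]
Combining with the trivial H\"older bound $\|P\|_{L^p(J)} \le |J|^{1/p}\|P\|_{L^\infty(J)} \le |J|^{1/p}\|P\|_{L^\infty(I^*)}$ yields the master estimate
\[
\|P\|_{L^p(J)} \le C(r,p)(|J|/|I|)^{1/p}(|I^*|/|I|)^r \|P\|_{L^p(I)}.
\]

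Finally, I would insert the length estimates from the first step. Splitting into the cases $k \le j$ and $k > j$, using that $(j+k+1)/(k+1) \le |j-k|+1$ when $j \ge k$ and $(j+k+1)/(k+1) \le 2$ when $j < k$, one checks
\[
|I^*|/|I| \le C(|j-k|+1)^2, \qquad (|J|/|I|)^{1/p} \le C(|j-k|+1)^{1/p}.
\]
This gives the conclusion with $s_1 := 2r + 1/p$ (with the convention $1/\infty = 0$).

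The only real bookkeeping obstacle is the last step: because the Chebyshev intervals have very different lengths as $j$ and $k$ vary, one must separately track the $j \ge k$ and $j < k$ regimes to confirm that a single exponent $s_1$ depending only on $p$ and $r$ works uniformly in $N$. All other ingredients are classical one-dimensional inequalities whose constants are independent of the interval.
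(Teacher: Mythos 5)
Your proof is correct, but it takes a genuinely different route from the paper's. The paper first proves a local doubling property
$\|P\|_{L^p(I_{2t}(x))}\leq L_{p,r}\|P\|_{L^p(I_t(x))}$
for intervals $I_t(x)$ defined by the square-root metric $|\sqrt{x}-\sqrt{y}|\leq\sqrt{2\al}\,t$, using a ball-dilation inequality (Lemma~\ref{lem-4-1}). It then invokes the abstract fact that doubling implies polynomial growth, namely
$\|P\|_{L^p(I_t(x))}\leq L_{p,r}\bigl(1+|\sqrt{x}-\sqrt{x'}|/(\sqrt{2\al}t)\bigr)^{s_1}\|P\|_{L^p(I_t(x'))}$
with $s_1=(\log L_{p,r})/\log 2$, and finally observes that each $[\al_k,\al_{k+1}]$ is exactly an $I_{t_k}(x_k)$, so the estimate falls out after the elementary bound~\eqref{8-15-18-00}. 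Your route avoids the doubling machinery entirely: you compute the Chebyshev interval lengths directly, enclose both $I=[\al_k,\al_{k+1}]$ and $J=[\al_j,\al_{j+1}]$ in $I^*$, and push a Chebyshev extremal bound plus a one-variable Nikol'skii inequality through the length ratios. The trade-offs are that the paper's argument is more modular and scales to higher-dimensional analogues (the doubling-to-growth step is dimension-free), while yours is more elementary, self-contained, and produces an explicit exponent $s_1=2r+1/p$ rather than a $\log$ of an implicit doubling constant. Both rely on the same underlying engine (a Chebyshev/Remez polynomial growth bound on a containing interval); they just package it differently.

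One small imprecision in your last step: the inequality $(j+k+1)/(k+1)\leq |j-k|+1$ for $j\geq k$ fails in the diagonal case $j=k\geq 1$ (e.g.\ $j=k=1$ gives $3/2>1$); you need an extra constant factor, as in $(j+k+1)/(k+1)\leq 2(|j-k|+1)$, which does hold for all $j\geq k\geq 0$. This does not affect the conclusion, since all you use is $|I^*|/|I|\leq C(|j-k|+1)^2$, but the clean statement as written is not quite right.
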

\begin{proof} First, we prove that 
	\begin{equation}\label{5-1}
		\|P\|_{L^p(I_{2t} (x))}  \leq L_{p,r}\|P\|_{L^p(I_t (x))},  \  \   \forall  P\in \Pi_r^1,\  \ \forall x\in [0, 2\al],\  \ \forall t\in (0, 1],
	\end{equation}
	where 
	$$I_t(x):=\Bl\{ y\in [0, 2\al]:\  \ |\sqrt{x}-\sqrt{y}|\leq \sqrt{2\al} t\Br\}.$$
	To see this, we note that   with   $\rho_t(x)=2\al t^2+t\sqrt{2\al  x}$, 
	\begin{equation*}\label{8-17-0} \Bl[x-\f18 \rho_t(x), x+\f18 \rho_t(x)\Br]\cap [0, 2\al]\subset I_t(x)\subset I_{2t}(x) \subset [x-4\rho_t(x), x+4\rho_t(x)],\end{equation*}
	where  the first relation  can be deduced by considering the cases $0\leq x\leq \al t^2$ and $\al t^2<x\leq 4\al$ separately. 
	By Lemma~\ref{lem-4-1}, this implies  that  with $I_t=I_t(x)$ and  $J=[x-4\rho_t(x), x+4\rho_t(x)]$,  
	\begin{align*}
		\|P\|_{L^p(I_{2t})} &\leq \|P\|_{L^p(J)} \leq C_{p,r} \|P\|_{L^p (\f 1{32} J \cap [0, 2\al])} \leq C_{p,r}\|P\|_{L^p(I_t)},
	\end{align*}
	which  proves~\eqref{5-1}.

	Next, we note that  the doubling property~\eqref{5-1} implies that for any $x, x'\in [0, 2\al]$ and any $t\in (0, 1]$, 
	\begin{equation}\label{8-14-18-00}
		\|P\|_{L^p(I_{t} (x))}  \leq L_{p,r} \Bl( 1+ \f {|\sqrt{x}-\sqrt{x'}|}{\sqrt{2\al} t}\Br)^{s_1}\|P\|_{L^p(I_t (x'))},  \  \   \forall  P\in \Pi_r^1,
	\end{equation}
	where $s_1=(\log L_{p,r})/\log 2$.

	Finally, for each $1\leq k\leq N/2$, we may write  $ [\al_k, \al_{k+1}] =I_{t_k} (x_k)$
	with  $t_k:=\f {\sqrt{\al_{k+1}}-\sqrt{\al_k}}{2\sqrt{2\al}}$ and  $x_k :=\f {(\sqrt{\al_k} +\sqrt{\al_{k+1}})^2} 4$. Note also that    
	by~\eqref{8-4-18-0},  
	\begin{align}\label{8-15-18-00}
		\f {\sqrt{2}|k-j|}{2N}\leq 	\f{|\sqrt{\al_j}-\sqrt{ \al_k}|}{\sqrt{2\al}}\leq \f {\pi|k-j|} {2N},\   \ 0\leq k, j\leq N/2.
	\end{align}
	It then follows by~\eqref{8-14-18-00} and~\eqref{8-15-18-00} that 
	\begin{align*}
		\|P\|_{L^p[\al_{j}, \al_{j+1}]}&\leq \|P\|_{L^p (I_{\pi/(4N)}(x_j))} \leq L_{p,r} \Bl ( 1+ \f {4N|\sqrt{x_j}-\sqrt{x_k}|}{\sqrt{2\al} \pi}\Br)^{s_1}
		\|P\|_{L^p (I_{\pi/(4N)}(x_k))} \\
		&\leq L_{p,r}^4 ( 1+|k-j|)^{s_1} \|P\|_{L^p[\al_{k}, \al_{k+1}]}.
	\end{align*}
\end{proof}

For $x=(x_1,\dots, x_d)\in\R^d$, we set  $\|x\|_\infty :=\max_{1\leq j\leq d} |x_j|$. 

\begin{lem}\label{lem-8-4} Given   $0<p\leq \infty$ and  $r\in\NN$, there  exist positive  constants $C=C(p, r, d)$ and $s_2=s_2(p,r,d)$ depending only on $p$, $r$ and $d$ such that  for any  $\ib, \kb\in\Ld_n^d$  and $Q\in\Pi_r^d$,
	\begin{equation*}\label{5-2b}
		\|Q\|_{L^p(\Delta_{\bfi})}\leq C(p,r,d) (1+\|\bfi-\kb\|_\infty)^{s_2}\|Q\|_{L^p(\Delta_{\kb})}.
	\end{equation*}
\end{lem}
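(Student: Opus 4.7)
The plan is to reduce to a routine polynomial doubling estimate on cubes, paralleling the strategy of Lemma~\ref{lem-5-1} but with the considerable simplification that all the cubes $\Delta_\bfi$ are translates of each other with the same side length $2b/n$. In contrast to the Chebyshev-distorted intervals $[\al_j,\al_{j+1}]$, no change of scale is needed here.

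The first step is the doubling inequality: there exists a constant $M=M(p,r,d)$ such that for every cube $\Delta\subset\RR^d$ (with sides parallel to the axes) of side length $h>0$ and every $Q\in\Pi_r^d$,
\[
\|Q\|_{L^p(2\Delta)}\le M\,\|Q\|_{L^p(\Delta)},
\]
where $2\Delta$ denotes the dilation of $\Delta$ from its center by a factor $2$. This is an immediate consequence of Lemma~\ref{lem-4-1} (or more precisely, its $d$-dimensional analogue, which is proved by the same argument) applied to the inscribed and circumscribed balls of $\Delta$ and $2\Delta$; the constant $M$ depends only on $p$, $r$, and $d$ since $\deg Q\le r$ is fixed.

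The second step is a simple geometric observation: since $\Delta_\bfi$ and $\Delta_\kb$ are translates of the same axis-parallel cube of side $2b/n$ and the centers of $\Delta_\bfi$ and $\Delta_\kb$ differ in each coordinate by at most $(2b/n)\|\bfi-\kb\|_\infty$, the dilated cube $\lambda\Delta_\kb$ contains $\Delta_\bfi$ whenever $\lambda\ge 2\|\bfi-\kb\|_\infty+1$. Choosing $\lambda$ to be the smallest power of $2$ exceeding $2\|\bfi-\kb\|_\infty+1$ and iterating the doubling inequality $\log_2\lambda$ times yields
\[
\|Q\|_{L^p(\Delta_\bfi)}\le \|Q\|_{L^p(\lambda\Delta_\kb)}\le M^{\log_2\lambda}\,\|Q\|_{L^p(\Delta_\kb)}\le C(p,r,d)\,(1+\|\bfi-\kb\|_\infty)^{s_2}\|Q\|_{L^p(\Delta_\kb)},
\]
with $s_2:=(\log M)/\log 2$, which is exactly the desired bound.

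There is no substantial obstacle: the only point that requires care is to verify the doubling inequality on cubes with a constant depending only on $p,r,d$, but this is standard and follows along the same lines as Lemma~\ref{lem-4-1}; alternatively one may cite the classical Nikol'skii inequality together with the equivalence of the $L^p$-norm and the $L^\infty$-norm on $\Pi_r^d$ restricted to a cube (with the equivalence constant depending only on $p$, $r$, and $d$). As in Lemma~\ref{lem-5-1}, the usual modifications handle the case $p=\infty$.
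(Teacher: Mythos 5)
Your proof is correct and takes essentially the same route as the paper, which simply asserts that the lemma is a direct consequence of Lemma~\ref{lem-4-1}. Your step-by-step version — inscribing/circumscribing balls to transfer the doubling from balls to cubes, then iterating — is a perfectly valid way to make that reduction explicit; one could also bypass the dyadic iteration by applying Lemma~\ref{lem-4-1} once with the dilation factor $\lambda\sim 1+\|\bfi-\kb\|_\infty$, since for fixed degree $r$ the bound $(5\lambda)^{r+(d+1)/p}$ is already polynomial in $\lambda$, giving $s_2=r+(d+1)/p$ directly, but either route yields the claim with constants depending only on $p,r,d$.
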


\begin{proof} The proof of Lemma~\ref{lem-8-4} is similar to that of Lemma~\ref{lem-5-1}, and in fact, is simpler.  It is a direct consequence of Lemma~\ref{lem-4-1}. 
\end{proof}

\begin{lem}\label{lem-5-2} Given $0<p\leq \infty$ and $r\in\NN$, there exists a positive number $\ell=\ell(p,r,d)$ such that 	
	for any  $(\bfi, j), (\mathbf{k}, l)\in \Ld_n^{d+1}$ and any $Q\in\Pi_r^{d+1}$, 
	\begin{equation}\label{desire}
		\|Q\|_{L^p(I_{\bfi, j})} \leq C \Bl(1+\max\{\|\bfi-\mathbf{k}\|_\infty,  |j-l|\}\Br)^{\ell}\|Q\|_{L^p(I_{\mathbf{k}, l})},
	\end{equation}
	where the constant $C$ depends only on $p, d, r$ and $\|\nabla^2 g\|_\infty$.
\end{lem}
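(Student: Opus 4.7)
The plan is a two-stage reduction. First, fixing $\bfi$, I compare $\|Q\|_{L^p(I_{\bfi,j})}$ with $\|Q\|_{L^p(I_{\bfi,l})}$, obtaining a factor that is polynomial in $1+|j-l|$. Then, fixing $l$, I compare $\|Q\|_{L^p(I_{\bfi,l})}$ with $\|Q\|_{L^p(I_{\kb,l})}$, obtaining a factor polynomial in $1+\|\bfi-\kb\|_\infty$. Since both $|j-l|$ and $\|\bfi-\kb\|_\infty$ are bounded by their maximum, the two factors combine into a single polynomial in $1+\max\{\|\bfi-\kb\|_\infty,|j-l|\}$, giving the required exponent $\ell=\ell(p,r,d)$.

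For the first stage, the change of variable $y=g(x)-t$ (with $x$ fixed, Jacobian $1$) gives
\[
\|Q\|_{L^p(I_{\bfi,j})}^p=\int_{\Delta_\bfi}\int_{\al_j}^{\al_{j+1}}|Q(x,g(x)-t)|^p\,dt\,dx.
\]
For each fixed $x$, the map $t\mapsto Q(x,g(x)-t)$ is a univariate polynomial of degree $\leq r$, and $j,l\leq n-1\leq N/2$, so Lemma~\ref{lem-5-1} gives pointwise in $x$ that $\int_{\al_j}^{\al_{j+1}}|\cdot|^p\,dt\leq C(1+|j-l|)^{ps_1}\int_{\al_l}^{\al_{l+1}}|\cdot|^p\,dt$. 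Integration over $\Delta_\bfi$ then yields $\|Q\|_{L^p(I_{\bfi,j})}\leq C(1+|j-l|)^{s_1}\|Q\|_{L^p(I_{\bfi,l})}$.

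For the second stage, I introduce the affine map $T_\bfi(x,t):=(x,H_\bfi(x)-t)$, where $H_\bfi(x):=g(x_\bfi)+\nabla g(x_\bfi)\cdot(x-x_\bfi)$ is the tangent plane of $g$ at $x_\bfi$. Because $H_\bfi$ is affine, $Q_\bfi:=Q\circ T_\bfi$ belongs to $\Pi_r^{d+1}$, and a direct computation shows that $T_\bfi^{-1}(I_{\bfi,l})=\{(x,t):x\in\Delta_\bfi,\ t\in[\al_l+\zeta(x),\al_{l+1}+\zeta(x)]\}$ with $\zeta(x):=H_\bfi(x)-g(x)$, and the analogous formula holds for $T_\bfi^{-1}(I_{\kb,l})$ with $x\in\Delta_\kb$. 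Taylor's theorem together with the bound on $\|\nabla^2 g\|_\infty$ gives $|\zeta(x)|\leq C/n^2$ on $\Delta_\bfi$ and $|\zeta(x)|\leq C(1+\|\bfi-\kb\|_\infty)^2/n^2$ on $\Delta_\kb$. I will then sandwich the shifted $t$-intervals between unions of $O(1+\|\bfi-\kb\|_\infty)$ consecutive Chebyshev cells $[\al_{l'},\al_{l'+1}]$, using the lower estimate $\al_j-\al_{j-k}\geq c\,k(j-k/2)/N^2$ (valid for $k\leq j$) together with the convention $\al_j^\ast=0$ for $j<0$ near the boundary $l=0$. Applying Lemma~\ref{lem-5-1} cell-by-cell in $t$ for each fixed $x$ produces a factor $(1+\|\bfi-\kb\|_\infty)^{c_1 s_1}$, while applying Lemma~\ref{lem-8-4} in $x$ for each fixed $t$, to the polynomial $Q_\bfi(\cdot,t)\in\Pi_r^d$, produces a factor $(1+\|\bfi-\kb\|_\infty)^{s_2}$.

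The main obstacle is the regime where $l+1$ is comparable to or smaller than $(1+\|\bfi-\kb\|_\infty)^2$: here the cell thickness $\al_{l+1}-\al_l\sim(l+1)/n^2$ is smaller than the shift $|\zeta(x)|$, so naively shrinking the $t$-range in $T_\bfi^{-1}(I_{\kb,l})$ would produce an empty set, while a naive enlargement in $T_\bfi^{-1}(I_{\bfi,l})$ could a priori consume $\Omega(n)$ cells. The resolution is a case analysis based on whether $l$ dominates $(1+\|\bfi-\kb\|_\infty)^2$ or not: in both regimes the lower estimate on $\al_j-\al_{j-k}$ (or its boundary analogue via $\al_j^\ast=0$) shows that at most $O(1+\|\bfi-\kb\|_\infty)$ Chebyshev cells are needed to absorb the shift, so the final exponent is $\ell=(c_1+1)s_1+s_2$, which depends only on $p$, $r$ and $d$.
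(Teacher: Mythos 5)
Your two-stage decomposition and the first stage (comparing $j$ to $l$ via Lemma~\ref{lem-5-1} with $x$ fixed) are correct and match the paper; likewise the idea of linearizing $g$ at $x_\bfi$ and applying Lemma~\ref{lem-8-4} in $x$ for fixed $t$ is essentially what the paper does. The gap is in how you undo the linearization on $\Delta_\kb$. After the spatial step you need to bound $\int_{\Delta_\kb}\int_{[\text{a few cells near }l]}|Q_\bfi(x,t)|^p\,dt\,dx$ above by $C K^{\ell p}\int_{\Delta_\kb}\int_{J_\kb(x)}|Q_\bfi(x,t)|^p\,dt\,dx$, where $J_\kb(x)=[\al_l+\zeta(x),\al_{l+1}+\zeta(x)]$ has length $\al_{l+1}-\al_l\sim(l+1)/n^2$ and the shift satisfies $|\zeta(x)|\lesssim K^2/n^2$ with $K=1+\|\bfi-\kb\|_\infty$. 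Your proposed cell-by-cell sandwich requires, for each $x$, that $J_\kb(x)$ \emph{contain} a full Chebyshev cell. This fails precisely in the regime you flagged: when $\zeta(x)>0$ (i.e., $H_\bfi(x)>g(x)$) and $l+1\ll K^2$, the shift pushes $J_\kb(x)$ toward cells of thickness $\sim m/n^2$ with $m\sim K\gg l+1$, so $J_\kb(x)$ can sit strictly inside a single cell and contains none. Your case analysis shows only that $O(K)$ cells \emph{cover} the relevant region (an upper-bound statement), but Lemma~\ref{lem-5-1} alone cannot supply the needed \emph{lower} bound on $\int_{J_\kb(x)}$ by a cell integral.

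The paper closes exactly this gap with Lemma~\ref{lem-4-1} (the Remez/dilation inequality): for fixed $x$, the univariate polynomial $y\mapsto Q(x,g(x)-y)$ of degree $\le r$ restricted from the enlarged interval $[\al_{l-1}-cK^2/n^2,\al_l+cK^2/n^2]$ of length $\sim(1+K^2)/n^2$ back to $[\al_{l-1},\al_l]$ of length $\gtrsim n^{-2}$ loses a factor controlled by $(C(1+K))^{2rp+4}$. This Remez contribution of roughly $2r+4/p$ is an essential part of the final exponent $\ell=(s_1+s_2+2rp+4)/p$ and is absent from your proposed $\ell=(c_1+1)s_1+s_2$. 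Without invoking Lemma~\ref{lem-4-1} (or an equivalent doubling/Remez estimate on arbitrary subintervals, not just Chebyshev cells), the argument does not close in the regime $l+1\lesssim K^2$.
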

\begin{proof} For simplicity, we shall prove Lemma~\ref{lem-5-2} for the case of $0<p<\infty$ only. The proof below with slight modifications works for  $p=\infty$.

	Writing 
	$$\|Q\|^p_{L^p(I_{\bfi, j})} = \int_{\Delta_{\bfi}}\Bl[\int_{\al_{j-1}}
	^{\al_{j}} |Q({x}, g({x})-u)|^p\, du\Br] d{x},$$
	and using  Lemma~\ref{lem-5-1}, we  obtain 
	\begin{align}\label{8-18-0}
		\|Q\|^p_{L^p(I_{\bfi, j})}\leq C(p,r) (1+|j-l|)^{s_1} \Bl[ \int_{\Delta_{\bfi}}\int_{g({x})-\al_{l}}
		^{g({x})-\al_{l-1}} |Q({x}, y)|^p\, dy d{x}\Br].
	\end{align}
	Using   Taylor's theorem, we have that
	\begin{equation}\label{5-3}
		|g({x})-t_{\bfi} ({x})|\leq \f {A}{2b^2}\|{x}-{x}_{\bfi}\|_\infty^2,\    \    \ \forall x\in [-b,b]^d,
	\end{equation}
	where   ${x}_{\bf i}$ is the center of the cube $\Delta_{\bfi}$, 
	$t_{\bfi}({x}):=g({x}_{\bfi})+\nabla g({x}_{\bfi})\cdot ({x}-{x}_{\bfi})$, and 
	$A:=2b^2d^2\|\nabla^2 g\|_{L^\infty[-b,b]^d}/2$.
	Thus,  the double  integral in the square brackets on the right hand side of~\eqref{8-18-0} is bounded above by 
	\begin{align*}
		&   \int_{\al_{\ell-1}}^{\al_\ell+\f {A}{n^2}}\Bl[  \int_{\Delta_{\bfi}}\Bl|Q\Bl({x}, t_{\bfi}({x})
		-u+\f A {2n^2}\Br)\Br|^p\, d{x}\Br] du=: I.
	\end{align*}
	However, applying  Lemma~\ref{lem-8-4} to this last  inner integral in the square brackets, we obtain 
	\begin{align}\label{8-20-1}
		I&\leq C(p,r,d)     (1+\|\bfi-\mathbf{k}\|_\infty)^{s_2} \int_{\Delta_{\mathbf k}}\Bl[\int_{t_{\bfi}({x})
			-\al_\ell-\f A {2n^2}}^{t_{\bfi}({x})
			-\al_{\ell-1}+\f A {2n^2}} |Q({x}, u)|^p\, du\Br] d {x}.
	\end{align}
	By~\eqref{5-3},  this last   integral in the square brackets on the right hand side of~\eqref{8-20-1} is bounded above by 
	\begin{align*}
		& \int_{g({x})
			-\al_\ell-\f {4A(1+\|\mathbf{k}-\bfi\|_\infty^2)}{n^2}}^{g({x})
			-\al_{\ell-1}+\f {4A(1+\|\mathbf{k}-\bfi\|_\infty^2)}{n^2}} |Q({x}, u)|^p\, du 
		=\int^{
			\al_\ell+\f {4A(1+\|\mathbf{k}-\bfi\|_\infty^2)}{n^2}}_{\al_{\ell-1}-\f {4A(1+\|\mathbf{k}-\bfi\|_\infty^2)}{n^2}} |Q({x}, g({x})-y)|^p\, dy, \end{align*}
	which,  using  Lemma~\ref{lem-4-1} and the fact that $\al_\ell-\al_{\ell-1}\ge c n^{-2}$,  is controlled above  by 
	\begin{align*}
		C(p,r)  \Bl(A (1+\|\mathbf {k}-\bfi\|_\infty)\Br)^{2rp+4}  \int^{\al_\ell}_{
			\al_{\ell-1}} |Q({x}, g({x})-y)|^p\, dy.
	\end{align*}

	Putting the above together, we prove  that 
	\begin{align*}
		\|Q\|^p_{L^p(I_{\bfi, j})}\leq C(p,r,d) (1+|j-l|)^{s_1}   (1+\|\bfi-\mathbf{k}\|_\infty)^{s_2+2rp+4}\|Q\|^p_{L^p(I_{\kb, l})}.
	\end{align*}
	This leads to the desired estimate~\eqref{desire} with $\ell= (s_1+s_2+2rp+4)/p$.
\end{proof}

Now we are in the position to prove Theorem~\ref{THM-WT-OMEGA}. 
\begin{proof}[Proof of Theorem~\ref{THM-WT-OMEGA}]  We shall prove the result for the case of   $0<p<\infty$ only.
	The proof below with slight modifications works equally well for the case $p=\infty$.
	
	For simplicity, we  use the Greek letters $\ga, \b,\dots$ to denote  indices in the set $\Ld_n^{d+1}$. 
	By Lemma~\ref{thm-2-1}, for each  $\ga:=(\ib, j)\in\Ld_n^{d+1}$  there exists a polynomial  $s_{\ga}\in\Pi_{(d+1)(r-1)}^{d+1}$  such that 
	\begin{align}\label{5-4}
		\|f-s_{\ga}\|_{L^p(I_{\ga}^\ast)} \leq C(p, r, d) W^r(f, I_{\ga}^\ast)_p,
	\end{align}
	where 
	$$  W^r(f, I_{\ga}^\ast)_p:=\og^r (f, I_{\ga}^\ast; e_{d+1})_p+\og^r (f, S_{\ga}; \mathcal{E}(x^\ast_\bfi))_p.$$
	Let $\{q_{\ga}:\  \ \ga\in\Ld_n^{d+1}\}\subset \Pi_{\lfloor n/(r(d+1))\rfloor}^{d+1}$ be the polynomial  partition of the unity as  given in Theorem~\ref{strips-0} and  Remark~\ref{rem-6-3} with a  large parameter $m>2d+2$, to be specified later. Define 
	$$P_n(\xi):=\sum_{\ga\in\Ld_n^{d+1}} s_\ga (\xi) q_\ga(\xi)\in\Pi_{n}^{d+1}.$$
	Clearly, it is sufficient  to prove that 
	\begin{equation}\label{8-22-00}
		\|f-P_n\|_{L^p(G)}\leq C \og^r_{\text{loc}} \Bl(f, \f1n\Br)_p.
	\end{equation}

	To show~\eqref{8-22-00},  we write, for each  $\b\in \Ld_n^{d+1}$,
	\begin{align*}
		f(\xi)-P_n(\xi)&=f(\xi)-s_\b(\xi)+\sum_{\ga\in \Ld_n^{d+1}} (s_\b(\xi)-s_\ga (\xi))q_\ga (\xi).
	\end{align*}
	It follows by Theorem~\ref{strips-0}  that 
	\begin{align*}
		\|f-P_n\|_{L^p(I_\b)}^p &\leq C_p \|f-s_\b\|_{L^p(I_\b)}^p +C_p \sum_{\ga\in\Ld_n^{d+1}} \|s_\b-s_\ga\|^p_{L^p(I_\b)} (1+\|\b-\ga\|_\infty)^{-mp_1},
	\end{align*}
	where $p_1:=\min\{p,1\}$.
	Using~\eqref{5-4}, we then  reduce  to showing that   \begin{align}\label{8-23-00}
		\Sigma_n'&:=\sum_{\b\in\Ld_n^{d+1}}\sum_{\ga\in\Ld_n^{d+1}} \|s_\b-s_\ga\|^p_{L^p(I_\b)} (1+\|\b-\ga\|_\infty)^{-mp_1}\leq C \og_{\text{loc}}^r\Bl(f,\f1n\Br)_p^p.
	\end{align}
	
	To show~\eqref{8-23-00},  we  claim that  there exists a positive number $s_3=s_3(p,d,r)$ such that for any  $\ga,\b\in\Ld_n^{d+1}$,
	\begin{equation}\label{claim-8-23} \|s_\ga-s_\b\|^p_{L^p(I_\ga)} \leq C (1+\|\ga-\b\|_\infty)^{s_3 p} \sum_{\eta\in \mathcal{I}_{k_0} ( \ga)} W^r(f, I^\ast_\eta)^p_p,\end{equation}
	where  $k_0:=1+\|\b-\ga\|_\infty$, and
	$$ \mathcal{I}_t(\ga):=\{ \eta\in\Ld_n^{d+1}:\  \ \|\ga-\eta\|_\infty\leq t\}\   \ \text{for $ \ga\in\Ld_n^{d+1}$ and $t>0$}.$$
	For the moment, we assume~\eqref{claim-8-23} and proceed with the proof of~\eqref{8-23-00}. Indeed, 
	we have 
	\begin{align*}
		\Sigma_n'  &\leq C\sum_{\b\in\Ld_n^{d+1}}\sum_{k=1}^\infty  k^{-mp_1}\sum_{\ga\in \mathcal{I}_k(\b)\setminus \mathcal{I}_{k-1}(\b)}\|s_\b-s_\ga\|_{L^p(I_\b)}^p,\end{align*}
	which,      using~\eqref{claim-8-23}, is bounded above by 
	\begin{align*}
		&  C\sum_{k=1}^\infty k^{-mp_1+s_3p+2d+2} \sum_{\eta\in  \Ld_n^{d+1}}W^r(f, I_\eta^\ast)_p^p.
	\end{align*}  Choosing  the parameter $m$ to be bigger than $s_3p/p_1 + (2d+4)/p_1$, we then  prove~\eqref{8-23-00}.

	It remains to prove the claim~\eqref{claim-8-23}.  A crucial ingredient in the proof   is to construct a sequence $\{\ga_1,\dots, \ga_{N_0}\}$ of distinct indices in $\Ld_n^{d+1}$ with the properties that  $N_0\leq  C ( 1+\|\ga-\b\|_\infty)^2$,
	$\ga_1=\ga$, $\ga_{N_0}=\b$, and for $j=0,\dots, N_0-1$,
	\begin{align}\label{8-25-00}
		I_{\ga_j} \subset I_{\ga_{j+1}}^\ast\   \  \text{and}\   \   \|\ga_j-\ga\|\leq 1+\|\ga-\b\|.
	\end{align}
	Indeed, once such a sequence is constructed, then 
	we have 
	\begin{align*}
		\|s_\ga-s_\b\|_{L^p(I_\ga)}^p&\leq N_0^{\max\{p,1\}-1}  \sum_{j=1}^{N_0-1}\|s_{\ga_j}-s_{\ga_{j+1}}\|^p_{L^p(I_{\ga})}, \end{align*}
	which, using~\eqref{8-25-00} and  Lemma~\ref{lem-5-2} with $\ell=\ell(p,r,d)>0$, is estimated above by
	\begin{align*}
		&\leq  C N_0^{\max\{p,1\}-1} (1+\|\ga-\b\|_\infty)^{\ell p}  \sum_{j=1}^{N_0-1}\|s_{\ga_j}-s_{\ga_{j+1}}\|^p_{L^p(I_{\ga_j})}.\end{align*}
	However, using~\eqref{8-25-00} and~\eqref{5-4}, we have that 
	\begin{align*}
		\|s_{\ga_j}-s_{\ga_{j+1}}\|^p_{L^p(I_{\ga_j})}\leq &C_p \Bl[ \|f-s_{\ga_j}\|_{L^p(I_{\ga_j})}^p +\|f-s_{\ga_{j+1}}\|_{L^p(I^\ast_{\ga_{j+1}})}^p\Br]\\
		\leq& C(p,r,d)\Bl[  W^r(f, I_{\ga_j}^\ast)_p^p+ W^r(f, I_{\ga_{j+1}}^\ast)_p^p\Br].
	\end{align*}
	Putting the above together, we prove the claim~\eqref{claim-8-23} with $s_3:=\ell+2\max\{1, \f 1p\}$.

	Finally, we construct the sequence $\{\ga_1,\dots, \ga_{N_0}\}$ as follows.  Assume that  $\ga=(\mathbf{k}, l)$, and  $\b=(\mathbf{k}', l')$. Without loss of generality, we may assume that   $l\leq l'$. (The case $l>l'$ can be treated similarly.)
	Recall that 
	$ \Delta_{\ib}:=\Bl\{x\in \RR^d:\  \ \|x-x_{\ib}\|_\infty\leq \f b n\Br\},$
	where  ${x}_{\bfi}$ is  the center of the cube $\Delta_{\bfi}$.
	Let $\{z_j\}_{j=0}^{n_0+1}$ be a sequence of points on the line segment $[x_{\kb}, x_{\kb'}]$ satisfying that  $z_0 =x_{\kb}$, $z_{n_0+1} =x_{\kb'}$, $\|z_j-z_{j+1}\|_\infty =\f {3b} n $ for $j=0,1,\dots, n_0-1$  and $\f {3b} n \leq \|z_{n_0}-z_{n_0+1}\|_\infty<\f {6b}n$, where $n_0+1\leq  \f 23 \|\kb-\kb'\|_\infty$.  Let $\ib_j \in\Ld_n^d$ be such that $z_j\in\Delta_{\ib_j}$ for $0\leq j\leq n_0+1$. 
	Since $\f {3b} n \leq \|z_j-z_{j+1}\|_\infty \leq  \f {6b} n$, the cubes $\Delta_{\ib_j}$ are distinct and  moreover
	\begin{equation}\label{8-25-0}
		\Delta_{\ib_j} \subset 9 \Delta_{\ib_{j+1}},\  \  j=0,1,\dots, n_0.
	\end{equation}
	In particular, this implies that $\ib_0=\kb$ and $\ib_{n_0+1} =\kb'$.
	It can also be easily seen from the construction  that for $j=0,\dots, n_0+1$, 
	\begin{equation}\label{8-27}
		\|\ib_j -\kb\|_\infty \leq \|\kb-\kb'\|_\infty+1.
	\end{equation}
	Next,  we order   the indices $(\ib_j, k)$, $0\leq j\leq n_0+1$, $l\leq k\leq l'$ as follows: 
	\begin{align*}
		(\ib_0, l), (\ib_0, l+1),\dots, (\ib_0, l'),
		(\ib_1, l'), (\ib_1, l'-1),\dots, (\ib_1, l), (\ib_2, l),\dots, (\ib_{n_0+1}, l').      
	\end{align*}
	We  denote the resulting  sequence by  
	$ \{\ga_1, \ga_2,\dots, \ga_{N_0}\},$
	where
	$$N_0\leq (1+|l-l'|) (n_0+2) \leq ( 1+\|\ga-\b\|_\infty)^2.$$ 
	Clearly, $\ga_1=\ga$, and  $\ga_{N_0}=\b$. Moreover, by~\eqref{8-27}, we have $\|\ga_j-\ga\|_\infty\leq 1+\|\ga-\b\|_\infty$ for $j=1,\dots, N_0$, whereas by~\eqref{8-25-0}, 
	$I_{\ga_j} \subset I_{\ga_{j+1}}^\ast $ for  $j=0,\dots, N_0-1$.
	This completes the proof. 
\end{proof}

\section{Comparison with average  moduli}\label{ch:IvanovModuli}
In this section, we shall prove that the   moduli of smoothness,   defined in~\eqref{eqn:defmodulus}  can be controlled from above by  Ivanov's moduli of smoothness, defined  in~\eqref{eqn:ivanov}.  By Remark~\ref{rem-3-2}, it is enough to show 

\begin{thm}\label{thm-9-1}
	There exist 	a parameter $A_0>1$  and a constant $A>1$ such that for any $0<q\leq p\leq \infty$,
	$$\og_\Og^r\Bl(f, \f 1n; A_0\Br)_p \leq C \tau_r \Bl(f, \f A n\Br)_{p,q},$$
	where the constant $C$ is independent of $f$ and $n$. 
\end{thm}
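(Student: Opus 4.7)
The proof reduces, via the decomposition in Definition~\ref{def:modulus}, to bounding each piece of $\og_\Og^r$ by $C\tau_r(f, c_0/n)_{p,q}$: namely (a) $\wt{\og}^r_{G_k}(f, 1/n; A_0)_p$ for each special-type domain $G_k$ from Lemma~\ref{lem-2-1-18}, and (b) $\og^r_{\Og,\vi}(f, 1/n; e_j)_p$ for each $j = 1, \dots, d+1$. Since $\tau_r(f,\cdot)_{p,q}$ is monotone increasing in $q$ by H\"older's inequality, the bound is strongest when $q$ is small, so we must target the full range $q \le p$ uniformly.

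For (a), fix $G = G_k$ in standard form together with $s \in (0, 1/n]$ and $j \in \{1, \dots, d\}$. For each $(x, y) \in G^{1/n}$ and $u \in I_x(b/n)$, set $\eta := (x, y) + rs\xi_j(u)$. The key geometric observation is that $\eta \in U((x,y), c_0/n)$ in the metric $\rho_\Og$: indeed $\|\eta - (x, y)\| = rs\|\xi_j(u)\| \leq C/n$, and both $(x, y)$ and $\eta$ keep $\dist(\cdot, \Ga) \gtrsim 1/n^2$ since $\xi_j(u)$ is nearly tangent to $\Ga$ at the close-by boundary point $(u, g(u))$, while the restriction $(x,y) \in G^{1/n}$ forces $\dist((x,y), \p'G) \geq A_0/n^2$ by~\eqref{eqn:a0}. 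Consequently the finite difference $\tr^r_{s\xi_j(u)}(f, \Og, (x, y))$ equals $\tr^r_{(\eta - (x,y))/r}(f, \Og, (x,y))$, a term in the integrand defining $w_r(f, (x, y), c_0/n)_q^q$. Applying a Fubini-type change of variables and Jensen's inequality (with exponent $p/q \ge 1$), the built-in $d$-dimensional averaging over $u$ combined with the $(d+1)$-dimensional integration over $(x,y) \in G^{1/n}$ furnishes the full $(d+1)$-dimensional $\eta$-coverage needed to reproduce $w_r(f, (x,y), c_0/n)_q$.

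For (b), set $\xi^* := \xi - (r/2)h\vi_\Og(e_j, \xi)e_j$ and $\eta^* := \xi + (r/2)h\vi_\Og(e_j, \xi)e_j$, so that $\wt{\tr}^r_{h\vi_\Og(e_j, \xi)e_j}(f, \Og, \xi) = \tr^r_{(\eta^* - \xi^*)/r}(f, \Og, \xi^*)$. The inequality $\rho_\Og(\xi^*, \eta^*) \leq c_0/n$ follows from $\vi_\Og(e_j, \xi)^2 = \ell_1\ell_2$ with $\ell_1, \ell_2$ the one-sided distances along $\pm e_j$ to $\p\Og$: the step $h\vi_\Og(e_j,\xi)e_j$ stays short enough that both $\xi^*$ and $\eta^*$ remain within a common $\rho_\Og$-ball of radius $c_0/n$. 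Lacking the built-in $u$-average of case (a), we introduce an auxiliary averaging by translating the center $\xi^*$ over a neighborhood $V_\xi \subset \RR^{d+1}$ whose measure is comparable to $|U(\xi^*, c_0/n)|$; a translation leaves the step $h\vi_\Og(e_j,\xi)e_j$ unchanged, so the $L^p$-norm over $\xi$ can be rewritten via Fubini as a double integral over $(\xi, \tau)$, which then matches (after change of variables) an $L^p$-norm of an average of the $\tau_r$-type, whereupon Jensen's inequality closes the bound.

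The main obstacle is case (b), where the absence of an intrinsic averaging makes the $L^p$-versus-$L^q$ matching delicate; the resolution via the shift-and-average trick requires fine control of the $\rho_\Og$-geometry (via Proposition~\ref{metric-lem} and the ball-measure estimate of Remark~\ref{rem-6-2}) to ensure that the translation set $V_\xi$ can be chosen of measure comparable to $|U(\xi^*, c_0/n)|$ while all translates remain within a $\rho_\Og$-ball of comparable radius. This is precisely where the subtlety of the $q < p$ case enters, since the geometric construction must be tight enough that the $(d+1)$-dimensional shift-set fills a definite proportion of the $w_r$-ball independently of $q$.
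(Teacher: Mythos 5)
Your outline correctly identifies the high-level reduction (bound the tangential pieces $\wt\og^r_{G_k}$ and the $\vi$-pieces $\og^r_{\Og,\vi}(\cdot;e_j)$ separately) and makes several sound geometric observations — in particular that $\eta:=(x,y)+rs\xi_j(u)$ lies in $U((x,y),c_0/n)$, and that the $\vi$-difference is also short in the $\rho_\Og$-metric. However, there is a genuine gap at the very core of both cases (a) and (b), and it is the same gap in each: you cannot bound a single finite difference $|\tr^r_{(\eta-\xi)/r}(f,\Og,\xi)|$ (nor a thin family of such differences sharing essentially one direction and step) by the $L^q$-average $w_r(f,\xi,c/n)_q$, because a single term of an average can be arbitrarily larger than the average. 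Observing that $\eta\in U(\xi,c/n)$ only tells you the term \emph{appears} in the integrand of $w_r$; it does not give a pointwise bound. This is precisely the obstacle the paper overcomes with Lemma~\ref{lem-9-1:Dec}, whose proof rests on the combinatorial identity~\eqref{9-2-0} of Ditzian--Prymak: that identity rewrites a single $r$-th difference $\tr^r_h f(\xi)$ as a fixed linear combination of $r$-th differences ``towards'' an arbitrary auxiliary point $\eta$, which can then legitimately be averaged over a convex set $E^\xi$ of comparable $(d+1)$-dimensional measure and estimated by an $L^q$-average via H\"older/Jensen. Nothing in your ``Fubini-type change of variables plus Jensen'' can substitute for this identity.

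In detail: in case (a) the inner average in $\wt\og^r_G$ is an $L^p$-average over $u\in I_x(tb)$ (exponent $p$, not $q$), and for fixed $(x,y)$ and $s$ the map $u\mapsto (x,y)+rs\xi_j(u)$ has one-dimensional range (only the last coordinate depends on $u$, through $\p_j g(u)$), so the $u$-average provides only a $1$-dimensional, Lebesgue-null set of $\eta$'s. Integrating over $(x,y)$ afterwards cannot create the needed pointwise-in-$\xi$ inequality: $\tau_r(f,\cdot)_{p,q}$ is the $L^p$ norm over $\xi$ of the \emph{local} $L^q$-average over $\eta$ near $\xi$, and you must produce a bound at each fixed base point $\xi$ before taking the outer $L^p$-norm. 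In case (b) the proposed ``shift-and-average'' does not fix this either: translating the base point $\xi^*$ over $V_\xi$ shifts the entire configuration without generating new directions, so after Fubini you still only ever see a one-parameter family of $r$-th differences, not the $(d+1)$-dimensional average that $w_r$ requires. The paper instead applies Lemma~\ref{lem-9-1:Dec} on the cells $S^\diamond_{\ib,j}$ (for the tangential part) and on the convex slabs $E_\xi\subset B_\xi$ built from interior tangent balls (for the $\vi$-part), checking that these sets have measure comparable to $|U(\xi,c/n)|$ so Remark~\ref{rem-6-2} closes the estimate. Without the combinatorial identity your argument cannot be repaired along the route you propose, and your final paragraph, which attributes the difficulty to the ``$L^p$-versus-$L^q$ matching,'' misdiagnoses the real obstruction.
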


As a result, using Remark~\ref{rem:constant-in-jackson}, we may establish the Jackson inequality for Ivanov's moduli of smoothness for  any dimension   $d\ge 1$ and  the full range of $0<q\leq p\leq \infty$.

\begin{cor}  	
	If $f\in L^p(\Og)$, $0< q\leq p \leq \infty$ and $r\in\NN$, then
	$$ E_n (f)_p \leq C_{r, \Og} \tau_r \Bl(f, \frac 1 n\Br)_{p,q}.$$
\end{cor}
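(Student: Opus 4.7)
The Corollary is a direct consequence of chaining two results established earlier in the excerpt: the Jackson inequality (Theorem~\ref{Jackson-thm}) and the comparison between the new modulus and Ivanov's average modulus (Theorem~\ref{thm-9-1}). The plan is to fix the parameter $A_0>1$ delivered by Theorem~\ref{thm-9-1}, apply the Jackson inequality to obtain
\[
E_n(f)_{L^p(\Og)}\le C\,\og^r_\Og\!\bigl(f,\tfrac{1}{n};A_0\bigr)_p,
\]
and then apply Theorem~\ref{thm-9-1} with that same $A_0$ to bound
\[
\og^r_\Og\!\bigl(f,\tfrac{1}{n};A_0\bigr)_p \le C\,\tau_r\!\bigl(f,\tfrac{A}{n}\bigr)_{p,q}.
\]
Composing the two estimates yields the claimed bound $E_n(f)_p\le C_{r,\Og}\,\tau_r(f,A/n)_{p,q}$, with $C_{r,\Og}$ absorbing the implicit constants of both theorems and $A>1$ inherited from Theorem~\ref{thm-9-1}.

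The only verification required is compatibility of the ranges of exponents. Theorem~\ref{Jackson-thm} is valid for $0<p\le\infty$ and Theorem~\ref{thm-9-1} for $0<q\le p\le\infty$, so their intersection is exactly the range $0<q\le p\le\infty$ of the corollary. Remark~\ref{rem-3-2} (the equivalence of $\og^r_\Og(\cdot;A_0)_p$ for different $A_0\ge 1$ up to constants) guarantees that we may freely use any convenient value of $A_0$ when invoking Theorem~\ref{Jackson-thm}, in particular the one specified by Theorem~\ref{thm-9-1}; the constant $C$ in Jackson changes by at most a fixed multiplicative factor depending on $A_0$, which is absorbed into $C_{r,\Og}$.

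All substantive difficulty has been pushed upstream into Theorem~\ref{thm-9-1}, whose proof must, for each base point $\xi'\in\Og$, locate a large subset of $U(\xi',A/n)$ on which a specific finite difference appearing in $\og^r_\Og$ (either the Ditzian--Totik piece $\wt\tr^r_{h\vi_\Og(e_j,\xi)e_j}f(\xi)$ or a tangential difference on some $G_i$) can be compared to the $L^q$-average $w_r(f,\xi',A/n)_q$ in Ivanov's definition, and must handle the quasi-norm regime $q<1$ where Jensen's inequality is unavailable. Once Theorem~\ref{thm-9-1} is in hand, the Corollary is just one composition step, and no further obstacle arises.
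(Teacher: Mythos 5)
Your proposal is correct and coincides with the paper's own (one-line) argument: the corollary is presented there precisely as an immediate consequence of Theorem~\ref{thm-9-1-00} and Theorem~\ref{Jackson-thm}, obtained by composing the two estimates, with Remark~\ref{rem-3-2} ensuring the choice of $A_0$ is harmless. No further comment is needed.
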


Recall that for  $S\subset \R^d$,
$$S_{rh}:=\Bl\{\xi\in S:\  \ [\xi, \xi+rh]\subset S\Br\},   \   r>0, h\in\R^d.$$
The proof of Theorem~\ref{thm-9-1}  relies on the following lemma, which  generalizes Lemma~7.4 of~\cite{Di-Pr08}.
\begin{lem}\label{lem-9-1:Dec}
	Let     $r\in\NN$,   $h\in\R^d$ and $\da_0\in (0,1)$.  Assume that  $(S,E)$ is a pair of  subsets   of $\R^d$  satisfying  that  for  each $\xi\in S_{rh}$,  there exists   a convex subset $E^\xi$ of $E$ such that  $|E^\xi|\ge \da_0 |E|$
	and   $[\xi, \xi+rh]\subset E^\xi$.
	Then for any $0<q\leq p <\infty$ and $f\in L^p(E)$, we have 
	\begin{equation*}\label{9-1-18}
		\|\tr_h^r (f, S, \cdot)\|_{L^p(S)} \leq  C(q, d, r)\Bl(\int_S \Bl(\f 1 {\da_0 |E|}  \int_E \bl| \tr_{(\eta-\xi)/r} ^r (f, E, \xi)\br|^q\, d\xi\Br)^{\f pq}\, d\eta\Br)^{\f1p},
	\end{equation*}
	where the constant $C(q,d, r)$ is independent of $S$, $E$ and $q$ if $q\ge 1$.
\end{lem}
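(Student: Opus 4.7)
The plan is to reformulate the right-hand side via the change of variables $w=(\eta-\xi)/r$ in the inner integral (Jacobian $r^d$): setting $V(\xi):=(E^\xi-\xi)/r$, the hypotheses guarantee that each $V(\xi)$ is convex, contains both $0$ and $h$ (since $\xi$ and $\xi+rh$ lie in $E^\xi$), and satisfies $|V(\xi)|\ge r^{-d}\delta_0|E|$. Thus the inequality to prove becomes
\[
\|\tr_h^r(f,S,\cdot)\|_{L^p(S)}\le C\Bl(\int_S\Bl(\frac{1}{|V(\xi)|}\int_{V(\xi)}|\tr_w^r f(\xi)|^q\,dw\Br)^{p/q}d\xi\Br)^{1/p}.
\]

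First I would exploit the key operator identity, valid for any $w\in\R^d$, obtained from $\tr_h=\tr_w+T_w\tr_{h-w}$ together with the commutativity of translations,
\[
\tr_h^r=\tr_w^r+\sum_{k=1}^r\binom{r}{k}T_{kw}\,\tr_{h-w}^k\,\tr_w^{r-k},
\]
which rewrites $\tr_h^r f(\xi)$ as $\tr_w^r f(\xi)$ plus an error $\sum_{k=1}^r\binom{r}{k}(\tr_{h-w}^k\tr_w^{r-k}f)(\xi+kw)$ whose ``tangential'' step is $h-w$. Raising to the $q$-th power via the (quasi-)triangle inequality, averaging $w$ over $V(\xi)$, and then integrating over $\xi\in S$ produces the desired main term involving $\frac{1}{|V(\xi)|}\int_{V(\xi)}|\tr_w^r f(\xi)|^q\,dw$, together with error terms that are $L^p$-integrals of averages of mixed finite differences.

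The essential step is to absorb these error terms into the main term. Under the change of variable $w\mapsto h-w$, the set $V(\xi)$ is mapped onto $h-V(\xi)$, which is again convex and contains $\{0,h\}$; together with a shift $\xi\to\xi+kw$ in the outer $L^p(S)$-integration (admissible because the convexity of $E^\xi$ and the containment $[\xi,\xi+rh]\subset E^\xi$ guarantee that the shifted base point still lies in a controlled region), each error term is dominated by a constant multiple of a similar average with the roles of $w$ and $h-w$ interchanged. After iterating this absorption $O(r)$ times, all error contributions are controlled by (a constant times) the main term, yielding the claimed bound. For $0<q<p$, the case $q\ge 1$ uses Minkowski's inequality in the average-of-finite-differences step, while $0<q<1$ uses $|a+b|^q\le|a|^q+|b|^q$, producing a constant that depends on $q$, consistent with the statement of the lemma.

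The hardest part will be the precise bookkeeping of the absorption argument: one must verify that the chain of changes of variable (both the reflection $w\mapsto h-w$ and the shift $\xi\mapsto\xi+kw$) is admissible for each error index $k=1,\dots,r$, that the resulting domains of integration still satisfy an analogue of the hypotheses (so the same inequality can be applied iteratively), and that the accumulated constant depends only on $r$ and $d$ (respectively on $r$, $d$ and $q$ when $q<1$).
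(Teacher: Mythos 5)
Your approach takes a different route from the paper and contains a genuine gap. The paper's proof hinges on a nontrivial combinatorial identity (cited from Ditzian--Prymak, \emph{Lemma~7.3}), namely
\begin{align*}
\tr^r_h f(\xi) =&\sum_{j=0}^{r-1} (-1)^j \binom r j \tr^r f\Bl[\xi+jh, \  \tfrac jr (\xi+rh)+\Bl(1-\tfrac jr\Br)\eta\Br]
-\sum_{j=1}^r (-1)^j \binom r j \tr^r f \Bl[ \Bl(1-\tfrac jr\Br) \xi+\tfrac jr \eta,\  \  \xi+rh \Br],
\end{align*}
where $\tr^r f[u,v]=\tr^r_{(v-u)/r}f(u)$. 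Every term on the right is a \emph{pure} $r$-th order difference of $f$ between two points lying in the convex hull of $\{\xi,\xi+rh,\eta\}$, which is therefore contained in $E^\xi$. Averaging over $\eta\in E^\xi$ and using two affine changes of variable then gives the claim directly.

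Your decomposition $\tr_h^r=\tr_w^r+\sum_{k=1}^r\binom{r}{k}T_{kw}\tr_{h-w}^k\tr_w^{r-k}$ is algebraically correct, but it produces \emph{mixed} finite differences $\tr_{h-w}^k\tr_w^{r-k}$ with $1\le k\le r$ as error terms, and these do not have the structure of the right-hand side of~\eqref{9-1-18}, which involves only pure $r$-th order differences of $f$ in a single direction. The proposed absorption by the reflection $w\mapsto h-w$ just exchanges the roles of $w$ and $h-w$ and produces another mixed difference $\tr_w^k\tr_{h-w}^{r-k}$, so nothing is actually absorbed; and ``iterating $O(r)$ times'' has no well-defined recursive structure, because repeated application of the same decomposition to $\tr_{h-w}$ or $\tr_w$ only creates more mixed terms. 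In addition, the outer change of variable $\xi\mapsto\xi+kw$ mixes the averaging variable $w$ into the integration domain in a way that the hypotheses on the sets $E^\xi$ do not obviously control, since $V(\xi+kw)$ bears no direct relation to $V(\xi)$ or to $h-V(\xi)$. In short, the key step you would need is precisely a way to convert $\tr_h^r f(\xi)$ into pure $r$-th differences along segments inside $E^\xi$, and that is exactly what the combinatorial identity from \cite{Di-Pr08} provides; your elementary binomial splitting does not supply a substitute.
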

Lemma~\ref{lem-9-1:Dec} was  proved in \cite[Lemma 7.4]{Di-Pr08}
in the case  when  $p=q$ and   $E=S$ is convex.   For the general case, it can be obtained by  modifying the proof there.  
\begin{proof}   
	The proof is based  on the following combinatorial identity, which was  proved in \cite[Lemma 7.3]{Di-Pr08}:   if $\xi, \eta\in\R^d$ and $f$ is defined on the convex hull of the set $\{\xi, \xi+rh, \eta\}$, then 
	\begin{align}\label{9-2-0}
		\tr^r_h f(\xi) =&\sum_{j=0}^{r-1} (-1)^j \binom r j \tr^r f\Bl[\xi+jh, \  \f jr (\xi+rh)+\Bl(1-\f jr\Br)\eta\Br]\\
		&	-\sum_{j=1}^r (-1)^j \binom r j \tr^r f \Bl[ \Bl(1-\f jr\Br) \xi+\f jr \eta,\  \  \xi+rh \Br],\notag
	\end{align}
	where  we used the notation   
	$\tr^r f[u, v] :=\tr_{(v-u)/r} ^r f(u)$ for $u,v\in\RR^d$. 
	
	Since $E^\xi$ is a convex set containing the line segment $[\xi, \xi+rh]$ for each $\xi\in S_{rh}$, we obtain from~\eqref{9-2-0} that   for  $\xi\in S_{rh}$,
	\begin{align*}
		|\tr_h^r f(\xi) |\leq&  C_r \max_{0\leq j\leq r-1}
		\Bl(\f 1 {|E^\xi|}	\int_{E^\xi}	\Bl|\tr^rf\bl[\xi+jh, \  \f jr (\xi+rh)+\Bl(1-\f jr\Br)\eta\br]\Br|^q\, d\eta\Br)^{\f1q}\\
		&+  C_r \max_{1\leq j\leq r}
		\Bl(\f 1 {|E^\xi|}	\int_{E^\xi}\Bl|\tr^r f \bl[ \Bl(1-\f jr\Br) \xi+\f jr \eta,\  \  \xi+rh \br]\Br|^q\, d\eta\Br)^{\f1q}.
	\end{align*}
	Taking the $L^p$-norm  over the set  $S_{rh}$ on both sides of this last inequality,   we obtain
	\begin{align*}
		\Bl(\int_{S_{rh}}	|\tr_h^r f(\xi) |^p\, d\xi\Br)^{\f1p}\leq&  C_{r}\da_0^{-\f1q} \Bl[\max_{0\leq j\leq r-1} I_j(h) + \max_{1\leq j\leq r}K_j(h)\Br],
	\end{align*}
	where 
	\begin{align*}
		I_j(h):&=\Bl(\int_{S_{rh}}\Bl(\f 1{|E|}	\int_{E^\xi}	\Bl|\tr^r f\bl[\xi+jh, \  \f jr (\xi+rh)+\Bl(1-\f jr\Br)\eta\br]\Br|^q\, d\eta\Br)^{\f pq}\, d\xi\Br)^{\f1p},\\
		K_j(h):&=\Bl(\int_{S_{rh}}\Bl(\f 1{|E|}	\int_{E^\xi}\Bl|\tr^r f \bl[ \Bl(1-\f jr\Br) \xi+\f jr \eta,\  \  \xi+rh \br]\Br|^q\, d\eta\Br)^{\f pq}\, d\xi\Br)^{\f1p}.	
	\end{align*}
	For the term   $I_j(h)$ with  $0\leq j\leq r-1$, we have
	\begin{align*}
		I_j(h)&=\Bl(\int_{S_{rh}+jh}\Bl(\f 1 {|E|}	\int_{E^{u-jh}}	\Bl|\tr^r f\bl[u, \  \f jr (u+(r-j)h)+\Bl(1-\f jr\Br)\eta\br]\Br|^q\, d\eta\Br)^{\f pq}\, du\Br)^{\f1p}\\
		&\leq r^{d/q}
		\Bl(\int_{S_{rh}+jh}\Bl(\f 1 {|E|}	\int_{E^{u-jh}}	\Bl|\tr^rf[u, \  v]\Br|^q\, dv\Br)^{\f pq}\, du\Br)^{\f1p},
	\end{align*}
	where we used the change of variables  $u=\xi+jh$ in the first step,  the change of variables  $v=\f jr (u+(r-j) h)+\Bl(1-\f jr\Br) \eta$  and the fact that each set  $E^\xi$ is convex  in the second step.
	Since 
	$[u,v] \subset E^{u-jh} \subset E$  whenever $u\in S_{rh}+jh$ and $v\in E^{u-jh}$ and since  $\tr^r f [u,v]=\tr^r f[v,u]$, it follows that 
	$$ I_j(h) \leq r^{d/q} 	\Bl(\int_{S}\Bl(\f 1 {|E|}	\int_{E}	\Bl|\tr^r_{(u-v)/r} (f, E, v)\Br|^q\, dv\Br)^{\f pq}\, du\Br)^{\f1p}.$$
	
	The  terms $K_j(h)$,  $1\leq j\leq r$ can be estimated in a similar way. In fact,  making  the change of variables $u=\xi+rh$ and $v=\Bl(1-\f jr\Br)(u-rh) +\f jr \eta$, we   obtain 
	\begin{align*}
		K_j(h)&=\Bl(\int_{S_{-rh}} 
		\Bl(\f 1 {|E|} 	\int_{E^{u-rh}}\Bl|\tr^r f \bl[ \Bl(1-\f jr\Br) (u-rh)+\f jr \eta,\  \  u \br]\Br|^q\, d\eta\Br)^{\f pq}\, du\Br)^{\f1p}\\
		&\leq r^{d/q} \Bl(\int_{S_{-rh}} 
		\Bl(\f 1 {|E|} 	\int_{E^{u-rh}}\Bl|\tr^r f [v,\  \  u ]\Br|^q\, dv\Br)^{\f pq}\, du\Br)^{\f1p}\\
		&\leq r^{d/q} 	\Bl(\int_{S}\Bl(\f 1 {|E|}	\int_{E}	\Bl|\tr^r_{(u-v)/r} (f, E, v)\Br|^q\, dv\Br)^{\f pq}\, du\Br)^{\f1p}.
	\end{align*}
	
	Putting the above together,  we complete  the proof. 	
\end{proof}

We are now in a position to prove Theorem~\ref{thm-9-1}.

\begin{proof}[Proof of  Theorem~\ref{thm-9-1}]
	We shall  prove Theorem~\ref{thm-9-1} for  $p<\infty$ only.
	The case $p=\infty$ can be deduced by letting $p\to\infty$. In fact,  all the general constants below are independent of $p$ as $p\to\infty$.

	
	
	By Lemma~\ref{lem-2-1-18}, there exists $\da_0\in (0,1)$ such that 
	$\Og\setminus \Og(\da_0) \subset \bigcup_{j=1}^{m_0} G_j,$
	where 
	$$\Og(\da_0):=\{\xi\in\Og:\  \  \dist(\xi, \Ga) > \da_0\}.$$
	We claim that 
	for any $ 0<t< \f {\da_0}{ 8\diam (\Og) +8}$,
	\begin{align}\label{9-5-2}
		\sup_{\|h\|\leq  t} \Bl\|\tr_{h\vi_{\Og} (h, \cdot)}^r (f, \Og, \cdot)\Br\|_{L^p(\Og(\da_0))}\leq C_{q,d} \tau_r (f, A_1t)_{p,q}.
	\end{align} Indeed,  using Fubini's theorem and Lemma~\ref{lem-8-1},  we have 
	\begin{align*}
		\sup_{\|h\|\leq  t} \Bl\|\tr_{h\vi_{\Og} (h, \cdot)}^r f\Br\|_{L^p(\Og(\da_0))}\leq C_d  \sup_{\|h\|\leq t} \Bl\|\tr_{h}^r f\Br\|_{L^p(\Og(\da_0/2))}.
	\end{align*}
	Let $\{\og_1,\dots, \og_{N}\}$ be a subset of $\Og(\da_0/2)$ such that $\min_{1\leq i\neq j\leq N} \|\og_i-\og_j\|\ge t$ and 
	$\Og(\da_0/2) \subset \bigcup_{j=1}^{N} B_j$,  where $B_j:=B_{t} (\og_j)$.
	Using Lemma~~\ref{thm-9-1}, we then  have 
	\begin{align*}
		&\sup_{\|h\|\leq t} \Bl\|\tr_{h}^r f\Br\|^p_{L^p(\Og(\da_0/2))}\leq C_{p} \sum_{j=1}^{N} \sup_{\|h\|\leq t} \Bl\|\tr_{h}^r(f, 2B_j, \cdot)\Br\|^p_{L^p(B_j)}\\
		&\leq C_{q}  \sum_{j=1}^{N}\int_{2B_j} \Bl(\f 1{t^{d+1}} \int_{ B_{4t} (\xi)} |\tr_{(\eta-\xi)/r}^r f (\xi)|^q \, d\xi\Br)^{\f pq} \, d\eta\leq C_{q,d} \tau_r (f, A_1 t)_{p,q}^p.
	\end{align*}
	This proves the claim~\eqref{9-5-2}.

	Now using~\eqref{9-5-2} and Definition~\ref{def:modulus}, we   reduce to showing that  for each $x_i$-domain $G\subset \Og$   attached to $\Ga$, and a sufficiently large parameter $A_0$, 
	\begin{equation}\label{9-3}
		\wt{\og}^r_G (f, \f1n; A_0)_{L^p(G)}\leq C  \tau_r \Bl(f, \f {A_1} n\Br)_{p,q}
	\end{equation}
	and  
	\begin{equation}\label{9-4}
		\sup_{0<s\leq  \f 1n} \|\tr_{s \vi_{\Og} (e_i,\cdot)e_i }^r (f, G,\cdot)\|_{L^p(G)}\leq C  \tau_r \Bl(f, \f {A_1} n\Br)_{p,q}.
	\end{equation}

	Without loss of generality, we may assume that $e_i=e_{d+1}$, $G$ takes the form~\eqref{standard} with small base size  $b\in(0,1)$, and $n\ge N_0$, where $N_0$ is a large positive integer depending only on the set $\Og$.
	We  follow  the same notations as   in  Section~\ref{Sec:8} with sufficiently large  parameters $m_0$ and $m_1$. Thus,  	
	$\{I_{\ib, j}:\  \  (\ib, j) \in\Ld_{n}^{d+1}\}$ is a partition of $G$, and  $S_{\ib,j}\subset I_{\ib,j}$ is  the compact parallelepiped as defined in~\eqref{8-7-1-18}.

	We start with the proof of~\eqref{9-3}.
	Given a  parameter $\ell>1$, we   define 
	\begin{align*}
		S_{\bfi,j}^ \diamond:=\Bl\{ (x,y):\  \  & x\in(\ell \Delta_{\bfi}^\ast)\cap [-2b, 2b]^d,\   \  H_{\bfi} (x) -\al^\ast_{j+m_1} +\f {M_0-\ell} {n^2}\leq y\leq\\
		&\leq  H_{\bfi}(x) -\al^\ast_{j-m_1} -\f {M_0-\ell} {n^2}\Br\},
	\end{align*}
	where $\ell \Delta_{\ib}^\ast$ denotes the dilation of the cube  $\Delta_{\ib}^\ast$ from its center $x_{\ib}$.  
	We  choose   the parameter $\ell$ sufficiently large  so that 
	\begin{enumerate}[\rm (i)]
		\item for any $\xi=(\xi_x,\xi_y)\in I_{\ib,j}$ and  $u\in B_{n^{-1} } (\xi_x)\subset \R^d$,  	$ \Bl[\xi, \xi +\f rn \zeta_k(u)\Br]\subset      S_{\ib,j}^ \diamond$ for all $1\leq k\leq d$; 
		\item  there exists a constant $c_0>0$ such that 
		$I_{\ib, j} \subset S_{\ib,j}^{\diamond}\subset G^\ast$ whenever  $\ib\in\Ld_n^{d}$ and  $j\ge c_0 \ell$.
	\end{enumerate}
	Furthermore, we may also choose the parameter $A_0$  large enough  so that 
	with $\Ld_{n,\ell}^{d+1}:=\{(\ib,j)\in \Ld_n^{d+1}:\   \  c_0\ell\leq j\leq n\}$, 
	$$   G_n:=\Bl\{\xi\in G:\  \  \dist(\xi, \p' G) \ge  \f {A_0} {n^2}\Br\}\subset \bigcup_{(\ib,j)\in\Ld_{n,\ell}^{d+1}} I_{\ib,j}.$$
	With the above notation, we  have that  for any $0<s\leq \f 1n$ and $k=1,\dots, d$, 	
	\begin{align*}
		&n^d\int_{G_n} \int_{\|u-\xi_x\|\leq \f1n} |\tr_{s \zeta_k(u)}^r (f, G^\ast,\xi)|^p  \, du d\xi\leq C_d \sum_{(\ib,j)\in\Ld_{n,\ell}^{d+1}} \sup_{\zeta \in\SS^d} \int_{S^{\diamond}_{\ib,j}}  |\tr_{s \zeta}^r (f, S_{\ib,j}^\diamond,\xi)|^p  d\xi,\end{align*}
	which, using  Lemma~\ref{lem-9-1:Dec}, is estimated above  by 
	\begin{equation}\label{9-5}
		C_{q,d,r} \sum_{(\ib,j)\in\Ld_{n,\ell}^{d+1}}
		\int_{S_{\ib,j}^\diamond}\Bl( \f 1 {|S_{\ib,j}^{\diamond}|}  \int_{S_{\ib,j}^\diamond} \bl| \tr_{(\eta-\xi)/r} ^r f( \xi)\br|^q\, d\xi\Br)^{\f pq}\, d\eta.
	\end{equation}
	Recall that for $\xi\in\Og$ and $t>0$, we defined 
	$ U( \xi, t)= \{\eta\in\Og:\  \  \rho_\Og(\xi,\eta) \leq t\}$. Now, by Proposition~\ref{metric-lem},  there exists a constant $A_1>1$ such that for each $(\ib, j)\in\Ld_{n,\ell}^{d+1}$, 
	$$    U\Bl(\eta_{\ib,j}, \f 1{n A_1}\Br)\subset  S_{\ib, j}^{\diamond}\subset  U\Bl(\eta_{\ib,j}, \f {A_1}{2n}\Br) \    \  \text{for some  $\eta_{\ib,j}\in S_{\ib,j}^\diamond$}.$$
	Thus,  by  Remark~\ref{rem-6-2},  the sum in~\eqref{9-5} is controlled above by  a constant multiple of 
	\begin{align*}
		\int_{\Og}\Bl( \f 1 {|U(\xi, \f {A_1} n)|}  \int_{U(\xi, \f {A_1} n)} \bl| \tr_{(\eta-\xi)/r} ^r (f,\Og, \xi)\br|^q\, d\xi\Br)^{\f pq}\, d\eta= \tau_r \Bl(f, \f {A_1} n\Br)_{p,q}^p.
	\end{align*}
	This completes the proof of~\eqref{9-3}.

	It remains to prove~\eqref{9-4}.  First, by the $C^2$ assumption of the domain $\Og$ (see, e.g.~\cite{Wa}), there exists a constant $r_0\in (0,1)$ such that for each  $\xi=(\xi_x, \xi_y)\in G$,  there exists a closed ball   $B_\xi\subset G^\ast$  of  radius $r_0\in (0, 1)$ that touches the boundary  $\Ga$ at the point $\ga(\xi):=(\xi_x, g(\xi_x))$. 
	Given   a large parameter  $A$, we 	
	define
	\begin{equation}\label{9-6}
		E_\xi:=\Bl\{ \eta \in  B_\xi:\  \  \dist(\eta, T_\xi)\leq \f {A} {n^2}\Br\},\   \  \xi\in G,
	\end{equation}
	where  $T_\xi$ denotes  the tangent plane to $\Ga$ at the point $\ga(\xi)$. Clearly,   $E_\xi\subset G^\ast$ is convex, 
	\begin{equation}\label{9-6-0}
		U(\ga(\xi), \f {c_1} {n}) \subset E_\xi\subset U(\ga(\xi), \f {c_2}n),
	\end{equation}
	where the constants  $c_1, c_2>0$ depend only on $G$ and the parameter $A$. 
	Next, recall that  $S_{\ib,j}^\ast$ is  the compact parallelepiped  defined in~\eqref{8-8-1}. By definition, there exists a positive integer $j_0$ depending only on $G$ such that $S_{\ib,j}^\ast\subset G^\ast$ whenever $j_0<j\leq n$.  Furthermore,    according to  Proposition~\ref{metric-lem}, we have that
	\begin{equation}\label{9-8-1}
		\sup_{\xi\in S^\ast_{\ib, j}} \|\xi-\ga(\xi)\|\leq \f {c_3} {n^2},\   \ \text{ for $0\leq j\leq j_0$, }
	\end{equation}
	and    
	\begin{equation}\label{9-7}U\Bl(\eta_{\ib,j}, \f {c_4} n\Br) \subset I^\ast_{\ib,j} \subset S_{\ib,j}^\ast\cap G^\ast  \subset U\Bl(\eta_{\ib,j}, \f {c_5} n\Br),\    \  \forall (\ib,j) \in\Ld_n^{d+1},\end{equation}
	for some point   $\eta_{\ib,j} \in I_{\ib,j}$,  
	where $c_3, c_4, c_5$ are positive constants depending only on the set $G$. 
	By~\eqref{9-8-1}, we may choose the parameter $A$ in~\eqref{9-6} large enough so that if $0\leq j\leq j_0$ and  $\xi\in I^\ast_{\ib,j}$, then  $[\xi, \ga(\xi)]\subset E_\xi$. 
	Note that if $\xi\in I_{\ib,j}^\ast$ with  $0\leq j\leq j_0$, then by~\eqref{9-7} and~\eqref{9-8-1}, 
	$$\rho_{\Og} (\eta_{\ib,j}, \ga(\xi)) \leq \f {c_6}n,$$
	where  $c_6>0$ is a constant depending only on $G$.    
	Now we define, for   $(\ib,j) \in\Ld_n^{d+1}$, 
	$$ E_{\ib, j} =\begin{cases}
		S_{\ib,j}^\ast, \   \  & \text{ if $j_0<j\leq n$}, \\
		U(\eta_{\ib,j}, \f {c_2+c_6} n), \  \ & \text{ if $0\leq j\leq j_0$}.
	\end{cases}$$
	Thus,  $E_{\ib, j}\subset G^\ast$, and  by~\eqref{9-6-0},~\eqref{9-8-1} and~\eqref{9-7},  we have  that for $\leq j\leq j_0$.
	\begin{equation}\label{9-10}
		\bigcup_{\xi\in I^\ast_{\ib,j}} E_\xi\subset  \bigcup_{\xi\in I^\ast_{\ib,j}} U(\ga(\xi), \f {c_2} n)\subset E_{\ib,j}.
	\end{equation} 
	Thus, setting $e=e_{d+1}$, and using Lemma~\ref{lem-8-1},  we have 
	\begin{align*}
		\sup_{0<s\leq  \f 1n}& \|\tr_{s \vi_{\Og} (e_i,\cdot)e_i }^r (f, G,\cdot)\|_{L^p(G)}^p\leq  C n\int_0^{\f 1n} \int_{G }|\tr_{s\vi_G (e, \xi)e}^r (f, G,\xi)|^p\, d\xi ds\\
		&\leq C\sum_{(\ib,j) \in\Ld_n^{d+1}} \sup_{0<s\leq \f {cj^2} {n^3}}  \int_{I_{\bfi, j}^\ast} |\tr_{se}^r(f, I_{\ib, j}^\ast, \xi)|^p d\xi.
	\end{align*}
	However, by~\eqref{9-6-0},~\eqref{9-10} and Lemma~\ref{lem-9-1:Dec},  this last sum can be estimated above by a constant multiple of 
	\begin{align*}
		& \sum_{(\ib,j) \in\Ld_n^{d+1}}   \int_{I_{\bfi, j}^\ast} \Bl( \f 1{|E_{\ib, j}|}\int_{E_{\ib,j}} |\tr_{(\eta-\xi)/r}^r(f, \Og, \xi)|^q d\eta\Br)^{\f pq}\, d\xi\\
		&\leq C \sum_{(\ib,j) \in\Ld_n^{d+1}}  \int_{I_{\bfi, j}^\ast} \Bl( \f 1{|U(\xi, \f {A_1} n)|}\int_{U(\xi, \f {A_1} n)} |\tr_{(\eta-\xi)/r}^r(f, \Og, \xi)|^q d\eta\Br)^{\f pq}\, d\xi
		\leq C \tau_r (f, \f {A_1} n)_{p,q}^p,
	\end{align*}
	where $A_1:= 2(c_2+c_5+c_6)$. This completes the proof.
\end{proof}

\section{Inverse inequality for $1\leq p\leq \infty$}\label{sec:15}

The main purpose in this section is to show Theorem~\ref{inverse-thm}, the inverse theorem. By Theorem~\ref{thm-9-1-00}, $\og^r_\Og(f, t)_p\leq C_{p,q} \tau_r(f, At)_{p,q}$ for $1\leq q\leq p\leq \infty$,  where $\tau_r(f,t)_{p,q}$ is the $(q,p)$-averaged modulus of smoothness given in~\eqref{eqn:ivanov}. Thus, it is sufficient to prove 
\begin{thm} \label{thm-15-1}If $r\in\NN$, $A>0$, $1\leq q\leq  p\leq \infty$ and $f\in L^p(\Og)$, then 
	$$\tau_r (f, An^{-1})_{p,q} \leq C_{r,A} n^{-r} \sum_{s=0}^n (s+1)^{r-1} E_s (f)_p.$$
	
\end{thm}
Here we recall that $L^p(\Og)$ denotes  the space $L^p(\Og)$ for $p<\infty$  and  the space $C(\Og)$  for  $p=\infty$.

The proof of Theorem~\ref{thm-15-1} relies on two lemmas. To state these   lemmas, we recall that 
for $t>0$, $\xi\in\Og$ and $f\in L^p(\Og)$, 
$$U( \xi, t):= \{\eta\in \Og:\  \  \rho_\Og(\xi,\eta) \leq t\},$$  and 
$$ w_r (f, \xi,  t)_q : =\begin{cases}
	\displaystyle \Bl( \f 1 {|U(\xi,t)|} \int_{U( \xi,t)} |\tr_{(\eta-\xi)/r} ^r (f,\Og,\xi)|^q \, d\eta\Br)^{\f1q},\  \  & \text{if $1\leq q <\infty$};\\
	\sup_{\eta\in U( \xi,t)} |\tr_{(\eta-\xi)/r}^r (f,\Og,\xi)|,\   \ &\text {if $q=\infty$}.\end{cases}$$
\begin{lem} \label{lem-15-1}  Let $G\subset \Og$ be a domain of special type attached to $\Ga$. 
	If  $r\in\NN$, $A>0$, $1\leq q\leq  p\leq \infty$ and $f\in L^p(\Og)$, then 
	\begin{equation}\label{inverse:15-1} \Bl\| w_r (f, \cdot,  An^{-1})_q \Br\|_{L^p(G)}\leq C_{r,p,A} n^{-r} \sum_{s=0}^n (s+1)^{r-1} E_s (f)_{L^p(\Og)}.\end{equation}
\end{lem}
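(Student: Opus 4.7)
The plan follows the classical inverse theorem template: decompose $f$ into near-best polynomial approximations, bound the averaged modulus of each piece via Bernstein-type inequalities, and sum. First I reduce to $q = p$: for $q \leq p$, Jensen's inequality applied to the probability measure $d\eta/|U(\xi,t)|$ gives $w_r(f,\xi,t)_q \leq w_r(f,\xi,t)_p$. Since $L^p$ for $p\geq 1$ is a norm and $\Delta^r$ is linear, the map $g \mapsto \|w_r(g,\cdot,n^{-1})_p\|_{L^p(G)}$ is sublinear. Let $P_s$ denote a best $L^p(\Omega)$-polynomial approximation of degree $\leq s$, pick $K$ with $2^K \leq n < 2^{K+1}$, and set $Q_k := P_{2^{k+1}} - P_{2^k}$, a polynomial of degree $\leq 2^{k+1}$ with $\|Q_k\|_{L^p(\Omega)} \leq 2E_{2^k}(f)_p$. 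Writing $f = (f - P_{2^{K+1}}) + \sum_{k=0}^K Q_k + P_1$, where for $r \geq 2$ the term $P_1$ vanishes under $\Delta^r$ (and for $r = 1$ it is absorbed by the subsequent estimates), the proof reduces to two bounds.

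The \emph{trivial bound} $\|w_r(g,\cdot,n^{-1})_p\|_{L^p(G)} \leq C\|g\|_{L^p(\Omega)}$ for any $g \in L^p(\Omega)$ follows by expanding $|\Delta^r_h g(\xi)|^p \leq C_r \sum_{j=0}^r |g(\xi + jh)|^p$, changing variables $\zeta = \xi + j(\eta-\xi)/r$, and using Fubini together with the comparability $|U(\xi,n^{-1})| \sim |U(\zeta, n^{-1})|$ for nearby $\xi, \zeta$ from Remark~\ref{rem-6-2}; applied to $g = f - P_{2^{K+1}}$ it contributes $\leq CE_n(f)_p$. The \emph{Bernstein bound} reads
\[
\|w_r(Q_m, \cdot, n^{-1})_p\|_{L^p(G)} \leq C(m/n)^r \|Q_m\|_{L^p(\Omega)}
\]
for every polynomial $Q_m$ of degree $\leq m$, and is the main step.

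To prove the Bernstein bound, I may assume $G$ is an upward $x_{d+1}$-domain of the form~\eqref{2-7-special}, with tangent vectors $\xi_j(x) = e_j + \partial_j g(x) e_{d+1}$ to $\Gamma$ at $(x, g(x))$. For $\xi \in G$ and $\eta \in U(\xi, n^{-1})$ with $[\xi, \eta] \subset \Omega$, I decompose $\eta - \xi = \sum_{j=1}^d (\eta_x - \xi_x)_j\, \xi_j(\xi_x) + \gamma\, e_{d+1}$. The bound $|(\eta_x - \xi_x)_j| \leq 1/n$ is immediate, while a Taylor expansion of $g$ combined with $\rho_\Omega \sim \wh{\rho}_G$ (Proposition~\ref{metric-lem}) and $|\sqrt{g(\xi_x)-\xi_y} - \sqrt{g(\eta_x)-\eta_y}| \leq 1/n$ yields $|\gamma| \leq C \varphi_n(\xi)/n$, with $\varphi_n(\xi) := \sqrt{g(\xi_x)-\xi_y} + 1/n$. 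Substituting into the integral representation of $\Delta^r$ and expanding the multinomial $(((\eta-\xi)/r)\cdot\nabla)^r$ in the operators $\mathcal{D}^{\pmb{\alpha}}_{\tan,\xi_x} \partial^k_{d+1}$ with $|\pmb{\alpha}| + k = r$ gives
\[
|\Delta^r_{(\eta-\xi)/r} Q_m(\xi)| \leq \f{C}{n^r} \sum_{|\pmb{\alpha}|+k=r} \varphi_n(\xi)^k \sup_{\zeta \in [\xi, \eta]} |\mathcal{D}^{\pmb{\alpha}}_{\tan, \xi_x} \partial^k_{d+1} Q_m(\zeta)|.
\]
Raising to the $p$-th power, averaging over $\eta$, and integrating over $\xi$, I handle the supremum over $\zeta$ and the $\xi_x$-dependence of the tangential derivatives via a standard covering of $G$ by $\rho_\Omega$-balls of radius $c/n$ on which $\varphi_n$ and the tangent vectors are essentially constant and the reference point lies in the set $\Xi_{n, \mu, \lambda}$ appearing in Theorem~\ref{cor-11-2}. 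Applying Theorem~\ref{cor-11-2} with $i = k$, $j = 0$ bounds each resulting term by $C m^{|\pmb{\alpha}|+k} n^{-r} \|Q_m\|_{L^p(G(\lambda))} = C(m/n)^r \|Q_m\|_{L^p(G(\lambda))} \leq C(m/n)^r\|Q_m\|_{L^p(\Omega)}$, giving the Bernstein bound.

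Combining, the polynomial pieces sum to $\sum_{k=0}^K C(2^{k+1}/n)^r E_{2^k}(f)_p \leq Cn^{-r} \sum_{s=1}^n (s+1)^{r-1} E_s(f)_p$ via the standard replacement $2^{kr} E_{2^k}(f) \leq C \sum_{s = 2^{k-1}+1}^{2^k} s^{r-1} E_s(f)$ (exploiting that $E_s(f)$ is non-increasing), and the trivial term $CE_n(f)_p$ is absorbed since $n^{-r} \sum_{s=n/2}^n (s+1)^{r-1} E_s(f)_p \geq c E_n(f)_p$. The main obstacle is the Bernstein bound: the tangential/normal decomposition of $\eta - \xi$ and the covering argument needed to reduce to the form of Theorem~\ref{cor-11-2} are the technically delicate steps, while the remainder is a routine adaptation of the one-dimensional Ditzian--Totik inverse theorem.
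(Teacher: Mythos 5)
Your proposal is correct and follows essentially the same approach as the paper: reduce to $q=p$, decompose dyadically into near-best approximants, handle the remainder by the trivial bound $\|w_r(g,\cdot,t)_p\|_p\leq C\|g\|_p$, and establish the Bernstein-type bound by decomposing $\eta-\xi$ into tangential and normal components and invoking Theorem~\ref{cor-11-2}. The only cosmetic difference is that you take tangent vectors at $\xi_x$ and absorb the second-order Taylor remainder into the normal coefficient $\gamma$, whereas the paper uses the mean value theorem to pick $u\in[\xi_x,\eta_x]$ so the Taylor remainder is exactly zero; both give $|\gamma|\leq C\varphi_n(\xi)/n$, and the paper's explicit partition $\{I_{\mathbf i,j}\}$ with enlarged blocks $I^*_{\mathbf i,j}$ plays the role of your covering by $\rho_\Omega$-balls of radius $c/n$.
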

\begin{proof} By monotonicity, it is enough to consider the case $q=p$. 
	It is  easily seen from the definition that 
	\begin{equation}\label{15-1}\|w_r(f,\cdot, t)_p\|_p\leq C_{p,r}\|f\|_p.\end{equation}
	Without loss of generality, we may assume that
	$$ G:=\{(x,y):\  \  x\in (-b, b)^d,\   \   g(x)-1<y\leq g(x)\},$$
	where $b>0$ and $g\in C^2(\RR^d)$.  We may also assume that $n\ge N_0$, where $N_0$ is a sufficiently large positive integer depending only on $\Og$, since otherwise~\eqref{inverse:15-1} follows directly from  the inequality $\|w_r(f,\cdot, t)_p\|_p\leq C E_0 (f)_p$, which can be obtained from~\eqref{15-1}.
	
	For $0\leq k\leq n$, let $P_k\in\Pi_k^{d+1}$ be such that 
	$\|f-P_k\|_{L^p(\Og)} =E_k(f)_{L^p(\Og)}$. Let $m\in\NN$ be such that $2^{m-1} \leq n <2^m$. Then by~\eqref{15-1}, we have 
	\begin{align*}
		\Bl\| w_r (f, \cdot,  An^{-1})_p \Br\|_{L^p(G)}&\leq \Bl\| w_r (f-P_{2^m}, \cdot,  An^{-1})_p\Br\|_{L^p(G)}+\Bl\| w_r (P_{2^m}, \cdot,  An^{-1})_p \Br\|_{L^p(G)}\\
		&\leq  C \|f-P_{2^m}\|_{L^p(\Og)} +  \sum_{j=0}^{m-1} \Bl\| w_r (P_{2^{j+1}}-P_{2^j}, \cdot,  An^{-1})_p\Br\|_{L^p(G)}.
	\end{align*}
	Thus, for the proof of~\eqref{inverse:15-1},  it suffices to show that for each $P\in\Pi_k^{d+1}$,
	\begin{equation}\label{15-2}
		\Bl\| w_r (P, \cdot, An^{-1})_p \Br\|_{L^p(G)}\leq C n^{-r} k^r \|P\|_{L^p(\Og)},
	\end{equation}
	where here and below $C$ and constants in the equivalences may depend on $A$.

	To show~\eqref{15-2}, we first  recall  the following  partition  
	of the domain $\overline{G}$ constructed in Section~\ref{sec:5}: $
	\overline{G}=\bigcup_{\bfi\in\Ld_n^d} \bigcup_{j=0}^{n-1} I_{\mathbf{i},j}$,   where   
	\begin{equation*}\label{15-4-0}
		I_{\mathbf{i},j}:=\Bl\{ (x, y):\  \  x\in \Delta_{\bfi},\  \   g(x)-y\in [\al_{j}, \al_{j+1}]\Br\}
	\end{equation*}
	and 
	\begin{align*}
		\bfi&=(i_1,\dots, i_d)\in \Ld^d_n:=\{ 0, 1,\dots, n-1\}^d\subset \ZZ^d,\\
		\Delta_{\bfi}:&=[t_{i_1}, t_{i_1+1}]\times \dots [t_{i_{d}}, t_{i_{d}+1}]  \   \  \ \text{with}\  \    t_{i}=-b+\f {2i}n b,\\
		\al_j:&= \sin^2 (\f {j\pi}{2\ell_1 n})/(\sin^2\f \pi{2\ell_1}), \   \   j=0,1,\dots, \ell_1 n,\\
		&\text{ with $\ell_1>1$  being  a large  integer parameter}.
	\end{align*}
	As in  Section~\ref{Sec:8}, we also define for any   two given integer parameters $m_0, m_1>1$,  
	\begin{align*}
		\Delta_{\bfi}^\ast&=\Delta_{\bfi,m_0}^\ast:=[t_{i_1-m_0}, t_{i_1+m_0}]\times [t_{i_2-m_0}, t_{i_2+m_0}]\times \dots\times [t_{i_{d}-m_0}, t_{i_{d}+m_0}],\\
		I_{\bfi,j}^\ast:&=I_{\bfi,j, m_0,m_1}^\ast:=\Bl\{ (x, y):\  \ x\in \Delta_{\bfi}^\ast,\   \  \al^\ast_{j-m_1}\leq g(x)-y\leq \al^\ast_{j+m_1}\Br\},
	\end{align*}
	where
	$\al_j^\ast =\al_j$ if $0\leq j\leq n$,  $\al_j^\ast =0$ if $j<0$ and $\al_j^\ast =2$ if $j>n$.  By Proposition~\ref{metric-lem}, we may choose the parameters $m_0, m_1$ large enough so that 
	\begin{equation}\label{15-4}U(\xi, An^{-1}) \subset I_{\bfi, j}^\ast\   \   \text{whenever $\xi\in I_{\bfi,j}$}.\end{equation}
	Note  that for $(\bfi, j)\in\Ld_n^{d+1}$ and  $(x,y) \in I_{\bfi,j}^\ast$, 
	\begin{align}
		\al_j& \sim \f {j^2}{n^2} \sim \da(x,y):=g(x)-y,\    \   j\ge 1, \notag\\
		\al_{j+1}-\al_j &\leq C \f {j+1}{n^2} \leq  \f Cn \vi_n(x,y):=\f C n \bl( \f 1n+\sqrt{\da(x,y)}\br).\label{15-5}
	\end{align}

	Now we turn to the proof of~\eqref{15-2}. 
	Let $P\in \Pi_k^d$ and $1\leq p<\infty$. Then  using Remark~\ref{rem-6-2},  Proposition~\ref{metric-lem} and~\eqref{15-4}, we have 
	\begin{align*}
		&\Bl\| w_r (P, \cdot,  An^{-1})_p \Br\|^p_{L^p(G)}
		\leq C \sum_{(\bfi,j)\in\Ld_n^d} \int_{I_{\bfi,j}}  \f 1 {|I_{\bfi,j}^\ast|} \int_{I_{\bfi,j}^\ast(\xi)} |\tr_{(\eta-\xi)/r} ^r (P,\Og,\xi)|^p d\eta \, d\xi,
	\end{align*}
	where 
	$I_{\bfi,j}^\ast(\xi)=\{\eta\in I_{\bfi,j}^\ast:\  \  [\xi,\eta]\in\Og\}.$
	Note that by H\"older's inequality,
	\begin{align*}
		|\tr_{(\eta-\xi)/r}^r (f,\Og, \xi)|^p & \leq \int_{[0,1]^r} \Bl|\p_{(\eta-\xi)/r}^r f(\xi+ r^{-1}(\eta-\xi) (t_1+\dots+t_r))\Br|^p\, dt_1\dots dr_r\\&\leq C \int_{0}^1 \Bl|\p_{\eta-\xi}^r f(\xi+ t(\eta-\xi) )\Br|^p\, dt.
	\end{align*}
	Thus,
	\begin{align}
		&\Bl\| w_r (P, \cdot,  An^{-1})_p \Br\|^p_{L^p(G)}\notag\\
		&\leq C  \sum_{(\bfi,j)\in\Ld_n^d} \int_{I_{\bfi,j}}  \f 1 {|I_{\bfi,j}^\ast|} \int_{I_{\bfi,j}^\ast(\xi)} \int_0^1 \Bl|\p_{\eta-\xi}^r P(\xi+ t(\eta-\xi) )\Br|^p\, dt
		d\eta \, d\xi.\label{15-6}
	\end{align}
	
	To estimate the sum in this last equation,  we shall   use the Bernstein inequality stated in Theorem~\ref{cor-11-2}.   For convenience, given a parameter $\mu>1$, and two nonnegative integers $l_1, l_2$, we define 
	\begin{align*}
		M_{\mu,n}^{l_1,l_2} f(\xi) := &\max_{ u\in \Xi_{n,\mu}(\xi)}\max_{\zeta\in\sph} \Bigl|  ( z_\zeta(u)\cdot \nabla )^{l_1}\partial_{d+1}^{l_2}f(\xi)\Br|,\  \   \xi\in G,\  \  f\in C^\infty(\Og),	\end{align*}
	where $z_\zeta(u)=(\zeta, \p_\zeta g(u))$, and 
	$$\Xi_{n, \mu} (\xi):= \Bl\{ u\in [-2 a, 2 a]^d:\  \  \|u-\xi_x\|\leq \mu \vi_n(\xi)\Br\}.$$
	We choose the parameter $\mu$ large enough so that 
	$\Delta_{\bfi, 4m_0}^\ast \subset \Xi_{n,\mu}(\xi)$ for any $\xi\in I_{\bfi, j}^\ast$.
	By Theorem~\ref{cor-11-2}, 
	we have
	\begin{equation}\label{15-7}
		\| \vi_n^{l_2} M_{\mu,n}^{l_1,l_2} P\|_{L^p(G^\ast)} \leq C k^{l_1+l_2} \|P\|_{L^p(\Og)},\   \   \ \forall P\in\Pi_k^{d+1}.
	\end{equation}

	Now fix temporarily   $\xi=(\xi_x, \xi_y)\in I_{\bfi,j}$ and $\eta=(\eta_x, \eta_y)\in I_{\bfi, j}^\ast$.  Then $\|\xi_x-\eta_x\|\leq \f cn$, and
	$$ \eta_y-\xi_y =\eta_y-g(\eta_x)+g(\eta_x)-g(\xi_x)+g(\xi_x) -\xi_y.$$
	By  the mean value theorem, there exists $u\in [\xi_x, \eta_x]$ such that 
	$$ \|(\eta_y-\xi_y)-\nabla g(u) \cdot (\eta_x-\xi_x)\|\leq \al_{j+m_1}^\ast -\al_{j-m_1}^\ast \leq c_1 \f {\vi_n(\xi)}n,$$
	where 
	the last step uses~\eqref{15-5}. Thus, setting $\zeta=n(\eta_x-\xi_x)$, 
	we  have $\|\zeta\|\leq c$ and we may write $\eta-\xi$ in the form
	$$ \eta-\xi=\f 1n \Bl(\zeta, \p_{\zeta} g(u)  +s\vi_n(\xi)\Br),$$
	with 
	$$ s=\f {n(\eta_y-\xi_y) -\p_\zeta g(u)}{\vi_n(\xi)}\in [-c_1, c_1].$$
	It follows that  
	$$ \p_{\eta-\xi} =(\eta-\xi)\cdot \nabla = \f 1n \Bl(\p_{z_\zeta (u)} + s \vi_n(\xi) \p_{d+1}\Br),$$
	where $z_\zeta (u) = (\zeta, \p_{\zeta} g(u))$. 
	This  implies that  for any $(x,y)\in I_{\bfi,j}^\ast$, 
	\begin{align*}
		|\p_{\eta-\xi}^r P(x,y)|&\leq C n^{-r} \max _{0\leq k\leq r}\vi_n(\xi)^k |\p_{z_\zeta(u)}^{r-k} \p_{d+1}^{k} P(x,y)| \\
		&\leq C n^{-r} \max _{0\leq k\leq r}\vi_n(x,y)^k   M_{\mu,n}^{r-k,k} P(x,y).\end{align*}
	Thus, setting 
	$$ P_\ast (x,y):=\max _{0\leq k\leq r}\vi_n(x,y)^k   M_{\mu,n}^{r-k,k} P(x,y),$$
	we obtain from~\eqref{15-6} that 
	\begin{align*}
		&\Bl\| w_r (P, \cdot,  An^{-1})_p \Br\|^p_{L^p(G)}
		\leq C n^{-rp}  \sum_{(\bfi,j)\in\Ld_n^d} \int_{I_{\bfi,j}}  \f 1 {|I_{\bfi,j}^\ast|} \int_{I_{\bfi,j}^\ast(\xi)} \int_0^1 |P_\ast(\xi+t(\eta-\xi))|^p\, dt
		d\eta \, d\xi\\
		&\leq C n^{-rp}  \sum_{(\bfi,j)\in\Ld_n^d} \int_{I_{\bfi,j, 2m_0, 2m_1}^\ast}  |P_\ast(\eta)|^p
		d\eta\leq C n^{-rp} \|P_\ast \|_{L^p(G_\ast(2))}^p \leq C \Bl( \f kn\Br)^{rp} \|P\|_{L^p(\Og)}^p,
	\end{align*}
	where the last step uses~\eqref{15-7}. This proves~\eqref{15-2} for $1\leq p<\infty$. 
	
	Finally,~\eqref{15-2} for  $p=\infty$ can be proved  similarly. This completes the proof of Lemma~\ref{lem-15-1}.
\end{proof}

\begin{lem} \label{lem-15-2}  Let $\va\in (0,1)$ and 
	$\Og^\va:=\{\xi\in \Og:\  \  \dist(\xi, \Ga) >\va\}.$
	If  $r\in\NN$, $A>0$, $1\leq q\leq  p\leq \infty$ and $f\in L^p(\Og)$, then 
	\begin{equation*}\label{inverse:15-8} \Bl\| w_r (f, \cdot,  An^{-1})_q \Br\|_{L^p(\Og^\va)}\leq C_{r,p,A} n^{-r} \sum_{s=0}^n (s+1)^{r-1} E_s (f)_{L^p(\Og)}.\end{equation*}
\end{lem}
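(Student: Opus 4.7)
The plan is to parallel the proof of Lemma~\ref{lem-15-1}, but to replace the refined partition of a special-type domain and the weighted tangential Bernstein inequality (Theorem~\ref{cor-11-2}) by their much simpler \emph{interior} counterparts. By monotonicity in $q$, I reduce to $q=p$. Using the easy estimate $\|w_r(f,\cdot,t)_p\|_p\le C_{p,r}\|f\|_p$, the usual telescoping
\[
f=(f-P_{2^m})+\sum_{j=0}^{m-1}(P_{2^{j+1}}-P_{2^j}),\qquad 2^{m-1}\le n<2^m,
\]
with $P_k\in\Pi_k^{d+1}$ achieving $E_k(f)_{L^p(\Og)}$, reduces the lemma to the polynomial Bernstein-type inequality
\begin{equation}\label{plan:key}
\|w_r(P,\cdot,n^{-1})_p\|_{L^p(\Og^\va)}\le C\Bl(\f kn\Br)^r\|P\|_{L^p(\Og)},\qquad P\in\Pi_k^{d+1}.
\end{equation}
The case of small $n$ (say $n<4/\va$) follows from $\|w_r(f,\cdot,t)_p\|_p\le CE_0(f)_p$, so I may assume $n\ge 4/\va$.

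The key simplification is that $\rho_\Og(\xi,\eta)\ge\|\xi-\eta\|$, hence for $\xi\in\Og^\va$ the set $U(\xi,n^{-1})$ is contained in the Euclidean ball $B(\xi,n^{-1})$, and when $n\ge 4/\va$ this ball lies entirely in $\Og^{\va/2}$. Consequently $[\xi,\eta]\subset \Og^{\va/2}\subset \Og$ for every $\eta\in U(\xi,n^{-1})$, so the truncated finite difference $\tr_{(\eta-\xi)/r}^r(P,\Og,\xi)$ coincides with the ordinary one, $|U(\xi,n^{-1})|\sim n^{-(d+1)}$, and the integral representation
\[
|\tr_{(\eta-\xi)/r}^r P(\xi)|^p\le C_r\int_0^1\Bl|\p_{\eta-\xi}^r P\bl(\xi+t(\eta-\xi)\br)\Br|^p\,dt
\]
is applicable. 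Writing $\eta-\xi=hv$ with $h=\|\eta-\xi\|\le n^{-1}$ and $\|v\|=1$ gives $|\p_{\eta-\xi}^r P(z)|^p\le C_{r,d}\,n^{-rp}\sum_{|\al|=r}|D^\al P(z)|^p$.

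Substituting and then using Fubini to interchange the $(\xi,\eta,t)$ integrations, together with the observation that $\xi+t(\eta-\xi)\in\Og^{\va/2}$, I obtain
\[
\|w_r(P,\cdot,n^{-1})_p\|_{L^p(\Og^\va)}^p\le C\,n^{-rp}\sum_{|\al|=r}\|D^\al P\|_{L^p(\Og^{\va/2})}^p.
\]
Now the standard interior Markov--Bernstein inequality for polynomials, which follows from the one-variable inequality~\eqref{markov-bern} applied along coordinate line segments of length $\va/2$ contained in $\Og$, yields $\|D^\al P\|_{L^p(\Og^{\va/2})}\le C_\va k^{|\al|}\|P\|_{L^p(\Og)}$ for every $P\in\Pi_k^{d+1}$. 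Combining these two estimates gives~\eqref{plan:key}, completing the proof.

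No step here is genuinely hard, and in fact this lemma is considerably easier than Lemma~\ref{lem-15-1}: because $\vi_n$ is bounded below by $\sqrt{\va}$ on $\Og^\va$, the delicate Chebyshev-type partition, the treatment of boundary layers, and the appeal to the tangential Bernstein inequality of Theorem~\ref{cor-11-2} are all replaced by the elementary interior Bernstein bound. The only point that requires a moment of thought is the Fubini step, which must be arranged so that one integrates $|D^\al P|^p$ over the slightly enlarged set $\Og^{\va/2}$ rather than over $\Og^\va$ itself.
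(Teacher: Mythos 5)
Your proof is correct and follows exactly the route the paper intends: the paper's own proof of Lemma~\ref{lem-15-2} consists of a single sentence declaring it ``similar to that of Lemma~\ref{lem-15-1}, and in fact, simpler,'' relying on the interior Bernstein inequality $\|\p^{\b}P\|_{L^p(\Og^\va)}\leq Ck^{|\b|}\|P\|_{L^p(\Og)}$ deduced from~\eqref{markov-bern}; your write-up simply fills in the details of that sketch (the reduction to the polynomial estimate~\eqref{plan:key}, the observation that $U(\xi,n^{-1})\subset B(\xi,n^{-1})\subset\Og^{\va/2}$ for $n\ge 4/\va$ because $\rho_\Og\ge\|\cdot\|$, and the replacement of the weighted tangential Bernstein inequality by the elementary interior one). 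The only place deserving a touch more care than you give it is the Fubini/change-of-variables step, where one should split $t\in[0,1/2]$ and $t\in(1/2,1]$ and change variables in $\eta$ or in $\xi$ respectively so the Jacobian $t^{-(d+1)}$ (or $(1-t)^{-(d+1)}$) stays bounded — but the paper's proof of Lemma~\ref{lem-15-1} also leaves this implicit, so you are matching the paper's level of rigor.
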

\begin{proof} 
	The proof is similar to that of Lemma~\ref{lem-15-1}, and in fact, is simpler. It relies on the following Bernstein inequality,
	$$ \|\p^{\b}  P\|_{L^p(\Og^\va)} \leq C k^{|\b|} \|P\|_{L^p(\Og)},\  \  \forall P\in\Pi_k^{d+1},\   \  \forall \b\in\ZZ_+^{d+1},$$
	which is a direct consequence of  the univariate Bernstein  inequality~\eqref{markov-bern}.
\end{proof}

Now we are in a position to prove Theorem~\ref{thm-15-1} .

\begin{proof}[Proof of Theorem~\ref{thm-15-1}]
	By monotonicity, 	it suffices to consider the case $p=q$.
	By Lemma~\ref{lem-2-1-18},  there exist $\va\in (0,1)$ and domains $G_1,\dots, G_{m_0}\subset \Og$ of special type attached to $\Ga$ such that 
	$$\Ga_\va:=\{ \xi\in\Og:\  \  \dist(\xi, \Ga) \leq \va\} \subset \bigcup_{j=1}^{m_0} G_j.$$
	Setting  $\Og^\va:=\Og\setminus \Ga_\va$, we have 
	\begin{align*}
		\tau_r(f, An^{-1})_{p,p} &\leq \sum_{j=1}^{m_0} \|w_r (f, \cdot, An^{-1})_p\|_{L^p(G_i)} + \|w_r (f, \cdot, An^{-1})_p\|_{L^p(\Og^\va)},
	\end{align*}
	which,  using Lemma~\ref {lem-15-1}
	and  Lemma~\ref {lem-15-2}, is estimated above by a constant multiple of 
	\begin{align*}
		n^{-r}\sum_{s=0}^n (s+1)^{r-1} E_s (f)_{L^p(\Og)}.
	\end{align*}
	This completes the proof. 
\end{proof}

\section*{Acknowledgment}
The first named author would like to thank Professor  K. G. Ivanov very much  for kindly  explaining    the works of  
\cite{Iv} to him. The authors are extremely grateful to the anonymous referee for the numerous useful comments.

\begin{bibsection}
	\begin{biblist}

\bib{Ba}{article}{
	author={Baran, M.},
	title={Bernstein type theorems for compact sets in ${\bf R}^n$},
	journal={J. Approx. Theory},
	volume={69},
	date={1992},
	number={2},
	pages={156--166},}

\bib{BS}{book}{
	author={Bennett, Colin},
	author={Sharpley, Robert},
	title={Interpolation of operators},
	series={Pure and Applied Mathematics},
	volume={129},
	publisher={Academic Press, Inc., Boston, MA},
	date={1988},
	pages={xiv+469},
}


	\bib{BE}{book}{
		author={Borwein, P.},
		author={Erd\'{e}lyi, T.},
		title={Polynomials and polynomial inequalities},
		series={Graduate Texts in Mathematics},
		volume={161},
		publisher={Springer-Verlag, New York},
		date={1995},
	}

\bib{Br}{article}{
	author={Brudnyi, Yu.},
	title={Polynomial approximation in convex domains},
	journal={J. Approx. Theory},
	volume={236},
	date={2018},
	pages={36--53},
}

		\bib{CD}{article}{
			author={Chen, W.},
			author={Ditzian, Z.},
			title={Mixed and directional derivatives},
			journal={Proc. Amer. Math. Soc.},
			volume={108},
			date={1990},
			number={1},
			pages={177--185},
		}
		\bib{Co-Sa}{article}{
			author={Constantine, G. M.},
			author={Savits, T. H.},
			title={A multivariate Fa\`a di Bruno formula with applications},
			journal={Trans. Amer. Math. Soc.},
			volume={348},
			date={1996},
			number={2},
			pages={503--520},
		}
		
		\bib{Da06}{article}{
			author={Dai, Feng},
			title={Multivariate polynomial inequalities with respect to doubling
				weights and $A_\infty$ weights},
			journal={J. Funct. Anal.},
			volume={235},
			date={2006},
			number={1},
			pages={137--170},
		}

		\bib{Da-Pr-Bernstein}{article}{
			author={Dai, Feng},
			author={Prymak, Andriy},
			title={$L^p$-Bernstein inequalities on $C^2$-domains and applications to
				discretization},
			journal={Trans. Amer. Math. Soc.},
			volume={375},
			date={2022},
			number={3},
			pages={1933--1976},
		}

\bib{Da-Pr-Whitney}{article}{
	author={Dai, Feng},
	author={Prymak, Andriy},
	title={On directional Whitney inequality},
	journal={Canad. J. Math.},
	volume={74},
	date={2022},
	number={3},
	pages={833--857},
}		
		

		\bib{DPTT}{article}{
			author={Dai, F.},
			author={Prymak, A.},
			author={Temlyakov, V. N.},
			author={Tikhonov, S. Yu.},
			title={Integral norm discretization and related problems},
			language={Russian, with Russian summary},
			journal={Uspekhi Mat. Nauk},
			volume={74},
			date={2019},
			number={4(448)},
			pages={3--58},
			
		}

		\bib{DX2}{book}{
			author={Dai, F.},
			author={Xu, Y.},
			title={Approximation theory and harmonic analysis on spheres and balls},
			series={Springer Monographs in Mathematics},
			publisher={Springer, New York},
			date={2013},
			pages={xviii+440},
		}

		\bib{DX}{article}{
			author={Dai, F.},
			author={Xu, Y.},
			title={Moduli of smoothness and approximation on the unit sphere and the
				unit ball},
			journal={Adv. Math.},
			volume={224},
			date={2010},
			number={4},
			pages={1233--1310},
		}

		\bib{De-Le}{article}{
			author={Dekel, S.},
			author={Leviatan, D.},
			title={Whitney estimates for convex domains with applications to
				multivariate piecewise polynomial approximation},
			journal={Found. Comput. Math.},
			volume={4},
			date={2004},
			number={4},
			pages={345--368},
		}
		
		\bib{De-Lo}{book}{
			author={DeVore, Ronald A.},
			author={Lorentz, George G.},
			title={Constructive approximation},
			series={Grundlehren der Mathematischen Wissenschaften [Fundamental
				Principles of Mathematical Sciences]},
			volume={303},
			publisher={Springer-Verlag, Berlin},
			date={1993},
			pages={x+449},
		}
		

		\bib{Di96}{article}{
			author={Ditzian, Z.},
			title={Polynomial approximation in $L_p(S)$ for $p>0$},
			journal={Constr. Approx.},
			volume={12},
			date={1996},
			number={2},
			pages={241--269},
		}
		
		\bib{Dit07}{article}{
			author={Ditzian, Z.},
			title={Polynomial approximation and $\omega^r_\phi(f,t)$ twenty years
				later},
			journal={Surv. Approx. Theory},
			volume={3},
			date={2007},
			pages={106--151},
		}

		\bib{Di14a}{article}{
			author={Ditzian, Z.},
			title={New moduli of smoothness on the unit ball and other domains,
				introduction and main properties},
			journal={Constr. Approx.},
			volume={40},
			date={2014},
			number={1},
			pages={1--36},
		}
		
		\bib{Di14b}{article}{
			author={Ditzian, Z.},
			title={New moduli of smoothness on the unit ball, applications and
				computability},
			journal={J. Approx. Theory},
			volume={180},
			date={2014},
			pages={49--76},
		}


		\bib{Di-Pr08}{article}{
			author={Ditzian, Z.},
			author={Prymak, A.},
			title={Ul$\prime$yanov-type inequality for bounded convex sets in $R^d$},
			journal={J. Approx. Theory},
			volume={151},
			date={2008},
			number={1},
			pages={60--85},
		}

	\bib{DiHI}{article}{
		author={Ditzian, Z.},
		author={Hristov, V. H.},
		author={Ivanov, K. G.},
		title={Moduli of smoothness and $K$-functionals in $L_p$, $0<p<1$},
		journal={Constr. Approx.},
		volume={11},
		date={1995},
		number={1},
		pages={67--83},
	}
		
		\bib{Di-Pr16}{article}{
			author={Ditzian, Z.},
			author={Prymak, A.},
			title={On Nikol'skii inequalities for domains in $\mathbb{R}^d$},
			journal={Constr. Approx.},
			volume={44},
			date={2016},
			number={1},
			pages={23--51},
		}
		
		\bib{Di-To}{book}{
			author={Ditzian, Z.},
			author={Totik, V.},
			title={Moduli of smoothness},
			series={Springer Series in Computational Mathematics},
			volume={9},
			publisher={Springer-Verlag, New York},
			date={1987},
			pages={x+227},
			isbn={0-387-96536-X},
		}


		\bib{Du}{article}{
			author={Dubiner, M.},
			title={The theory of multi-dimensional polynomial approximation},
			journal={J. Anal. Math.},
			volume={67},
			date={1995},
			pages={39--116},
			issn={0021-7670},
		}

		\bib{Dz-Ko}{article}{
			author={Dzjadyk, V. K.},
			author={Konovalov, V. N.},
			title={A method of partition of unity in domains with piecewise smooth
				boundary into a sum of algebraic polynomials of two variables that have
				certain kernel properties},
			language={Russian},
			journal={Ukrain. Mat. Z.},
			volume={25},
			date={1973},
			pages={179--192, 285},
		}
		
		\bib{Er}{article}{
			author={Erd\'{e}lyi, T.},
			title={Notes on inequalities with doubling weights},
			journal={J. Approx. Theory},
			volume={100},
			date={1999},
			number={1},
			pages={60--72},
		}
	
	\bib{Er2}{article}{
		author={Erd\'{e}lyi, T.},
		title={Arestov's theorems on Bernstein's inequality},
		journal={J. Approx. Theory},
		volume={250},
		date={2020},
		pages={105323, 9},
	}

%
		
		\bib{Iv}{article}{
			author={Ivanov, K. G.},
			title={Approximation of functions of two variables by algebraic
				polynomials. I},
			conference={
				title={Anniversary volume on approximation theory and functional
					analysis},
				address={Oberwolfach},
				date={1983},
			},
			book={
				series={Internat. Schriftenreihe Numer. Math.},
				volume={65},
				publisher={Birkh\"auser, Basel},
			},
			date={1984},
			pages={249--255},
		}

		\bib{Iv2}{article}{
			author={Ivanov, K.G.},
			title={A characterization of weighted Peetre K-functionals},
			journal={J. Approx.
				Theory},
			volume={56},
			date={1989},
			number={1},
			pages={185-211},
		}	
%

		\bib{ITo}{article}{
			author={Ivanov, K. G.},
			author={Totik, V.},
			title={Fast decreasing polynomials},
			journal={Constr. Approx.},
			volume={6},
			date={1990},
			number={1},
			pages={1--20},
		}
	
	\bib{KNT}{article}{
		author={Kalmykov, S.},
		author={Nagy, B.},
		author={Totik, V.},
		title={Bernstein- and Markov-type inequalities for rational functions},
		journal={Acta Math.},
		volume={219},
		date={2017},
		number={1},
		pages={21--63},
	}

	\bib{KL}{article}{
		author={Kobindarajah, C. K.},
		author={Lubinsky, D. S.},
		title={$L_p$ Markov-Bernstein inequalities on all arcs of the circle},
		journal={J. Approx. Theory},
		volume={116},
		date={2002},
		number={2},
		pages={343--368},
	}
	
	\bib{Kr09}{article}{
		author={Kro\'{o}, Andr\'{a}s},
		title={On Bernstein-Markov-type inequalities for multivariate polynomials
			in $L_q$-norm},
		journal={J. Approx. Theory},
		volume={159},
		date={2009},
		number={1},
		pages={85--96},
	}

		\bib{Kr2}{article}{
	author={Kro\'{o}, Andr\'{a}s},
	title={On optimal polynomial meshes},
	journal={J. Approx. Theory},
	volume={163},
	date={2011},
	number={9},
	pages={1107--1124},
	
}

	\bib{Kr13}{article}{
		author={Kro\'{o}, Andr\'{a}s},
		title={Bernstein type inequalities on star-like domains in $\Bbb{R}^d$
			with application to norming sets},
		journal={Bull. Math. Sci.},
		volume={3},
		date={2013},
		number={3},
		pages={349--361},
	}

	
	\bib{Kr-Re}{article}{
		author={Kro\'{o}, Andr\'{a}s},
		author={R\'{e}v\'{e}sz, Szil\'{a}rd},
		title={On Bernstein and Markov-type inequalities for multivariate
			polynomials on convex bodies},
		journal={J. Approx. Theory},
		volume={99},
		date={1999},
		number={1},
		pages={134--152},
	}
		
		\bib{Lu1}{article}{
			author={Lubinsky, D. S.},
			title={Marcinkiewicz-Zygmund inequalities: methods and results},
			conference={
				title={Recent progress in inequalities},
				address={Niv{s}},
				date={1996},
			},
			book={
				series={Math. Appl.},
				volume={430},
				publisher={Kluwer Acad. Publ., Dordrecht},
			},
			date={1998},
			pages={213--240},
		}

		\bib{Lu2}{article}{
			author={Lubinsky, D. S.},
			title={On Marcinkiewicz-Zygmund inequalities at Jacobi zeros and their
				Bessel function cousins},
			conference={
				title={Complex analysis and dynamical systems VII},
			},
			book={
				series={Contemp. Math.},
				volume={699},
				publisher={Amer. Math. Soc., Providence, RI},
			},
			date={2017},
			pages={223--245},
			
		}

		\bib{Lu3}{article}{
			author={Lubinsky, D. S.},
			title={On sharp constants in Marcinkiewicz-Zygmund and Plancherel-Polya
				inequalities},
			journal={Proc. Amer. Math. Soc.},
			volume={142},
			date={2014},
			number={10},
			pages={3575--3584},
			issn={0002-9939},
			
		}

		\bib{MT2}{article}{
			author={Mastroianni, G.},
			author={Totik, V.},
			title={Weighted polynomial inequalities with doubling and $A_\infty$
				weights},
			journal={Constr. Approx.},
			volume={16},
			date={2000},
			number={1},
			pages={37--71},
		}

		\bib{MK}{article}{
			author={De Marchi, S.},
			author={Kro\'{o}, A.},
			title={Marcinkiewicz-Zygmund type results in multivariate domains},
			journal={Acta Math. Hungar.},
			volume={154},
			date={2018},
			number={1},
			pages={69--89},
		}

%

		\bib{Ne}{article}{
			author={Netrusov, Yu. V.},
			title={Structural description of functions defined in a plane convex
				domain that have a given order of approximation by algebraic polynomials},
			language={Russian, with English and Russian summaries},
			journal={Zap. Nauchn. Sem. S.-Peterburg. Otdel. Mat. Inst. Steklov.
				(POMI)},
			volume={215},
			date={1994},
			number={Differentsial\cprime naya Geom. Gruppy Li i Mekh. 14},
			pages={217--225, 313--314},
			issn={0373-2703},
			translation={
				journal={J. Math. Sci. (New York)},
				volume={85},
				date={1997},
				number={1},
				pages={1698--1703},
			},
		}

		\bib{Ni}{article}{ 	author={Nikol’skii, S.M.},  title={On the best approximation by polynomials of functions which
				satisfy Lipschitz condition}, journal={ Izv. Akad. Nauk SSSR}, 		volume={10},
			date={1946},
			pages={295-318},	}
%
		
		\bib{To14}{article}{
			author={Totik, V.},
			title={Polynomial approximation on polytopes},
			journal={Mem. Amer. Math. Soc.},
			volume={232},
			date={2014},
			number={1091},
			pages={vi+112},
			issn={0065-9266},
			isbn={978-1-4704-1666-9},
		}
		
		\bib{To17}{article}{
			author={Totik, Vilmos},
			title={Polynomial approximation in several variables},
			journal={J. Approx. Theory},
			volume={252},
			date={2020},
			pages={105364, 44},
		}

		\bib{Wa}{article}{
			author={Walther, G.},
			title={On a generalization of Blaschke's rolling theorem and the
				smoothing of surfaces},
			journal={Math. Methods Appl. Sci.},
			volume={22},
			date={1999},
			number={4},
			pages={301--316},
		}

	\end{biblist}
\end{bibsection}

\end{document}